\documentclass[11pt,reqno]{amsart}

\usepackage[a4paper,margin=30mm]{geometry}
\usepackage[T1]{fontenc}
\usepackage[utf8]{inputenc}
\usepackage{lmodern}
\usepackage{amsmath,amssymb,amsthm,mathtools}
\usepackage{microtype}
\usepackage{booktabs,array,tabularx,longtable}
\usepackage{enumitem}
\usepackage{xspace}
\usepackage{graphicx}
\usepackage{float}
\usepackage[section]{placeins}
\usepackage{subfig}
\usepackage[dvipsnames,svgnames,table]{xcolor}
\usepackage{tikz}

\definecolor{TileDarkBlue}{HTML}{0B3B75}
\definecolor{TileOrange}{HTML}{F28E2B}
\definecolor{TileGreen}{HTML}{8BCB72}
\definecolor{TileTan}{HTML}{E7B26D}
\definecolor{TileLightBlue}{HTML}{A6CEE3}
\definecolor{TilePink}{HTML}{E78AC3}
\definecolor{TileGold}{HTML}{B89D2D}
\definecolor{TileTeal}{HTML}{66C2A5}
\definecolor{TileRose}{HTML}{F4A6B0}
\definecolor{TilePaleGreen}{HTML}{A9D18E}
\definecolor{TileCyan}{HTML}{9CCED5}
\definecolor{TileMauve}{HTML}{E1B7C8}
\definecolor{TileYellow}{HTML}{F1C27D}
\definecolor{TileBlue}{HTML}{9BC2E6}
\definecolor{TilePurple}{HTML}{C27BA0}
\definecolor{TileTurquoise}{HTML}{76B7B2}
\definecolor{TileBrightYellow}{HTML}{EDC948}
\definecolor{TileBrown}{HTML}{9C755F}
\definecolor{TileFreshGreen}{HTML}{8FD17F}
\definecolor{TileSteel}{HTML}{4E79A7}
\definecolor{TilePaleRose}{HTML}{F6C1D0}

\newcommand{\PickTileColor}[1]{%
  \pgfmathtruncatemacro{\TileColorIndex}{mod((#1)+2000,20)}%
  \def\fillcol{TileOrange}%
  \ifcase\TileColorIndex
    \def\fillcol{TileOrange}\or
    \def\fillcol{TileGreen}\or
    \def\fillcol{TileTan}\or
    \def\fillcol{TileLightBlue}\or
    \def\fillcol{TilePink}\or
    \def\fillcol{TileGold}\or
    \def\fillcol{TileTeal}\or
    \def\fillcol{TileRose}\or
    \def\fillcol{TilePaleGreen}\or
    \def\fillcol{TileCyan}\or
    \def\fillcol{TileMauve}\or
    \def\fillcol{TileYellow}\or
    \def\fillcol{TileBlue}\or
    \def\fillcol{TilePurple}\or
    \def\fillcol{TileTurquoise}\or
    \def\fillcol{TileBrightYellow}\or
    \def\fillcol{TileBrown}\or
    \def\fillcol{TileFreshGreen}\or
    \def\fillcol{TileSteel}\or
    \def\fillcol{TilePaleRose}%
  \fi
}

\tikzset{
  every picture/.append style={baseline=(current bounding box.south)},
  origin marker/.style={circle,fill=red,draw=red!55!black,line width=.55pt,
                        inner sep=2.05pt,outer sep=0pt}
}
\newcommand{\MarkOrigin}{}

\newcommand{\AxisNumberFont}{\fontsize{5.4}{5.8}\selectfont}
\tikzset{
  axis x tick/.style={fill=black,inner sep=0pt,minimum width=.35pt,minimum height=3pt},
  axis y tick/.style={fill=black,inner sep=0pt,minimum width=3pt,minimum height=.35pt},
  axis number/.style={font=\AxisNumberFont,fill=white,inner sep=.55pt,text=black},
  lattice cross/.style={draw=red!88!black,line width=.45pt,overlay}
}
\newcommand{\LatticeCross}[2]{%
  \draw[lattice cross]
    ([xshift=-1.55pt,yshift=-1.55pt]#1,#2)--
    ([xshift= 1.55pt,yshift= 1.55pt]#1,#2);
  \draw[lattice cross]
    ([xshift=-1.55pt,yshift= 1.55pt]#1,#2)--
    ([xshift= 1.55pt,yshift=-1.55pt]#1,#2);%
}
\newcommand{\DrawIntegerAxes}[5]{%
  \foreach \xx in {#1,...,#2}{%
    \node[axis x tick,anchor=north,overlay] at (\xx,#3) {};
  }
  \foreach \yy in {#3,...,#4}{%
    \node[axis y tick,anchor=east,overlay] at (#1,\yy) {};
  }
  \foreach \kk in {0,1,2,3}{%
    \pgfmathtruncatemacro{\xxlab}{\kk*(#5)}%
    \node[axis number,anchor=north,yshift=-3.2pt,overlay] at (\xxlab,#3) {$\xxlab$};
  }
  \pgfmathtruncatemacro{\ymaxlab}{floor((#4)/(#5))}%
  \foreach \kk in {0,...,\ymaxlab}{%
    \pgfmathtruncatemacro{\yylab}{\kk*(#5)}%
    \node[axis number,anchor=east,xshift=-3.2pt,overlay] at (#1,\yylab) {$\yylab$};
  }
  \path (0,#3) ++(0,-7pt);
}
\newcommand{\RasterLatticeOverlay}{}
\newcommand{\RasterWithAxes}[9]{%
  \begin{tikzpicture}[baseline=(image.south)]
    \node[anchor=south west,inner sep=0pt] (image) at (0,0)
      {\includegraphics[width=\linewidth]{#1}};
    \begin{scope}[shift={(image.south west)},
      x={($(image.south east)-(image.south west)$)},
      y={($(image.north west)-(image.south west)$)}]
      \pgfmathsetmacro{\axisleft}{#2+(#6)*(#4)}
      \pgfmathsetmacro{\axisright}{#2+(#7)*(#4)}
      \pgfmathsetmacro{\axisbottom}{#3+(#8)*(#5)}
      \pgfmathsetmacro{\axistop}{#3+(#9)*(#5)}
      \def\RasterPoint##1##2{%
        \pgfmathsetmacro{\pointx}{#2+(##1)*(#4)}%
        \pgfmathsetmacro{\pointy}{#3+(##2)*(#5)}%
        \LatticeCross{\pointx}{\pointy}%
      }%
      \begin{scope}
        \clip (\axisleft,\axisbottom) rectangle (\axisright,\axistop);
        \RasterLatticeOverlay
      \end{scope}
      \foreach \ii in {#6,...,#7}{%
        \pgfmathsetmacro{\tickx}{#2+(\ii)*(#4)}%
        \node[axis x tick,anchor=north,overlay] at (\tickx,\axisbottom) {};
      }
      \foreach \jj in {#8,...,#9}{%
        \pgfmathsetmacro{\ticky}{#3+(\jj)*(#5)}%
        \node[axis y tick,anchor=east,overlay] at (\axisleft,\ticky) {};
      }
      \pgfmathtruncatemacro{\axisspan}{#7-#6}%
      \def\axislabelstep{1}%
      \ifnum\axisspan>15 \def\axislabelstep{6}%
      \else\ifnum\axisspan>9 \def\axislabelstep{3}%
      \else\ifnum\axisspan>6 \def\axislabelstep{2}%
      \fi\fi\fi
      \foreach \kk in {0,1,2,3}{%
        \pgfmathtruncatemacro{\xxlab}{\kk*\axislabelstep}%
        \pgfmathsetmacro{\labx}{#2+(\xxlab)*(#4)}%
        \node[axis number,anchor=north,yshift=-3.2pt,overlay]
          at (\labx,\axisbottom) {$\xxlab$};
      }
      \pgfmathtruncatemacro{\ymaxlab}{floor((#9)/\axislabelstep)}%
      \foreach \kk in {0,...,\ymaxlab}{%
        \pgfmathtruncatemacro{\yylab}{\kk*\axislabelstep}%
        \pgfmathsetmacro{\laby}{#3+(\yylab)*(#5)}%
        \node[axis number,anchor=east,xshift=-3.2pt,overlay]
          at (\axisleft,\laby) {$\yylab$};
      }
      \path (#2,\axisbottom) ++(0,-7pt);
    \end{scope}
  \end{tikzpicture}%
}

\newcommand{\RasterWithZTwo}[9]{%
  \begingroup
  \def\RasterLatticeOverlay{%
    \foreach \xx in {#6,...,#7}{\foreach \yy in {#8,...,#9}{\RasterPoint{\xx}{\yy}}}%
  }%
  \RasterWithAxes{#1}{#2}{#3}{#4}{#5}{#6}{#7}{#8}{#9}%
  \endgroup
}
\newcommand{\RasterWithThreeZxZ}[9]{%
  \begingroup
  \def\RasterLatticeOverlay{%
    \foreach \mm in {-2,...,7}{\foreach \yy in {#8,...,#9}{\RasterPoint{3*\mm}{\yy}}}%
  }%
  \RasterWithAxes{#1}{#2}{#3}{#4}{#5}{#6}{#7}{#8}{#9}%
  \endgroup
}
\newcommand{\RasterWithFiveZxZ}[9]{%
  \begingroup
  \def\RasterLatticeOverlay{%
    \foreach \mm in {-1,...,2}{\foreach \yy in {#8,...,#9}{\RasterPoint{5*\mm}{\yy}}}%
  }%
  \RasterWithAxes{#1}{#2}{#3}{#4}{#5}{#6}{#7}{#8}{#9}%
  \endgroup
}
\newcommand{\RasterWithSqrtTwoZxZ}[9]{%
  \begingroup
  \def\RasterLatticeOverlay{%
    \foreach \mm in {-1,...,4}{\foreach \yy in {#8,...,#9}{\RasterPoint{sqrt(2)*\mm}{\yy}}}%
  }%
  \RasterWithAxes{#1}{#2}{#3}{#4}{#5}{#6}{#7}{#8}{#9}%
  \endgroup
}
\newcommand{\RasterWithZxFourZ}[9]{%
  \begingroup
  \def\RasterLatticeOverlay{%
    \foreach \xx in {#6,...,#7}{\foreach \nn in {-1,...,3}{\RasterPoint{\xx}{4*\nn}}}%
  }%
  \RasterWithAxes{#1}{#2}{#3}{#4}{#5}{#6}{#7}{#8}{#9}%
  \endgroup
}
\newcommand{\RasterWithSixZxZ}[9]{%
  \begingroup
  \def\RasterLatticeOverlay{%
    \foreach \mm in {-1,...,2}{\foreach \yy in {#8,...,#9}{\RasterPoint{6*\mm}{\yy}}}%
  }%
  \RasterWithAxes{#1}{#2}{#3}{#4}{#5}{#6}{#7}{#8}{#9}%
  \endgroup
}
\newcommand{\RasterWithSevenZxZ}[9]{%
  \begingroup
  \def\RasterLatticeOverlay{%
    \foreach \mm in {-1,...,3}{\foreach \yy in {#8,...,#9}{\RasterPoint{7*\mm}{\yy}}}%
  }%
  \RasterWithAxes{#1}{#2}{#3}{#4}{#5}{#6}{#7}{#8}{#9}%
  \endgroup
}
\newcommand{\RasterWithThreeZxThreeZ}[9]{%
  \begingroup
  \def\RasterLatticeOverlay{%
    \foreach \mm in {-2,...,7}{\foreach \nn in {-2,...,3}{\RasterPoint{3*\mm}{3*\nn}}}%
  }%
  \RasterWithAxes{#1}{#2}{#3}{#4}{#5}{#6}{#7}{#8}{#9}%
  \endgroup
}

\usetikzlibrary{matrix,patterns,arrows,arrows.meta,calc,positioning,fit,backgrounds,decorations.pathreplacing,decorations.pathmorphing,shapes.geometric}
\usepackage[numbers,sort&compress]{natbib}
\usepackage[colorlinks=true,linkcolor=MidnightBlue,citecolor=MidnightBlue,urlcolor=MidnightBlue]{hyperref}
\hypersetup{%
  pdftitle={Tilings, Packings, and the existence of Schwartz-class Gabor windows},%
  pdfauthor={Andrei Caragea, Mihail N. Kolountzakis, and Goetz E. Pfander},%
  pdfsubject={Sharp lattice criteria for Schwartz Parseval Gabor frames and multiwindow extensions},%
  pdfkeywords={lattice tiling, epsilon-packing, Parseval Gabor frames, Schwartz windows, Feichtinger algebra, Balian--Low theorem, multiwindow Gabor frames, continuous Zak transform, symplectically rational lattices, common zeros, Zibulski--Zeevi transform, rectangular Zak matrix}%
}

\setlist{itemsep=.2em,topsep=.4em}
\allowdisplaybreaks
\numberwithin{equation}{section}
\renewcommand{\thepart}{\Alph{part}}

\makeatletter
\renewcommand\subsection{\@startsection{subsection}{2}{\z@}%
  {0.85\baselineskip \@plus 0.25\baselineskip \@minus 0.10\baselineskip}%
  {0.42\baselineskip}%
  {\normalfont\bfseries}}
\renewcommand\subsubsection{\@startsection{subsubsection}{3}{\z@}%
  {0.70\baselineskip \@plus 0.20\baselineskip \@minus 0.10\baselineskip}%
  {0.30\baselineskip}%
  {\normalfont\bfseries}}
\makeatother

\newcommand{\displaypart}[1]{%
  \clearpage
  \vspace*{1cm}%
  \refstepcounter{part}%
  \addcontentsline{toc}{part}{Part~\thepart.\ #1}%
  \noindent{\fontsize{13}{16}\selectfont\normalfont\scshape Part~\thepart\hspace{1em}#1\par}
  \vspace{2.2\baselineskip}%
}

\newtheorem{theorem}{Theorem}[section]
\newtheorem{proposition}[theorem]{Proposition}
\newtheorem{lemma}[theorem]{Lemma}
\newtheorem*{unnumberedlemma}{Lemma}
\newtheorem*{unnumberedtheorem}{Theorem}
\newtheorem{corollary}[theorem]{Corollary}

\theoremstyle{definition}
\newtheorem{definition}[theorem]{Definition}

\newtheorem{examples}[theorem]{Examples}
\theoremstyle{remark}
\newtheorem{remark}[theorem]{Remark}

\newcommand{\R}{\mathbb R}
\newcommand{\Z}{\mathbb Z}
\newcommand{\N}{\mathbb N}
\newcommand{\Q}{\mathbb Q}
\newcommand{\C}{\mathbb C}

\newcommand{\eps}{\varepsilon}
\newcommand{\covol}{\operatorname{covol}}

\newcommand{\dist}{\operatorname{dist}}
\newcommand{\supp}{\operatorname{supp}}
\newcommand{\cl}{\overline}
\newcommand{\rank}{\operatorname{rank}}
\newcommand{\Pf}{\operatorname{Pf}}

\newcommand{\G}{\mathcal G}
\newcommand{\Sclass}{\mathcal S}

\newcommand{\ip}[2]{\langle #1,#2\rangle}

\newcommand{\norm}[1]{\lVert #1\rVert}

\newcommand{\TeP}{\ensuremath{(\mathrm{T}\epsilon\mathrm{P})}\xspace}
\newcommand{\SG}{\ensuremath{(\mathrm{SG})}\xspace}
\newcommand{\mSG}{\mathrm{mSG}}
\newcommand{\Sp}{\operatorname{Sp}}

\newcommand{\Id}{\mathrm I}

\newcommand{\CZ}{C_Z}
\newcommand{\Szero}{S_0}
\newcommand{\Wiener}{W(C,\ell^1)}
\newcommand{\Wzero}{W_0}

\newcommand{\ConfigurationTable}[1]{%
  \begin{tikzpicture}[x=.50cm,y=.50cm,baseline=(current bounding box.south)]
    \useasboundingbox (-1.15,-.20) rectangle (12.15,8.85);
    \fill[yellow!28] (0,0) rectangle (1,1);
    \draw[line width=.75pt] (0,0) rectangle (12,8);
    \foreach \x in {2,4,6,8,10}{\draw[line width=.75pt] (\x,0)--(\x,8);}
    \foreach \y in {2,4,6}{\draw[line width=.75pt] (0,\y)--(12,\y);}
    \foreach \x in {1,3,5,7,9,11}{\draw[gray!60,line width=.28pt] (\x,0)--(\x,8);}
    \foreach \y in {1,3,5,7}{\draw[gray!60,line width=.28pt] (0,\y)--(12,\y);}
    \foreach \cc in {1,...,6}{
      \pgfmathsetmacro{\configlabelx}{2*\cc-1}
      \pgfmathtruncatemacro{\configlabelindex}{\cc-1}
      \node[font=\scriptsize] at (\configlabelx,8.48) {$Q_{\configlabelindex}$};
    }
    \foreach \rr in {1,...,4}{
      \pgfmathsetmacro{\configlabely}{2*\rr-1}
      \pgfmathtruncatemacro{\configrowindex}{\rr-1}
      \node[left,font=\scriptsize] at (-.24,\configlabely) {$R_{\configrowindex}$};
    }
    #1
  \end{tikzpicture}%
}

\newcommand{\MiniConfigurationTable}[2]{%
  \begin{tikzpicture}[x=.248cm,y=.233cm,baseline=(current bounding box.south west)]
    \path[use as bounding box] (0,0) rectangle (12,8);
    \fill[yellow!28] (0,0) rectangle (1,1);
    \draw[#1,line width=.62pt] (0,0) rectangle (12,8);
    \foreach \x in {2,4,6,8,10}{\draw[#1,line width=.62pt] (\x,0)--(\x,8);}
    \foreach \y in {2,4,6}{\draw[#1,line width=.62pt] (0,\y)--(12,\y);}
    \foreach \x in {1,3,5,7,9,11}{\draw[#1!55,line width=.24pt] (\x,0)--(\x,8);}
    \foreach \y in {1,3,5,7}{\draw[#1!55,line width=.24pt] (0,\y)--(12,\y);}
    \foreach \cc in {1,...,6}{
      \pgfmathsetmacro{\configlabelx}{2*\cc-1}
      \pgfmathtruncatemacro{\configlabelindex}{\cc-1}
      \node[overlay,font=\fontsize{3.8}{4.2}\selectfont] at (\configlabelx,8.52) {$Q_{\configlabelindex}$};
    }
    \foreach \rr in {1,...,4}{
      \pgfmathsetmacro{\configlabely}{2*\rr-1}
      \pgfmathtruncatemacro{\configrowindex}{\rr-1}
      \node[overlay,left,font=\fontsize{3.8}{4.2}\selectfont] at (-.18,\configlabely) {$R_{\configrowindex}$};
    }
    #2
  \end{tikzpicture}%
}
\newcommand{\MiniConfigurationEntry}[6]{%
  \pgfmathsetmacro{\miniconfigentryx}{2*(#2-1)+#3}%
  \pgfmathsetmacro{\miniconfigentryy}{2*(#1-1)+#4}%
  \node[anchor=center,inner sep=0pt,text=#6,font=\fontsize{6.6}{6.9}\selectfont] at
    (\miniconfigentryx,\miniconfigentryy) {$#5$};%
}
\newcommand{\MiniConfigTL}[4]{\MiniConfigurationEntry{#1}{#2}{.5}{.5}{#3}{#4}}
\newcommand{\MiniConfigTR}[4]{\MiniConfigurationEntry{#1}{#2}{1.5}{.5}{#3}{#4}}
\newcommand{\MiniConfigBL}[4]{\MiniConfigurationEntry{#1}{#2}{.5}{1.5}{#3}{#4}}

\newcommand{\ConfigurationEntry}[5]{%
  \pgfmathsetmacro{\configentryx}{2*(#2-1)+#3}%
  \pgfmathsetmacro{\configentryy}{2*(#1-1)+#4}%
  \node[text=MidnightBlue,font=\footnotesize] at (\configentryx,\configentryy) {$#5$};%
}
\newcommand{\ConfigTL}[3]{\ConfigurationEntry{#1}{#2}{.5}{.5}{#3}}
\newcommand{\ConfigTR}[3]{\ConfigurationEntry{#1}{#2}{1.5}{.5}{#3}}
\newcommand{\ConfigBL}[3]{\ConfigurationEntry{#1}{#2}{.5}{1.5}{#3}}
\newcommand{\ConfigBR}[3]{\ConfigurationEntry{#1}{#2}{1.5}{1.5}{#3}}

\title[Tilings, packings, and Schwartz-class Gabor windows]{Tilings, Packings, and the existence of Schwartz-class Gabor windows}

\author[A. Caragea]{Andrei Caragea}
\address{Mathematical Institute for Machine Learning and Data Science (MIDS), Catholic University of Eichst\"att--Ingolstadt, Auf der Schanz 49, 85049 Ingolstadt, Germany}

\author[M. N. Kolountzakis]{Mihail N. Kolountzakis}
\address{Department of Mathematics and Applied Mathematics, University of Crete, Voutes Campus, 70013 Heraklion, Greece}
\email{kolount@gmail.com}

\author[G. E. Pfander]{G\"otz E. Pfander}
\address{Chair of Mathematics -- Scientific Computing, Catholic University of Eichst\"att--Ingolstadt, MIDS, Auf der Schanz 49, 85049 Ingolstadt, Germany}
\email{pfander@ku.de}

\subjclass[2020]{42C15, 42C40, 42B35, 52C22, 52C23, 55P15}
\date{}
\keywords{lattice tiling, epsilon-packing, fundamental domain, phase-space lattice, Parseval Gabor frame, Schwartz window, Feichtinger algebra, Balian--Low theorem, multiwindow Gabor frame, continuous Zak transform, Zak quasiperiodicity, common-zero theorem, Zibulski--Zeevi transform, rectangular rank gap, metaplectic covariance, skew Smith normal form}

\begin{document}
\raggedbottom

\begin{abstract}
The existence and construction of Schwartz-class windows for lattice Gabor
frames is a central problem in time--frequency analysis.  For general
time--frequency lattices, we characterize exactly those lattices that admit
Schwartz-class windows and, additionally, windows in the Feichtinger algebra.
Thereby we arrive at sharp classical and amalgam Balian--Low theorems for lattices in $\R^d$.
For separable lattices and specified block-zero forms, we construct smooth
windows that are compactly supported either in time or in frequency.  These constructions are based on a
characterization of pairs of lattices for which there exists a single set
that tiles with one lattice and whose $\varepsilon$-neighborhood packs with
the other.
\end{abstract}

\maketitle
\tableofcontents

\section{Introduction and main results}\label{sec:introduction}

The search for measurable sets that tile Euclidean space by translations
from two different lattices, or that tile by one lattice and pack by another,
is a well-studied problem at the intersection of harmonic analysis and
discrete geometry.  In this paper we sharpen the second question.  We ask
not merely that the packing translates be disjoint, but that distinct
packing translates remain a positive Euclidean distance apart; equivalently,
a positive Euclidean neighborhood of the tile must still pack by the second
lattice.

Gabor frames, on the other hand, play a central role in time--frequency analysis and signal
processing because they provide stable expansions into translated and
modulated copies of a window function.  A long-standing problem is to
decide, for a prescribed time--frequency lattice, whether one can choose a
frame window with strong localization and regularity.  We settle this
question for Schwartz class windows by characterizing exactly the
phase-space lattices that admit a Gabor frame generated by such a window.

The two problems are intimately connected.  For a separable phase-space
lattice, the tiling--packing criterion for a pair of Euclidean lattices is
exactly the arithmetic condition that governs the existence of a Schwartz
class Gabor frame.  Moreover, a positively separated tiling--packing
configuration can be mollified and normalized to produce a compactly
supported smooth Parseval Gabor window.  Thus the geometric theorem supplies
both the intuition and the constructive core of the Gabor-frame existence
result.

\medskip

A \emph{full-rank lattice} in $\R^d$ is a subgroup of the form
$L=M\Z^d$ with $M\in GL_d(\R)$.  A measurable set $F\subset\R^d$ is a
\emph{fundamental domain} for $L$ if the translates
$\{F+\ell:\ell\in L\}$ form a pairwise disjoint partition of $\R^d$;
equivalently, every $x\in\R^d$ has a unique representation $x=f+\ell$ with
$f\in F$ and $\ell\in L$.  The \emph{covolume} and \emph{density} of $L$ are
\[
 \covol(L)=|\det M|=|F|,
 \qquad
 \operatorname{density}(L)=\frac{1}{\covol(L)}=\frac{1}{|F|}.
\]
Thus the covolume is the measure of a fundamental domain and the density is
its reciprocal.  Throughout, $\norm{\cdot}_2$ denotes the Euclidean norm on
$\R^d$.

\subsection{Simultaneous tilings and $\varepsilon$-packings}
\label{subsec:intro-tiling-packing}

Let $T,P\subset\R^d$ be full-rank lattices, and let $\Omega\subset\R^d$ be a
bounded Borel $T$-fundamental domain.  For $\eps>0$, set
\[
 B_\eps=\{x\in\R^d:\norm{x}_2<\eps\},
 \qquad \Omega_\eps=\Omega+B_\eps.
\]
We say that $\Omega_\eps$ \emph{packs with $P$} if
\[
 (\Omega_\eps+p)\cap\Omega_\eps=\varnothing
 ,\qquad p\in P\setminus\{0\}.
\]
The first problem of the paper is to determine exactly when such a pair
$(\Omega,\eps)$ exists.

The closure of $T+P$ is a closed subgroup of $\R^d$ containing a full-rank
lattice.  Hence there is a unique $k\in\{0,\ldots,d\}$ such that
\[
 \overline{T+P}\cong \Z^k\times\R^{d-k}.
\]
We call $k$ the \emph{commensurability rank} of $(T,P)$.  The case $k=d$ is
exactly the commensurable case, in which $T+P$ is a lattice.

\begin{theorem}\label{thm:main-geometric}
Let $T,P\subset\R^d$ be full-rank lattices.  The following are equivalent.
\begin{enumerate}[label=(\alph*)]
\item There exist a bounded Borel $T$-fundamental domain $\Omega$ and
$\eps>0$ such that $\Omega+B_\eps$ packs with $P$.
\item The condition \TeP holds, namely,
\begin{equation}
 \covol(T)<\covol(P)
 \quad\text{and, if $T+P$ is discrete,}\quad
 [T+P:P]\ge [T+P:T]+d.
 \tag{\ensuremath{\mathrm{T}\epsilon\mathrm{P}}}\label{eq:TeP}
\end{equation}
\stepcounter{equation}
\end{enumerate}
Whenever these conditions hold, $\Omega$ may be chosen as a finite union of
bounded half-open polyhedra.
\end{theorem}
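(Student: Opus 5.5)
The plan is to prove necessity and sufficiency separately, and to handle sufficiency first in the commensurable case $k=d$, where the problem becomes combinatorial on the torus $\R^d/L$ with $L=T+P$.

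\textbf{Necessity.} If $\Omega_\eps=\Omega+B_\eps$ packs with $P$, then $|\Omega_\eps|\le\covol(P)$. Since $\Omega$ is bounded, $|\Omega+B_\eps|>|\Omega|=\covol(T)$; Brunn--Minkowski gives this strictly. Hence $\covol(T)<\covol(P)$.

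Now assume $L=T+P$ is a lattice, and write $n_T=[L:T]$, $n_P=[L:P]$. Fibre $\Omega$ over the torus $\mathbb T_L=\R^d/L$. For a.e.\ $x$, the set $\Omega\cap(x+L)$ consists of exactly one point in each of the $n_T$ cosets of $T$ in $x+L$. The packing condition says these points have pairwise distinct images in $L/P$, a group of order $n_P$.

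Positive separation upgrades this to a statement about the closure. If $\omega,\omega'\in\cl\Omega$ and $\omega-\omega'\in P\setminus\{0\}$, then $\Omega_\eps$ would meet $\Omega_\eps+p$ for some $p\in P\setminus\{0\}$. So for every $x\in\mathbb T_L$, all lifts of $x$ lying in $\cl\Omega$ are pairwise distinct modulo $P$, and there are at most $n_P$ of them.

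It therefore suffices to show that some point $x$ has at least $n_T+d$ lifts in $\cl\Omega$. For each $T$-coset $c$ and each $a\in L/P$, consider the closed set $F_{c,a}$ of $x\in\mathbb T_L$ whose $c$-representative in $\cl\Omega$ has class $a$. For fixed $c$ these sets cover the torus, and the point count at $x$ is $\sum_c\#\{a: x\in F_{c,a}\}$.

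The representative for $c$ cannot be globally constant. Going around a generator of $L$ that lies outside $T$ permutes the $T$-cosets, so the labels carry nontrivial monodromy. I would encode this as a cocycle, equivalently a nonzero class in $H^1(\mathbb T_L;\,\cdot\,)$, and apply a Lebesgue-covering/KKM-type argument. A cover of $\mathbb T^d$ by closed sets with no nontrivial lift to a single sheet must have a point of multiplicity $d+1$. The expected output is one point at which the labels of the various cosets jump $d$ extra times in total.

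This topological step is where I expect the main obstacle. What I would try first is to pull everything back to $\R^d$. There the lifted closed sets form a locally finite $L$-periodic cover, and one can use the nerve together with the fact that the torus has nonzero top cohomology, via a cup product of the $d$ degree-one classes determined by the coordinate monodromies.

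\textbf{Sufficiency, commensurable case.} I would reverse the picture. Take an $L$-periodic triangulation of $\R^d$ and its dual cell decomposition, and colour each dual cell by the dimension of the simplex it is dual to. This gives colours $0,\dots,d$, cells of one colour are pairwise separated, and every point lies in closed cells of at most $d+1$ different colours.

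Since $T$ surjects onto $L/P$, every $T$-coset $c_i$ can realise any class in $L/P$, so labels can be prescribed freely. On cells of colour $j$, assign to coset $c_i$ an element $g_{i,j}(\text{cell})\in c_i+T$ whose $L/P$-class is chosen from a palette of size $n_T+d$. The assignment should keep the base classes $0,\dots,n_T-1$ for most cosets and route a single ``moving'' coset through the $d$ extra classes as the colour changes.

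The choices must be made equivariantly along $L$, so that the monodromy is absorbed. Then $\Omega=\bigcup(\text{cell}+g)$ is a finite union of half-open polyhedra and a $T$-fundamental domain. At every point the used labels are distinct mod $P$, or are equal only for the same coset with the same class, and compactness then yields a uniform $\eps$.

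\textbf{Incommensurable case.} If $T+P$ is not discrete, approximate $P$ by a lattice $P'$ commensurable with $T$, chosen in the direction of the dense factor $\R^{d-k}$. The aim is that $\covol(P')$ stays above $\covol(T)$ while $[T+P':P']-[T+P':T]$ becomes arbitrarily large, which is possible because the closure contains $\R^{d-k}$. Apply the commensurable construction to $(T,P')$ with separation $2\eps$. Since $\Omega$ is bounded, only finitely many $p\in P$ matter, so a sufficiently small perturbation from $P'$ back to $P$ preserves $\eps$-separation.
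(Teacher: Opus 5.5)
The covolume bound and your reduction for the index obstruction are sound, but the sufficiency half has a genuine gap, and it sits exactly where the paper's main work lies.

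\textbf{Commensurable sufficiency.} You cannot assign labels from a colour-indexed palette. $L$-equivariance forces the label set on $C+\ell$ to be the label set on $C$ translated by $\ell$. So the $T$-cosets are permuted and the classes in $L/P$ are shifted by $\ell\bmod P$. Since $T+P=L$, no proper set of ``base classes'' is invariant under all these shifts. Hence the configuration must genuinely vary across a fundamental cell of $L$, and the variations forced in the $d$ coordinate directions have to be reconciled at every point where $d+1$ cells meet.

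Counting that $n_T+d$ classes suffice at a $(d+1)$-fold point is only the local necessary condition. What is missing is an existence theorem for an $L$-equivariant assignment that is compatible on all intersecting cells. The paper obtains this from the connectivity of the soft-configuration space. By Theorem~\ref{thm:soft-filling}, proved by a last-column decomposition and induction on the table size, simplexwise affine maps $S^{j-1}\to\mathcal C(T,P)$ extend over $B^j$ for all $j\le n_P-n_T$. A field can therefore be filled skeleton by skeleton on one cube with opposite-face compatibility, then extended equivariantly. The zero coordinate $\varphi(x)=\Lambda(x)(0)$, thresholding, and a first-hit selector then produce $\Omega$.

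\textbf{Non-discrete case.} Your approximation argument has its quantifiers in the wrong order. The $\Omega'$ and $\eps'$ produced for $(T,P')$ depend on $P'$. Nothing bounds $\eps'$ below, or $\operatorname{diam}\Omega'$ above, uniformly as $P'\to P$, even though $[T+P':T]\to\infty$ along such a sequence. Once $P'$ is fixed, its discrepancy from $P$ is fixed too, and cannot be made small relative to $\eps'$. The paper avoids approximation altogether. It splits $\cl{T+P}=L\oplus V$ and uses a fine partition of unity for $T\cap V$ together with separated slots for $P\cap V$ (Lemma~\ref{lem:packet-slot-supply}). This builds a refined row--column table whose column surplus is at least $k=\dim(\R^d/V)$. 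It then fills over the $k$-torus with the same extension theorem and regularizes via Lemma~\ref{lem:pou-replacement}.

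\textbf{Necessity.} Your reduction is correct: lifts of one point lying in $\cl\Omega$ are pairwise distinct mod $P$, so it suffices to find $n_T+d$ of them. But the cocycle/KKM step is left open and points at the wrong mechanism. When $T=T+P$ no coset is permuted, yet the obstruction persists, e.g.\ for $T=\Z^2$, $P=\Z\times2\Z$.

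The step is closed directly by Lebesgue's covering theorem (Theorem~\ref{thm:lebesgue-covering}). The finitely many closed sets $\cl\Omega+t$, $t\in T$, meeting a cube of side larger than $\operatorname{diam}\cl\Omega$ cover that cube. So some $x_0$ lies in $d+1$ of them, giving $d+1$ distinct lifts $x_0-t_j$ in a single $T$-coset. Each of the other $n_T-1$ cosets $u_i+T$ contributes at least one more lift, because the sets $\cl\Omega+u_i+t$ also cover $\R^d$.
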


The proof yields a stronger functional formulation.  Whenever
\eqref{eq:TeP} holds, there exist both a nonnegative compactly supported
continuous piecewise-affine $T$-partition of unity $\varphi_{\rm pa}$ and a
nonnegative compactly supported smooth $T$-partition of unity
$\varphi_{\infty}$.  Each may be chosen with a positive support buffer:
for suitable $\eta_{\rm pa},\eta_{\infty}>0$,
\[
 (\supp\varphi_{\rm pa}+B_{\eta_{\rm pa}})+P
 \quad\text{and}\quad
 (\supp\varphi_{\infty}+B_{\eta_{\infty}})+P
\]
are packings (see Corollary~\ref{cor:global-partitions}).

The polyhedral conclusion is obtained in three slightly different ways.  In
the discrete case, Section~\ref{sec:soft-configurations} constructs the required compactly supported
piecewise-affine partition of unity directly.  If $T+P$ is dense in
$\R^d$, Section~\ref{sec:fully-dense-case} gives a direct piecewise-affine
partition-of-unity construction.  In the remaining non-discrete cases, the partition-of-unity/slot
argument first produces a continuous compactly supported partition of unity,
and Lemma~\ref{lem:pou-replacement} replaces it by piecewise-affine and
smooth partitions of unity without changing the partition-of-unity identity
or the packing buffer.

The strict covolume inequality in \eqref{eq:TeP} is necessary by volume
comparison: if such an $\Omega$ and $\eps$ exist, then
$\covol(T)=|\Omega|<|\Omega+B_\eps|\le\covol(P)$.

\begin{examples}\label{ex:intro-TeP}
In dimension $d=2$, the criterion \eqref{eq:TeP} gives the following.
\begin{enumerate}[label=(\roman*)]
\item Let $T=\Z^2$ and $P=\Z\times\sqrt2\,\Z$.  Then
$T+P=\Z\times(\Z+\sqrt2\,\Z)$ is not discrete and
$\covol(T)=1<\sqrt2=\covol(P)$, so \eqref{eq:TeP} holds.

\item Let $T=\Z^2$ and $P=\Z\times2\Z$.  Here
$T+P=\Z^2$, $[T+P:P]=2$, and $[T+P:T]=1$; hence
$2<1+2$ and \eqref{eq:TeP} fails.

\item Let $T=7\Z\times\Z$ and $P=3\Z\times3\Z$.  Here
$T+P=\Z^2$, $[T+P:P]=9$, and $[T+P:T]=7$; thus
$9=7+2$ and \eqref{eq:TeP} holds at equality.

\item Let $K=\begin{psmallmatrix}2&1\\0&2\end{psmallmatrix}$,
$T=\Z^2$, and $P=K\Z^2$.  Then $P$ is non-diagonal,
$T+P=\Z^2$, $[T+P:P]=4$, and $[T+P:T]=1$, so
$4\ge1+2$ and \eqref{eq:TeP} holds.
\end{enumerate}
\end{examples}

The two cases in \TeP, according to whether $T+P$ is discrete or not,
are proved by closely related constructions.  In the commensurable case, a
finite row--column table leads to a soft configuration space, and an elementary
sphere-filling argument supplies the continuously varying choices.  When
$T+P$ is dense in all of $\R^d$, there is no residual parameter torus, and a direct piecewise-affine
partition-of-unity assignment suffices.  In the intermediate non-discrete
case, the connected directions are refined into compactly supported elements
of a partition of unity and separated target slots; the resulting larger
finite table is handled by
the same sphere-filling argument over a lower-dimensional torus and then by the
regularity replacement lemma of Section~\ref{sec:pou-replacement}.

\subsection{Sharp Balian--Low theorems}
\label{subsec:intro-gabor}

For $x,\xi,t\in\R^d$, the \emph{time shift},
\emph{frequency shift}, and \emph{time--frequency shift} are
\[
 (T_xf)(t)=f(t-x),
 \qquad
 (M_\xi f)(t)=e^{2\pi i\ip{\xi}{t}}f(t),
 \qquad
 \pi(x,\xi)=M_\xi T_x.
\]
A family $(f_j)_{j\in J}\subset L^2(\R^d)$ is a \emph{frame} if there are
constants $0<A\le B<\infty$ such that
\[
 A\norm{f}_2^2
 \le \sum_{j\in J}|\ip{f}{f_j}|^2
 \le B\norm{f}_2^2
 ,\qquad f\in L^2(\R^d).
\]
It is \emph{tight} if $A=B$ and \emph{Parseval} if $A=B=1$.  A full-rank
lattice $\Lambda\subset\R^{2d}$ indexes the \emph{Gabor system}
\[
 \G(g,\Lambda)=\{\pi(\lambda)g:\lambda\in\Lambda\};
\]
when this family is a frame, it is called a \emph{Gabor frame}.  Standard
background on Gabor systems, frame operators, dual windows, and lattice
density may be found in \cite[Chapters~5--7]{Grochenig2001}.

A Parseval frame gives a painless nonorthogonal expansion: for every
$f\in L^2(\R^d)$,
\[
 f=\sum_{\lambda\in\Lambda}
     \ip{f}{\pi(\lambda)g}\,\pi(\lambda)g,
\]
with unconditional convergence in $L^2(\R^d)$.  For windows and signals with additional
decay, the localization of both $g$ and $\widehat g$ also controls the decay
of the Gabor coefficients and hence the convergence of natural finite
truncations; compare \cite{DaubechiesGrossmannMeyer1986} and
\cite[Chapters~5--7]{Grochenig2001}.

For a full-rank Euclidean lattice $L\subset\R^d$, its dual lattice is
\[
 L^*=\{\xi\in\R^d:\ip{\xi}{\ell}\in\Z
       \text{ for every }\ell\in L\},
\]
where $\ip{\cdot}{\cdot}$ denotes the standard Euclidean inner product on
$\R^d$.  At times we shall also write $\ip{x}{\xi}=x\cdot\xi$.  If
$L=M\Z^d$, then $L^*=M^{-T}\Z^d$ and
$\covol(L^*)=\covol(L)^{-1}$.

On $\R^{2d}=\R^d\times\R^d$, write
\[
 \sigma((x,\xi),(y,\eta))=\ip{\eta}{x}-\ip{\xi}{y}.
\]
The adjoint lattice of $\Lambda$ is
\[
 \Lambda^\circ=\{z\in\R^{2d}:\sigma(z,\lambda)\in\Z
 \text{ for all }\lambda\in\Lambda\}.
\]
The integral symplectic subgroup is
\[
 \Lambda_{\rm int}=\Lambda\cap\Lambda^\circ.
\]
We call $\Lambda$ \emph{symplectically rational} if
$[\Lambda:\Lambda_{\rm int}]<\infty$, and in this case write
$\nu(\Lambda)=[\Lambda:\Lambda_{\rm int}]^{1/2}\in\N$.

As a last preparation for stating the sharp Balian--Low theorems obtained
here, we define the Feichtinger algebra.  Choose a nonnegative
$\varphi\in C_c^\infty(\R^d)$ such that
$\sum_{k\in\Z^d}\varphi(x-k)=1$.  Then
\begin{equation}\label{eq:S0-definition}
 \Szero(\R^d)
 =\left\{f\in L^2(\R^d):
   \sum_{k\in\Z^d}
   \left\|\mathcal F\bigl(f\,\varphi(\,\cdot-k)\bigr)\right\|_{L^1(\R^d)}<\infty
  \right\}.
\end{equation}
The Schwartz class $\Sclass(\R^d)$ consists of all $f\in C^\infty(\R^d)$
for which $\sup_{x\in\R^d}|x^\alpha\partial^\beta f(x)|<\infty$ for every
pair of multiindices $\alpha,\beta$; one has
$\Sclass(\R^d)\subseteq\Szero(\R^d)$.

\begin{theorem}[Sharp Amalgam Balian--Low theorem]
\label{thm:main-gabor}\label{thm:amalgam-balian-low}
Let $\Lambda\subset\R^{2d}$ be a full-rank phase-space lattice.  The following
are equivalent.
\begin{enumerate}[label=(\alph*)]
\item There exists $g\in\Sclass(\R^d)$ such that $\G(g,\Lambda)$ is a Gabor
frame.
\item There exists $g\in\Szero(\R^d)$ such that $\G(g,\Lambda)$ is a Gabor
frame.
\item The lattice $\Lambda$ satisfies the condition \SG, namely,
\begin{equation}
 \covol(\Lambda)<1
 \quad\text{and, if $\Lambda$ is symplectically rational,}\quad
 \nu(\Lambda)\ge \nu(\Lambda^\circ)+d.
 \tag{\ensuremath{\mathrm{SG}}}\label{eq:SG}
\end{equation}
\stepcounter{equation}
\end{enumerate}
\end{theorem}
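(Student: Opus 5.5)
The equivalence splits into (a)$\Rightarrow$(b), which is trivial because $\Sclass(\R^d)\subseteq\Szero(\R^d)$, then (b)$\Rightarrow$(c), a necessity argument, and (c)$\Rightarrow$(a), a construction.

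\emph{Reduction to normal forms.} First I will reduce to normal forms via metaplectic covariance. If $S\in\Sp(2d,\R)$ and $\mu(S)$ is an associated metaplectic operator, then $\G(g,\Lambda)$ is a frame if and only if $\G(\mu(S)g,S\Lambda)$ is. Moreover $\mu(S)$ preserves both $\Sclass$ and $\Szero$. The quantities $\covol(\Lambda)$, $\Lambda^\circ$, symplectic rationality and $\nu$ are all symplectic invariants. Hence \SG is invariant, and I may replace $\Lambda$ by any symplectic image. Using a skew Smith normal form, a symplectically rational $\Lambda$ can be brought, up to scaling, to a block form $A\Z^d\times B\Z^d$ with $A,B$ diagonal rational. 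In this form $\Lambda_{\rm int}$ and the indices $\nu(\Lambda),\nu(\Lambda^\circ)$ become the lattice indices $[T+P:P]$ and $[T+P:T]$ for a pair of Euclidean lattices $T,P$ built from $A$ and $B^{-T}$. In this dictionary \SG corresponds exactly to \TeP. In the non-rational case, a symplectic change of variables should instead split off a direction in which $T+P$ is non-discrete, so that only $\covol(\Lambda)<1$, i.e.\ $\covol(T)<\covol(P)$, remains.

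\emph{Sufficiency (c)$\Rightarrow$(a).} For a separable $\Lambda=T\times P^{*}$ satisfying \TeP, Theorem~\ref{thm:main-geometric} and the smooth partition of unity $\varphi_\infty$ with packing buffer (Corollary~\ref{cor:global-partitions}) give the window directly. I set $g=\sqrt{\varphi_\infty}$, after arranging $\varphi_\infty$ so that its square root is smooth, for instance by first taking a smooth $T$-partition of unity of squares, and then rescale. By the Walnut representation, the frame operator of $\G(g,T\times P^*)$ equals $\covol(P)\sum_{p\in P}\overline{g(\cdot-p)}\,g(\cdot-p-\cdot)$-type correlations. The buffered packing by $P$ kills all off-diagonal terms, and $\sum_{t\in T}|g(\cdot-t)|^2=1$ makes the diagonal constant. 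This yields a compactly supported smooth Parseval window, hence one in $\Sclass$. Non-separable lattices are handled by transporting back with $\mu(S)$, which preserves $\Sclass$.

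\emph{Necessity (b)$\Rightarrow$(c).} The inequality $\covol(\Lambda)<1$ is the amalgam Balian--Low theorem for lattices: a frame with $\covol=1$ is a Riesz basis, which is impossible with an $\Szero$ window. Assume $\Lambda$ is symplectically rational with $\nu(\Lambda)=n$ and $\nu(\Lambda^\circ)=m$. After normalization, a Zibulski--Zeevi/Zak transform turns the frame condition into the requirement that an $m\times n$-type matrix-valued function $Z(x,\omega)$ have full rank $m$ at every point of a torus of dimension $2d$. For $g\in\Szero$, $Z$ is continuous and carries the quasiperiodicity of the continuous Zak transform. I then argue topologically, in the spirit of a common-zero theorem: a continuous quasiperiodic section, i.e.\ a map into the Stiefel manifold of $m$-frames in $\C^n$, twisted by a nontrivial line bundle of Chern class one, can be nonvanishing everywhere only if $n-m\ge d$. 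The underlying fact is that the top Chern class $c_d$ of the bundle $L^{\oplus(n-m+\dots)}$ obstructing $m$ independent sections must vanish, and this class is nonzero on $T^{2d}$ when $n-m<d$. This rectangular rank gap is the main obstacle. I would first try computing the Chern class of $n\cdot L$ modulo the trivial rank-$m$ subbundle spanned by the rows: this gives $c(E/\C^m)=(1+c_1)^{n}$, and $c_1^d\ne0$ on $T^{2d}$ forces the quotient rank $n-m\ge d$.
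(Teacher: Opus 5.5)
\textbf{The gap: (c)$\Rightarrow$(a) for lattices that are not symplectically rational.} You assume that a symplectic change of variables carries such a $\Lambda$ to a separable lattice $T\times P^*$, where Part~A applies. For $d\ge2$ this is false in general.

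If $\Lambda=S(\Gamma\times\Phi)$ with $S\in\Sp(2d,\R)$, then $S(\Gamma\times\{0\})$ is a rank-$d$ sublattice of $\Lambda$ on which $\sigma$ vanishes identically. Now take $d=2$ and $\Lambda=M\Z^4$, where $\Theta=M^TJM$ is a nondegenerate alternating matrix whose six entries $\theta_{ij}$, $i<j$, are linearly independent over $\Q$. Every such $\Theta$ arises by Darboux, and rescaling $\Lambda$ preserves this property while making $\covol(\Lambda)<1$. For $v,w\in\Z^4$ one has $v^T\Theta w=\sum_{i<j}\theta_{ij}(v_iw_j-v_jw_i)$. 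This vanishes only if all $2\times2$ minors of the pair $(v,w)$ vanish, that is, only if $v,w$ are dependent. So $\Lambda$ has no isotropic rank-$2$ sublattice and is not symplectically equivalent to any separable lattice. Yet it satisfies \SG, and neither Part~A nor your Walnut computation produces a window for it.

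This is not something a cleverer normalization fixes. The paper does not derive this case from the tiling--packing theory at all. It imports it from Enstad--Thiel--Vilalta (their Theorem~C with vanishing correction term; earlier Jakobsen--Luef), whose proof rests on Heisenberg modules over noncommutative tori. It then uses spectral invariance to pass to the canonical tight window in $\Sclass$. You need that external result, or a genuinely new construction for non-separable irrational lattices, to close (c)$\Rightarrow$(a). For $d=1$ your reduction is fine, since $\Sp(2,\R)=SL_2(\R)$.

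\textbf{The rest of your argument.} For symplectically rational lattices your sufficiency argument is the paper's. You bring $\Lambda$ to $D\Z^d\times\Z^d$ by the skew Smith normal form, take the buffered smooth partition of unity from Part~A, and normalize it to $\psi/(\sum_\gamma\psi(\cdot+\gamma)^2)^{1/2}$. You then verify the frame via Walnut's representation; the paper instead shows orthonormality on the adjoint lattice and applies Janssen--Ron--Shen duality, which is equivalent.

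Your necessity proof of the gap is a genuinely different route, and it works. The paper first multiplies the Zibulski--Zeevi matrix by the diagonal gauge $\Gamma(x)$ so that all entries become exactly Zak-quasiperiodic. You need this step too, since otherwise the rows pick up different phases $e^{2\pi i s_i/b_i}$ and are sections of different, though topologically isomorphic, bundles. The paper then reduces the rank-$R$ statement to the scalar common-zero theorem of de Dios Pont--Liehr--Taylor by amplification. Pairing $S(u,v)$ with a nowhere-vanishing smooth Zak-quasiperiodic $\rho:\R^{R-1}\times\R^{R-1}\to\C^R$ gives $N$ scalar functions in $d+R-1$ variables with no common zero, hence $N>d+R-1$.

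Your Whitney-sum computation reaches the same bound in one step, at the cost of vector-bundle theory, which the paper's Appendix~B deliberately avoids. State it with $c_{n-m+1}$ rather than $c_d$, where in your notation $n=N$ and $m=R$. The quotient bundle has rank $n-m$, so $c_{n-m+1}$ of it vanishes. But $c_{n-m+1}=\binom{n}{n-m+1}c_1^{n-m+1}$, which is nonzero in the torsion-free ring $H^*(T^{2d};\Z)$ whenever $n-m+1\le d$.
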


The implication \textup{(c)}$\Rightarrow$\textup{(a)} was proved by
Enstad--Thiel--Vilalta~\cite{EnstadThielVilalta2025}; Jakobsen--Luef had
already covered the non-symplectically-rational case
\cite{JakobsenLuef2020}.  The necessity implication
\textup{(b)}$\Rightarrow$\textup{(c)} is new; it is based on results of
de Dios Pont--Liehr--Taylor~\cite{deDiosLiehrTaylor2026}, who found lattices
admitting no Gabor-frame window with continuous Zak transform (see
Subsection~\ref{sec:rectangular-common-zero}).

Similarly, we can classify the lattices for which there exist Gabor-frame
windows with finite uncertainty product.

\begin{theorem}[Sharp Balian--Low theorem]
\label{thm:full-multivariate-balian-low}
Let $\Lambda\subset\R^{2d}$ be a full-rank phase-space lattice.  The following
are equivalent.
\begin{enumerate}[label=(\alph*)]
\item There exists $g\in L^2(\R^d)$ such that $\G(g,\Lambda)$ is a Gabor
frame and
\[
 \left(\int_{\R^d}|x|^2|g(x)|^2\,dx\right)
 \left(\int_{\R^d}|\xi|^2|\widehat g(\xi)|^2\,d\xi\right)<\infty.
\]
\item The strict density condition $\covol(\Lambda)<1$ holds.
\end{enumerate}
\end{theorem}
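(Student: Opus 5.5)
The plan is to prove the two implications separately; the substance is the sufficiency direction.

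\emph{Necessity (a)$\Rightarrow$(b).} If $\G(g,\Lambda)$ is a frame for any $g\in L^2(\R^d)$, the density theorem for lattice Gabor frames gives $\covol(\Lambda)\le 1$ \cite[Chapters~5--7]{Grochenig2001}. Suppose $\covol(\Lambda)=1$. Then $\G(g,\Lambda)$ is a Riesz basis. Via a metaplectic operator $\mu(S)$ and a dilation, we normalize $\Lambda$ to $\Z^{2d}$; these maps preserve the frame property. The point to check is that finiteness of the uncertainty product survives this normalization. This holds because $g\in H^1$ with $xg\in L^2$ is exactly membership in the domain of the quadratic harmonic-oscillator form, and that domain is invariant under linear symplectic maps. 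We then apply the multivariate Balian--Low theorem for $\Z^{2d}$: the Zak transform $Zg$ is bounded above and below, quasi-periodic, and lies in $H^1_{\rm loc}$, and the degree/topological obstruction in one of the $d$ coordinate planes gives a contradiction. Equivalently, we can cite the Gautam or Benedetto--Czaja--Gadziński--Powell extension of the BLT to symplectic lattices of volume one. This shows $\covol(\Lambda)<1$.

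\emph{Sufficiency (b)$\Rightarrow$(a).} Assume $\covol(\Lambda)<1$. If (SG) holds, Theorem~\ref{thm:amalgam-balian-low} already gives a Schwartz window, and a Schwartz window has a finite uncertainty product. The remaining case is symplectically rational with $\nu(\Lambda)<\nu(\Lambda^\circ)+d$, where no continuous-Zak window exists. Here we exploit the weaker requirement: $H^1$-type regularity permits Zak transforms with isolated singularities of codimension $\ge2$, and in dimension $2d\ge2$ such singularities are $H^1$-admissible. Concretely, we reduce by metaplectic covariance and the Zibulski--Zeevi matrix to the construction of a measurable, $H^1_{\rm loc}$, quasi-periodic matrix-valued function whose Zak matrix has full rank almost everywhere with bounded condition number.

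One alternative route is to take $\Lambda'\supset$-perturbation by density: choose a non-symplectically-rational lattice $\Lambda'\subset$-close to $\Lambda$ with a Schwartz window. This route fails because frames do not transfer between lattices.

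The better route is to shrink first. Take $\Lambda_0\subseteq\Lambda$ a sublattice that is itself a frame-friendly lattice? This also fails, since sublattices give fewer vectors. Instead, take a superlattice: pick $\Lambda\subseteq\Lambda_1$? That fails too, since a frame over $\Lambda_1$ does not yield one over $\Lambda$.

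So the actual plan is a direct construction. In the separable normal form $\Lambda=A\Z^d\times B\Z^d$ with $\covol<1$, we build $g$ whose Zak matrix is unitary up to scaling, with $H^1$ entries, away from a finite set of point singularities of the form $z/|z|$ in two real variables. Such singularities are not $H^1$ in two dimensions, so we take them in $2d$-variable Zak space with $2d\ge 4$ when $d\ge2$; for $d=1$ the case $\nu<\nu^\circ+1$ with $\covol<1$ requires separate inspection.

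The main obstacle is exactly this step: producing finite-uncertainty windows in the rational rank-gap regime, especially for $d=1$, where we would instead use the oversampling slack $\covol<1$ to smooth $\chi$-type windows, for example a Gaussian-tapered characteristic window.
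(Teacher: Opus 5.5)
Both directions have a genuine gap.

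\emph{Necessity.} Your reduction to $\Z^{2d}$ only works for symplectic lattices. Only symplectic changes of variables are implemented by unitaries that intertwine time--frequency shifts. A symplectic image $S\Z^{2d}$ always satisfies $\Lambda^\circ=\Lambda$, i.e.\ $\nu(\Lambda)=1$. For $d\ge2$ many covolume-one lattices fail this:
\begin{itemize}
\item $\operatorname{diag}(2,\tfrac12)\Z^2\times\Z^2$ has covolume $1$ but $\nu(\Lambda)=\nu(\Lambda^\circ)=2$;
\item $\operatorname{diag}(\sqrt2,1/\sqrt2)\Z^2\times\Z^2$ is not even symplectically rational.
\end{itemize}
No metaplectic operator or dilation carries these to $\Z^{2d}$. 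The results you cite (Gautam, Benedetto--Czaja--Gadzi\'nski--Powell), like the strong form of Gr\"ochenig--Han--Heil--Kutyniok, concern $\Z^{2d}$ or symplectic lattices only. Arbitrary critical lattices are exactly where extra input is needed. The paper argues as follows:
\begin{itemize}
\item at critical density $\G(g,\Lambda)$ is a Riesz basis with biorthogonal dual window $\gamma=S^{-1}g$;
\item the canonical-dual regularity theorem of Lee--Philipp--Voigtlaender gives $\gamma\in\mathbb H^1(\R^d)$ whenever $g\in\mathbb H^1(\R^d)$;
\item the weak Balian--Low theorem of Gr\"ochenig--Han--Heil--Kutyniok, valid for all lattices, forbids $g$ and $\gamma$ from both lying in $\mathbb H^1(\R^d)$.
\end{itemize}
You need this, or an equivalent regularity-transfer result, for non-symplectic lattices.

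\emph{Sufficiency.} The only case not covered by Theorem~\ref{thm:main-gabor} is a symplectically rational $\Lambda$ whose diagonal data satisfy $R<N<R+d$. For $d=1$ this range is empty, because $\covol(\Lambda)=R/N<1$ already forces $N\ge R+1$. So your $d=1$ worry is vacuous, and the real work is at $d\ge2$. There you name the target (a quasiperiodic Zak matrix with $W^{1,2}$ entries and bounded condition number a.e.) but do not construct it. The shape you suggest fails in general:
\begin{itemize}
\item A $z/|z|$ singularity in two real variables has a codimension-two singular set and is not $W^{1,2}$ in any ambient dimension.
\item A matrix field that is continuous and of full rank off finitely many points can be impossible for $d\ge3$. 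Take $D=\operatorname{diag}(\tfrac12,1,1)$, so $N=2$, $R=1$. Fix $(x_3,\eta_3)$ so that the slice misses those points. On the slice, $Zg(x,\eta)$ and $Zg(x-\tfrac12e_1,\eta)$ are continuous Zak-quasiperiodic functions in two coordinate pairs with no common zero, contradicting Theorem~\ref{thm:ddlt-common-zero}.
\end{itemize}
The paper's construction instead tolerates singular sets of codimension $\ge3$:
\begin{itemize}
\item Since $N>R$, the rank-deficient set $\Sigma\subset\C^{R\times N}$ is covered by countably many Lipschitz images of real dimension at most $2RN-2(N-R+1)\le2RN-4$.
\item Hence generic vertex values, broken segments and cones over loops give a full-rank field on the $2$-skeleton of a periodic triangulation of the fine Zak rectangle. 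The field is transported by the unitary face rules that encode quasiperiodicity.
\item On every $k$-simplex with $k\ge3$, the radial extension $H(r\theta)=h(\theta)$ has energy $\frac1{k-2}\int_{S^{k-1}}|\nabla h|^2$ and keeps the singular-value bounds.
\item The normalization $A\mapsto\sqrt R\,(AA^*)^{-1/2}A$ commutes with the unitary transport and gives $\mathcal Z_g\mathcal Z_g^*=RI_R$ a.e.
\item The entries reassemble into one scalar $W^{1,2}$ Zak function. Its inverse Zak transform is a Parseval window in $\mathbb H^1(\R^d)$, and metaplectic invariance of $\mathbb H^1$ transfers it to $\Lambda$.
\end{itemize}
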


The implication \textup{(a)}$\Rightarrow$\textup{(b)} follows from the
Gabor density theorem and, at critical density, by combining the weak
arbitrary-lattice Balian--Low theorem of Gr\"ochenig--Han--Heil--Kutyniok
with the canonical-dual regularity theorem of
Lee--Philipp--Voigtlaender
\cite{GrochenigHanHeilKutyniok2002,LeePhilippVoigtlaender2023}.  The converse
is constructed in Sections~\ref{sec:full-multivariate-balian-low}
and~\ref{sec:gabor-necessity}.

In Theorems~\ref{thm:main-gabor} and
\ref{thm:full-multivariate-balian-low}, the term ``Gabor frame'' may be
replaced by ``tight Gabor frame'' or ``Parseval Gabor frame,'' as discussed
in Subsection~\ref{subsec:frames-duality-density}.

\stepcounter{theorem}

\begin{examples}\label{ex:intro-SG}
Use the coordinate order $(x_1,x_2,\omega_1,\omega_2)$ and write
$\Lambda_j=M_j\Z^4$.  The first four matrices correspond to the four pairs
in Examples~\ref{ex:intro-TeP}, with $\Gamma=T$ and $\Phi=P^*$.
\begin{enumerate}[label=(\roman*)]
\item For $M_1=I_3\oplus(1/\sqrt2)$, the lattice is not symplectically rational and $\covol(\Lambda_1)=1/\sqrt2<1$, so~\eqref{eq:SG} holds.
\item For $M_2=I_3\oplus(1/2)$, one has $\covol(\Lambda_2)=1/2$, $\nu(\Lambda_2)=2$, and $\nu(\Lambda_2^\circ)=1$; hence $2<1+2$ and~\eqref{eq:SG} fails.
\item For $M_3=\operatorname{diag}(7,1,1/3,1/3)$, one has $\covol(\Lambda_3)=7/9$, $\nu(\Lambda_3)=9$, and $\nu(\Lambda_3^\circ)=7$; thus~\eqref{eq:SG} holds at equality.
\item Let $K=\begin{psmallmatrix}2&1\\0&2\end{psmallmatrix}$ and $M_4=\operatorname{diag}(I_2,K^{-T})$, where $K^{-T}=\begin{psmallmatrix}1/2&0\\-1/4&1/2\end{psmallmatrix}$.  Then $\covol(\Lambda_4)=1/4$, $\nu(\Lambda_4)=4$, and $\nu(\Lambda_4^\circ)=1$, so~\eqref{eq:SG} holds.
\item The nonseparable shear $M_5=\begin{psmallmatrix}I_2&0\\I_2&I_2\end{psmallmatrix}\begin{psmallmatrix}(1/3)I_2&0\\0&I_2\end{psmallmatrix}=\begin{psmallmatrix}(1/3)I_2&0\\(1/3)I_2&I_2\end{psmallmatrix}$ has $\covol(\Lambda_5)=1/9$, $\nu(\Lambda_5)=9$, and $\nu(\Lambda_5^\circ)=1$, so~\eqref{eq:SG} holds.
\item Put $D_0=\operatorname{diag}(1,1/2)$ and $M_6=2^{-1/2}\begin{psmallmatrix}I_2&I_2\\-I_2&I_2\end{psmallmatrix}\begin{psmallmatrix}D_0&0\\0&I_2\end{psmallmatrix}$.  This nonseparable symplectic rotation has $\covol(\Lambda_6)=1/2$, $\nu(\Lambda_6)=2$, and $\nu(\Lambda_6^\circ)=1$, so~\eqref{eq:SG} fails.
\end{enumerate}
\end{examples}

For a symplectically rational lattice, the normal-form calculation in
Section~\ref{sec:lattice-theory} gives
\begin{equation}\label{eq:covol-nu-ratio-intro}
 \covol(\Lambda)=\frac{\nu(\Lambda^\circ)}{\nu(\Lambda)}.
\end{equation}
Consequently,
\[
 \covol(\Lambda)<1
 \quad\Longleftrightarrow\quad
 \nu(\Lambda)>\nu(\Lambda^\circ)
 \quad\Longleftrightarrow\quad
 \nu(\Lambda)\ge\nu(\Lambda^\circ)+1.
\]
Thus the algebraic gap in~\eqref{eq:SG} already implies the strict density
inequality in the symplectically rational case.

For separable lattices, the geometric condition \eqref{eq:TeP} and the
Gabor-frame condition \eqref{eq:SG} are the same arithmetic requirement.
This connection is rooted in the use of dual lattices and fundamental
domains in Gabor analysis; see \cite{HanWang2001,PfanderRashkovWang2012}
and \cite[Chapters~7--8]{Grochenig2001}.

\begin{proposition}
\label{prop:TeP-SG-relation}
Let $\Lambda=\Gamma\times\Phi$, where $\Gamma,\Phi\subset\R^d$ are full-rank
lattices.  Then
\[
 \Lambda\text{ satisfies \eqref{eq:SG}}
 \quad\Longleftrightarrow\quad
 (\Gamma,\Phi^*)\text{ satisfies \eqref{eq:TeP}}.
\]
If $\Gamma+\Phi^*$ is not discrete, the common condition is
$\covol(\Gamma)<\covol(\Phi^*)$, equivalently
$\covol(\Lambda)<1$.  If $\Gamma+\Phi^*$ is a lattice, then
\[
 \nu(\Lambda)=[\Gamma+\Phi^*:\Phi^*],
 \qquad
 \nu(\Lambda^\circ)=[\Gamma+\Phi^*:\Gamma],
\]
and the common condition is
\[
 [\Gamma+\Phi^*:\Phi^*]\ge[\Gamma+\Phi^*:\Gamma]+d.
\]
\end{proposition}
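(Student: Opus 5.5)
We would compute the adjoint lattice and the integral subgroup of $\Lambda=\Gamma\times\Phi$ directly. With $\sigma((x,\xi),(y,\eta))=\ip{\eta}{x}-\ip{\xi}{y}$, a point $(y,\eta)$ lies in $\Lambda^\circ$ iff $\ip{\eta}{\gamma}\in\Z$ for all $\gamma\in\Gamma$ (test against $(\gamma,0)$) and $\ip{\varphi}{y}\in\Z$ for all $\varphi\in\Phi$ (test against $(0,\varphi)$). Hence
\[
 \Lambda^\circ=\Phi^*\times\Gamma^*,
 \qquad
 \Lambda_{\rm int}=(\Gamma\cap\Phi^*)\times(\Phi\cap\Gamma^*).
\]
The covolume statement is immediate: $\covol(\Lambda)=\covol(\Gamma)\covol(\Phi)=\covol(\Gamma)/\covol(\Phi^*)$. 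So $\covol(\Lambda)<1$ is equivalent to $\covol(\Gamma)<\covol(\Phi^*)$, which is the first clause of both \eqref{eq:SG} and \eqref{eq:TeP} with $T=\Gamma$, $P=\Phi^*$.

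Next we would match the two "rationality" hypotheses. We claim that $\Lambda$ is symplectically rational iff $\Gamma+\Phi^*$ is discrete. Since $[\Lambda:\Lambda_{\rm int}]=[\Gamma:\Gamma\cap\Phi^*]\cdot[\Phi:\Phi\cap\Gamma^*]$, the index is finite iff both factors are finite.

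\begin{itemize}
\item By the second isomorphism theorem, $[\Gamma:\Gamma\cap\Phi^*]=[\Gamma+\Phi^*:\Phi^*]$.
\item If this index is finite, then $\Gamma+\Phi^*$ is a finite union of cosets of the lattice $\Phi^*$, hence discrete.
\item Conversely, if $\Gamma+\Phi^*$ is discrete, it is a lattice containing $\Phi^*$, so the index is finite.
\end{itemize}

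For the second factor, duality reverses inclusions of commensurable lattices: $(\Gamma+\Phi^*)^*=\Gamma^*\cap\Phi$ and $(\Gamma\cap\Phi^*)^*=\Gamma^*+\Phi$. Moreover, for lattices $L'\subseteq L$ of finite index, $[L'^*:L^*]=[L:L']$. Therefore, in the discrete case,
\[
 [\Phi:\Phi\cap\Gamma^*]=[(\Phi\cap\Gamma^*)^*:\Phi^*]=[\Gamma+\Phi^*:\Phi^*].
\]
This gives $[\Lambda:\Lambda_{\rm int}]=[\Gamma+\Phi^*:\Phi^*]^2$ and hence $\nu(\Lambda)=[\Gamma+\Phi^*:\Phi^*]$. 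In the non-discrete case the first factor is already infinite, so $\Lambda$ is not symplectically rational, and both conditions reduce to the covolume inequality.

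For $\nu(\Lambda^\circ)$ we would apply the same computation to $\Lambda^\circ=\Phi^*\times\Gamma^*$, which is again separable with first factor $\Gamma'=\Phi^*$ and second factor $\Phi'=\Gamma^*$. Then $\Phi'^*=\Gamma$, and the formula just proved gives
\[
 \nu(\Lambda^\circ)=[\Phi^*+\Gamma:\Gamma].
\]
Substituting both formulas into $\nu(\Lambda)\ge\nu(\Lambda^\circ)+d$ yields exactly $[T+P:P]\ge[T+P:T]+d$ with $T=\Gamma$, $P=\Phi^*$, which is the equivalence claimed in the Proposition.

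The only delicate step is the duality index identity $[\Phi:\Phi\cap\Gamma^*]=[\Gamma+\Phi^*:\Phi^*]$. We would justify it by noting that all lattices involved are full rank in the discrete case, together with the standard fact $[L'^*:L^*]=[L:L']$, which follows from $\covol(L^*)=\covol(L)^{-1}$. Everything else is bookkeeping.
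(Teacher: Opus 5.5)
Your proof is correct, and it takes a genuinely different, more intrinsic route than the paper.

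The paper never writes down $\Lambda_{\rm int}$ for a separable lattice. It gets ``symplectically rational $\Leftrightarrow$ $\Gamma+\Phi^*$ discrete'' from rationality of the pairing matrix $A_0^TB_0$. It then uses the Smith normal form of that matrix to move $\Gamma\times\Phi$ by a symplectic dilation onto $D\Z^d\times\Z^d$. A dilation acts on $\Gamma$ and on $\Phi^*$ by one common linear map, so the indices in $\Gamma+\Phi^*$ are unchanged. On the diagonal model, $\nu=N$ and $\nu(\Lambda^\circ)=R$ are computed by hand (Lemma~\ref{lem:diagonal-arithmetic}, Theorem~\ref{thm:diagonal-symplectic-normal-form}).

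Your tools are the factorization $\Lambda_{\rm int}=(\Gamma\cap\Phi^*)\times(\Phi\cap\Gamma^*)$, the second isomorphism theorem, and the duality $[\Phi:(\Gamma+\Phi^*)^*]=[\Gamma+\Phi^*:\Phi^*]$. These bypass all normal-form machinery and show directly why $[\Lambda:\Lambda_{\rm int}]$ is a perfect square for separable lattices. They also dispose of the non-discrete case in one line, since the first factor is already infinite.

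The paper's route, in exchange, obtains the proposition as a by-product of a normal-form theorem it needs anyway for the general symplectically rational reduction. In the separable case it also exhibits that reduction as a dilation, which preserves compact support.

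One detail you should add. In the lattice case the statement says the common condition is the index inequality alone, so you need to note that the covolume clause is then automatic. With $H=\Gamma+\Phi^*$ one has $\covol(\Gamma)/\covol(\Phi^*)=[H:\Gamma]/[H:\Phi^*]$, which is $<1$ as soon as $[H:\Phi^*]\ge[H:\Gamma]+d$.
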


At the level of arbitrary square-integrable windows, there is no obstruction
beyond the ordinary density condition.  Bekka proved that a full-rank
phase-space lattice $\Lambda$ admits a window $g\in L^2(\R^d)$ for which
$\G(g,\Lambda)$ is a Gabor frame if and only if
\[
                         \covol(\Lambda)\le 1;
\]
see \cite[Theorem~4]{Bekka2004}.  The corresponding Parseval statement follows
from the frame-operator discussion in Section~\ref{sec:gabor-background}.
Thus every lattice satisfying the necessary density inequality admits an
$L^2$ Gabor frame; the purpose of \SG is to characterize when the window
can be required to have strong time--frequency regularity.

We write $C_c^\infty(\R^d)$ for the smooth compactly supported functions.
The classical Wiener space $\Wiener(\R^d)$ consists of the continuous
functions $g$ for which
\[
 \norm{g}_{\Wiener}
 :=\sum_{k\in\Z^d}\sup_{x\in[0,1]^d}|g(x+k)|<\infty,
\]
and
\[
 \Wzero(\R^d)
 :=\{g\in\Wiener(\R^d):\widehat g\in\Wiener(\R^d)\}.
\]
We write $\CZ(\R^d)$ for the set of all $g\in L^2(\R^d)$ whose Zak
transform has a continuous representative on $\R^d\times\R^d$.  The
relevant inclusions are
\[
 C_c^\infty(\R^d)\subset \Sclass(\R^d)\subset \Szero(\R^d)
 \subset \Wzero(\R^d)\subset \Wiener(\R^d)
 \subset \CZ(\R^d)\subset L^2(\R^d).
\]
The Feichtinger algebra $\Szero(\R^d)$ is invariant under every metaplectic
operator; see Section~\ref{sec:lattice-theory},
\cite[Section~9.4]{Grochenig2001}, and \cite{GjertsenLuef2024}.

The following theorem records the sharper continuous-Zak obstruction in the
case of rectangular lattices.

\begin{theorem}[Continuous-Zak obstruction for diagonal lattices]
\label{thm:intro-diagonal-zak-obstruction}
Let
\[
 D=\operatorname{diag}(b_1/a_1,\ldots,b_d/a_d),
 \qquad (a_i,b_i)=1,
\]
where the $a_i,b_i$ are positive integers, let
$\Lambda_D=D\Z^d\times\Z^d$, and put
\[
 N=\prod_{i=1}^d a_i,
 \qquad
 R=\prod_{i=1}^d b_i.
\]
If $\Lambda_D$ does not satisfy \eqref{eq:SG}, then there is no $g\in\CZ(\R^d)$ for which
$\G(g,\Lambda_D)$ is a frame.  In particular, no window in the
Fourier-invariant Wiener space $\Wzero(\R^d)$, or even in the larger
classical Wiener amalgam $\Wiener(\R^d)$, can generate such a frame.
\end{theorem}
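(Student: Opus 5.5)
The plan is to pass from the frame property to a pointwise rank condition on a continuous matrix-valued function built from the Zak transform, and then to show that quasiperiodicity makes this rank condition topologically impossible when \eqref{eq:SG} fails. The proof has three steps: a Zibulski--Zeevi reduction, an interpretation of that matrix as a bundle map, and a characteristic-class computation. The last two inclusions in the statement are then immediate, since $\Wzero(\R^d)\subset\Wiener(\R^d)\subset\CZ(\R^d)$.

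\textbf{Step 1 (arithmetic and reduction to a matrix condition).} First I record the arithmetic. By \eqref{eq:covol-nu-ratio-intro}, or Proposition~\ref{prop:TeP-SG-relation} with $\Gamma=D\Z^d$ and $\Phi=\Z^d$, one has $\covol(\Lambda_D)=R/N$, $\nu(\Lambda_D)=N$ and $\nu(\Lambda_D^\circ)=R$. So failure of \eqref{eq:SG} means $N<R+d$. If $N\le R$, Bekka's density theorem (or the Gabor density theorem) already excludes frames, so I assume $R<N<R+d$.

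Write $Zg(x,\omega)=\sum_{k\in\Z^d}g(x-k)e^{2\pi i k\cdot\omega}$. This satisfies $Zg(x+n,\omega)=e^{2\pi i n\cdot\omega}Zg(x,\omega)$ and is $\Z^d$-periodic in $\omega$. Modulations by $m\in\Z^d$ act as multiplication by $e^{2\pi i m\cdot x}$, and translations by $D\Z^d$ act as shifts in $x$. The coordinatewise Zibulski--Zeevi argument, tensorised over $i=1,\dots,d$, gives the following criterion. $\G(g,\Lambda_D)$ is a frame if and only if the $R\times N$ matrix
\[
\mathcal P(x,\omega)=\Bigl(Zg\bigl(x+D\ell,\ \omega+j\bigr)\Bigr)
\]
has smallest singular value bounded below for almost every $(x,\omega)$. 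Here $\ell$ ranges over residues indexed by $\prod_i\{0,\dots,a_i-1\}$, and $j$ over shifts in $\prod_i \tfrac1{b_i}\{0,\dots,b_i-1\}$, with the roles of rows and columns fixed so that full row rank $R$ is required.

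If $g\in\CZ(\R^d)$, the entries of $\mathcal P$ are continuous. The a.e.\ lower bound therefore holds everywhere, so $\mathcal P(x,\omega)$ has rank $R$ at every point.

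\textbf{Step 2 (bundle interpretation).} Under the shifts $x\mapsto x+e_i/a_i$ and $\omega\mapsto\omega+e_i/b_i$ (the period lattice of the index set), the rows and columns of $\mathcal P$ are permuted and multiplied by unimodular phases. Hence $\mathcal P$ descends to an injective bundle map $\mathcal P^*:V\to\underline{\C}^N$ over a $2d$-torus $\mathbb T^{2d}$, where $V$ is a rank-$R$ bundle twisted by the Zak cocycle. The twist is coordinatewise. So $V$ should be a sum (equivalently, up to stable isomorphism, a tensor combination) of copies of pullbacks of line bundles $L_i\to\mathbb T^2_{(x_i,\omega_i)}$, each with $c_1(L_i)=\pm[dx_i\wedge d\omega_i]\neq0$.

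Injectivity yields a complementary bundle $W$ of rank $N-R$ with $V\oplus W\cong\underline{\C}^N$. Hence $c(W)=c(V)^{-1}$.

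\textbf{Step 3 (Chern class obstruction; main obstacle).} The key computation is that $c(V)^{-1}$ has nonzero component in degree $2d$, namely a nonzero multiple of $\prod_i [dx_i\wedge d\omega_i]$. Since $c_k(W)=0$ for $k>N-R$ and $N-R<d$, this is a contradiction. The component is nonzero because $(1+\sum_i\alpha_i)^{-R}$, with $\alpha_i=c_1$ of the $i$-th twist, satisfies $\alpha_i^2=0$. Its top term is therefore $\pm\binom{R+d-1}{d}d!\,\alpha_1\cdots\alpha_d\neq0$.

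I expect the main obstacle to be the precise identification of $V$: tracking the permutation matrices and phases under the quasiperiodicity to pin down its Chern character. This is the ``rectangular common-zero'' input of de Dios Pont--Liehr--Taylor~\cite{deDiosLiehrTaylor2026}. I would first try to invoke their theorem directly in the form ``any continuous $R\times N$ Zak matrix with $N<R+d$ drops rank somewhere,'' reducing the proof to verifying that $\mathcal P$ fits their hypotheses. Only if that fails would I carry out the Chern class computation by hand, one coordinate at a time via the tensor structure.
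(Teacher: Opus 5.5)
Your Chern-class endgame is a workable route and differs from the paper's, but two steps need repair before it closes.

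\textbf{The case $N=R$.} The density theorem only rules out $N<R$. At $N=R$, frames exist in $L^2$: for $D=I_d$, the window $\mathbf 1_{[0,1)^d}$ gives an orthonormal basis. So you cannot discard this case. You also do not need to, because your topological argument covers it with $\operatorname{rank}W=0$.

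\textbf{Step~2 does not produce the bundle you use in Step~3.} Under the fine shift $x\mapsto x+e_i/a_i$, the Zibulski--Zeevi matrix undergoes a column permutation with $\eta$-dependent column phases; see \eqref{eq:H1-fine-u-face}. So on your fine torus the target is in general not the trivial bundle $\underline{\C}^N$, and the identity $c(W)=c(V)^{-1}$ is unavailable. In addition, $V$ is not a direct sum of the pullbacks $\operatorname{pr}_i^*L_i$. Sums and tensor products are not stably equivalent: $\bigoplus_i\operatorname{pr}_i^*L_i$ has total Chern class $\prod_i(1+\alpha_i)$, not $1+\sum_i\alpha_i$.

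The missing ingredient is the paper's bookkeeping step, in two parts:
\begin{enumerate}
\item Use only the coarse periods $\Z^{2d}$. Under these, $\mathcal Z_g$ changes only by the row operations $e^{2\pi i\eta_i}\Delta_i$ of \eqref{eq:zak-fine-x}, and it is periodic in $\eta$ by \eqref{eq:zak-coarse-eta}.
\item Apply the diagonal gauge $\Gamma(x)_{s,s}=\exp(-2\pi i\sum_j s_jx_j/b_j)$. The matrix $S_g=\mathcal Z_g^{\mathsf T}\Gamma(x)$ then satisfies $S_g(x+n,\eta+m)=e^{2\pi i\ip{n}{\eta}}S_g(x,\eta)$.
\end{enumerate}
This says that $S_g$ is an injective bundle map $(L^*)^{\oplus R}\to\underline{\C}^N$ over $\R^{2d}/\Z^{2d}$. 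Here $L=\bigotimes_i\operatorname{pr}_i^*L_i$ is the Zak line bundle, with $c_1(L)=\pm\sum_i\alpha_i$; so the correct identification is $R$ copies of this single line bundle.

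With this identification your Step~3 is correct. The class $c(W)=(1\mp\sum_i\alpha_i)^{-R}$ has degree-$2d$ part $\pm\binom{R+d-1}{d}d!\,\alpha_1\cdots\alpha_d$. This is nonzero in the torsion-free group $H^{2d}(\R^{2d}/\Z^{2d};\Z)$, which forces $N-R\ge d$.

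\textbf{Comparison with the paper.} Once repaired, your argument departs from the paper's after the gauge. The paper proves the rectangular rank gap (Theorem~\ref{thm:rectangular-common-zero-gap}) by amplification. It pairs $S_g$ with a nowhere-vanishing quasiperiodic vector $\rho\in\C^R$ in $R-1$ auxiliary coordinate pairs. This yields $N$ scalar Zak-quasiperiodic functions in dimension $d+R-1$ with no common zero. It then applies the vector common-zero theorem, which Appendix~\ref{app:pfaffian-common-zero} reproves by an elementary Pfaffian computation.

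Your Whitney-sum computation handles the rectangular case in one stroke and is more conceptual. The price is that it uses Chern classes of topological bundles and complementary subbundles. The paper's route reduces everything to the scalar common-zero statement and can be kept elementary.

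Your fallback of citing de Dios Pont--Liehr--Taylor ``directly'' for matrices does not work as stated. The theorem used here concerns vectors of $M\le q$ quasiperiodic functions. Before it applies you need both the gauge, to make the matrix scalar-quasiperiodic, and the amplification step, to pass from full rank to the absence of common zeros.
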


We finish the discussion of sharp Balian--Low theorems with the corresponding
multiple-window result.
For $\mathbf g=(g_1,\ldots,g_q)$, write
\[
 \G(\mathbf g,\Lambda)
   =\{\pi(\lambda)g_j:\lambda\in\Lambda,\ 1\le j\le q\}.
\]

\begin{theorem}[Sharp multi-window amalgam Balian--Low theorem]
\label{thm:main-multiwindow}
Let $q\ge1$ and let $\Lambda\subset\R^{2d}$ be a full-rank phase-space
lattice.  For $\mathbf g=(g_1,\ldots,g_q)$, the following are equivalent.
\begin{enumerate}[label=(\alph*)]
\item There exist $g_1,\ldots,g_q\in\Sclass(\R^d)$ such that
$\G(\mathbf g,\Lambda)$ is a Gabor frame.
\item There exist $g_1,\ldots,g_q\in\Szero(\R^d)$ such that
$\G(\mathbf g,\Lambda)$ is a Gabor frame.
\item The lattice satisfies the condition $(\mathrm{mSG}_q)$, namely,
\begin{align}
 \covol(\Lambda)<q
 \quad\text{and, if $\Lambda$ is symplectically rational,}\quad
 q\,\nu(\Lambda)\ge\nu(\Lambda^\circ)+d.
 \tag{\ensuremath{\mathrm{mSG}_q}}\label{eq:mSG}
\end{align}
\stepcounter{equation}
\end{enumerate}
\end{theorem}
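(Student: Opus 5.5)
The plan is to reduce Theorem~\ref{thm:main-multiwindow} to the single-window case, Theorem~\ref{thm:main-gabor}, by a standard ``stacking'' device, and then to handle the multi-window bookkeeping only at the level of the invariants $\covol$ and $\nu$. The implication (a)$\Rightarrow$(b) is immediate from $\Sclass\subset\Szero$, so two directions remain: sufficiency (c)$\Rightarrow$(a) and necessity (b)$\Rightarrow$(c).

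For sufficiency, suppose $\Lambda$ satisfies $(\mathrm{mSG}_q)$. I would look for a superlattice $\Lambda'\supseteq\Lambda$ with $[\Lambda':\Lambda]=q$ that satisfies \eqref{eq:SG}. Write $\Lambda'=\bigcup_{j=1}^q(\lambda_j+\Lambda)$ for coset representatives $\lambda_j$. If $g\in\Sclass$ generates a frame $\G(g,\Lambda')$, then setting $g_j=\pi(\lambda_j)g$ gives
\[
\G(\mathbf g,\Lambda)=\G(g,\Lambda')
\]
up to unimodular phases. Hence $\G(\mathbf g,\Lambda)$ is a frame with Schwartz windows.

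It remains to choose $\Lambda'$. One has $\covol(\Lambda')=\covol(\Lambda)/q<1$. In the non-symplectically-rational case this already suffices, since $\Lambda'$ is commensurable with $\Lambda$ and therefore also not symplectically rational. In the symplectically rational case I would use the normal form of Section~\ref{sec:lattice-theory}: after a metaplectic change of coordinates (legitimate because $\Sclass$ is preserved and frames are transported), $\Lambda$ becomes a block-diagonal lattice $D\Z^d\times\Z^d$ with $D=\operatorname{diag}(b_i/a_i)$. I would then refine one coordinate by the factor $q$, taking for instance $b_1\mapsto b_1/q$ after arranging the coprimality. The aim is to get
\[
\nu(\Lambda')=q\,\nu(\Lambda),\qquad \nu(\Lambda'^\circ)=\nu(\Lambda^\circ),
\]
so that $(\mathrm{mSG}_q)$ for $\Lambda$ becomes exactly \eqref{eq:SG} for $\Lambda'$. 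This is the step I expect to be the main obstacle. Refinement can create new common factors between the $a_i$ and $b_i$, in which case $\nu$ and $\nu(\,\cdot\,^\circ)$ do not scale cleanly. The first thing I would try is to split $q$ into its prime-power parts and distribute them over the coordinates where they stay coprime to $a_i$. If that is not always possible, the fallback is to bypass the superlattice and run the Enstad--Thiel--Vilalta construction directly with a $q$-fold fiber: their vector-bundle/rank criterion generalizes to projective modules of rank $q$, and the gap $q\nu(\Lambda)-\nu(\Lambda^\circ)\ge d$ is exactly the rank-gap hypothesis needed for a nonvanishing section over the $2d$-torus.

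For necessity, assume $g_1,\dots,g_q\in\Szero$ generate a frame. The density theorem for multi-window systems gives $\covol(\Lambda)\le q$, and the multi-window analogue of the critical-density Balian--Low argument (Gröchenig--Han--Heil--Kutyniok together with the $\Szero$ regularity of the canonical dual) rules out equality. In the symplectically rational case I would again reduce metaplectically to $\Lambda_D$, using that $\Szero$ is metaplectically invariant. The frame property becomes a pointwise rank condition: the $q\,\nu(\Lambda)\times\nu(\Lambda^\circ)$ matrix of Zak transforms of the $g_j$, sampled at the appropriate coset points, must have full column rank at every point of the torus. Since $\Szero\subset\CZ$, the entries are continuous and quasiperiodic. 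This places us in the rectangular common-zero setting of Subsection~\ref{sec:rectangular-common-zero} (the de Dios Pont--Liehr--Taylor theorem): a continuous family of $\nu(\Lambda^\circ)$-frames in $\C^{q\nu(\Lambda)}$ over the $2d$-torus, with the twisting forced by quasiperiodicity, must degenerate somewhere unless $q\nu(\Lambda)-\nu(\Lambda^\circ)\ge d$. I would invoke this exactly as in the single-window proof of Theorem~\ref{thm:main-gabor}, with the column count $\nu(\Lambda)$ replaced by $q\,\nu(\Lambda)$.
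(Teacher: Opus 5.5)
Your necessity argument in the symplectically rational case matches the paper's: metaplectic reduction, horizontal concatenation of the $q$ Zibulski--Zeevi blocks into a quasiperiodic $qN\times R$ matrix of full column rank, then Theorem~\ref{thm:rectangular-common-zero-gap} (cf.\ Proposition~\ref{prop:continuous-zak-multiwindow}). Your superlattice argument is also a valid alternative for sufficiency when $\Lambda$ is not symplectically rational, where it reduces to the one-window Theorem~\ref{thm:main-gabor}.

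The genuine gap is sufficiency in the symplectically rational case. An index-$q$ superlattice satisfying \eqref{eq:SG} need not exist, so the stacking device itself fails, not just its coprimality bookkeeping. Take $d=2$, $q=2$, $\Lambda=\operatorname{diag}(2,2/3)\Z^2\times\Z^2$. Then $N=3$, $R=4$, and $(\mathrm{mSG}_2)$ holds with equality, $2\cdot3=4+2$.

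Let $\Lambda'\supseteq\Lambda$ have index $2$, and write $M=M'K$ for basis matrices of $\Lambda,\Lambda'$, with $K$ integral and $|\det K|=2$. Then $\Theta_{\Lambda'}^{-1}=K\Theta_\Lambda^{-1}K^{\mathsf T}=\tfrac12 KXK^{\mathsf T}$, where $X=2\Theta_\Lambda^{-1}$ is integral alternating with $\Pf X=\pm3$. So $KXK^{\mathsf T}$ is integral alternating with Pfaffian $\pm6$. Since $h_1h_2=6$ and $h_1\mid h_2$, its skew Smith form is $\operatorname{diag}(1,6)$. Because $\pm\Theta_{\Lambda'}^{-1}$ is the symplectic Gram matrix of $\Lambda'^\circ$, the proof of Theorem~\ref{thm:diagonal-symplectic-normal-form} shows that $\Lambda'^\circ$ is symplectically equivalent to $\operatorname{diag}(1/2,3)\Z^2\times\Z^2$. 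Hence $\nu(\Lambda'^\circ)=2$ and $\nu(\Lambda')=3<2+2$.

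By Corollary~\ref{cor:rational-S0-obstruction}, no window in $\Szero$ (let alone $\Sclass$) generates a frame on \emph{any} index-two superlattice of $\Lambda$. Yet the theorem asserts a two-window Schwartz frame on $\Lambda$. Redistributing prime powers of $q$ cannot help. Passing to a superlattice can cancel a common factor of $q\nu(\Lambda)$ and $\nu(\Lambda^\circ)$, and the additive gap in $(\mathrm{mSG}_q)$ does not survive such cancellation. Here $q=2$ and both $b_i$ are even, and the computation above covers non-separable $\Lambda'$ as well.

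So your fallback is not optional; it is the entire rational existence proof. As stated (``their criterion generalizes to projective modules of rank $q$''), it is an assertion rather than an argument. The paper closes this step by citing Enstad--Thiel--Vilalta's multiwindow Theorem~D. That theorem gives a Schwartz frame with $k=\lfloor\covol(\Lambda)+(d-1)/\nu(\Lambda)\rfloor+1=\lceil(\nu(\Lambda^\circ)+d)/\nu(\Lambda)\rceil$ windows, and $(\mathrm{mSG}_q)$ is exactly $q\ge k$. One then reaches exactly $q$ windows by replacing one window $h$ by $\ell=q-k+1$ copies of $h/\sqrt{\ell}$ and applying $S^{-1/2}$.

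A smaller point concerns necessity in the non-rational case. Excluding $\covol(\Lambda)=q$ for $\Szero$ windows requires a genuinely multiwindow amalgam statement; the paper uses Enstad's multiwindow density theorem and Gerhold--Lamando--Luef. The single-window $\mathbb H^1$ route through Gr\"ochenig--Han--Heil--Kutyniok and canonical-dual regularity does not supply it as cited.
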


In parts~\textup{(a)} and~\textup{(b)}, the term ``Gabor frame'' may be
replaced by ``tight Gabor frame'' or ``Parseval Gabor frame.''  On a
diagonal representative of a symplectically rational lattice, the necessity
of~\eqref{eq:mSG} remains valid under the weaker assumptions
$g_1,\ldots,g_q\in\CZ(\R^d)$.  On an arbitrary symplectically rational
lattice it remains valid for $g_1,\ldots,g_q\in\Szero(\R^d)$, by
metaplectic invariance.

The minimum number of Schwartz windows, equivalently of
Feichtinger-algebra windows, is
\[
 q_{\min}^{\Sclass}(\Lambda)
 =\left\lceil
    \frac{\nu(\Lambda^\circ)+d}{\nu(\Lambda)}
  \right\rceil
\]
when $\Lambda$ is symplectically rational, and
\[
 q_{\min}^{\Sclass}(\Lambda)=\lfloor\covol(\Lambda)\rfloor+1
\]
otherwise.

\subsection{Smooth Gabor windows with compact support}
\label{subsec:smooth-compact-support}
The next theorem collects the explicit compact-support conclusions based on
Part~A of this paper.

\begin{theorem}
\label{thm:strengthened-one-window}
Let $\Lambda\subset\R^{2d}$ be a full-rank phase-space lattice.
\begin{enumerate}[label=(\alph*)]
\item If $\Lambda=\Gamma\times\Phi$ is separable and satisfies
\eqref{eq:SG}, then there exists a nonnegative $g\in C_c^\infty(\R^d)$ such that
$\G(g,\Gamma\times\Phi)$ is a Parseval frame and
\begin{align}
 \supp g\cap(\supp g+\xi^*)&=\varnothing,
 &&\xi^*\in\Phi^*\setminus\{0\},
 \label{eq:intro-support-separation}\\
 \sum_{\gamma\in\Gamma}|g(x+\gamma)|^2&=\covol(\Phi),
 &&x\in\R^d.
 \label{eq:intro-parseval-periodization}
\end{align}
There is also an unsmoothed polyhedral alternative: one may choose a
nonnegative Parseval window $g_{\rm pa}\in C_c(\R^d)$ such that
$\covol(\Phi)^{-1}g_{\rm pa}^2$ is a continuous piecewise-affine
$\Gamma$-partition of unity and
\eqref{eq:intro-support-separation}--\eqref{eq:intro-parseval-periodization}
hold with $g_{\rm pa}$ in place of $g$.

Alternatively, one may choose $g\in\Sclass(\R^d)$ so that
$\widehat g\in C_c^\infty(\R^d)$ is nonnegative and
\begin{align}
 \supp\widehat g\cap(\supp\widehat g+\gamma^*)&=\varnothing,
 &&\gamma^*\in\Gamma^*\setminus\{0\},
 \label{eq:intro-fourier-support-separation}\\
 \sum_{\phi\in\Phi}|\widehat g(\omega+\phi)|^2&=\covol(\Gamma),
 &&\omega\in\R^d.
 \label{eq:intro-fourier-periodization}
\end{align}
There is likewise an unsmoothed Fourier-side alternative: one may choose
a Parseval window $g_{\rm fpa}\in L^2(\R^d)$ such that
$\widehat g_{\rm fpa}\in C_c(\R^d)$ and
$\covol(\Gamma)^{-1}|\widehat g_{\rm fpa}|^2$ is a continuous
piecewise-affine $\Phi$-partition of unity, and
\eqref{eq:intro-fourier-support-separation}--\eqref{eq:intro-fourier-periodization}
hold with $g_{\rm fpa}$ in place of $g$.

\item Let $a>0$ and
\[
 \Lambda=aS\Z^{2d},
 \qquad
 S=\begin{pmatrix}A&B\\ C&D\end{pmatrix}\in\Sp(2d,\R),
\]
and assume that $\Lambda$ satisfies \eqref{eq:SG}.  If $A=0$ or $B=0$, a Parseval frame window
$g\in C_c^\infty(\R^d)$ can be chosen.  Alternatively, one may choose a
compactly supported continuous Parseval window for which $|g|^2$ is
piecewise affine.  If $C=0$ or $D=0$, a Parseval frame window can be chosen
so that $\widehat g\in C_c^\infty(\R^d)$; alternatively,
$\widehat g$ may be chosen compactly supported and continuous with
$|\widehat g|^2$ piecewise affine.
\end{enumerate}
\end{theorem}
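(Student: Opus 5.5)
The plan is to reduce everything to Theorem~\ref{thm:main-geometric} and its functional form (Corollary~\ref{cor:global-partitions}), using Proposition~\ref{prop:TeP-SG-relation} and standard Gabor duality. Metaplectic covariance then transfers the result to the lattices in part~(b).

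\textbf{Part (a), time side.} Let $\Lambda=\Gamma\times\Phi$ satisfy \eqref{eq:SG}. By Proposition~\ref{prop:TeP-SG-relation}, the pair $(T,P)=(\Gamma,\Phi^*)$ satisfies \eqref{eq:TeP}. Corollary~\ref{cor:global-partitions} then gives a nonnegative smooth compactly supported $\Gamma$-partition of unity $\varphi_\infty$ such that $(\supp\varphi_\infty+B_\eta)+\Phi^*$ is a packing.

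The first step is to take a square root. I would set $g=\covol(\Phi)^{1/2}\varphi_\infty^{1/2}$. Since $\sqrt{\varphi}$ need not be smooth, I would instead apply the construction to obtain $\varphi_\infty=\psi^2$ with $\psi\in C_c^\infty$. The natural route is to redo the regularity replacement of Lemma~\ref{lem:pou-replacement} so that the partition of unity is a sum of squares of smooth functions. An alternative is to normalize: take any smooth nonnegative $\psi$ whose support lies in the buffered set, with $\sum_\gamma\psi(\cdot+\gamma)^2>0$ everywhere, and divide by the square root of that periodization. That periodization is smooth, positive, and $\Gamma$-periodic, so the quotient stays in $C_c^\infty$. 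This normalization is the step I expect to need the most care; it is also where positivity of the periodization uses the fact that $\supp\varphi_\infty$ covers a fundamental domain.

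The support separation \eqref{eq:intro-support-separation} follows from the packing property, and \eqref{eq:intro-parseval-periodization} holds by construction. It then remains to check the Parseval property. Here I would use the Walnut/Ron--Shen representation of the frame operator for separable lattices:
\[
S f=\covol(\Phi)^{-1}\sum_{\xi^*\in\Phi^*}\Bigl(\sum_{\gamma\in\Gamma}\overline{g(\cdot-\gamma)}\,g(\cdot-\gamma-\xi^*)\Bigr)\,T_{\xi^*}f .
\]
Every term with $\xi^*\ne0$ vanishes, because $\supp g$ and its translates by nonzero elements of $\Phi^*$ are disjoint. The $\xi^*=0$ term equals $\covol(\Phi)^{-1}\sum_\gamma|g(\cdot-\gamma)|^2=1$. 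Hence $S=\mathrm{I}$.

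\textbf{Piecewise-affine variant and Fourier side.} The piecewise-affine variant uses $\varphi_{\rm pa}$ and sets $g_{\rm pa}=(\covol(\Phi)\varphi_{\rm pa})^{1/2}$, which is continuous, so no smoothness issue arises.

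For the Fourier-side statement I would use the identity $\mathcal F\pi(x,\xi)=e^{-2\pi i x\cdot\xi}\pi(\xi,-x)\mathcal F$. This shows that $\G(g,\Gamma\times\Phi)$ is Parseval if and only if $\G(\widehat g,\Phi\times\Gamma)$ is Parseval (after the harmless reflection $-\Gamma=\Gamma$). The roles of the two lattices swap: one needs $(\Phi,\Gamma^*)$ to satisfy \eqref{eq:TeP}. By Proposition~\ref{prop:TeP-SG-relation} this is equivalent to \eqref{eq:SG} for $\Phi\times\Gamma$. That lattice is symplectically equivalent to $\Lambda$ via the symplectic rotation $J$, so $\nu$ and $\covol$ agree and \eqref{eq:SG} still holds. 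Applying the time-side construction to $\Phi\times\Gamma$ and inverting the Fourier transform yields \eqref{eq:intro-fourier-support-separation} and \eqref{eq:intro-fourier-periodization}. Since $\widehat g\in C_c^\infty$, we get $g\in\Sclass$.

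\textbf{Part (b).} Suppose $B=0$. Then $S$ is block lower triangular with $D=A^{-T}$, and it factors as $S=\begin{psmallmatrix}\Id&0\\CA^{-1}&\Id\end{psmallmatrix}\begin{psmallmatrix}A&0\\0&A^{-T}\end{psmallmatrix}$. Hence $\Lambda=\mathcal U(aA\Z^d\times aA^{-T}\Z^d)$, where $\mathcal U$ is a shear whose metaplectic operator is multiplication by a chirp $e^{\pi i x\cdot Qx}$. That operator preserves $C_c^\infty$ and $|g|^2$.

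Suppose $A=0$. Then $S=\begin{psmallmatrix}0&B\\C&D\end{psmallmatrix}$, and $\Lambda=aS\Z^{2d}$ has the same lattice as $aSJ\Z^{2d}$ because $J\Z^{2d}=\Z^{2d}$. The product $SJ$ has a vanishing upper-right block, so we are back in the case $B=0$.

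In both cases \eqref{eq:SG} is a symplectic invariant, so the separable lattice satisfies it and part~(a) applies. The chirp multiplication carries the compactly supported window to one generating a Parseval frame for $\Lambda$.

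For $C=0$ or $D=0$, I would conjugate by the Fourier transform, which corresponds to $J$. This exchanges these cases with the ones just treated, giving $\widehat g\in C_c^\infty$ or the piecewise-affine variant.
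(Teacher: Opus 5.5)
Your proposal is correct and follows essentially the same route as the paper: reduction to \eqref{eq:TeP} via Proposition~\ref{prop:TeP-SG-relation}, a buffered partition of unity from Corollary~\ref{cor:global-partitions}, and the energy normalization $\sqrt{\covol(\Phi)}\,\psi/\sqrt{\sum_{\gamma}\psi(\cdot+\gamma)^2}$. You may simply take $\psi=\varphi_\infty$, whereas the paper cuts and mollifies $\varphi_{\rm pa}$; the remaining steps are the Fourier side via $J(\Gamma\times\Phi)=\Phi\times(-\Gamma)$ and part~(b) via the block-zero metaplectic operators for $S$, $SJ^{\pm1}$ and $JS$. The only genuine variation is that you verify the Parseval property with the Walnut representation of the frame operator, whereas the paper proves orthonormality on the adjoint lattice $\Phi^*\times\Gamma^*$ and applies Theorem~\ref{thm:gabor-duality}; both use exactly the same two facts, support separation and constant periodization.
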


The diagonal obstruction is proved in Section~\ref{sec:diagonal-model}.
The detailed proofs of parts~(a) and~(b) are completed in
Sections~\ref{sec:gabor-sufficiency} and~\ref{sec:gabor-necessity}.

The block conditions in part~(b) are sufficient, but they are not necessary
for a compactly supported smooth Parseval window.  Let
\[
 a=\frac{71}{100},
 \qquad
 \Gamma_0=
 a\begin{pmatrix}1&1\\-1&1\end{pmatrix}\Z^2.
\]
Appendix~\ref{app:compact-parseval-example} constructs
$g_0\in C_c^\infty(\R)$ such that $\G(g_0,\Gamma_0)$ is an orthonormal
system.  Its adjoint lattice is
\[
 \Lambda_0=\Gamma_0^\circ
 =\frac1{2a}\begin{pmatrix}1&-1\\1&1\end{pmatrix}\Z^2
 =cS\Z^2,
 \qquad
 c=\frac1{\sqrt2a},
 \qquad
 S=\frac1{\sqrt2}\begin{pmatrix}1&-1\\1&1\end{pmatrix}.
\]
All four scalar blocks of $S$ are nonzero.  Nevertheless,
$g=\sqrt{\covol(\Lambda_0)}\,g_0$ generates a Parseval frame on
$\Lambda_0$.  The relevant arithmetic values are
\[
 \covol(\Lambda_0)=\frac{5000}{5041},
 \qquad
 \nu(\Lambda_0)=5041,
 \qquad
 \nu(\Lambda_0^\circ)=5000.
\]
The full elementary construction and verification are given in
Appendix~\ref{app:compact-parseval-example}.

\subsection*{Outline of the paper}
Section~\ref{sec:literature} gives a brief summary of the most relevant
literature.  Part~A proves Theorem~\ref{thm:main-geometric} in the
commensurable, fully dense, and partially dense cases and then proves
necessity.  Part~B first develops the rational Zak matrix model and then
treats its two regularity branches: continuous matrices yield the amalgam
arithmetic gap, while Sobolev matrices yield finite-uncertainty windows below
critical density.  It then transfers both conclusions metaplectically and
proves the one-window and multiwindow Gabor results.

\section{Earlier results and relation to the literature}\label{sec:literature}

This section records the strands of the literature that motivate the geometric and
Gabor-theoretic arguments used later.

\subsection{Tiling and packing by lattices}\label{sec:lit-tiling}

The relation between translational tilings, packings, and orthogonal
exponentials has a long history.  Kolountzakis developed the viewpoint that
orthogonality and completeness can be encoded as packing and tiling
identities for suitable nonnegative functions
\cite{Kolountzakis2000Packing}.  Fourier methods for lattice tilings by
cubes and related polyhedral sets appear in
\cite{Kolountzakis1998Cubes}.  These works are part of the broader
connection between translational tiling and spectral questions.

Han and Wang \cite{HanWang2001} proved that any two full-rank lattices of
equal covolume have a common measurable fundamental domain and, more
generally, that if $\covol(L)\le\covol(M)$ then there is a measurable set
which tiles by $L$ and packs by $M$.  Essential predecessors and extensions
for simultaneous tiling include the multi-lattice problem
\cite{Kolountzakis1997MultiLattice}, compactly supported functions tiling
with several lattices \cite{KolountzakisPapageorgiou2022}, and common
domains for more general group actions
\cite{DutkayHanJorgensenPicioroaga2013}.

Grepstad and
Kolountzakis \cite{GrepstadKolountzakis2026} proved that two Euclidean
lattices of equal covolume have a bounded common fundamental domain, which
can be chosen as a finite union of polytopes.  A direct bounded measurable
construction was subsequently given by Spyridakis
\cite{Spyridakis2026BoundedCommon}.  Grepstad, Kolountzakis, and Spyridakis
\cite{GrepstadKolountzakisSpyridakis2025} established the corresponding
bounded tile/packing statement under the strict covolume inequality.  The
present paper asks for the stronger metric conclusion that a positive
neighborhood of the tile packs.  This is not a null-set refinement: it
requires a uniform gap between every pair of nontrivial packing translates.

\subsection{Gabor frames, regular windows, and Balian--Low phenomena}
\label{sec:lit-gabor}

A standard general reference for Gabor frames, duality, density, the Zak
transform, modulation spaces, and the Balian--Low theorem is
Gr\"ochenig's monograph~\cite{Grochenig2001}.  The geometric use of
fundamental domains goes back to Han and Wang~\cite{HanWang2001}; related
constructive results include the painless expansions of Daubechies,
Grossmann, and Meyer~\cite{DaubechiesGrossmannMeyer1986} and the smooth
common-domain construction of Pfander, Rashkov, and
Wang~\cite{PfanderRashkovWang2012}.  The duality and finite Zak-matrix tools
used later originate in work of Janssen, Ron--Shen, and Zibulski--Zeevi
\cite{Janssen1995,RonShen1997,ZibulskiZeevi1997}.

For classical and modern forms of the Balian--Low phenomenon, see
\cite{Balian1981,Low1985,Battle1988,BenedettoHeilWalnut1995,GrochenigHanHeilKutyniok2002,CabrelliMolterPfander2016,CarageaLeePfanderPhilipp2019,CarageaLeePhilippVoigtlaender2021,CarageaLeePhilippVoigtlaender2023}.
Gabor expansions on modulation and amalgam spaces, and stability under
changes of lattice, are treated in
\cite{FeichtingerGrochenig1997,GrochenigHeilOkoudjou2002,FeichtingerKaiblinger2004,GrochenigOrtegaRomero2015}.
For related work on localized or continuous frames, frame operators, and
regular or compactly supported dual Gabor windows, see
\cite{FornasierRauhut2005,BalazsFeichtingerHampejsKracher2006,Strohmer2001,Christensen2006}.
Structured Gabor windows and sharp frame-set questions for Hermite or totally
positive windows are developed, among other places, in
\cite{AbreuGrochenig2012,GrochenigRomeroStoeckler2018}.
For general phase-space lattices, operator-algebraic methods interpret Gabor
frames through Heisenberg modules and noncommutative tori
\cite{Luef2009Projections,JakobsenLuef2020}.  The Schwartz existence result
of Enstad--Thiel--Vilalta~\cite{EnstadThielVilalta2025}, already summarized
in Section~\ref{sec:introduction}, is the main external positive result used
in Part~B.  Symplectic normal forms and metaplectic covariance provide the
reduction from rational lattices to diagonal models; see
\cite{Newman1972,Folland1989,GjertsenLuef2024}.  The present paper uses the continuous-Zak obstruction and common-zero
theorem of de Dios Pont--Liehr--Taylor~\cite{deDiosLiehrTaylor2026} in
Subsection~\ref{sec:rectangular-common-zero}, and combines them with the
rational Zak matrix model to obtain the sharp arithmetic gap.

\displaypart{Tilings and $\varepsilon$-packings}
\section{Planar examples for the Tiling--$\varepsilon$--Packing theorem}
\label{sec:planar-examples}

This section gives a graphical progression from common fundamental domains,
through ordinary tiling--packing pairs, to the buffered packing problem of
Theorem~\ref{thm:main-geometric}.  In every tiling or packing picture, the
unshifted fundamental domain is the unique dark blue copy.

\subsection{The simultaneous tiling problem}
The classical simultaneous tiling problem asks whether two lattices of the same
covolume admit a single measurable fundamental domain.  It appears in
\cite{HanWang2001,PfanderRashkovWang2012,DutkayHanJorgensenPicioroaga2013,GrepstadKolountzakis2026}.
Figure~\ref{fig:simultaneous-tiling-four-panels} begins with separate fundamental
domains for
\[
 L_1=2\Z\times2\Z,
 \qquad
 L_2=4\Z\times\Z,
\]
and then shows that the parallelogram spanned by $(0,1)$ and $(4,2)$ is a
fundamental domain for both lattices.

\begin{figure}[htbp]
\centering
\begin{minipage}[b]{\dimexpr(\textwidth-1.5cm)/4\relax}
\centering
\begin{tikzpicture}[x=.27cm,y=.27cm]
  \begin{scope}
  \clip (-3,-3) rectangle (9,9);
  \foreach \m in {-6,-5,...,6}{\foreach \n in {-8,-7,...,8}{
    \PickTileColor{7*\m+11*\n}
    \ifnum\m=0\ifnum\n=0\def\fillcol{TileDarkBlue}\fi\fi
    \path[fill=\fillcol,draw=white,line width=.55pt] (2*\m,2*\n) rectangle ++(2,2);
    \draw[black,line width=.18pt] (2*\m,2*\n) rectangle ++(2,2);
  }}
  \foreach \m in {-1,...,4}{\foreach \n in {-1,...,4}{\LatticeCross{2*\m}{2*\n}}}
  \end{scope}
  \draw[black,line width=.45pt] (-3,-3) rectangle (9,9);
  \DrawIntegerAxes{-3}{9}{-3}{9}{3}
  \MarkOrigin
\end{tikzpicture}
\end{minipage}\hfill%
\begin{minipage}[b]{\dimexpr(\textwidth-1.5cm)/4\relax}
\centering
\begin{tikzpicture}[x=.27cm,y=.27cm]
  \begin{scope}
  \clip (-3,-3) rectangle (9,9);
  \foreach \m in {-4,-3,...,4}{\foreach \n in {-12,-11,...,12}{
    \PickTileColor{13*\m+7*\n}
    \ifnum\m=0\ifnum\n=0\def\fillcol{TileDarkBlue}\fi\fi
    \path[fill=\fillcol,draw=white,line width=.55pt] (4*\m,\n) rectangle ++(4,1);
    \draw[black,line width=.18pt] (4*\m,\n) rectangle ++(4,1);
  }}
  \foreach \m in {0,...,2}{\foreach \n in {-3,...,9}{\LatticeCross{4*\m}{\n}}}
  \end{scope}
  \draw[black,line width=.45pt] (-3,-3) rectangle (9,9);
  \DrawIntegerAxes{-3}{9}{-3}{9}{3}
  \MarkOrigin
\end{tikzpicture}
\end{minipage}\hfill%
\begin{minipage}[b]{\dimexpr(\textwidth-1.5cm)/4\relax}
\centering
\begin{tikzpicture}[x=.27cm,y=.27cm]
  \begin{scope}
  \clip (-3,-3) rectangle (9,9);
  \foreach \m in {-6,-5,...,6}{\foreach \n in {-8,-7,...,8}{
    \PickTileColor{7*\m+11*\n}
    \ifnum\m=0\ifnum\n=0\def\fillcol{TileDarkBlue}\fi\fi
    \begin{scope}[shift={(2*\m,2*\n)}]
      \path[fill=\fillcol,draw=white,line width=.55pt] (0,0)--(0,1)--(4,3)--(4,2)--cycle;
      \draw[black,line width=.18pt] (0,0)--(0,1)--(4,3)--(4,2)--cycle;
    \end{scope}
  }}
  \foreach \m in {-1,...,4}{\foreach \n in {-1,...,4}{\LatticeCross{2*\m}{2*\n}}}
  \end{scope}
  \draw[black,line width=.45pt] (-3,-3) rectangle (9,9);
  \DrawIntegerAxes{-3}{9}{-3}{9}{3}
  \MarkOrigin
\end{tikzpicture}
\end{minipage}\hfill%
\begin{minipage}[b]{\dimexpr(\textwidth-1.5cm)/4\relax}
\centering
\begin{tikzpicture}[x=.27cm,y=.27cm]
  \begin{scope}
  \clip (-3,-3) rectangle (9,9);
  \foreach \m in {-4,-3,...,4}{\foreach \n in {-12,-11,...,12}{
    \PickTileColor{13*\m+7*\n}
    \ifnum\m=0\ifnum\n=0\def\fillcol{TileDarkBlue}\fi\fi
    \begin{scope}[shift={(4*\m,\n)}]
      \path[fill=\fillcol,draw=white,line width=.55pt] (0,0)--(0,1)--(4,3)--(4,2)--cycle;
      \draw[black,line width=.18pt] (0,0)--(0,1)--(4,3)--(4,2)--cycle;
    \end{scope}
  }}
  \foreach \m in {0,...,2}{\foreach \n in {-3,...,9}{\LatticeCross{4*\m}{\n}}}
  \end{scope}
  \draw[black,line width=.45pt] (-3,-3) rectangle (9,9);
  \DrawIntegerAxes{-3}{9}{-3}{9}{3}
  \MarkOrigin
\end{tikzpicture}
\end{minipage}
\caption{The simultaneous tiling problem for two lattices of covolume $4$.  From left to right: the square $[0,2)^2$ tiled by $2\Z\times2\Z$; the rectangle $[0,4)\times[0,1)$ tiled by $4\Z\times\Z$; and the common parallelogram spanned by $(0,1)$ and $(4,2)$ tiled by the two respective lattices.  Here and in the following, the elements of lattices are marked by red crosses.}
\label{fig:simultaneous-tiling-four-panels}
\end{figure}

We next describe a finite cube procedure that captures the common-domain
problem for integer lattices.  Let $T,P\subset\Z^d$ satisfy
\begin{equation}\label{eq:commensurable-normalization}
 T,P\subset\Z^d,
 \qquad
 T+P=\Z^d.
\end{equation}
Put
\[
 m=[\Z^d:T],
 \qquad
 n=[\Z^d:P],
\]
and enumerate the quotient groups by
\begin{equation}\label{eq:quotient-enumeration}
 \Z^d/T=\{R_0,R_1,\ldots,R_{m-1}\},
 \qquad R_0=T,
\end{equation}
\begin{equation}\label{eq:column-enumeration}
 \Z^d/P=\{Q_0,Q_1,\ldots,Q_{n-1}\},
 \qquad Q_0=P.
\end{equation}
Assume first that $T$ and $P$ have the same covolume.  For sublattices of
$\Z^d$, this is equivalent to $m=n$, because
$\covol(T)=[\Z^d:T]\covol(\Z^d)=m$ and similarly $\covol(P)=n$.  Let $F=[0,1)^d$.  To obtain a common fundamental domain
built from unit cubes, choose addresses $z_0,\ldots,z_{m-1}\in\Z^d$ so that
the classes $z_i+T$ run through $\Z^d/T$ exactly once and the classes
$z_i+P$ run through $\Z^d/P$ exactly once.  Then
\begin{equation}\label{eq:section3-hard-domain}
 \Omega(Z)=\bigcup_{i=0}^{m-1}(F+z_i)
\end{equation}
is a fundamental domain for both lattices.  Indeed, the half-open cubes
$F+z$, $z\in\Z^d$, partition $\R^d$, and the selected addresses give exactly
one representative in every coset modulo either lattice.  Only the two
residue classes of an address matter: if $h\in T\cap P$, then replacing
$z_i$ by $z_i+h$ changes neither class and therefore preserves both tiling
properties.

A configuration table records these data.  The sets $R_i$ index its rows,
the sets $Q_j$ index its columns, and the cell in row $R_i$ and column $Q_j$
is the address class
\begin{equation}\label{eq:group-address-class}
 \mathcal A_{i,j}=R_i\cap Q_j
 =\{z\in\Z^d:z+T=R_i,\ z+P=Q_j\}.
\end{equation}
Every address class is nonempty.  Indeed, if $R_i=a+T$ and $Q_j=b+P$, write
$b-a=t+p$ with $t\in T$ and $p\in P$; then $a+t=b-p$ belongs to
$R_i\cap Q_j$.  Moreover, for any $z_{i,j}\in\mathcal A_{i,j}$,
\begin{equation}\label{eq:address-class-coset}
 \mathcal A_{i,j}=z_{i,j}+(T\cap P).
\end{equation}
Thus every cell contains infinitely many actual addresses.  In the figures, a
cell may be divided into four subcells to display four chosen representatives;
subcell positions are read in the order lower left, lower right, upper left,
upper right.  The representatives used for each guiding lattice pair are
specified when that pair is introduced.

\begin{definition}\label{def:hard-configuration}
A \emph{hard configuration} is a finitely supported function
$\lambda:\Z^d\to\{0,1\}$ such that every row contains exactly one occupied
address,
\begin{equation}\label{eq:hard-row}
 \sum_{z\in R_i}\lambda(z)=1,
 \qquad 0\le i\le m-1,
\end{equation}
and every column contains at most one occupied address,
\begin{equation}\label{eq:hard-column}
 \sum_{z\in Q_j}\lambda(z)\le1,
 \qquad 0\le j\le n-1.
\end{equation}
Equivalently, the occupied addresses meet every row once and every column at
most once.  When $m=n$, every column is occupied exactly once.
\end{definition}

For a concrete example, take
\[
 T=4\Z\times\Z,
 \qquad
 P=\Z\times4\Z.
\]
The table in Figure~\ref{fig:four-four-hard-common} uses the non-diagonal
matching
\[
 R_0\leftrightarrow Q_1,
 \quad R_1\leftrightarrow Q_3,
 \quad R_2\leftrightarrow Q_0,
 \quad R_3\leftrightarrow Q_2.
\]
In the subcell order fixed above, the four displayed representatives are
$(0,1)$, $(5,3)$, $(2,4)$, and $(7,6)$.  They occur, respectively, in the lower-left,
lower-right, upper-left, and upper-right positions of their occupied main
cells.  Subtracting suitable vectors of $T\cap P=4\Z\times4\Z$ gives the
compact representatives
\begin{equation}\label{eq:four-four-addresses}
 Z_{4,4}=\{(0,1),(1,3),(2,0),(3,2)\}.
\end{equation}
Thus
\[
 \Omega_{4,4}=\bigcup_{z\in Z_{4,4}}\bigl([0,1)^2+z\bigr)
\]
is a common fundamental domain.

\begin{figure}[htbp]
\centering
\begin{minipage}[b]{.30\textwidth}
\centering
\begin{tikzpicture}[x=.34cm,y=.34cm]
  \fill[yellow!28] (0,0) rectangle (1,1);
  \draw[line width=.72pt] (0,0) rectangle (8,8);
  \foreach \x in {2,4,6}{\draw[line width=.72pt] (\x,0)--(\x,8);}
  \foreach \y in {2,4,6}{\draw[line width=.72pt] (0,\y)--(8,\y);}
  \foreach \x in {1,3,5,7}{\draw[gray!60,line width=.27pt] (\x,0)--(\x,8);}
  \foreach \y in {1,3,5,7}{\draw[gray!60,line width=.27pt] (0,\y)--(8,\y);}
  \foreach \j in {0,...,3}{\node[above,font=\scriptsize] at (2*\j+1,8.18) {$Q_{\j}$};}
  \foreach \i in {0,...,3}{\node[left,font=\scriptsize] at (-.14,2*\i+1) {$R_{\i}$};}
  \node[text=MidnightBlue,font=\large] at (2.5,.5) {$1$};
  \node[text=MidnightBlue,font=\large] at (7.5,2.5) {$1$};
  \node[text=MidnightBlue,font=\large] at (.5,5.5) {$1$};
  \node[text=MidnightBlue,font=\large] at (5.5,7.5) {$1$};
\end{tikzpicture}

\end{minipage}\hfill
\begin{minipage}[b]{.30\textwidth}
\centering
\begin{tikzpicture}[x=.38cm,y=.38cm]
  \begin{scope}
  \clip (-1,-1) rectangle (7,7);
  \foreach \a in {-3,-2,...,3}{
    \foreach \b in {-8,-7,...,8}{
      \PickTileColor{7*\a+11*\b}
      \ifnum\a=0\ifnum\b=0\def\fillcol{TileDarkBlue}\fi\fi
      \foreach \x/\y in {0/1,1/3,2/0,3/2}{
        \path[fill=\fillcol,draw=white,line width=.62pt]
          ({\x+4*\a},{\y+\b}) rectangle ++(1,1);
        \draw[black,line width=.16pt]
          ({\x+4*\a},{\y+\b}) rectangle ++(1,1);
      }
    }
  }
  \foreach \a in {0,1}{\foreach \b in {-1,...,7}{\LatticeCross{4*\a}{\b}}}
  \end{scope}
  \draw[black,line width=.5pt] (-1,-1) rectangle (7,7);
  \DrawIntegerAxes{-1}{7}{-1}{7}{2}
  \MarkOrigin
\end{tikzpicture}

\end{minipage}\hfill
\begin{minipage}[b]{.30\textwidth}
\centering
\begin{tikzpicture}[x=.38cm,y=.38cm]
  \begin{scope}
  \clip (-1,-1) rectangle (7,7);
  \foreach \a in {-8,-7,...,8}{
    \foreach \b in {-3,-2,...,3}{
      \PickTileColor{13*\a+7*\b}
      \ifnum\a=0\ifnum\b=0\def\fillcol{TileDarkBlue}\fi\fi
      \foreach \x/\y in {0/1,1/3,2/0,3/2}{
        \path[fill=\fillcol,draw=white,line width=.62pt]
          ({\x+\a},{\y+4*\b}) rectangle ++(1,1);
        \draw[black,line width=.16pt]
          ({\x+\a},{\y+4*\b}) rectangle ++(1,1);
      }
    }
  }
  \foreach \a in {-1,...,7}{\foreach \b in {0,1}{\LatticeCross{\a}{4*\b}}}
  \end{scope}
  \draw[black,line width=.5pt] (-1,-1) rectangle (7,7);
  \DrawIntegerAxes{-1}{7}{-1}{7}{2}
  \MarkOrigin
\end{tikzpicture}

\end{minipage}
\caption{A cube-built common fundamental domain for $T=4\Z\times\Z$ and $P=\Z\times4\Z$.  From left to right: a non-diagonal $4\times4$ hard configuration, with four sample addresses displayed in four different subcells; the $T$-tiling; and the $P$-tiling.  The displayed addresses reduce modulo $T\cap P$ to the compact representatives in~\eqref{eq:four-four-addresses}, whose unit-cube union is the dark-blue set.}
\label{fig:four-four-hard-common}
\end{figure}

\subsection{The tiling and packing problem}
Suppose now that $T,P\subset\Z^d$, $T+P=\Z^d$, and
$\covol(T)<\covol(P)$, equivalently $m<n$.  A hard configuration uses every
row and only $m$ of the $n$ columns.  The cube union~\eqref{eq:section3-hard-domain} is a
$T$-fundamental domain and packs with $P$.

For a planar example take $T=4\Z\times\Z$ and
$P=\begin{psmallmatrix}1&0\\1&5\end{psmallmatrix}\Z^2$.  Then
$T+P=\Z^2$, $m=4$, and $n=5$.  The rows are the first-coordinate classes
modulo $4$, while the columns are given by $y-x$ modulo $5$.  Choose the
addresses
\begin{equation}\label{eq:four-five-addresses}
 Z_{4,5}=\{(0,0),(1,0),(2,0),(3,0)\},
 \qquad \Omega_{4,5}=[0,4)\times[0,1).
\end{equation}
They occupy $R_0\cap Q_0$, $R_1\cap Q_4$, $R_2\cap Q_3$, and
$R_3\cap Q_2$.  All four entries are displayed in the lower-left address
subcell, and one column remains unoccupied.  The set $\Omega_{4,5}$ tiles by $T$ and packs
by $P$; the shear in $P$ makes the vacant regions stagger rather than line
up in one horizontal strip.

\begin{figure}[htbp]
\centering
\begin{minipage}[b]{.30\textwidth}
\centering
\begin{tikzpicture}[x=.31cm,y=.31cm]
  \fill[yellow!28] (0,0) rectangle (1,1);
  \draw[line width=.72pt] (0,0) rectangle (10,8);
  \foreach \x in {2,4,6,8}{\draw[line width=.72pt] (\x,0)--(\x,8);}
  \foreach \y in {2,4,6}{\draw[line width=.72pt] (0,\y)--(10,\y);}
  \foreach \x in {1,3,5,7,9}{\draw[gray!60,line width=.27pt] (\x,0)--(\x,8);}
  \foreach \y in {1,3,5,7}{\draw[gray!60,line width=.27pt] (0,\y)--(10,\y);}
  \foreach \j in {0,...,4}{\node[above,font=\scriptsize] at (2*\j+1,8.18) {$Q_{\j}$};}
  \foreach \i in {0,...,3}{\node[left,font=\scriptsize] at (-.14,2*\i+1) {$R_{\i}$};}
  \node[text=MidnightBlue,font=\large] at (.5,.5) {$1$};
  \node[text=MidnightBlue,font=\large] at (8.5,2.5) {$1$};
  \node[text=MidnightBlue,font=\large] at (6.5,4.5) {$1$};
  \node[text=MidnightBlue,font=\large] at (4.5,6.5) {$1$};
\end{tikzpicture}
\end{minipage}\hfill
\begin{minipage}[b]{.30\textwidth}
\centering
\begin{tikzpicture}[x=.37cm,y=.37cm]
  \begin{scope}
  \clip (-1,-1) rectangle (7,7);
  \foreach \a in {-3,-2,...,3}{\foreach \b in {-8,-7,...,8}{
    \PickTileColor{7*\a+11*\b}
    \ifnum\a=0\ifnum\b=0\def\fillcol{TileDarkBlue}\fi\fi
    \path[fill=\fillcol,draw=white,line width=.62pt] (4*\a,\b) rectangle ++(4,1);
    \draw[black,line width=.16pt] (4*\a,\b) rectangle ++(4,1);
  }}
  \foreach \a in {0,1}{\foreach \b in {-1,...,7}{\LatticeCross{4*\a}{\b}}}
  \end{scope}
  \draw[black,line width=.5pt] (-1,-1) rectangle (7,7);
  \DrawIntegerAxes{-1}{7}{-1}{7}{2}
  \MarkOrigin
\end{tikzpicture}
\end{minipage}\hfill
\begin{minipage}[b]{.30\textwidth}
\centering
\begin{tikzpicture}[x=.37cm,y=.37cm]
  \begin{scope}
  \clip (-1,-1) rectangle (7,7);
  \foreach \a in {-9,-8,...,9}{\foreach \b in {-4,-3,...,4}{
    \PickTileColor{13*\a+7*\b}
    \ifnum\a=0\ifnum\b=0\def\fillcol{TileDarkBlue}\fi\fi
    \path[fill=\fillcol,draw=white,line width=.62pt]
      (\a,{\a+5*\b}) rectangle ++(4,1);
    \draw[black,line width=.16pt] (\a,{\a+5*\b}) rectangle ++(4,1);
  }}
  \foreach \a in {-9,...,9}{\foreach \b in {-4,...,4}{\LatticeCross{\a}{\a+5*\b}}}
  \end{scope}
  \draw[black,line width=.5pt] (-1,-1) rectangle (7,7);
  \DrawIntegerAxes{-1}{7}{-1}{7}{2}
  \MarkOrigin
\end{tikzpicture}
\end{minipage}
\caption{The tiling and packing problem for $T=4\Z\times\Z$ and $P=\begin{psmallmatrix}1&0\\1&5\end{psmallmatrix}\Z^2$.  From left to right: the $4\times5$ hard configuration determined by the addresses in~\eqref{eq:four-five-addresses}; the resulting $T$-tiling by the dark-blue set $\Omega_{4,5}=[0,4)\times[0,1)$; and its staggered $P$-packing.}
\label{fig:four-five-hard-packing}
\end{figure}

\subsection{The tiling and $\varepsilon$-packing problem}
The stronger goal is to find a $T$-fundamental domain $\Omega$ and
$\varepsilon>0$ such that $\Omega+B_\varepsilon$ packs with $P$, so that
nontrivial packing translates are separated by a positive gap.

\begin{figure}[htbp]
\centering
\begin{minipage}[b]{.48\textwidth}
  \centering
  \RasterWithZTwo{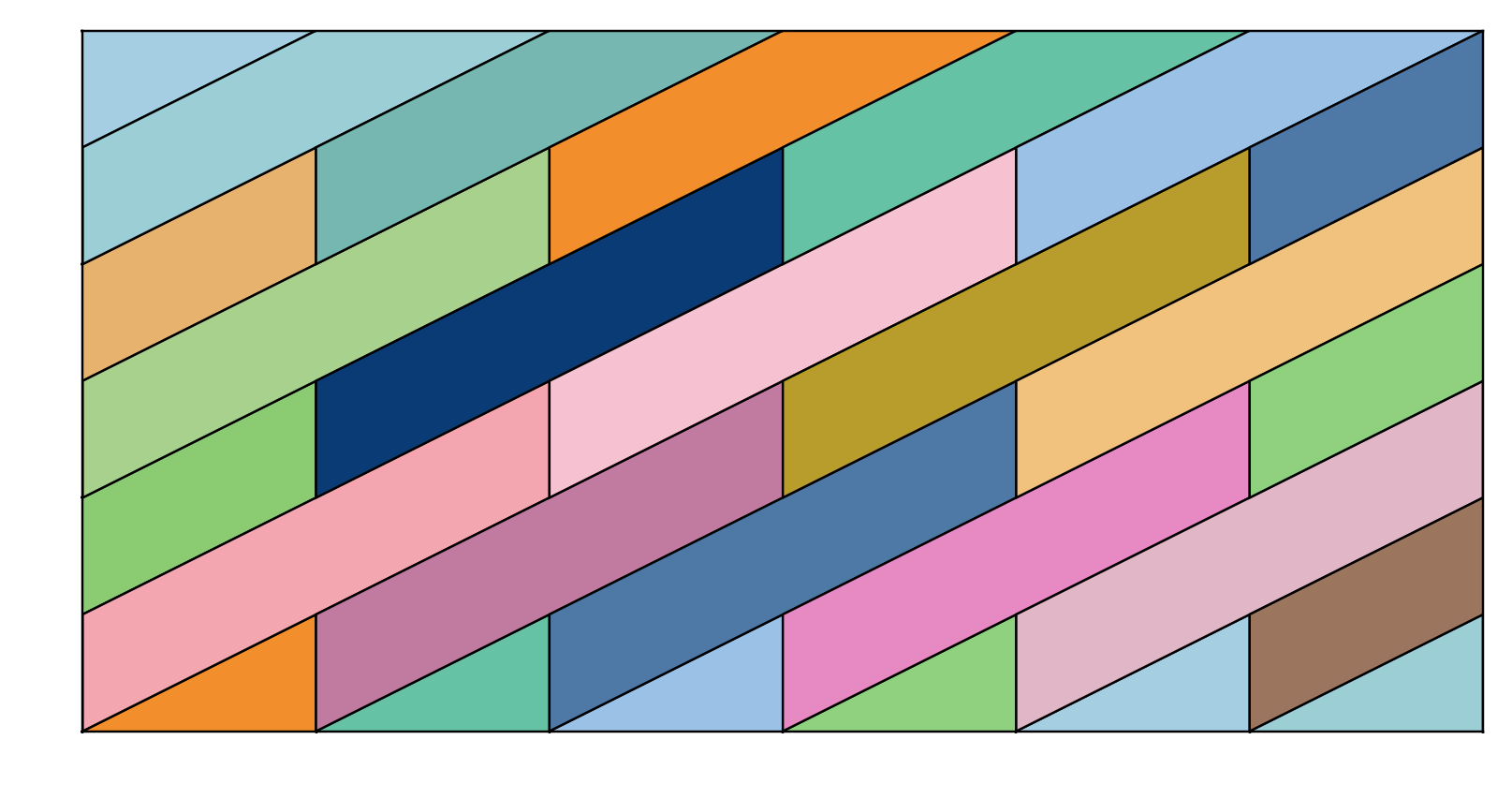}{.208566}{.377296}{.154386}{.292302}{-1}{5}{-1}{2}
\end{minipage}\hfill
\begin{minipage}[b]{.48\textwidth}
  \centering
  \RasterWithThreeZxZ{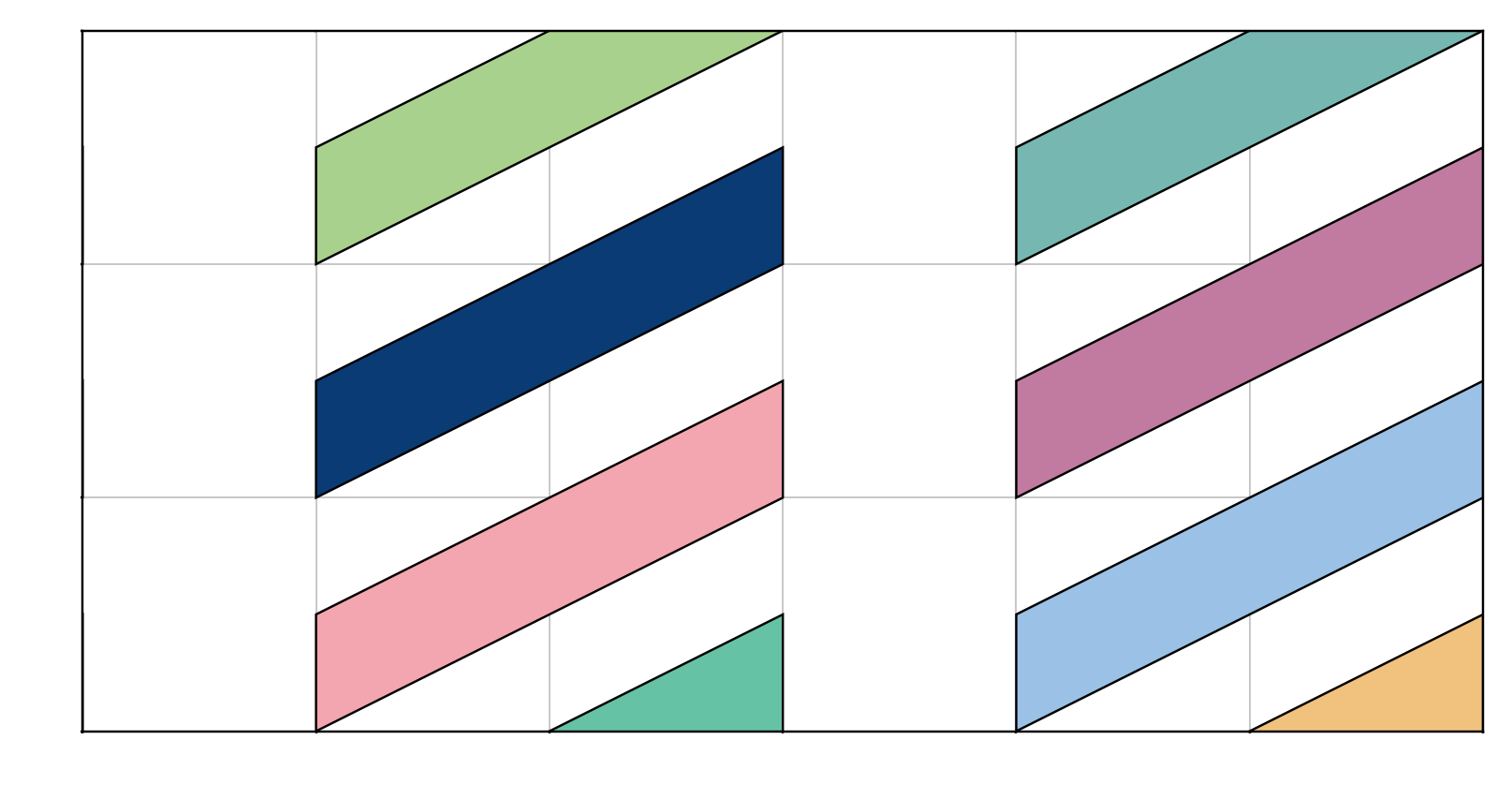}{.208566}{.377296}{.154386}{.292302}{-1}{5}{-1}{2}
\end{minipage}
\caption{A direct buffered construction for $T=\Z^2$ and
$P=3\Z\times\Z$.  The parallelogram is spanned by $(0,1/2)$ and $(2,1)$.
Left: the $T$-tiling.  Right: the buffered $P$-packing.}
\label{fig:parallelogram-3zxz}
\end{figure}

A second solution to the same lattice pair is obtained from the soft field.
Start from the unit square $F=[0,1)^2$, partition it into the finitely many
half-open polygonal pieces selected by the first-hit construction in
Subsection~\ref{subsec:worked-example-Z2-3ZxZ}, and translate those pieces by
integer vectors.  Figure~\ref{fig:one-three-two-solutions} displays this
cube-derived domain for the alternative first-hit enumeration
$((1,0),(0,-1),(-1,-1),(0,0),(-1,0))$.

\begin{figure}[htbp]
\centering
\begin{minipage}[b]{.48\textwidth}
  \centering
  \RasterWithZTwo{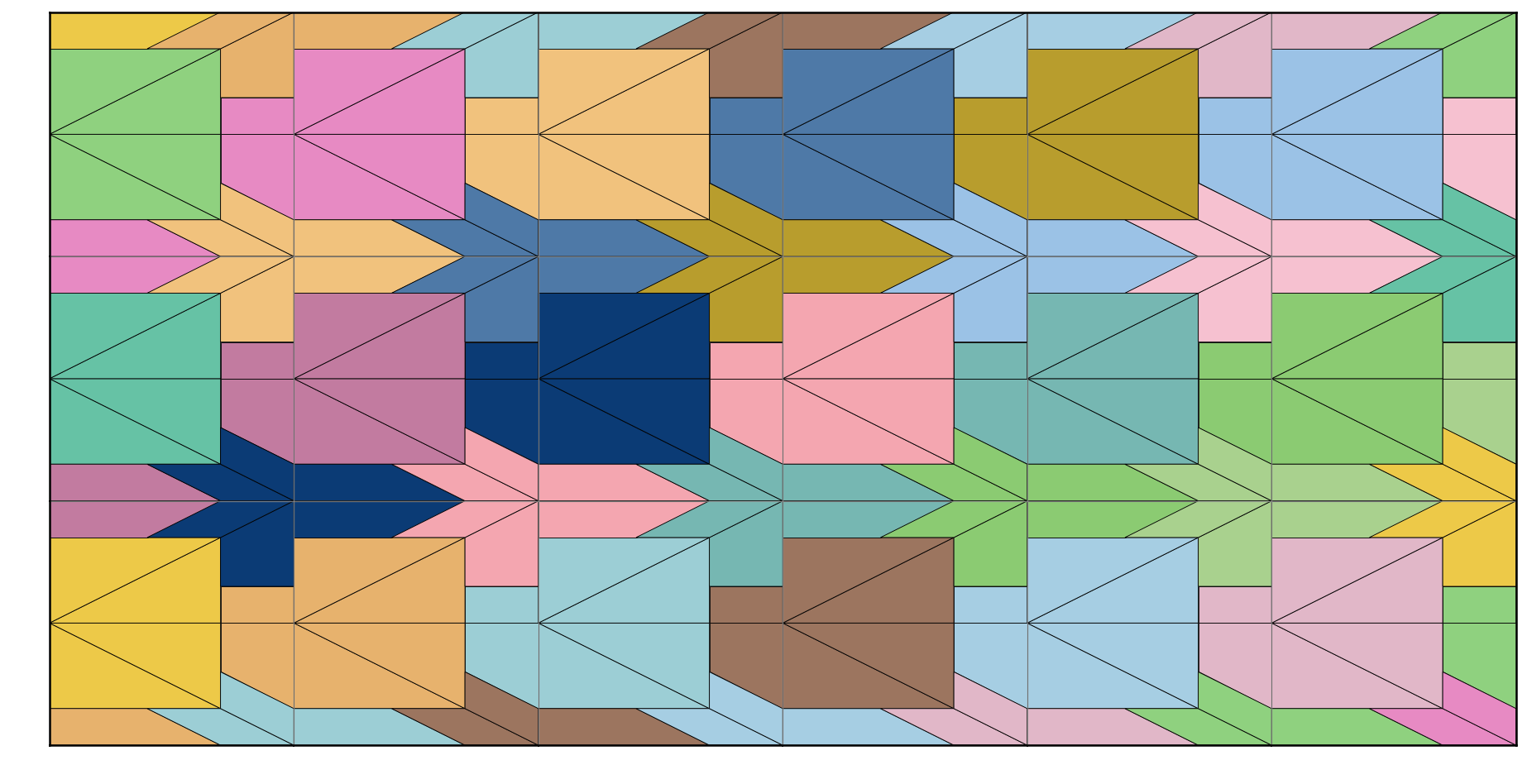}{.192579}{.361111}{.160076}{.311533}{-1}{5}{-1}{2}
\end{minipage}\hfill
\begin{minipage}[b]{.48\textwidth}
  \centering
  \RasterWithThreeZxZ{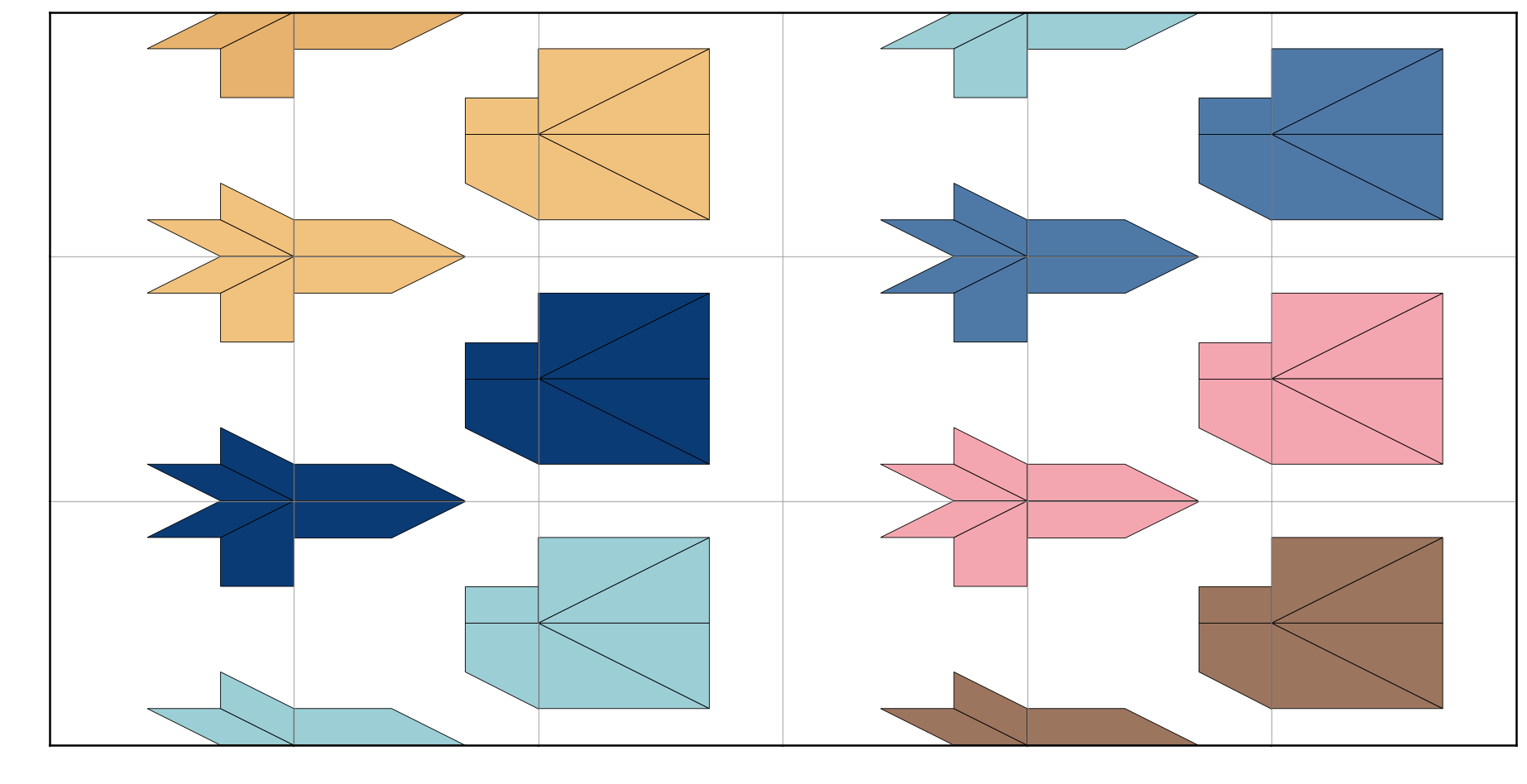}{.192579}{.361111}{.160076}{.311533}{-1}{5}{-1}{2}
\end{minipage}
\caption{The cube-derived solution of the $\Z^2$--$3\Z\times\Z$ problem obtained from the alternative first-hit enumeration above.  From left to right: the exact tiling by $\Z^2$ and the buffered packing by $3\Z\times\Z$.  The detailed construction in Subsection~\ref{subsec:worked-example-Z2-3ZxZ} retains the standard enumeration.}
\label{fig:one-three-two-solutions}
\end{figure}

The analogous buffered problem cannot be solved for
\[
 T=\Z^2,
 \qquad
 P=\Z\times2\Z.
\]
This obstruction appears in the geometric Gabor-frame work of Pfander,
Rashkov, and Wang~\cite{PfanderRashkovWang2012}, and it also follows from
the necessity theorem proved later in this paper.  The best corresponding
parallelogram picture gives only ordinary packing, with boundary contact.

\begin{figure}[htbp]
\centering
\begin{minipage}[b]{.22\textwidth}
\vspace{0pt}\centering
\begin{tikzpicture}[x=.49cm,y=.49cm]
  \begin{scope}
  \clip (-1,-1) rectangle (5,3);
  \foreach \m in {-1,0,...,5}{\foreach \n in {-1,0,...,3}{
    \PickTileColor{5*\m+9*\n}
    \ifnum\m=0\ifnum\n=0\def\fillcol{TileDarkBlue}\fi\fi
    \path[fill=\fillcol,draw=white,line width=.9pt] (\m,\n) rectangle ++(1,1);
    \draw[black,line width=.25pt] (\m,\n) rectangle ++(1,1);
  }}
  \foreach \m in {-1,...,5}{\foreach \n in {-1,...,3}{\LatticeCross{\m}{\n}}}
  \end{scope}
  \draw[black,line width=.6pt] (-1,-1) rectangle (5,3);
  \DrawIntegerAxes{-1}{5}{-1}{3}{1}
  \MarkOrigin
\end{tikzpicture}
\end{minipage}\hfill
\begin{minipage}[b]{.22\textwidth}
\vspace{0pt}\centering
\begin{tikzpicture}[x=.49cm,y=.49cm]
  \begin{scope}
  \clip (-1,-1) rectangle (5,3);
  \foreach \m in {-1,0,...,5}{\foreach \n in {-1,0,1}{
    \PickTileColor{11*\m+5*\n}
    \ifnum\m=0\ifnum\n=0\def\fillcol{TileDarkBlue}\fi\fi
    \path[fill=\fillcol,draw=white,line width=.9pt] (\m,{2*\n}) rectangle ++(1,1);
    \draw[black,line width=.25pt] (\m,{2*\n}) rectangle ++(1,1);
  }}
  \foreach \m in {-1,...,5}{\foreach \n in {0,1}{\LatticeCross{\m}{2*\n}}}
  \end{scope}
  \draw[black,line width=.6pt] (-1,-1) rectangle (5,3);
  \DrawIntegerAxes{-1}{5}{-1}{3}{1}
  \MarkOrigin
\end{tikzpicture}
\end{minipage}\hfill
\begin{minipage}[b]{.22\textwidth}
\vspace{0pt}\centering
\begin{tikzpicture}[x=.49cm,y=.49cm]
  \begin{scope}
  \clip (-1,-1) rectangle (5,3);
  \foreach \m in {-1,0,...,5}{\foreach \n in {-2,-1,...,3}{
    \PickTileColor{7*\m+11*\n}
    \ifnum\m=0\ifnum\n=0\def\fillcol{TileDarkBlue}\fi\fi
    \begin{scope}[shift={(\m,\n)}]
      \path[fill=\fillcol,draw=white,line width=.9pt] (0,0)--(0,1)--(1,2)--(1,1)--cycle;
      \draw[black,line width=.25pt] (0,0)--(0,1)--(1,2)--(1,1)--cycle;
    \end{scope}
  }}
  \foreach \m in {-1,...,5}{\foreach \n in {-1,...,3}{\LatticeCross{\m}{\n}}}
  \end{scope}
  \draw[black,line width=.6pt] (-1,-1) rectangle (5,3);
  \DrawIntegerAxes{-1}{5}{-1}{3}{1}
  \MarkOrigin
\end{tikzpicture}
\end{minipage}\hfill
\begin{minipage}[b]{.22\textwidth}
\vspace{0pt}\centering
\begin{tikzpicture}[x=.49cm,y=.49cm]
  \begin{scope}
  \clip (-1,-1) rectangle (5,3);
  \foreach \m in {-1,0,...,5}{\foreach \n in {-1,0,1}{
    \PickTileColor{13*\m+7*\n}
    \ifnum\m=0\ifnum\n=0\def\fillcol{TileDarkBlue}\fi\fi
    \begin{scope}[shift={(\m,2*\n)}]
      \path[fill=\fillcol,draw=white,line width=.9pt] (0,0)--(0,1)--(1,2)--(1,1)--cycle;
      \draw[black,line width=.25pt] (0,0)--(0,1)--(1,2)--(1,1)--cycle;
    \end{scope}
  }}
  \foreach \m in {-1,...,5}{\foreach \n in {0,1}{\LatticeCross{\m}{2*\n}}}
  \end{scope}
  \draw[black,line width=.6pt] (-1,-1) rectangle (5,3);
  \DrawIntegerAxes{-1}{5}{-1}{3}{1}
  \MarkOrigin
\end{tikzpicture}
\end{minipage}
\caption{Two ordinary tiling--packing solutions for $T=\Z^2$ and $P=\Z\times2\Z$.  From left to right: the unit-square tiling and packing; and the tiling and packing by the parallelogram spanned by $(0,1)$ and $(1,1)$.  Both packings have boundary contact, so neither has a positive buffer.}
\label{fig:basic-tiling-packing}
\end{figure}

For other pairs at the sharp planar index gap, a geometric construction may
succeed without being described by one fixed hard table.  In the next
example the fundamental domain is cut into polyhedral pieces and different
pieces are translated by different lattice vectors.  Equivalently, the
associated hard choice varies piecewise with the fractional position.

\begin{figure}[H]
\centering
\begin{minipage}[b]{.48\textwidth}
  \centering
  \RasterWithThreeZxZ{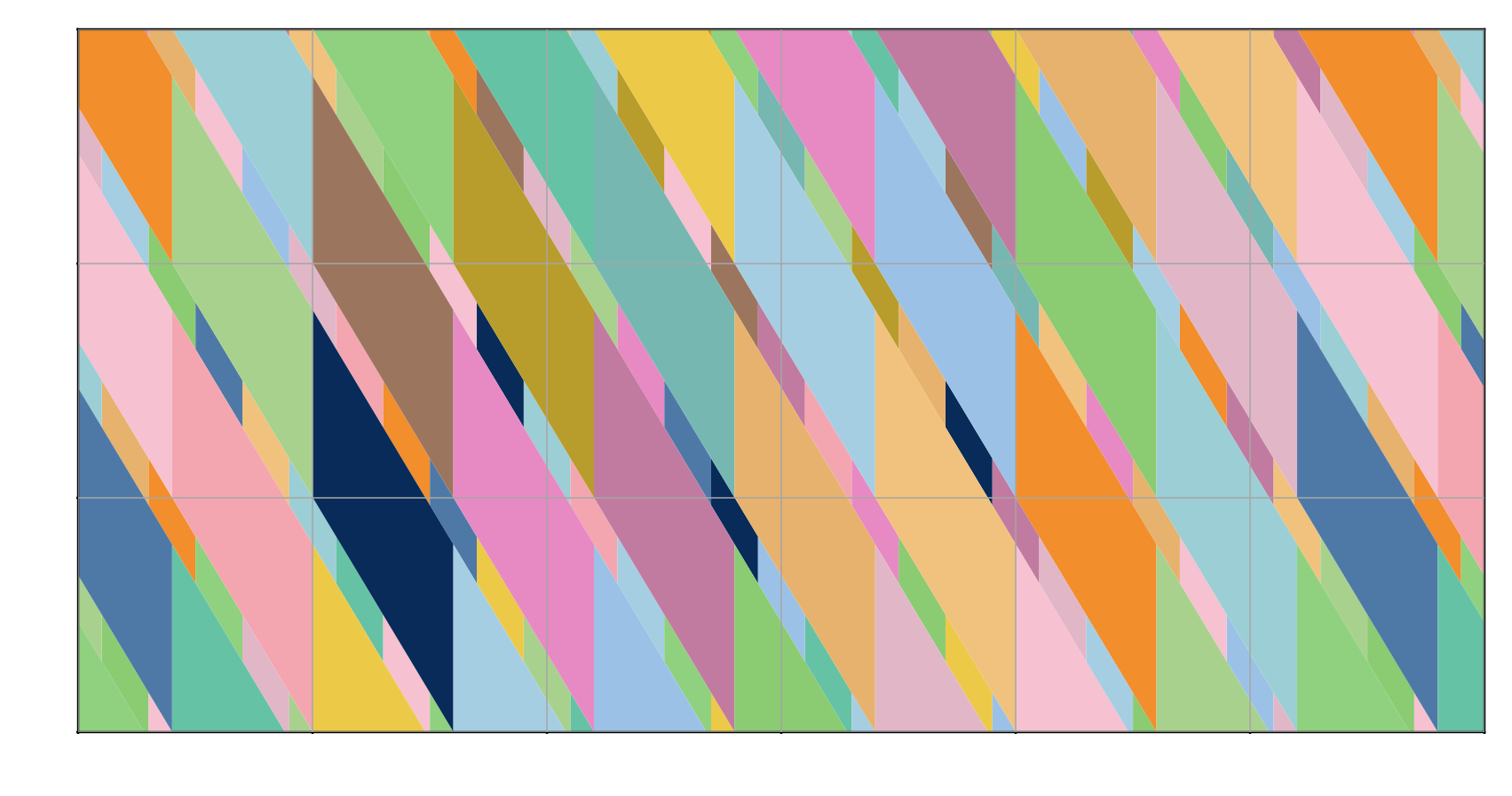}{.206205}{.374954}{.155118}{.294686}{-1}{5}{-1}{2}
\end{minipage}\hfill
\begin{minipage}[b]{.48\textwidth}
  \centering
  \RasterWithFiveZxZ{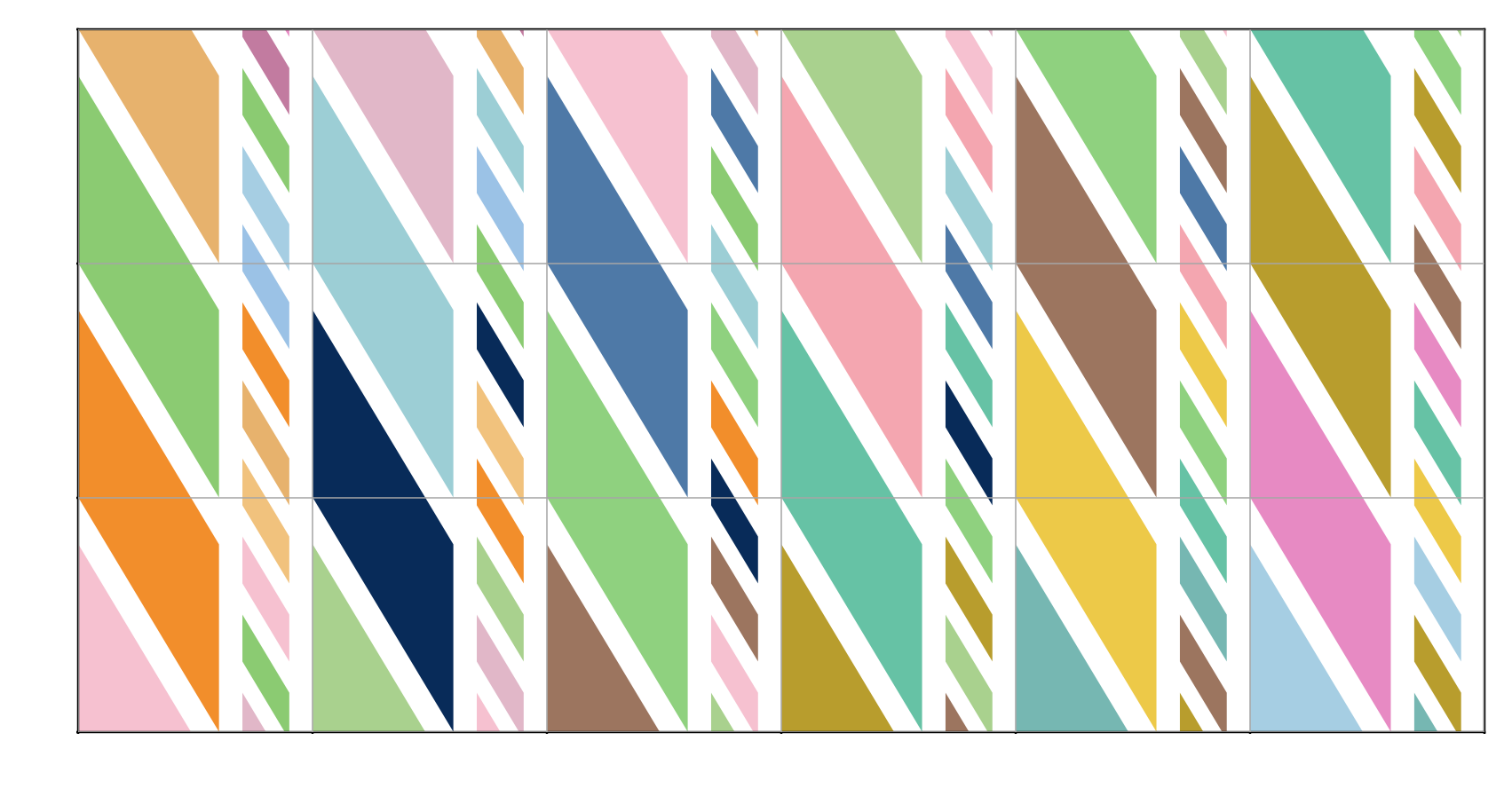}{.206205}{.374954}{.155118}{.294686}{-1}{5}{-1}{2}
\end{minipage}
\caption{The sharp planar integer example $T=3\Z\times\Z$ and
$P=5\Z\times\Z$, for which $m=3$ and $n=5=m+2$.  The displayed
cut-and-translate construction is not based on one fixed hard configuration.
Left: the $T$-tiling.  Right: the buffered $P$-packing.}
\label{fig:sharp-three-fifths}
\end{figure}

The main theorem classifies all lattice pairs for which the buffered problem
can be solved.  In the normalized commensurable case the criterion is
\[
 n\ge m+d,
\]
which becomes $n\ge m+2$ in the plane.  The remaining sections prove
sufficiency by replacing a fixed hard table with a continuous field of soft
configurations, then applying a first-hit selector to recover the actual
polyhedral fundamental domain; see Sections~\ref{sec:hard-configurations}
and~\ref{sec:soft-configurations}.

\subsection{The noncommensurable case}
The noncommensurable case has a different source of flexibility.  The
connected component of $\overline{T+P}$ can be subdivided into arbitrarily
many small source packets and target slots.  These labels refine the finite
data from the discrete quotient, so the configuration table can be enlarged
until the required surplus is available.  This is made precise in
Sections~\ref{sec:fully-dense-case} and~\ref{sec:nondiscrete-construction}.

As a concrete irrational example, let
\[
 T=\Z^2,
 \qquad
 P=\sqrt2\Z\times\Z.
\]
Then $\overline{T+P}=\R\times\Z$ and
$\covol(T)=1<\sqrt2=\covol(P)$.  Start from the $T$-fundamental
parallelogram spanned by $(0,1)$ and $(1,-1)$.  Keep the smaller main
parallelogram spanned by $(0,9/10)$ and $(1,-1)$, divide the remaining thin
strip into five vertical slices, and translate these slices by
$(4,0),(1,0),(5,0),(2,0),(6,0)\in T$.  The tiling property is unchanged,
and periodization by $P$ places the slices in separated gaps.

\begin{figure}[htbp]
\centering
\begin{minipage}[b]{.48\textwidth}
  \centering
  \RasterWithZTwo{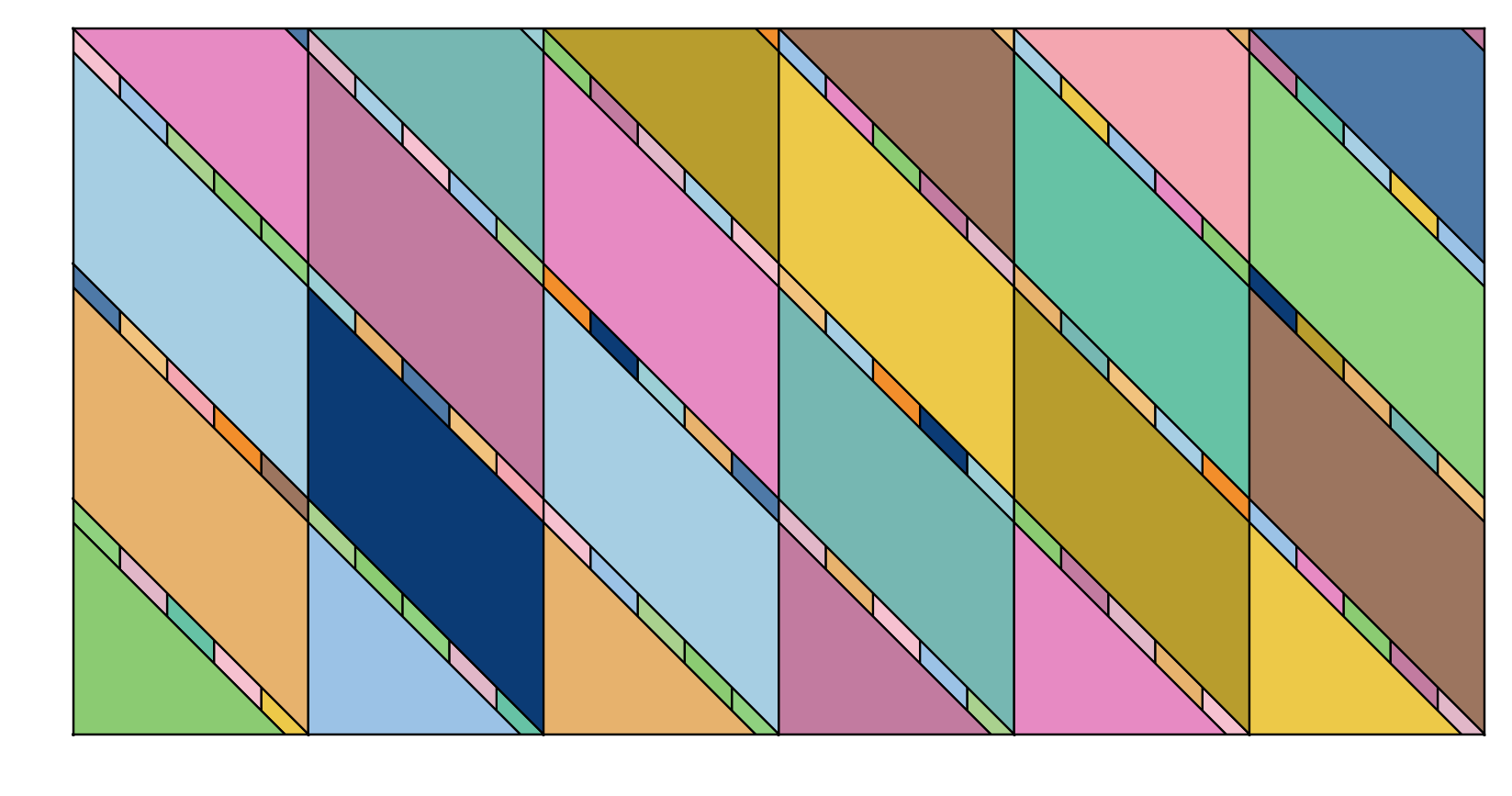}{.203805}{.373510}{.155535}{.295656}{-1}{5}{-1}{2}
\end{minipage}\hfill
\begin{minipage}[b]{.48\textwidth}
  \centering
  \RasterWithSqrtTwoZxZ{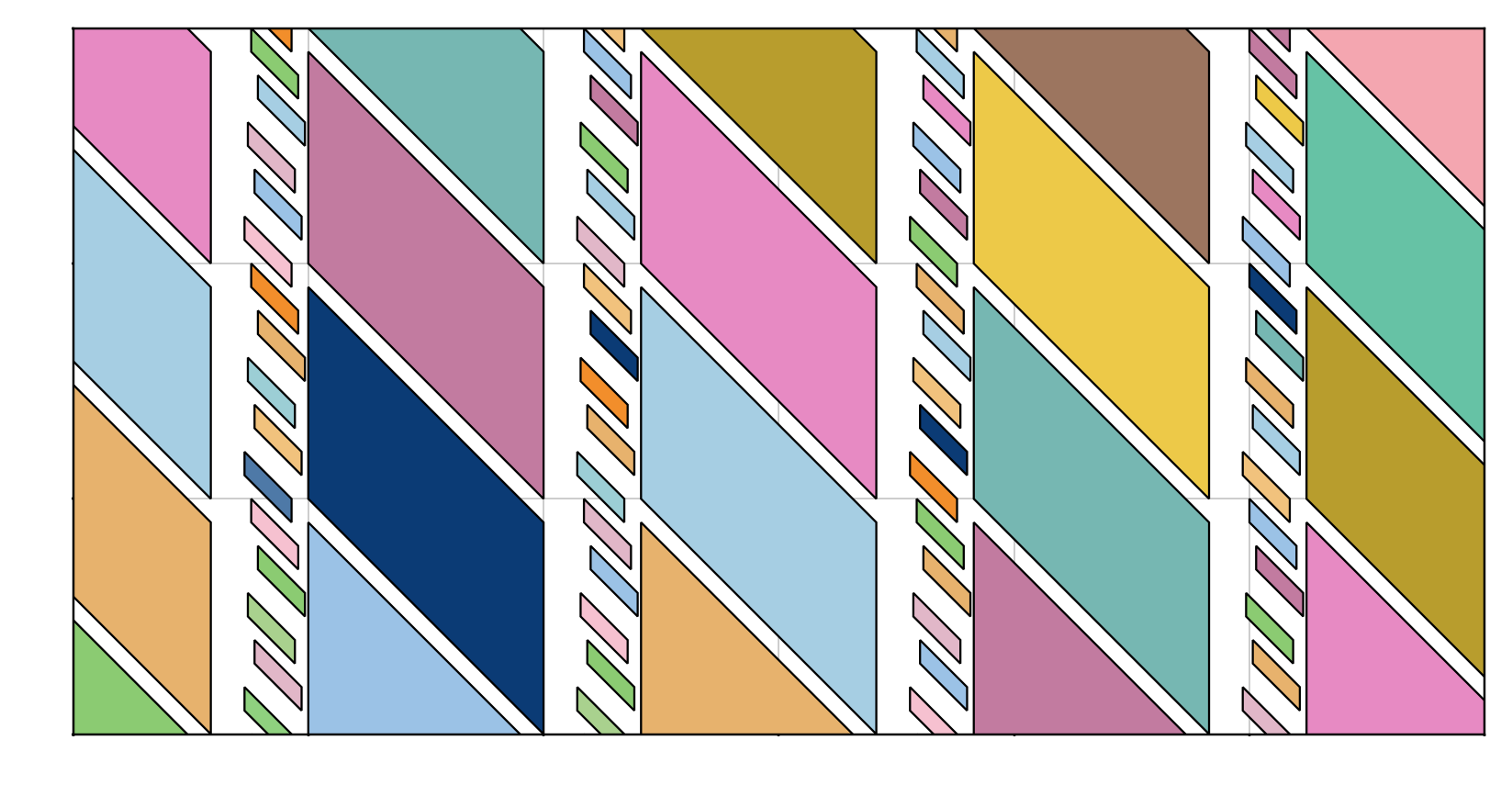}{.203805}{.373510}{.155535}{.295656}{-1}{5}{-1}{2}
\end{minipage}
\caption{A direct cut-and-translate construction for
$T=\Z^2$ and $P=\sqrt2\Z\times\Z$.  Left: the $T$-tiling.  Right: the
buffered $P$-packing.  The unshifted fundamental domain is dark blue.}
\label{fig:noncommensurable-sqrt2}
\end{figure}

In the commensurable case, the finite table is fixed by the quotient groups.
In the noncommensurable case, the connected factor supplies as many packet
and slot labels as needed; the resulting enlarged table is then filled by
the same equivariant configuration-space method used later.

\section{Soft configurations}\label{sec:hard-configurations}

We now overcome the limitations of hard configurations by allowing the
selected addresses to vary continuously and by replacing a single hard table
with a field of weighted tables.

\subsection{Partitions of unity generating tiling and
$\varepsilon$-packing sets $\Omega$}
Throughout this subsection, $\varphi$ is assumed nonnegative, continuous,
and compactly supported.  Continuity makes its positive superlevel sets
closed, while compact support makes them compact and ensures that only
finitely many translates meet a fixed fundamental parallelepiped.  We call
$\varphi$ a $T$-partition of unity when its $T$-translates are locally finite and
\begin{equation}\label{eq:roadmap-partition-separation}
 \sum_{t\in T}\varphi(x+t)=1
 \quad (x\in\R^d).
\end{equation}
The goal of Section~\ref{sec:soft-configurations} is to construct such a
function that is compactly supported and continuous piecewise affine, and
also satisfies
\begin{equation}\label{eq:roadmap-pointwise-separation}
 \varphi(x)\varphi(x-p)=0,
 \qquad p\in P\setminus\{0\}.
\end{equation}
The partition-of-unity identity implies $\{\varphi>0\}+T=\R^d$.  The second
identity gives pointwise disjointness of nontrivial $P$-translates.  To obtain
a positive packing buffer we pass to a compact positive superlevel set.

Write $T=A\Z^d$ and let
\[
 F=A[0,1)^d
\]
be the standard half-open $T$-fundamental parallelepiped.  Since
$\supp\varphi$ is compact and the sets $F+t$, $t\in T$, partition $\R^d$,
there are $t_1,\ldots,t_N\in T$ such that
\[
 \supp\varphi\subset\bigcup_{j=1}^N(F+t_j).
\]
Consequently, for every $x\in F$,
\begin{equation}\label{eq:finite-roadmap-partition}
 1=\sum_{t\in T}\varphi(x+t)
  =\sum_{j=1}^N\varphi(x+t_j).
\end{equation}
Therefore, at least one term contributes at least $1/N$ to the sum.  Put
\[
 K=\{y\in\R^d:\varphi(y)\ge 1/N\},
 \qquad
 D_j=\{x\in F:x+t_j\in K\}.
\]
The sets $D_1,\ldots,D_N$ cover $F$.  Since $\varphi$ is continuous and
piecewise affine on a finite polyhedral subdivision of its compact support,
$K$ is a compact finite union of polyhedra.

Choose the first admissible translate in the fixed order by setting
\begin{equation}\label{eq:finite-roadmap-first-hit}
 E_1=D_1,
 \qquad
 E_j=D_j\setminus\bigcup_{k<j}D_k\quad (2\le j\le N),
 \qquad
 \Omega=\bigcup_{j=1}^N(E_j+t_j).
\end{equation}
The sets $E_j$ form a disjoint partition of $F$.  Hence every $T$-coset has
exactly one representative in $\Omega$, so $\Omega+T$ is an exact tiling.
After refining the finitely many polyhedral subdivisions on which the
functions $x\mapsto\varphi(x+t_j)$ are affine, each $E_j$ is a finite union
of half-open polyhedral pieces.  Thus $\Omega$ is a finite union of bounded
half-open polyhedra.

Equations~\eqref{eq:roadmap-partition-separation} and
\eqref{eq:roadmap-pointwise-separation} imply
\[
 K\cap(K+p)=\varnothing,
 \qquad p\in P\setminus\{0\}.
\]
Because $K$ is compact and $P$ is discrete,
\[
 \delta:=\inf_{p\in P\setminus\{0\}}\dist(K,K+p)>0.
\]
Since $\Omega\subset K$, for every $0<\varepsilon<\delta/2$, the set
$\Omega+B_\varepsilon$ packs with $P$.  Section~\ref{sec:soft-configurations}
uses the sharper row--column threshold $1/(n-m+1)$ instead of the coarser
$1/N$ used here.

We retain the normalization and notation of
Section~\ref{sec:planar-examples}.  For all configuration pictures below, the representatives in the four subcell positions are fixed by the guiding pair $T=\Z\times4\Z$, $P=6\Z\times\Z$; thus
$R_i=(0,i)+T$, $Q_j=(j,0)+P$,
$\mathcal A_{i,j}=(j,i)+(6\Z\times4\Z)$, and the four subcells of
$\mathcal A_{i,j}$ represent $(j,i)$, $(j+6,i)$, $(j,i+4)$, and
$(j+6,i+4)$.

\subsection{Why hard configurations are insufficient}

The preceding examples show that fixed hard tables can work for some pairs.  Nevertheless, an arbitrarily large surplus $n-m$ does not force the unit-square construction to have a positive buffer for a given pair.  The following elementary family makes this limitation explicit.

\begin{remark}\label{prop:hard-not-enough}
An arbitrarily large column surplus does not guarantee a positive buffer for
the unit-square hard construction.
For any $1\le m<n$, take $T=m\Z\times\Z$ and
$P=\Z\times n\Z$.  A hard configuration gives a union of unit cubes that
tiles by $T$ and packs by $P$.  Since $(1,0)\in P$, the closure of every
selected cube meets the closure of its translate by $(1,0)$.  The distance
to a nonzero $P$-translate is therefore zero, independently of $n-m$.
\end{remark}

\subsection{Smith normal form reduction}

The ordinary Smith normal form recalled in
Section~\ref{sec:lattice-theory} implies that, after a further common
invertible linear change of variables, the commensurable pair may be put in
the diagonal form
\[
 T=\operatorname{diag}(b_1,\ldots,b_d)\Z^d,
 \qquad
 P=\operatorname{diag}(a_1,\ldots,a_d)\Z^d,
 \qquad
 \gcd(a_\ell,b_\ell)=1.
\]
In these coordinates, $\Z^d/T\cong\prod_{\ell=1}^d\Z/b_\ell\Z$, $m=\prod_{\ell=1}^d b_\ell$, $\Z^d/P\cong\prod_{\ell=1}^d\Z/a_\ell\Z$, and $n=\prod_{\ell=1}^d a_\ell$.
If $R_i$ corresponds to $(\alpha_\ell)$ and $Q_j$ corresponds to
$(q_\ell)$, the coordinatewise Chinese remainder theorem gives a unique
representative $c(i,j)$ modulo
$\operatorname{diag}(a_1b_1,\ldots,a_db_d)\Z^d$, and
\[
 \mathcal A_{i,j}=c(i,j)+
 \operatorname{diag}(a_1b_1,\ldots,a_db_d)\Z^d.
\]
The reader may keep this diagonal model in mind.  We do not impose it in the
proof, because the row--column construction works directly under
\eqref{eq:commensurable-normalization}.

\subsection{Soft configurations}

The hard configurations of Definition~\ref{def:hard-configuration} are the
vertices of the continuous configuration space introduced below.  They
choose one actual address in every row, with the chosen addresses in
pairwise distinct columns.  Since every address class is nonempty, a hard
configuration exists whenever $n\ge m$.

\begin{figure}[htbp]
\centering
\begin{minipage}[b]{.48\textwidth}
\centering
\ConfigurationTable{%
  \ConfigTL{1}{1}{1}%
  \ConfigTL{2}{3}{1}%
  \ConfigTL{3}{5}{1}%
  \ConfigTL{4}{6}{1}%
}
\end{minipage}\hfill
\begin{minipage}[b]{.48\textwidth}
\centering
\ConfigurationTable{%
  \ConfigTL{1}{2}{1}%
  \ConfigTL{2}{4}{1}%
  \ConfigTL{3}{1}{1}%
  \ConfigTL{4}{5}{1}%
}
\end{minipage}
\caption{Two hard configurations with four rows and six columns.  Every main cell is an address class $\mathcal A_{i,j}=R_i\cap Q_j$, and its four subcells represent four possible addresses.  In each panel, exactly one subcell in every
row has value $1$, and the four occupied main cells lie in distinct
columns.  The two unused columns provide the vacancies for elementary hard
moves.}
\label{fig:hard-configurations}
\end{figure}

With the guiding address convention above, the supports
of the two hard configurations are, respectively,
\begin{equation}\label{eq:hard-picture-supports}
 \{(0,0),(2,1),(4,2),(5,3)\}
 \quad\text{and}\quad
 \{(1,0),(3,1),(0,2),(4,3)\}.
\end{equation}
Each support meets every $R_i$ once and every $Q_j$ at most once.

At the level of row--column choices, the finite cut-and-translate viewpoint
is closely related to the lattice packing and tiling arguments in
\cite{Kolountzakis1998Cubes,Kolountzakis2000Packing}.  If a row occupies one
column and another column is vacant, the row can be moved to an address in
the vacant column.  Its previous column then becomes vacant.  Consequently,
if $n\ge m+1$, any two hard configurations can be joined by a sequence of
such elementary moves through a vacant column.

Hard configurations are discrete.  To let configurations vary continuously,
we allow each row to distribute its unit mass among finitely many addresses.

\begin{definition}\label{def:soft-config}
A \emph{soft configuration} is a finitely supported function
\[
 \lambda:\Z^d\longrightarrow[0,1]
\]
such that every row has total mass one,
\begin{equation}\label{eq:soft-row}
 \sum_{z\in R_i}\lambda(z)=1,
 \qquad i=0,\ldots,m-1,
\end{equation}
and distinct positive addresses never lie in the same column:
\begin{equation}\label{eq:soft-column}
 z\ne z',\quad z+P=z'+P
 \quad\Longrightarrow\quad
 \lambda(z)\lambda(z')=0.
\end{equation}
We denote the resulting space by $\mathcal C(T,P)$.
\end{definition}

A hard configuration is precisely a soft configuration whose values lie in
$\{0,1\}$.  The row condition and the column-exclusion condition are now
expressed directly in the ambient group $\Z^d$.

\begin{figure}[htbp]
\centering
\begin{minipage}[b]{.48\textwidth}
\centering
\ConfigurationTable{%
  \ConfigTL{1}{1}{.7}%
  \ConfigBL{1}{2}{.3}%
  \ConfigTL{2}{3}{.6}%
  \ConfigBR{2}{4}{.4}%
  \ConfigTR{3}{5}{1}%
  \ConfigBR{4}{6}{1}%
}
\end{minipage}\hfill
\begin{minipage}[b]{.48\textwidth}
\centering
\ConfigurationTable{%
  \ConfigTL{1}{1}{.2}%
  \ConfigBL{1}{2}{.3}%
  \ConfigTR{1}{3}{.5}%
  \ConfigTL{2}{4}{1}%
  \ConfigBR{3}{5}{1}%
  \ConfigBL{4}{6}{1}%
}
\end{minipage}
\caption{Two soft configurations.  Empty subcells have weight zero.  In
every row the displayed values add to $1$, while no main column contains
positive values belonging to two different rows.  In this schematic each main cell is an address class $\mathcal A_{i,j}$ and displays four possible addresses.  The column-exclusion rule allows at most one positive subcell anywhere in each column.}
\label{fig:soft-configurations}
\end{figure}

The same guiding pair also provides the first concrete illustration of why
weights are useful.  The following legal weighted table occurs in the later
equivariant field and produces the buffered construction that a fixed hard
table cannot supply.

\begin{figure}[htbp]
\centering
\begin{minipage}[b]{.29\textwidth}
\centering
\resizebox{\linewidth}{!}{%
\ConfigurationTable{%
  \ConfigTL{1}{6}{1}%
  \ConfigTL{2}{1}{1}%
  \ConfigTL{3}{2}{.5}%
  \ConfigTL{3}{3}{.5}%
  \ConfigTL{4}{4}{1}%
}}
\end{minipage}\hfill
\begin{minipage}[b]{.335\textwidth}
  \centering
  \RasterWithZxFourZ{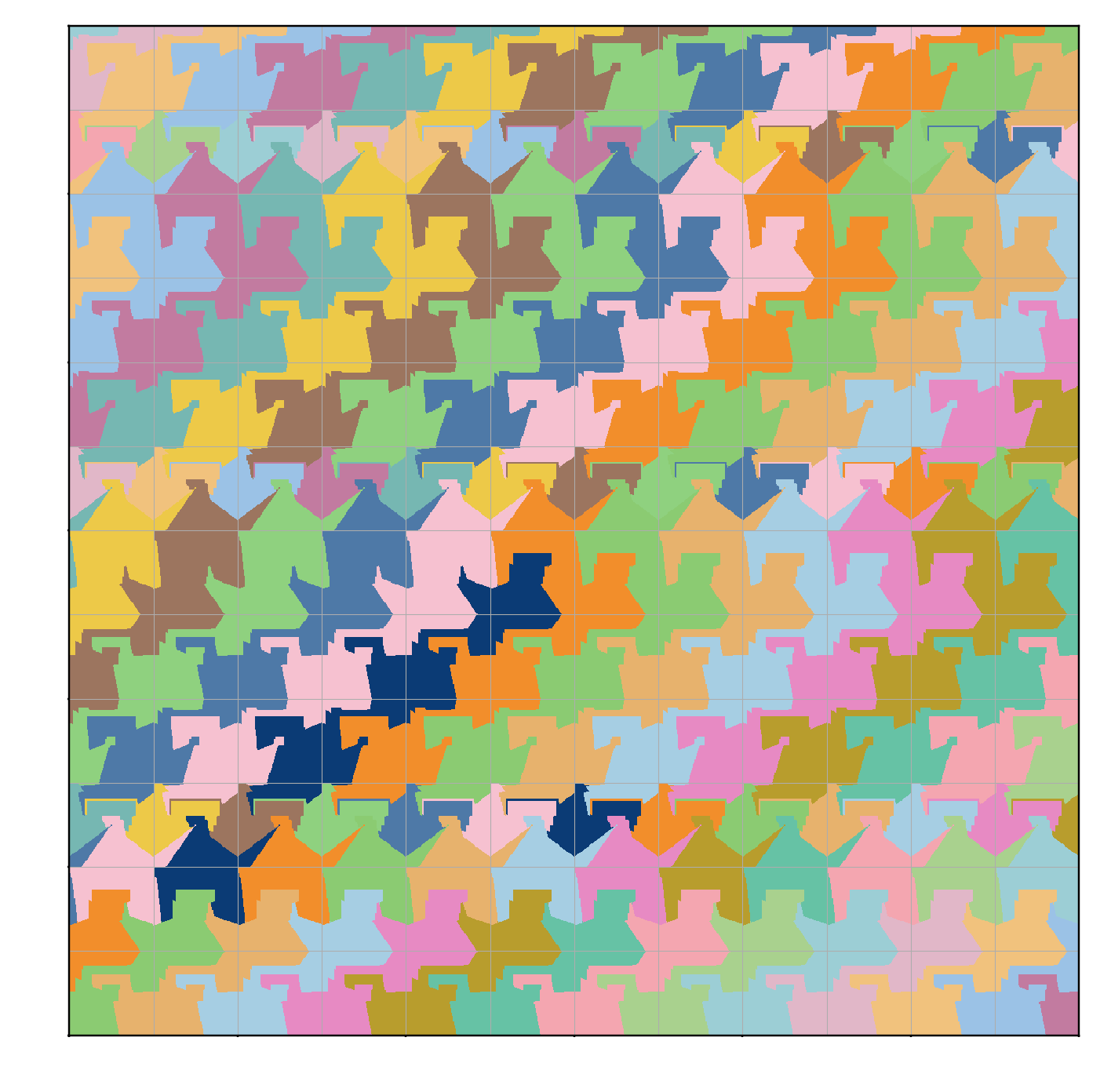}{.213277}{.206178}{.075742}{.077047}{-2}{10}{-2}{10}
\end{minipage}\hfill
\begin{minipage}[b]{.335\textwidth}
  \centering
  \RasterWithSixZxZ{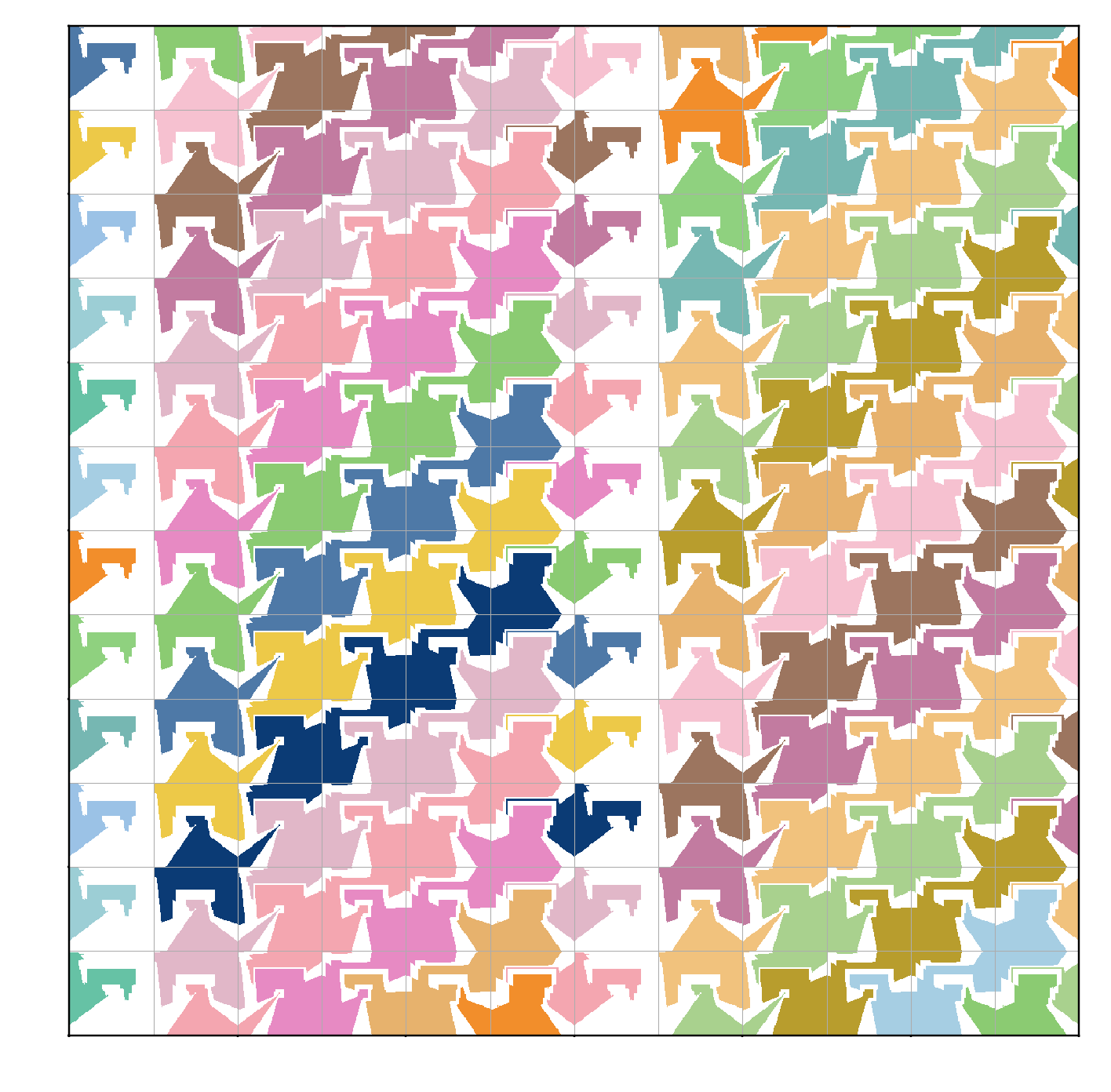}{.213277}{.206178}{.075742}{.077047}{-2}{10}{-2}{10}
\end{minipage}
\caption{A guiding soft-configuration example for
$T=\Z\times4\Z$ and $P=6\Z\times\Z$.  From left to right: one legal
weighted configuration occurring in the configuration field; the resulting
exact $T$-tiling; and the buffered $P$-packing.  The complete construction
is given in the second example in Subsection~\ref{subsec:two-further-equality-examples}.}
\label{fig:four-six-preview-success}
\end{figure}

An admissible support pattern is a tuple
\[
 S=(S_0,\ldots,S_{m-1}),
\]
where every $S_i\subset R_i$ is finite and nonempty and no two addresses in
$\bigcup_iS_i$ lie in the same column $Q_j$.  The configurations supported
in $S$ form the convex cell
\begin{equation}\label{eq:soft-cell}
 C_S=\prod_{i=0}^{m-1}\Delta(S_i),
\end{equation}
where $\Delta(S_i)$ is the probability simplex on $S_i$.

The first configuration in Figure~\ref{fig:soft-configurations} has support
pattern $S^{A}=(S^{A}_0,S^{A}_1,S^{A}_2,S^{A}_3)$ with
\begin{equation}\label{eq:first-soft-support}
 \begin{aligned}
 S^{A}_0&=\{(0,0),(1,4)\},&
 S^{A}_1&=\{(2,1),(9,5)\},\\
 S^{A}_2&=\{(10,2)\},&
 S^{A}_3&=\{(11,7)\}.
 \end{aligned}
\end{equation}
Thus
\[
 C_{S^{A}}
 =\Delta\{(0,0),(1,4)\}
  \times\Delta\{(2,1),(9,5)\}
  \times\Delta^0\times\Delta^0
 \cong\Delta^1\times\Delta^1\times\Delta^0\times\Delta^0.
\]
Here $\Delta^0=\{1\}$ is the probability simplex on a singleton.

The second configuration has support pattern
$S^{B}=(S^{B}_0,S^{B}_1,S^{B}_2,S^{B}_3)$ with
\begin{equation}\label{eq:second-soft-support}
 \begin{aligned}
 S^{B}_0&=\{(0,0),(1,4),(8,0)\},&
 S^{B}_1&=\{(3,1)\},\\
 S^{B}_2&=\{(10,6)\},&
 S^{B}_3&=\{(5,7)\}.
 \end{aligned}
\end{equation}
Hence
\[
 C_{S^{B}}
 =\Delta\{(0,0),(1,4),(8,0)\}
  \times\Delta^0\times\Delta^0\times\Delta^0
 \cong\Delta^2\times\Delta^0\times\Delta^0\times\Delta^0.
\]
The point, segment, triangle, and square cell types that occur in this
four-row, six-column setting are displayed in
Figure~\ref{fig:soft-cell-types}.

We regard every cell as a subset of the vector space
$\R^{(\Z^d)}$ of finitely supported real functions on $\Z^d$, by extending
its coordinates by zero outside $\bigcup_iS_i$.  With this convention,
\begin{equation}\label{eq:group-soft-union}
 \mathcal C(T,P)=\bigcup_{S\text{ admissible}}C_S.
\end{equation}
For $\lambda\in\mathcal C(T,P)$, the support in row $R_i$ is
\[
 S_i(\lambda)=\{z\in R_i:\lambda(z)>0\}.
\]
The tuple $S(\lambda)$ is admissible, and $C_{S(\lambda)}$ is the unique
smallest cell containing $\lambda$.  Conversely, every point of every
$C_S$ satisfies the row-normalization and column-exclusion conditions.
Faces arise by setting weights equal to zero.

If $S$ uses $s$ addresses in total, then
\[
 \dim C_S=\sum_{i=0}^{m-1}(|S_i|-1)=s-m.
\]
Since an admissible pattern uses at most one address in each of the $n$
columns, $s\le n$, and hence
\begin{equation}\label{eq:group-cell-dimension}
 \dim C_S\le n-m.
\end{equation}
The full space is generally an infinite polyhedral complex, but every map
from a finite triangulation that is defined by finitely many vertex values
uses only finitely many addresses and therefore lies in a finite
subcomplex.

\begin{figure}[htbp]
\centering
\begin{tikzpicture}[>=Latex,scale=.87]
  \path[use as bounding box] (-.1,-.42) rectangle (15.0,3.62);
  \def\base{.72}

  \begin{scope}[shift={(0,0)}]
    \node[font=\bfseries] at (1.25,3.38) {point};
    \fill[MidnightBlue] (1.25,\base) circle (3.2pt);
    \node[font=\large] at (1.25,.18) {$\Delta^0$};
  \end{scope}

  \begin{scope}[shift={(3.35,0)}]
    \node[font=\bfseries] at (1.35,3.38) {segment};
    \coordinate (a) at (.20,\base);
    \coordinate (b) at (2.50,\base);
    \draw[very thick,MidnightBlue] (a)--(b);
    \fill[MidnightBlue] (a) circle (2.8pt)
      node[below=5pt,font=\scriptsize] {$(0,0)$};
    \fill[MidnightBlue] (b) circle (2.8pt)
      node[below=5pt,font=\scriptsize] {$(4,0)$};
    \node[font=\large] at (1.35,.18) {$\Delta^1$};
  \end{scope}

  \begin{scope}[shift={(7.05,0)}]
    \node[font=\bfseries] at (1.35,3.38) {triangle};
    \coordinate (a) at (.25,\base);
    \coordinate (b) at (2.45,\base);
    \coordinate (c) at (1.35,{\base+1.905});
    \draw[very thick,fill=MidnightBlue!8] (a)--(b)--(c)--cycle;
    \fill[MidnightBlue] (a) circle (2.6pt)
      node[below left,font=\scriptsize] {$(0,0)$};
    \fill[MidnightBlue] (b) circle (2.6pt)
      node[below right,font=\scriptsize] {$(4,0)$};
    \fill[MidnightBlue] (c) circle (2.6pt)
      node[above right=1pt,font=\scriptsize] {$(2,0)$};
    \node[font=\large] at (1.35,.18) {$\Delta^2$};
  \end{scope}

  \begin{scope}[shift={(11.05,0)}]
    \node[font=\bfseries] at (1.40,3.38) {square};
    \draw[very thick,fill=ForestGreen!10] (.50,\base) rectangle ++(1.80,1.80);
    \fill[ForestGreen] (.50,\base) circle (2.6pt)
      node[below left,font=\scriptsize] {$ac$};
    \fill[ForestGreen] (2.30,\base) circle (2.6pt)
      node[below right,font=\scriptsize] {$bc$};
    \fill[ForestGreen] (.50,{\base+1.80}) circle (2.6pt)
      node[above left,font=\scriptsize] {$ad$};
    \fill[ForestGreen] (2.30,{\base+1.80}) circle (2.6pt)
      node[above right,font=\scriptsize] {$bd$};
    \node[font=\large] at (1.40,{\base+.90}) {$\Delta^1\!\times\!\Delta^1$};
    \node[font=\fontsize{5.8}{6.2}\selectfont] at (1.40,.20)
      {$a=(0,0),\ b=(4,0)$};
    \node[font=\fontsize{5.8}{6.2}\selectfont] at (1.40,-.08)
      {$c=(1,1),\ d=(5,1)$};
  \end{scope}
\end{tikzpicture}
\caption{The four elementary cell types in the concrete four-row,
six-column model.  Their lower edges are vertically aligned; the triangle is
equilateral and the product cell is drawn as a square.  The fixed
$\Delta^0$ factors are suppressed.  The displayed labels are actual
addresses in the guiding classes $\mathcal A_{i,j}$, chosen so that
the active addresses lie in pairwise distinct columns.  In the square, each
vertex records the pair selected in rows $R_0$ and $R_1$.}
\label{fig:soft-cell-types}
\end{figure}

\section{Commensurable sufficiency in the Tiling--$\varepsilon$--Packing theorem}
\label{sec:soft-configurations}

We retain without reintroduction the normalization, quotient cosets,
address classes, and hard configurations of
Section~\ref{sec:planar-examples}, together with the soft configurations of
Section~\ref{sec:hard-configurations}.
Assume throughout this section that
\begin{equation}\label{eq:commensurable-gap}
 n\ge m+d.
\end{equation}

\subsection{The fundamental topological result}

The commensurable construction repeatedly fills the boundary of a parameter cell.  For $k\ge1$, write $B^k=\{x\in\R^k:\norm{x}_2\le1\}$ and $S^{k-1}=\partial B^k$.

\begin{definition}[Simplexwise affine maps]\label{def:simplexwise-affine}
Let $K$ be a finite simplicial complex, with geometric realization $|K|$.  A map $f:|K|\to\mathcal C(T,P)$ is \emph{simplexwise affine with respect to $K$} if, for every simplex $\sigma=[v_0,\ldots,v_p]$, the vertex values lie in one common cell $C_{S_\sigma}$ and
\[
 f\!\left(\sum_{i=0}^pt_iv_i\right)=\sum_{i=0}^pt_if(v_i)
 \quad\text{whenever }t_i\ge0\text{ and }\sum_{i=0}^pt_i=1.
\]
It is \emph{simplexwise affine} if this holds after a finite subdivision of $K$.
\end{definition}
The common-cell condition keeps the affine interpolation inside the soft-configuration space.  The formulas agree on common faces, so the map is continuous.

\begin{theorem}\label{thm:soft-filling}
Assume $n\ge m+d$.  For every $1\le k\le d$, every simplexwise affine map from a finitely
triangulated $S^{k-1}$ into $\mathcal C(T,P)$ extends simplexwise affinely
over a finitely triangulated $B^k$ without changing the boundary
triangulation or its vertex values.  No finiteness assumption is imposed on
the address classes $\mathcal A_{i,j}$; as in Definition~\ref{def:soft-config},
each individual configuration has finite support.  More precisely, for every
boundary map there exist finite nonempty sets
\[
 B_{i,j}\subset\mathcal A_{i,j},
 \qquad 0\le i\le m-1,\quad 0\le j\le n-1,
\]
containing every address that occurs in a boundary vertex value, such that
the extension may be chosen to use only addresses in
$\bigcup_{i,j}B_{i,j}$.
\end{theorem}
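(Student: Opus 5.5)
The plan is to reduce the filling statement to a connectivity estimate for a finite polyhedral complex, and then to prove that estimate by induction on the number of rows.

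\emph{Step 1: choice of the sets $B_{i,j}$.} The boundary triangulation of $S^{k-1}$ is finite, and every vertex value has finite support. So only finitely many addresses occur, and we let $B_{i,j}$ be the set of occurring addresses in $\mathcal A_{i,j}$. If this set is empty, we add one arbitrary element of $\mathcal A_{i,j}$; this is possible because every address class is nonempty. Let $X\subset\mathcal C(T,P)$ be the finite subcomplex of all cells $C_S$ with $S_i\subset\bigcup_jB_{i,j}$. Column exclusion still forces at most one positive address per column. Hence $X$ is a finite polyhedral complex whose cells are products of simplices, and by \eqref{eq:group-cell-dimension} they have dimension at most $n-m$. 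The boundary map lands in $X$. It therefore suffices to show that $X$ is $(n-m-1)$-connected, so in particular $(d-1)$-connected by \eqref{eq:commensurable-gap}. The simplexwise affine extension then follows from standard PL approximation.

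\emph{Step 2: PL approximation.} Take the canonical simplicial subdivision of $X$, for instance the barycentric subdivision of its product-of-simplices cells, which introduces no new addresses. Given a continuous null-homotopy, i.e.\ a continuous extension $B^k\to X$, apply relative simplicial approximation rel $S^{k-1}$. Subdividing only the interior of $B^k$ keeps the boundary triangulation and its vertex values fixed. The result is simplexwise affine in the sense of Definition~\ref{def:simplexwise-affine}, because every simplex of the subdivision lies in a single cell $C_S$.

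\emph{Step 3: connectivity of $X$, by induction on $m$.} For $m=0$ the space is a point. For $m\ge1$, consider $\pi\colon X\to\Delta$, the distribution of the last row $R_{m-1}$ over its columns, where $\Delta$ is a simplex. Cover $X$ by the closed subcomplexes
\[
U_j=\{\lambda\in X:\ \text{column } Q_j \text{ is not used by rows } R_0,\ldots,R_{m-2}\},\qquad 0\le j\le n-1.
\]
Straight-line retraction of the last row onto an address in $B_{m-1,j}$ is legal inside $U_j$: it never creates a positive weight in a column that is occupied by another row. This retraction should show that $U_j$ deformation retracts onto a copy of $X(m-1,n-1)$, the complex for the first $m-1$ rows and the remaining $n-1$ columns. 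An intersection $U_{j_0}\cap\cdots\cap U_{j_r}$ similarly retracts onto a copy of $X(m-1,n-1-r)$. By induction these are $(n-m-1-r)$-connected whenever $n-1-r\ge m-1$. The nerve of the cover is the full simplex on the $n$ columns: $r+1$ columns can be simultaneously avoided by the first $m-1$ rows exactly when $r+1\le n-m+1$, so the nerve contains every face of dimension at most $n-m$ and is $(n-m-1)$-connected. A generalized nerve lemma (Björner's connectivity version) then yields that $X$ is $(n-m-1)$-connected.

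\emph{Main obstacle.} I expect the main obstacle to be Step 3. First, one must check carefully that the retraction of $U_j$ really stays inside $U_j$, using the coordinatewise column bookkeeping. Second, the connectivity indices in the nerve lemma must be verified at the boundary cases, where $n-1-r$ is close to $m-1$; this is exactly where the sharp threshold $n\ge m+d$ enters. If the nerve-lemma bookkeeping fails, the fallback is a direct elementary sphere-filling argument in the spirit of the vacant-column moves described after Figure~\ref{fig:hard-configurations}, proceeding by induction on $k$. The filling would cone each boundary simplex toward a hard configuration, and the $d$ spare columns would provide the room needed to resolve column conflicts one skeleton dimension at a time.
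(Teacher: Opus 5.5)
\textbf{Gap in Step~3.} The straight-line retraction of the last row onto a fixed address $v\in B_{m-1,j}$ does not stay inside $U_j$. Column exclusion \eqref{eq:soft-column} forbids two distinct positive addresses in one column even when both lie in the \emph{same} row. Your $U_j$ only constrains the rows $R_0,\ldots,R_{m-2}$. So a point of $U_j$ may carry positive last-row mass at some $v'\in B_{m-1,j}$ with $v'\neq v$, and then $(1-t)\lambda_{m-1}+t\delta_v$ is illegal for $0<t<1$.

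This is not a cosmetic slip. For $m=1$ one has $U_j=X$ for every $j$. If every $B_{0,j}$ has two elements, $X$ is the join of $n$ two-point sets, which is homeomorphic to $S^{n-1}$ and not to the point $X(0,n-1)$. The connectivity of the pieces is then exactly the statement you are trying to prove. For $m\ge2$, the intersections $U_{j_0}\cap\cdots\cap U_{j_r}$ are again mixed complexes: the first $m-1$ rows avoid $r+1$ columns, and the last row is restricted only by exclusion. Your induction hypothesis does not cover these, so the induction does not close.

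Several addresses per cell cannot be avoided, because Step~1 collects all boundary addresses. Already the worked example in Subsection~\ref{subsec:worked-example-Z2-3ZxZ} uses $(0,0),(0,1)\in Q_0$ and $(1,0),(1,1)\in Q_1$. Your nerve argument is valid only in the singleton case $|B_{i,j}|=1$. The connectivity you want is true (Theorem~\ref{thm:continuous-soft-filling-main}), but this route does not reach it.

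\textbf{What is missing.} The pieces must be indexed by \emph{addresses}, so that collapsing onto a point mass is legal. The cover must also be arranged so that the intersections remain uniform.

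The paper does this along the last column rather than the last row. There $U_v$ is the set of configurations in which $v$ is the only address of column $n$ that may be positive. The affine map $P_v$ replaces the row of $v$ by $\delta_v$, and the prism between $f$ and $P_v\circ f$ then stays in $U_v$. This reduces the filling to $Y_v\cong\mathcal C_{m-1,n-1}$ (Proposition~\ref{prop:Uv-filling}). Moreover, all pairwise intersections equal the single subcomplex $A_0\cong\mathcal C_{m,n-1}$, in which the last column is unused.

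A row-indexed address cover would lack this property. Two designated addresses in the same column force that column to be empty, so the intersections change type and the nerve is no longer a skeleton of a simplex.

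The paper then fills by coning with nested carriers (Lemma~\ref{lem:soft-pieces}), by induction on $m+n$, and stays simplexwise affine throughout. This also removes the need for your Step~2, which as stated is incomplete. A simplexwise affine boundary map is not simplicial for any fixed triangulation of $X$, so relative simplicial approximation rel $S^{k-1}$ needs an extra collar interpolation. Alternatively, you can add the multiplicities topologically as in Appendix~\ref{app:continuous-soft-filling}: start from the singleton case, add colors column by column with the gluing lemma, and finish with the padding retraction.
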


The proof is deferred to Section~\ref{sec:proof-soft-filling}.  There the
lattice origin of the address classes is forgotten and a stronger theorem is
proved for arbitrary nonempty row--column sets, finite or infinite.  The
proof reduces every simplexwise affine boundary problem to a finite complete
address subsystem before applying the finite polyhedral argument.

\subsection{The translation action}\label{sec:commensurable-construction}

For $a\in\Z^d$ and $\lambda\in\mathcal C(T,P)$, define
\begin{equation}\label{eq:rho-action}
 (\rho_a\lambda)(z)=\lambda(z-a),
 \qquad z\in\Z^d.
\end{equation}
Translation by $a$ permutes the rows $R_i$, the columns $Q_j$, and their
address classes, so $\rho_a\lambda$ is again a soft configuration.  Moreover,
$\rho_a\rho_b=\rho_{a+b}$.

The action is needed because the configuration attached to a base point
records masses at integer addresses relative to that base point.  We read
the coordinate $\lambda(z)$ at a base point $u$ as the mass attached to the
physical point $u-z$.  If the base point is shifted from $u$ to $u+a$, then
the same physical point is written as
\[
 u-z=(u+a)-(z+a).
\]
Thus the mass formerly stored at address $z$ must, at the new base point
$u+a$, be stored at address $z+a$.  Equivalently,
$\lambda_{\rm new}(z+a)=\lambda_{\rm old}(z)$, which is precisely the rule
\[
 (\rho_a\lambda)(z)=\lambda(z-a).
\]
We need this relabeling because opposite faces of neighboring cubes represent
the same physical points.  The action $\rho_a$ is what allows the local
fillings on adjacent cubes to match along those shared boundary points and to
assemble into one globally defined equivariant field.  It will also yield the
key identity $\varphi(x-z)=\Lambda(x)(z)$, from which the
partition-of-unity and packing relations follow.

\subsection{Constructing the equivariant configuration field $\Lambda$}

We first construct the field on the closed fundamental cube
\[
 F=[0,1]^d
\]
and only afterwards extend it equivariantly to all of $\R^d$.  For
$i=1,\ldots,d$, write
\[
 F_i^- =\{x\in F:x_i=0\},
 \qquad
 F_i^+ =\{x\in F:x_i=1\}=F_i^-+e_i.
\]
The map on the cube will satisfy the opposite-face compatibility relations
\begin{equation}\label{eq:cube-face-compatibility}
 \Lambda_F(x+e_i)=\rho_{e_i}\Lambda_F(x),
 \qquad x\in F_i^-.
\end{equation}
These relations are precisely what is needed for the copies on neighboring
integer cubes to fit together continuously.

We triangulate $F$ by the standard staircase triangulation.  For every
permutation $\pi$ of $1,\ldots,d$, the corresponding $d$-simplex is
\begin{equation}\label{eq:staircase-simplex}
 \operatorname{conv}\bigl\{0,\ e_{\pi(1)},\
 e_{\pi(1)}+e_{\pi(2)},\ \ldots,\
 e_{\pi(1)}+\cdots+e_{\pi(d)}\bigr\}.
\end{equation}
The triangulations induced on opposite faces of the cube agree after
translation by the corresponding coordinate vector.  In other words, when one
translates a simplex lying in a face $x_i=0$ by $e_i$, one obtains exactly the
corresponding simplex in the opposite face $x_i=1$.

\begin{figure}[H]
\centering
\begin{tikzpicture}[>=Latex]
  \newcommand{\stairtet}[6]{%
    \begin{scope}[shift={(#1,#2)},scale=.61]
      \coordinate (O)   at (0,0);
      \coordinate (X)   at (2.45,0);
      \coordinate (Y)   at (.88,.70);
      \coordinate (XY)  at (3.33,.70);
      \coordinate (Z)   at (0,2.45);
      \coordinate (XZ)  at (2.45,2.45);
      \coordinate (YZ)  at (.88,3.15);
      \coordinate (U)   at (3.33,3.15);
      \filldraw[fill=#5!52,fill opacity=.34,draw=#5!85!black,line width=.55pt]
        (O)--(#3)--(#4)--cycle;
      \filldraw[fill=#5!46,fill opacity=.30,draw=#5!85!black,line width=.55pt]
        (O)--(#3)--(U)--cycle;
      \filldraw[fill=#5!38,fill opacity=.27,draw=#5!85!black,line width=.55pt]
        (O)--(#4)--(U)--cycle;
      \filldraw[fill=#5!58,fill opacity=.30,draw=#5!85!black,line width=.55pt]
        (#3)--(#4)--(U)--cycle;
      \draw[black!75,line width=.62pt] (O)--(X)--(XY)--(Y)--cycle;
      \draw[black!75,line width=.62pt] (Z)--(XZ)--(U)--(YZ)--cycle;
      \draw[black!75,line width=.62pt] (O)--(Z) (X)--(XZ)
        (Y)--(YZ) (XY)--(U);
      \draw[black!45,dashed,line width=.48pt] (O)--(U);
      \foreach \p in {O,X,Y,XY,Z,XZ,YZ,U}
        \fill[black] (\p) circle (1.15pt);
      \node[font=\small] at (1.67,-.55) {$T_{#6}$};
    \end{scope}%
  }
  \stairtet{0}{4.05}{X}{XY}{MidnightBlue}{123}
  \stairtet{4.25}{4.05}{X}{XZ}{OrangeRed}{132}
  \stairtet{8.50}{4.05}{Y}{XY}{ForestGreen}{213}
  \stairtet{0}{0}{Y}{YZ}{Magenta}{231}
  \stairtet{4.25}{0}{Z}{XZ}{Purple}{312}
  \stairtet{8.50}{0}{Z}{YZ}{Cyan!70!black}{321}
\end{tikzpicture}
\caption{The standard staircase triangulation in dimension three.
Each panel shows the unit cube from the same perspective and shades one of
the six tetrahedra $T_\pi$.  Every tetrahedron contains the long diagonal
from $(0,0,0)$ to $(1,1,1)$; together the six shaded tetrahedra fill the
cube and meet along common faces.}
\label{fig:staircase-triangulation}
\end{figure}

For a triangulated polyhedron, the \emph{$j$-skeleton} is the union of all
simplices of dimension at most $j$.  In particular, the $0$-skeleton of the
triangulated cube consists precisely of its corners.

We begin on the $0$-skeleton, that is, on the corners of the cube.
Choose one hard configuration $\lambda^0$.  For every cube vertex
$\varepsilon\in\{0,1\}^d$, set
\begin{equation}\label{eq:field-vertices}
 \Lambda_F(\varepsilon)=\rho_\varepsilon\lambda^0.
\end{equation}
The commuting relations $\rho_a\rho_b=\rho_{a+b}$ show that these values
already satisfy~\eqref{eq:cube-face-compatibility} at the corners, that is, on the $0$-skeleton.
For the concrete four-row, six-column illustration in dimension two, we take
\[
 \supp\lambda^0=\{(0,0),(1,1),(2,2),(3,3)\}.
\]
Since $(\rho_a\lambda)(z)=\lambda(z-a)$, the support is translated by $+a$.
Consequently,
\begin{align*}
 \supp(\rho_{e_1}\lambda^0)
   &=\{(1,0),(2,1),(3,2),(4,3)\},\\
 \supp(\rho_{e_2}\lambda^0)
   &=\{(0,1),(1,2),(2,3),(3,4)\},\\
 \supp(\rho_{e_1+e_2}\lambda^0)
   &=\{(1,1),(2,2),(3,3),(4,4)\}.
\end{align*}

For the explicit filling used in the later four-row, six-column example,
the two non-corner values displayed below are
\begin{align}
 \Lambda(0,0.5)
 &=\delta_{(5,0)}+\delta_{(0,1)}
   +\tfrac12\delta_{(1,2)}+\tfrac12\delta_{(2,2)}+\delta_{(3,3)},
 \label{eq:configuration-field-green}\\
 \Lambda(0.9,0.55)
 &=\delta_{(6,0)}+\delta_{(1,1)}
   +\tfrac45\delta_{(2,2)}+\tfrac15\delta_{(3,2)}+\delta_{(4,3)}.
 \label{eq:configuration-field-orange}
\end{align}
The occupied columns are pairwise distinct in each configuration.

\begin{figure}[htbp]
\centering
\begin{tikzpicture}[x=4.95cm,y=4.95cm,>=Latex,scale=1.3,transform shape]
  \definecolor{AnchorBlue}{RGB}{0,58,135}
  \definecolor{AnchorOrange}{RGB}{170,82,0}
  \definecolor{AnchorGreen}{RGB}{0,100,55}
  \definecolor{AnchorOrangePt}{RGB}{185,110,0}
  \draw[->,line width=.7pt] (-0.2,0) -- (1.55,0) node[right] {$x$};
  \draw[->,line width=.7pt] (0,-0.2) -- (0,1.55) node[above] {$y$};
  \foreach \t/\lab in {0/0,0.5/{0.5},1/1}{
    \draw[line width=.5pt] (\t,0.018)--(\t,-0.018)
      node[below=2pt,font=\scriptsize] {$\lab$};
    \draw[line width=.5pt] (0.018,\t)--(-0.018,\t)
      node[left=2pt,font=\scriptsize] {$\lab$};
  }

  \node[anchor=south west,inner sep=0] at (0,0)
    {\MiniConfigurationTable{AnchorBlue}{
       \MiniConfigTL{1}{1}{1}{AnchorBlue}%
       \MiniConfigTL{2}{2}{1}{AnchorBlue}%
       \MiniConfigTL{3}{3}{1}{AnchorBlue}%
       \MiniConfigTL{4}{4}{1}{AnchorBlue}}};
  \node[anchor=south west,inner sep=0] at (0,1)
    {\MiniConfigurationTable{AnchorBlue}{
       \MiniConfigBL{1}{4}{1}{AnchorBlue}%
       \MiniConfigTL{2}{1}{1}{AnchorBlue}%
       \MiniConfigTL{3}{2}{1}{AnchorBlue}%
       \MiniConfigTL{4}{3}{1}{AnchorBlue}}};
  \node[anchor=south west,inner sep=0] at (1,0)
    {\MiniConfigurationTable{AnchorBlue}{
       \MiniConfigTL{1}{2}{1}{AnchorBlue}%
       \MiniConfigTL{2}{3}{1}{AnchorBlue}%
       \MiniConfigTL{3}{4}{1}{AnchorBlue}%
       \MiniConfigTL{4}{5}{1}{AnchorBlue}}};
  \node[anchor=south west,inner sep=0] at (1,1)
    {\MiniConfigurationTable{AnchorBlue}{
       \MiniConfigBL{1}{5}{1}{AnchorBlue}%
       \MiniConfigTL{2}{2}{1}{AnchorBlue}%
       \MiniConfigTL{3}{3}{1}{AnchorBlue}%
       \MiniConfigTL{4}{4}{1}{AnchorBlue}}};
  \node[anchor=south west,inner sep=0] at (0,.5)
    {\MiniConfigurationTable{AnchorGreen}{
       \MiniConfigTL{1}{6}{1}{AnchorGreen}%
       \MiniConfigTL{2}{1}{1}{AnchorGreen}%
       \MiniConfigTL{3}{2}{.5}{AnchorGreen}%
       \MiniConfigTL{3}{3}{.5}{AnchorGreen}%
       \MiniConfigTL{4}{4}{1}{AnchorGreen}}};
  \node[anchor=south west,inner sep=0] at (.9,.55)
    {\MiniConfigurationTable{AnchorOrangePt}{
       \MiniConfigTR{1}{1}{1}{AnchorOrangePt}%
       \MiniConfigTL{2}{2}{1}{AnchorOrangePt}%
       \MiniConfigTL{3}{3}{.8}{AnchorOrangePt}%
       \MiniConfigTL{3}{4}{.2}{AnchorOrangePt}%
       \MiniConfigTL{4}{5}{1}{AnchorOrangePt}}};

  \fill[AnchorBlue] (0,0) circle (1.7pt);
  \fill[AnchorBlue] (0,1) circle (1.7pt);
  \fill[AnchorBlue] (1,0) circle (1.7pt);
  \fill[AnchorBlue] (1,1) circle (1.7pt);
  \fill[AnchorGreen] (0,.5) circle (1.7pt);
  \fill[AnchorOrangePt] (.9,.55) circle (1.7pt);
\end{tikzpicture}
\caption{The legal configuration field $\Lambda$ on one fundamental
square for $T=\Z\times4\Z$ and $P=6\Z\times\Z$.  The blue tables are the
four equivariant corner values.  The green and orange tables are the values
in~\eqref{eq:configuration-field-green} and~\eqref{eq:configuration-field-orange}; in
each table the positive entries occupy distinct columns.  This is the same
explicit field used in the second example in Subsection~\ref{subsec:two-further-equality-examples}, whose tiling
and buffered packing appear in Figure~\ref{fig:four-six-tiling-packing}.}
\label{fig:lambda-plane-configurations}
\end{figure}

The first extension step already explains why interpolation through hard configurations alone is insufficient.
Even on an edge, the straight segment between the endpoint hard
configurations may leave the soft-configuration space: two rows can acquire
positive mass in the same column.  We therefore subdivide the edge, choose a
polygonal path of soft configurations, and interpolate affinely along its
segments.  On an opposite edge, the path is defined by the appropriate
translate $\rho_{e_i}$.  This is the case $j=1$ of
Theorem~\ref{thm:soft-filling}.

We now fill the higher-dimensional simplices of $F$ inductively.  Suppose
that $\Lambda_F$ has already been defined on the $(j-1)$-skeleton and obeys
the opposite-face relations there.  A simplex in a boundary face cannot be
filled independently of its translated copy on the opposite face.  More
precisely, if $\sigma\subset F_i^-$, then $\sigma+e_i\subset F_i^+$ must be
filled by the translated rule $\Lambda_F(x+e_i)=\rho_{e_i}\Lambda_F(x)$.
If $\sigma$ lies in several coordinate faces, the same rule applies to every
copy obtained by adding the corresponding sums of coordinate vectors.
Thus the boundary identifications partition the finitely many $j$-simplices
into finitely many classes.  Interior simplices form singleton classes.  We
choose one representative from each class, fill only that representative,
and obtain every other member of the class by equivariance.  This guarantees
compatibility on all identified faces by construction.

Fix one representative $\sigma$.  Its boundary has already been filled and carries the
simplexwise-affine map
\begin{equation}\label{eq:cell-boundary-map}
 f_\sigma=\Lambda_F|_{\partial\sigma}:
 \partial\sigma\longrightarrow\mathcal C(T,P).
\end{equation}
The boundary triangulation has finitely many vertices, and every vertex
value has finite support, so only finitely many addresses occur there; call
their union $E_\sigma$.  Each row already occurs, since every boundary
configuration has row sum one.  Nevertheless, some row--column boxes may
contain no address from $E_\sigma$.  For each such missing box choose one
arbitrary address and adjoin it to $E_\sigma$.  The resulting family
$A_\sigma$ is finite and contains at least one address in every box.  The
boundary map still lands in the corresponding finite subcomplex
$\mathcal C_{m,n}(A_\sigma)$.  We therefore apply the finite abstract
Theorem~\ref{thm:soft-filling-strong} to this prescribed address system.

\begin{figure}[htbp]
\centering
\begin{tikzpicture}[>=Latex,scale=1.02]
  \begin{scope}[shift={(-5.2,0)}]
    \node[font=\bfseries] at (1.8,3.75) {boundary data $E_\sigma$};
    \draw[very thick] (0,0) rectangle (3.6,3.2);
    \fill[MidnightBlue] (0,0) circle (2.6pt);
    \fill[MidnightBlue] (3.6,0) circle (2.6pt);
    \fill[MidnightBlue] (3.6,3.2) circle (2.6pt);
    \fill[MidnightBlue] (0,3.2) circle (2.6pt);
    \node[MidnightBlue,font=\scriptsize,anchor=north east] at (-.08,-.05)
      {$ (0,0)$};
    \node[MidnightBlue,font=\scriptsize,anchor=north west] at (3.68,-.05)
      {$ (1,4)$};
    \node[MidnightBlue,font=\scriptsize,anchor=south west] at (3.68,3.25)
      {$ (2,1)$};
    \node[MidnightBlue,font=\scriptsize,anchor=south east] at (-.08,3.25)
      {$ (9,5)$};
    \node[align=center,font=\scriptsize,text width=4.5cm] at (1.8,-.65)
      {blue labels: addresses already used on the boundary};
  \end{scope}

  \draw[-{Latex[length=3mm]},very thick] (-.75,1.6)--(.75,1.6);

  \begin{scope}[shift={(1.35,0)}]
    \node[font=\bfseries] at (1.8,3.75)
      {completed system and filling};
    \draw[very thick] (0,0) rectangle (3.6,3.2);
    \coordinate (c) at (1.8,1.6);
    \coordinate (m1) at (1.8,0);
    \coordinate (m2) at (3.6,1.6);
    \coordinate (m3) at (1.8,3.2);
    \coordinate (m4) at (0,1.6);
    \draw[gray!70,line width=.7pt] (c)--(0,0) (c)--(3.6,0)
      (c)--(3.6,3.2) (c)--(0,3.2);
    \draw[gray!70,line width=.7pt] (m1)--(c)--(m3) (m4)--(c)--(m2);
    \foreach \p in {(0,0),(3.6,0),(3.6,3.2),(0,3.2)}
      \fill[MidnightBlue] \p circle (2.6pt);
    \fill[OrangeRed] (m1) circle (2.8pt);
    \fill[OrangeRed] (m2) circle (2.8pt)
      node[right=3pt,font=\scriptsize] {$(10,6)$};
    \fill[OrangeRed] (c) circle (3pt)
      node[above left=2pt,font=\scriptsize] {$a^*_{i,k}$};
    \fill[OrangeRed] (m3) circle (2.8pt);
    \fill[OrangeRed] (m4) circle (2.8pt);
    \node[OrangeRed,font=\scriptsize] at (1.8,-.38)
      {$\{(8,0),(10,6),a^*_{i,k},\ldots\}$};
  \end{scope}
\end{tikzpicture}
\caption{Finite bookkeeping before filling a two-dimensional parameter cell,
shown schematically as a square.  The boundary uses the finite address
set $E_\sigma$ shown in blue.  We adjoin addresses from every missing
row--column class and then triangulate the interior; new data are shown in
orange.  Rows are already nonempty, so the completion is needed for missing
boxes, not for missing rows.}
\label{fig:soft-finite-bookkeeping}
\end{figure}

Since $j\le d\le n-m$, Theorem~\ref{thm:soft-filling-strong}, applied to
$\mathcal C_{m,n}(A_\sigma)$, extends $f_\sigma$ simplexwise affinely over
$\sigma$, using only addresses from the prescribed finite system $A_\sigma$
and leaving the existing boundary triangulation unchanged.  If
$\sigma+a$ is another simplex in the same opposite-face identification
class, define its filling by
\[
 \Lambda_F(x+a)=\rho_a\Lambda_F(x),
 \qquad x\in\sigma.
\]
The relation $\rho_a\rho_b=\rho_{a+b}$ makes this prescription consistent
on intersections of several faces.  Adjacent fillings have the same vertex
values on their common face, so their affine formulas agree there.  Repeating
this for $j=1,\ldots,d$ constructs a simplexwise affine map
\[
 \Lambda_F:F\longrightarrow\mathcal C(T,P)
\]
satisfying~\eqref{eq:cube-face-compatibility}.  Because $F$ has only
finitely many simplices after the construction, only finitely many addresses
occur on the entire cube.

After the cube has been filled, we extend the map equivariantly to all of $\R^d$.
Every $x\in\R^d$ has a unique representation
\[
 x=a+u,
 \qquad a\in\Z^d,
 \quad u\in[0,1)^d.
\]
Define
\begin{equation}\label{eq:global-field-from-cube}
 \Lambda(x)=\rho_a\Lambda_F(u).
\end{equation}
Then
\[
 \Lambda(x+b)=\rho_b\Lambda(x),
 \qquad b\in\Z^d,
\]
so the desired equivariance holds.  It remains only to check continuity
across the boundaries of adjacent cubes.  If, for example, $u_i=0$, the
limit from the neighboring cube on the negative side is
\[
 \rho_{a-e_i}\Lambda_F(u+e_i)
 =\rho_{a-e_i}\rho_{e_i}\Lambda_F(u)
 =\rho_a\Lambda_F(u),
\]
by~\eqref{eq:cube-face-compatibility}.  This equals the value in
\eqref{eq:global-field-from-cube}.  The same calculation works at
intersections of several coordinate faces.  Hence $\Lambda$ is continuous
on all of $\R^d$ and is simplexwise affine on the translated cube
triangulations.

\subsection{The scalar partition of unity}

The final scalar function is the coordinate of the zero address.  We now show
that it is a compactly supported piecewise-affine $T$-partition of unity:
\begin{equation}\label{eq:scalar-zero-coordinate}
 \varphi(x)=\Lambda(x)(0),
 \qquad x\in\R^d.
\end{equation}
Since coordinate evaluation is continuous, $\varphi$ is continuous.  For
every $z\in\Z^d$, equivariance gives the especially simple identity
\begin{equation}\label{eq:scalar-all-coordinates}
 \varphi(x-z)
 =\Lambda(x-z)(0)
 =(\rho_{-z}\Lambda(x))(0)
 =\Lambda(x)(z).
\end{equation}
Thus the integer translates of the one scalar function are exactly the
coordinates of the configuration $\Lambda(x)$.

Since $-T=T=R_0$, the row-normalization condition gives the
partition-of-unity identity
\begin{equation}\label{eq:phi-partition}
 \sum_{t\in T}\varphi(x+t)
 =\sum_{t\in T}\Lambda(x)(-t)=1,
 \qquad x\in\R^d.
\end{equation}
If $p\in P\setminus\{0\}$ and both $\varphi(x)$ and $\varphi(x-p)$ were
positive, then \eqref{eq:scalar-all-coordinates} implies that $\Lambda(x)$ is
positive at the distinct addresses $0$ and $p$, which lie in the same column
$Q_0=P$.  This contradicts \eqref{eq:soft-column}.  Hence
\begin{equation}\label{eq:phi-separation}
 \varphi(x)\varphi(x-p)=0,
 \qquad x\in\R^d,
 \quad p\in P\setminus\{0\}.
\end{equation}

Only finitely many addresses occur while $x$ ranges over one closed unit
cube; denote their set by $E$.  If $u\in[0,1)^d$ and
$\varphi(u-z)>0$, then \eqref{eq:scalar-all-coordinates} implies $z\in E$.
Therefore
\[
 \supp\varphi\subset[0,1]^d-E,
\]
which is compact.  Hence $\varphi\in C_c(\R^d)$.  Since $\Lambda$ is simplexwise affine,
$\varphi$ is piecewise affine on the translated periodic triangulation.
Together with~\eqref{eq:phi-partition}, this proves that $\varphi$ is a
compactly supported piecewise-affine $T$-partition of unity.

\subsection{Thresholding}

For fixed $x$, the nonzero terms $\varphi(x+t)$, $t\in T$, are the positive
coordinates in one row of $\Lambda(x)$.  Each of the other $m-1$ rows must
use at least one column, and these columns are distinct by the
column-exclusion rule.  The chosen row can therefore use at most
\[
 D=n-m+1
\]
columns.  Hence at most $D$ members of the $T$-partition of unity
\eqref{eq:phi-partition} are nonzero at any point, and one of them is at
least $1/D$.
Choose
\[
 0<\tau<\frac1D
\]
and set
\[
 U=\{x:\varphi(x)>\tau\},
 \qquad
 K=\{x:\varphi(x)\ge\tau\}.
\]
Then $U$ is open, $K$ is compact, and $U+T=\R^d$.  Moreover,
\[
 K\cap(K+p)=\varnothing,
 \qquad p\in P\setminus\{0\},
\]
by \eqref{eq:phi-separation}.  Since $K$ is compact and $P$ is discrete,
\begin{equation}\label{eq:positive-K-separation}
 \delta=
 \inf_{p\in P\setminus\{0\}}\dist(K,K+p)>0.
\end{equation}
To see the strict positivity, choose $R>0$ so large that
$\dist(K,K+p)>1$ whenever $\norm{p}_2>R$.  Only finitely many nonzero
$p\in P$ satisfy $\norm{p}_2\le R$, and each of the corresponding compact
sets $K$ and $K+p$ is disjoint.  The minimum of their positive distances is
therefore positive.

\subsection{The first-hit selector}\label{sec:first-hit-selector}

Let $F_T$ be a bounded half-open fundamental parallelepiped for $T$ and enumerate $T=\{t_1,t_2,\ldots\}$.
For $x\in F_T$, define
\[
 j(x)=\min\{j:x+t_j\in U\}.
\]
This is defined because $U+T=\R^d$.  Set
\begin{equation}\label{eq:first-hit-Omega}
 \Omega=\{x+t_{j(x)}:x\in F_T\}.
\end{equation}
For each $j$, the set
\[
 E_j=\{x\in F_T:x+t_j\in U,\ x+t_i\notin U\text{ for }i<j\}
\]
is Borel and
\[
 \Omega=\bigcup_j(E_j+t_j).
\]
The function $\varphi$ is affine on every simplex of a periodic
triangulation and has compact support.  It is therefore affine on the cells
of a finite polyhedral subdivision of its support.  Consequently, $U$ is a
finite union of relatively open polyhedral sets.  Since both $F_T$ and $U$
are bounded, only finitely many translates $F_T+t_j$ can meet $U$.  Hence
only finitely many sets $E_j$ are nonempty, and each nonempty $E_j$ is a
finite union of half-open polyhedral pieces.  It follows that $\Omega$ is a
finite union of bounded half-open polyhedral pieces.

The set $\Omega$ contains exactly one representative of every
$T$-coset, so it is a $T$-fundamental domain.  Moreover,
$\Omega\subset U\subset K$, and therefore
\[
 \dist(\Omega,\Omega+p)\ge\delta,
 \qquad p\in P\setminus\{0\}.
\]
For any $0<\eps<\delta/2$, the set $\Omega+B_\eps$ packs with $P$.  The inclusion
$\Omega\subset K$ also proves boundedness.

Undoing the initial common linear normalization proves commensurable
sufficiency.  Indeed, if $S\in GL_d(\R)$ is the normalizing map, then
$S^{-1}(B_\eps)$ contains a Euclidean ball of some positive radius.  Thus a
positive packing buffer in the normalized coordinates yields a positive
Euclidean packing buffer for the original lattices.

\section{Examples}\label{sec:commensurable-examples}

We now record four explicit constructions. In each case we list the active
addresses that occur in the construction. The first example is written out in
detail, including the simplexwise formulas for $\Lambda$, the induced scalar
function $\varphi$, and the resulting fundamental domain $\Omega$.

\subsection{A fully worked example: $T=\boldsymbol{\Z^2}$ and $P=\boldsymbol{3\Z\times\Z}$}
\label{subsec:worked-example-Z2-3ZxZ}

We now carry out the construction explicitly for
\[
 T=\Z^2,
 \qquad
 P=3\Z\times\Z.
\]
Then
\[
 T+P=\Z^2,
 \qquad
 m=[\Z^2:T]=1,
 \qquad
 n=[\Z^2:P]=3,
\]
so the sharp equality case $n=m+d$ holds when $d=2$.
The three columns are the congruence classes of the first coordinate modulo~$3$,
and there is only one row.

We work on the unit square with vertices
\[
 v_0=(0,0),\quad v_1=(1,0),\quad v_2=\Bigl(1,\frac12\Bigr),\quad
 v_3=(1,1),\quad v_4=(0,1),\quad v_5=\Bigl(0,\frac12\Bigr),
\]
and triangulate it by
\[
 \Delta_1=[v_0,v_1,v_5],
 \quad
 \Delta_2=[v_1,v_2,v_5],
 \quad
 \Delta_3=[v_2,v_3,v_5],
 \quad
 \Delta_4=[v_3,v_4,v_5].
\]
Assign the hard configurations
\[
 v_0\mapsto\delta_{(0,0)},
 \quad
 v_1\mapsto\delta_{(1,0)},
 \quad
 v_2\mapsto\delta_{(0,0)},
 \quad
 v_3\mapsto\delta_{(1,1)},
 \quad
 v_4\mapsto\delta_{(0,1)},
 \quad
 v_5\mapsto\delta_{(-1,0)}.
\]
On each triangle we interpolate affinely.  Relative to $[0,1]^2$, we have
\[
 \begin{aligned}
 \Delta_1&=\{x+2y\le1\},
 &\qquad
 \Delta_2&=\{y\le\tfrac12,\ x+2y\ge1\},\\
 \Delta_3&=\{y\ge\tfrac12,\ 2y\le1+x\},
 &\qquad
 \Delta_4&=\{2y\ge1+x\},
 \end{aligned}
\]
and obtain the configuration field
\begin{equation}\label{eq:worked-example-Lambda}
 \Lambda(x,y)=
 \begin{cases}
  (1-x-2y)\delta_{(0,0)}+x\delta_{(1,0)}+2y\delta_{(-1,0)},
   &(x,y)\in\Delta_1,\\[1mm]
  (x+2y-1)\delta_{(0,0)}+(1-2y)\delta_{(1,0)}+(1-x)\delta_{(-1,0)},
   &(x,y)\in\Delta_2,\\[1mm]
  (x-2y+1)\delta_{(0,0)}+(2y-1)\delta_{(1,1)}+(1-x)\delta_{(-1,0)},
   &(x,y)\in\Delta_3,\\[1mm]
  (2y-1-x)\delta_{(0,1)}+x\delta_{(1,1)}+(2-2y)\delta_{(-1,0)},
   &(x,y)\in\Delta_4.
 \end{cases}
\end{equation}
These formulas agree on common edges.  Moreover,
\[
 \Lambda(1,y)=\rho_{(1,0)}\Lambda(0,y),
 \qquad
 \Lambda(x,1)=\rho_{(0,1)}\Lambda(x,0),
\]
so $\Lambda$ extends to all of $\R^2$ by
\[
 \Lambda(u+z)=\rho_z\Lambda(u),
 \qquad u\in[0,1)^2,\ z\in\Z^2.
\]

The associated scalar function is determined by
\[
 \varphi(u-z)=\Lambda(u)(z),
 \qquad u\in[0,1)^2,\ z\in\Z^2.
\]
Only the five addresses
\[
 (0,0),\ (1,0),\ (-1,0),\ (0,1),\ (1,1)
\]
occur.  On the fundamental square the nonzero coordinate functions are
\begin{equation}\label{eq:worked-example-phi-table}
\begin{array}{c|cccc}
 z&\Delta_1&\Delta_2&\Delta_3&\Delta_4\\ \hline
 (0,0)
 &1-x-2y&x+2y-1&x-2y+1&0\\
 (1,0)
 &x&1-2y&0&0\\
 (-1,0)
 &2y&1-x&1-x&2-2y\\
 (0,1)
 &0&0&0&2y-1-x\\
 (1,1)
 &0&0&2y-1&x.
\end{array}
\end{equation}
Hence $\varphi$ is a nonnegative compactly supported continuous piecewise-affine function.
By construction,
\[
 \sum_{t\in\Z^2}\varphi(x+t)=1,
 \qquad
 \varphi(x)\varphi(x-p)=0\quad (p\in(3\Z\times\Z)\setminus\{0\}).
\]

Choose $\tau=0.3$.  In this detailed example and in Figure~\ref{fig:worked-example-tiling-packing}, the first-hit selector uses the active addresses
$((0,0),(1,0),(-1,0),(0,1),(1,1))$, equivalently the enumeration
\begin{equation}\label{eq:standard-first-hit-enumeration}
 (t_1,t_2,t_3,t_4,t_5)=((0,0),(-1,0),(1,0),(0,-1),(-1,-1))
\end{equation}
of the corresponding $T$-translates, where $t_j=-a_j$.
For $u\in[0,1)^2$, let $j(u)$ be the first index for which
$\Lambda(u)(a_j)>\tau$.  Since on every triangle three nonnegative coefficients add to $1$,
some coefficient is at least $1/3$, so $j(u)$ is well defined.  We then set
\begin{equation}\label{eq:worked-example-Omega}
 \Omega=\{u-a_{j(u)}:u\in[0,1)^2\}.
\end{equation}
Equivalently, if
\[
 E_j=\{u\in[0,1)^2:\Lambda(u)(a_j)>\tau\text{ and }\Lambda(u)(a_i)\le \tau\text{ for }i<j\},
\]
then
\[
 \Omega=\bigcup_{j=1}^5(E_j-a_j).
\]
Thus $\Omega$ is a finite union of bounded half-open polygons and a $\Z^2$-fundamental domain.  Since
$\Omega\subset\{\varphi\ge 0.3\}$ and the latter set has positive distance from all its
nontrivial translates by $3\Z\times\Z$, the set $\Omega+B_\eps$ packs with
$3\Z\times\Z$ for all sufficiently small $\eps>0$.

\begin{figure}[htbp]
\centering
\begin{minipage}[b]{.48\textwidth}
  \centering
  \RasterWithZTwo{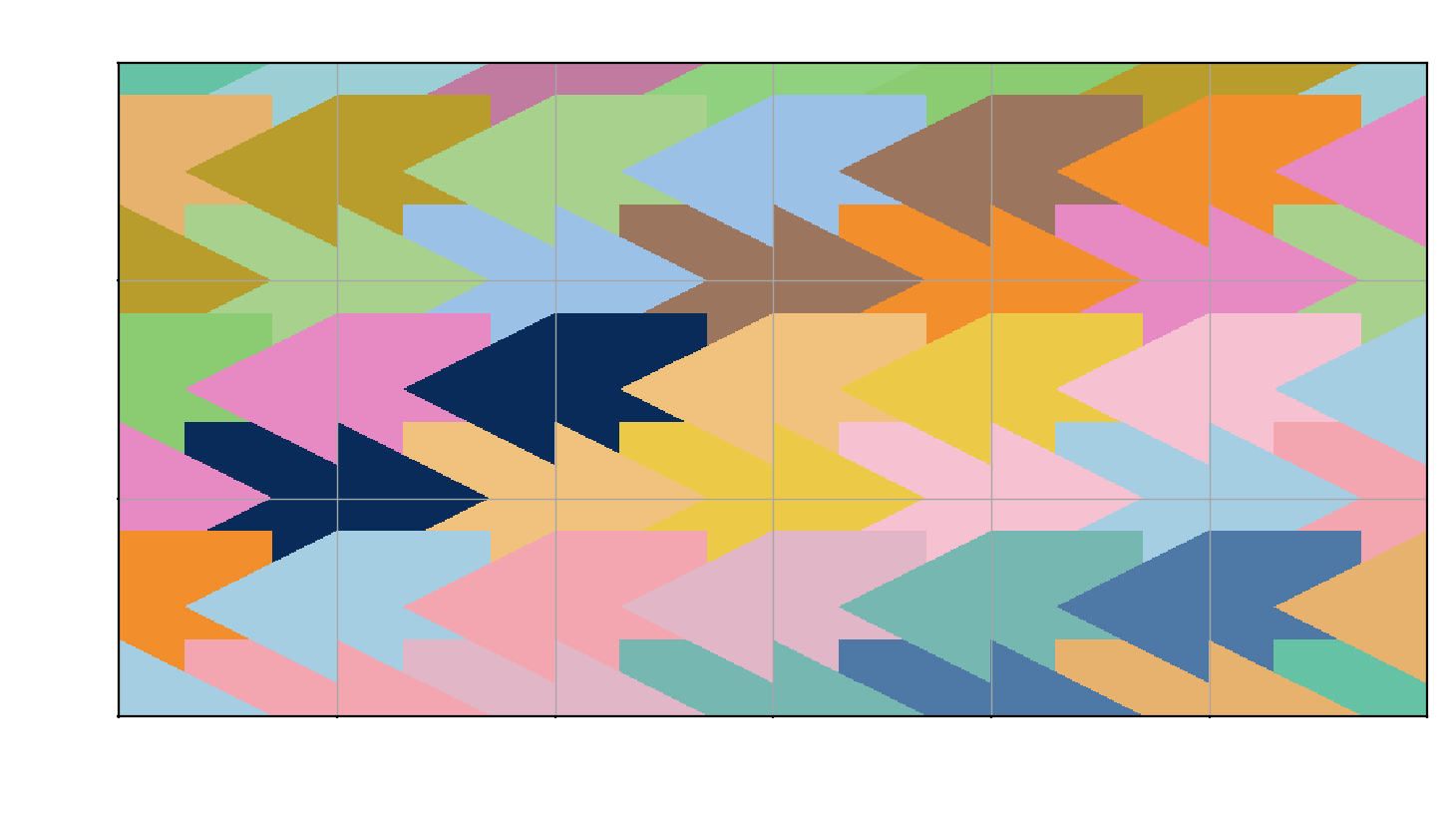}{.230936}{.394949}{.149772}{.264646}{-1}{5}{-1}{2}
\end{minipage}\hfill
\begin{minipage}[b]{.48\textwidth}
  \centering
  \RasterWithThreeZxZ{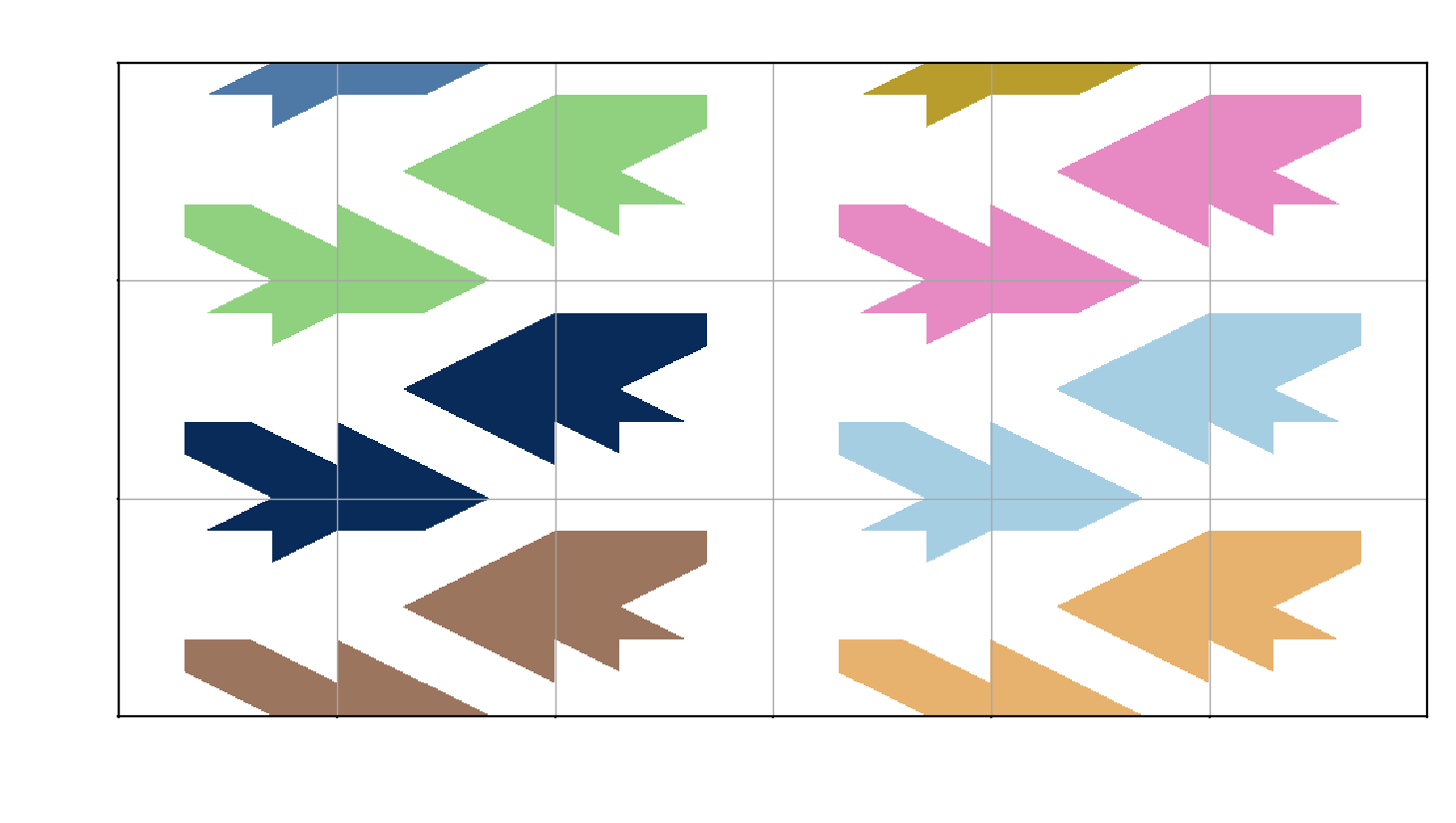}{.230936}{.394949}{.149772}{.264646}{-1}{5}{-1}{2}
\end{minipage}
\caption{The domain obtained from the first-hit enumeration~\eqref{eq:standard-first-hit-enumeration}.  Left: the tiling $\Omega+\Z^2$ on $[-1,5]\times[-1,2]$.  Right: the corresponding translates by $3\Z\times\Z$ on the same rectangle.  The unshifted domain is dark blue, and the visible gaps give the positive packing buffer.}
\label{fig:worked-example-tiling-packing}
\end{figure}

\begin{figure}[htbp]
\centering
\begin{minipage}[b]{.48\textwidth}
  \centering
  \includegraphics[width=\linewidth]{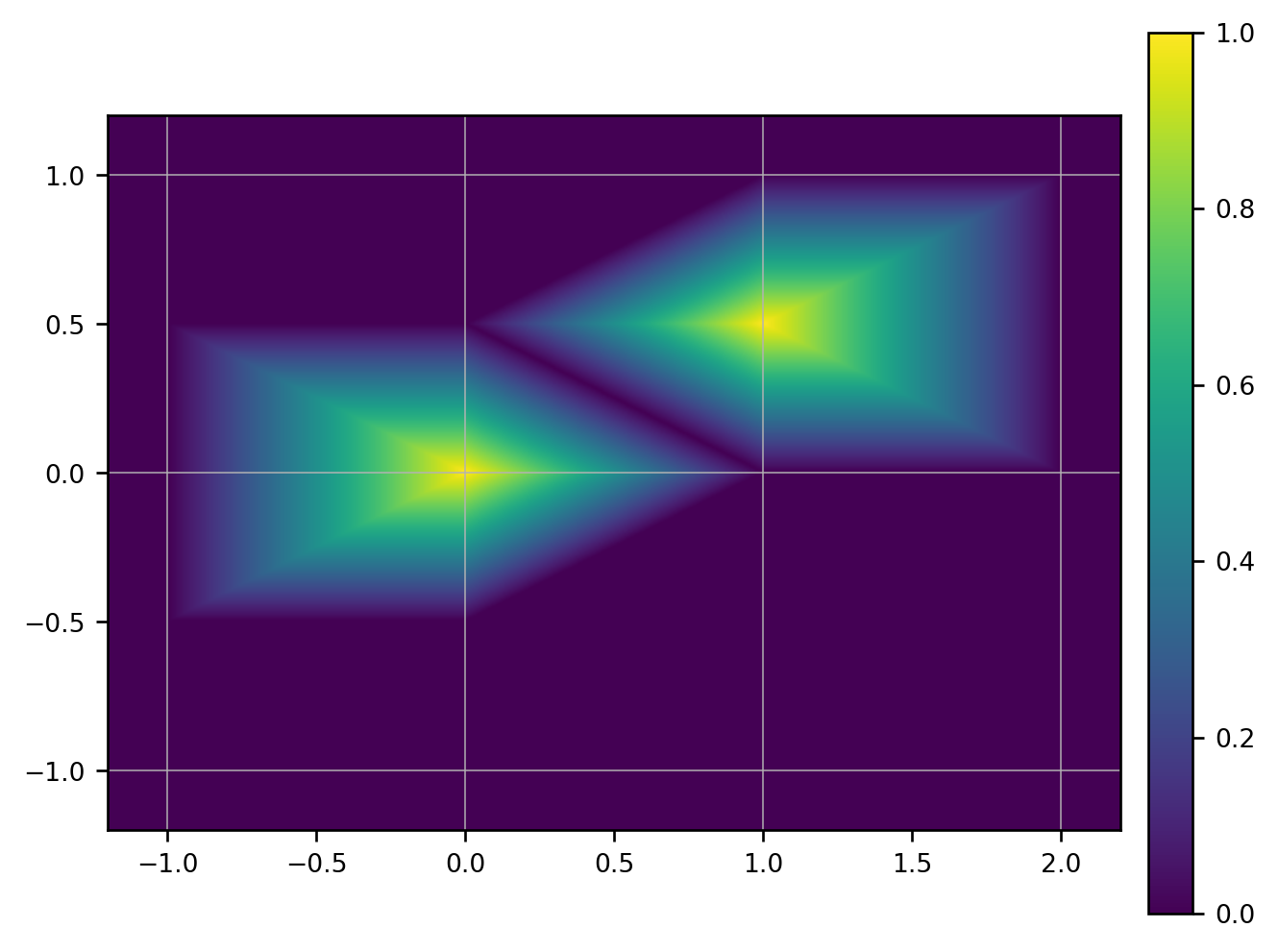}
\end{minipage}\hfill
\begin{minipage}[b]{.48\textwidth}
  \centering
  \includegraphics[width=\linewidth]{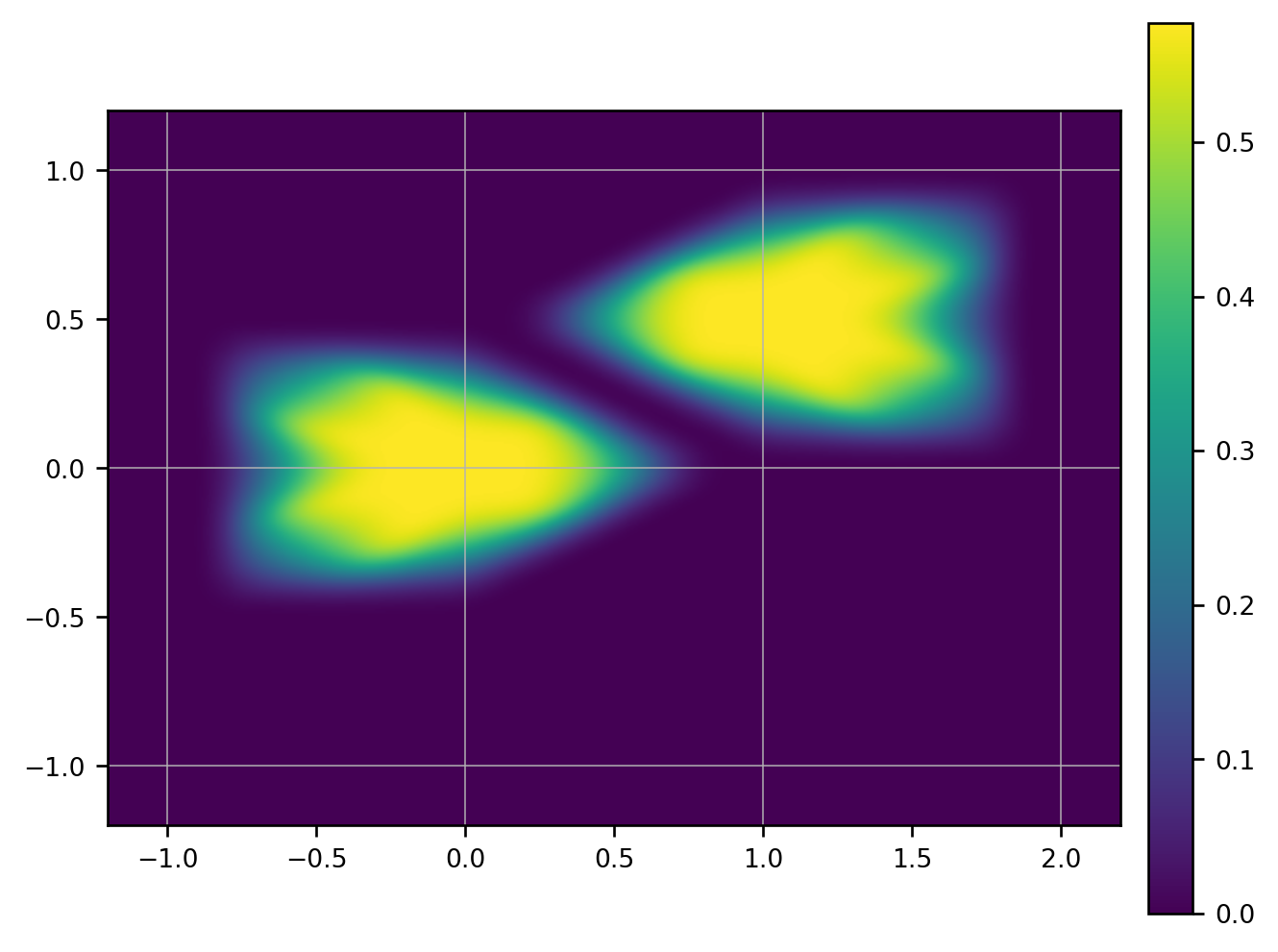}
\end{minipage}
\caption{Left: a two-dimensional color plot of the compactly supported continuous piecewise-affine function $\varphi$ produced from the soft-configuration field.
Right: a two-dimensional color plot of the smooth window $g$ obtained in Part~B by
cutting $\varphi$ below a positive level, mollifying the result, and then
normalizing the periodic energy.}
\label{fig:worked-example-phi-g}
\end{figure}

\subsection{Two further equality-case examples}\label{subsec:two-further-equality-examples}

We record two explicit constructions at the planar equality threshold.  For
$T=7\Z\times\Z$ and $P=3\Z\times3\Z$, one has $m=7$ and $n=9=m+2$; the positive addresses used by the construction are
\begin{equation}\label{eq:seven-nine-address-set}
\begin{aligned}
E_{7,9}=\{&(1,1),(1,0),(0,1),(0,0),(-1,1),(-1,0),(-2,1),\\
&(-2,0),(-2,-1),(-3,0),(-3,-1),(-4,0),(-4,-1),\\
&(-5,0),(-5,-1),(-5,-2),(-6,-1),(-6,-2),(-7,-2)\}.
\end{aligned}
\end{equation}

\begin{figure}[htbp]
\centering
\begin{minipage}[b]{.48\textwidth}\centering
  \RasterWithSevenZxZ{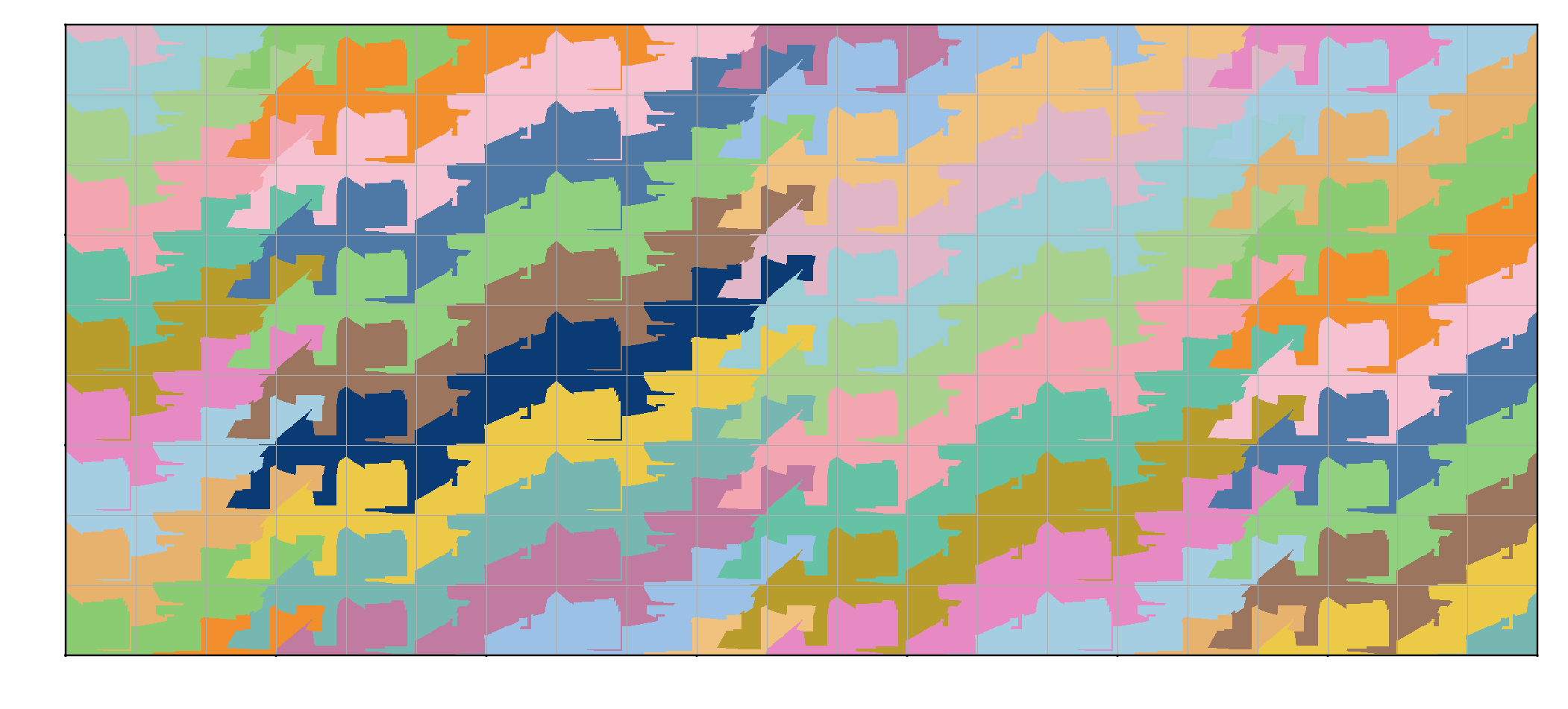}{.175717}{.372766}{.044697}{.098843}{-3}{18}{-3}{6}
\end{minipage}\hfill
\begin{minipage}[b]{.48\textwidth}\centering
  \RasterWithThreeZxThreeZ{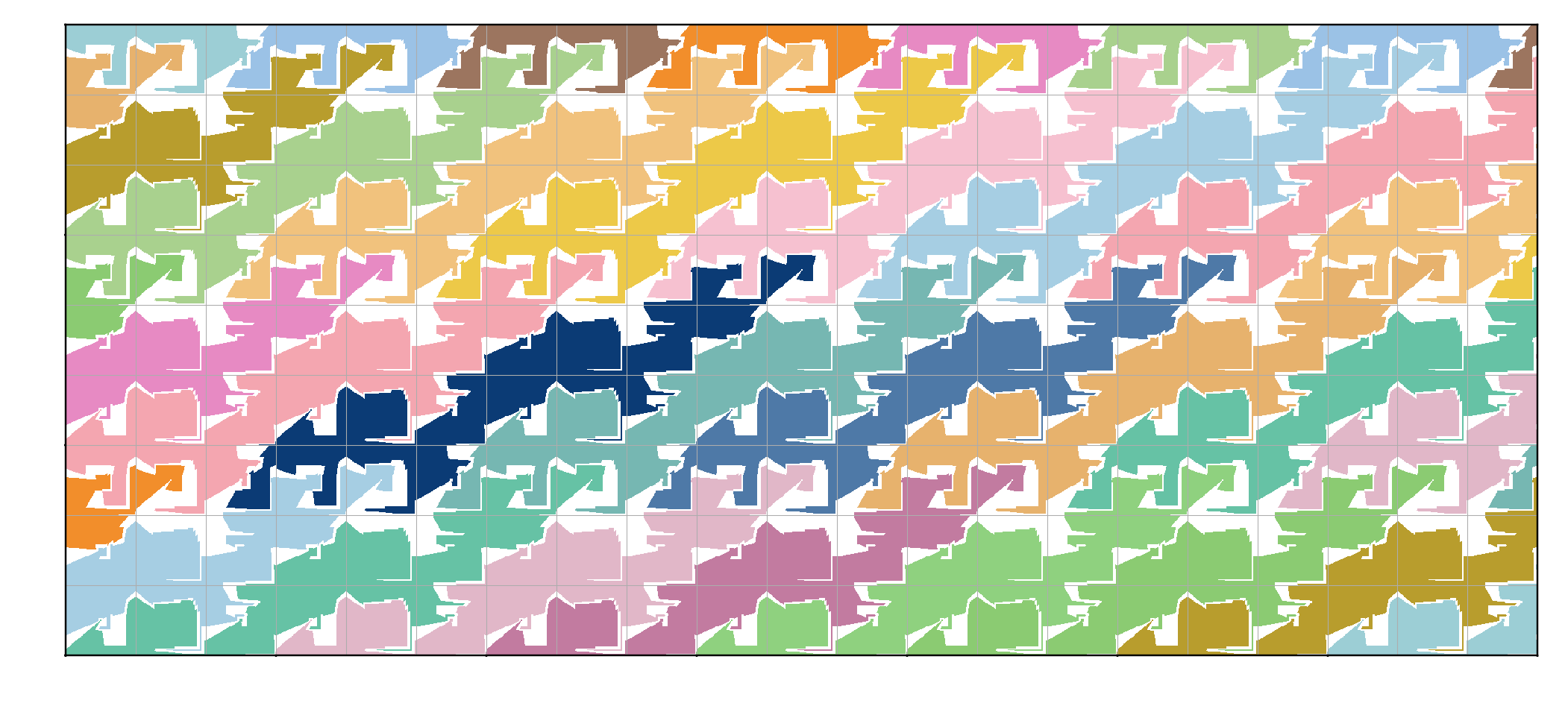}{.175717}{.372766}{.044697}{.098843}{-3}{18}{-3}{6}
\end{minipage}
\caption{The nineteen-address construction for $T=7\Z\times\Z$ and $P=3\Z\times3\Z$.  From left to right: the $T$-tiling and the buffered $P$-packing.}
\label{fig:seven-nine-tiling-packing}
\end{figure}

For $T=\Z\times4\Z$ and $P=6\Z\times\Z$, one has $m=4$ and $n=6=m+2$; the positive addresses are
\begin{equation}\label{eq:four-six-address-set}
\begin{aligned}
E_{4,6}=\{&(0,0),(1,0),(2,1),(3,2),(4,3),(5,0),(6,0),(0,1),\\
&(1,1),(1,2),(2,2),(2,3),(3,3),(3,4),(4,4)\}.
\end{aligned}
\end{equation}

\begin{figure}[htbp]
\centering
\begin{minipage}[b]{.48\textwidth}\centering
  \RasterWithZxFourZ{example_4x6_tiling_upright.png}{.213277}{.206178}{.075742}{.077047}{-2}{10}{-2}{10}
\end{minipage}\hfill
\begin{minipage}[b]{.48\textwidth}\centering
  \RasterWithSixZxZ{example_4x6_packing_upright.png}{.213277}{.206178}{.075742}{.077047}{-2}{10}{-2}{10}
\end{minipage}
\caption{The fifteen-address construction for $T=\Z\times4\Z$ and $P=6\Z\times\Z$.  From left to right: the $T$-tiling and the buffered $P$-packing.}
\label{fig:four-six-tiling-packing}
\end{figure}

\section{Abstract row--column complexes and proof of Theorem~\ref{thm:soft-filling}}
\label{sec:abstract-row-column}
\label{sec:proof-soft-filling}

The extension problem used in Section~\ref{sec:soft-configurations} depends only on the row--column
incidence structure, not on the ambient lattice groups.  We now forget the
group origin of the address classes and prove a stronger abstract theorem.

Fix integers $1\le m\le n$.  For every row $i=1,\ldots,m$ and column
$j=1,\ldots,n$, let $A_{i,j}$ be a nonempty set, not necessarily finite.
The sets are regarded as disjoint labeled sets, and
\[
 A_i=\bigsqcup_{j=1}^nA_{i,j}.
\]
An abstract soft configuration is a tuple
$\lambda=(\lambda_1,\ldots,\lambda_m)$ such that every $\lambda_i$ is a
finitely supported probability vector on $A_i$ and distinct positive
coordinates never use the same column.  Explicitly,
\[
 \lambda_i(a)\ge0,
 \qquad
 \#\supp\lambda_i<\infty,
 \qquad
 \sum_{a\in A_i}\lambda_i(a)=1,
\]
and, whenever $a\in A_{i,j}$ and $b\in A_{i',j}$ are distinct labeled
addresses,
\[
 \lambda_i(a)\lambda_{i'}(b)=0.
\]
The resulting, possibly infinite, polyhedral complex is denoted by
$\mathcal C_{m,n}(A)$.  We regard it as a union of finite-dimensional cells
inside the coordinate space
\[
 \prod_{i=1}^m\mathbb R^{(A_i)},
\]
where $\mathbb R^{(A_i)}$ denotes the finitely supported real functions on
$A_i$.  If $S=(S_1,\ldots,S_m)$ is an admissible finite support pattern, its
cell is
\[
 C_S=\prod_{i=1}^m\Delta(S_i),
 \qquad
 \mathcal C_{m,n}(A)=\bigcup_{S\text{ admissible}}C_S.
\]
When every $A_{i,j}$ is finite, this is the finite polyhedral complex used in
the proof below.  Simplexwise affine maps into $\mathcal C_{m,n}(A)$ are
defined by the same vertex-interpolation rule as in Section~\ref{sec:hard-configurations}.

\begin{theorem}\label{thm:soft-filling-strong}
Let every $A_{i,j}$ be a nonempty set, finite or infinite, and let
$\mathcal C_{m,n}(A)$ consist of the finitely supported abstract soft
configurations defined above.  For every integer
\[
 1\le k\le n-m,
\]
every simplexwise affine map on a finitely triangulated $S^{k-1}$ extends
simplexwise affinely over a finitely triangulated $B^k$ without changing the
boundary triangulation or its vertex values.  Moreover, for each boundary
map there are finite nonempty subsets
\[
 B_{i,j}\subset A_{i,j}
\]
containing every address that occurs in a boundary vertex value, and the
extension may be chosen to take values in the finite subcomplex
$\mathcal C_{m,n}(B)\subset\mathcal C_{m,n}(A)$.  If $n=m$, the space
$\mathcal C_{m,m}(A)$ is nonempty.
\end{theorem}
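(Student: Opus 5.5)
The plan is to reduce to a finite address system, prove a connectivity estimate for the finite complex $\mathcal C_{m,n}(B)$, and then upgrade a continuous filling to a simplexwise affine one without touching the boundary.

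\emph{Reduction to finite data.} The boundary map is determined by finitely many vertex values, each of finite support, so only finitely many addresses occur. Let $E_{i,j}$ be the occurring addresses in $A_{i,j}$. Put $B_{i,j}=E_{i,j}$ if this is nonempty; otherwise let $B_{i,j}$ be one arbitrary element of $A_{i,j}$. Then every $B_{i,j}$ is finite and nonempty, and the boundary map lands in $\mathcal C_{m,n}(B)$. This complex is a finite polyhedral complex whose cells $C_S=\prod_i\Delta(S_i)$ are products of simplices. The case $n=m$ is immediate: choose $b_i\in B_{i,i}$ and take the hard configuration $(\delta_{b_1},\ldots,\delta_{b_m})$. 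It therefore suffices to prove that $\mathcal C_{m,n}(B)$ is $(n-m-1)$-connected, and then to make the filling simplexwise affine.

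\emph{Connectivity.} I would prove that $\mathcal C_{m,n}(B)$ is $(n-m-1)$-connected by induction on $m$, for all finite nonempty $B_{i,j}$ simultaneously.

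For $m=1$, the complex is the join $B_{1,1}*\cdots*B_{1,n}$ of $n$ nonempty discrete sets, which is $(n-2)$-connected.

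For the inductive step, I would work with the face poset $\mathcal P_{m,n}$ of admissible patterns ordered by inclusion; its order complex is the barycentric subdivision of $\mathcal C_{m,n}(B)$. Consider the order-preserving projection
\[
 f(S_1,\ldots,S_m)=(S_1,\ldots,S_{m-1})
\]
onto $\mathcal P_{m-1,n}$, and apply Quillen's fiber lemma to the fibers $f^{-1}(\mathcal P_{\ge S'})$. Fix $S'$ occupying a column set $J$ with $|J|=s\ge m-1$. The free choices for row $m$ are the nonempty simplices of the join of the $B_{m,j}$ with $j\notin J$, which is $(n-s-2)$-connected. Since $n-s$ can be as small as $n-m+1$, each such join is at least $(n-m-1)$-connected. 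The base is $(n-m)$-connected by induction.

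The expected obstacle is that the fiber over $S'$ is not merely a product with this join. Patterns $S\ge S'$ may also enlarge rows $1,\ldots,m-1$ into columns outside $J$, and these enlargements compete with row $m$ for the same free columns. I would handle this in one of two ways. The first is to choose the fiber direction $f^{-1}(\mathcal P_{\le S'})$ instead, where the fiber becomes a join of a contractible cone with the row-$m$ join. The second is a discrete-Morse or shelling argument: order the patterns by the column of a distinguished address and collapse the enlargements of rows $1,\ldots,m-1$ onto the row-$m$ join. If both fail, I would fall back on a direct homotopy argument on maps $S^{k-1}\to\mathcal C_{m,n}(B)$, which I expect to work because $k\le n-m$. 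One first deforms row $m$, inside its join over the columns not used generically, to a single free column; one then removes row $m$ and applies induction on $(m-1,n-1)$ with $k\le(n-1)-(m-1)$.

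\emph{Simplexwise affine realization.} From connectivity we obtain a continuous extension $B^k\to\mathcal C_{m,n}(B)$ extending the given boundary map. Triangulate $\mathcal C_{m,n}(B)$ by subdividing each product cell $C_S$ simplicially, without new vertices, by the staircase triangulation of products of simplices. This subdivision restricts compatibly to faces. Since the boundary map is already simplexwise affine, after a common subdivision of the boundary it is simplicial into a subdivision of $\mathcal C_{m,n}(B)$. Relative simplicial approximation then gives a simplicial map on a fine triangulation of $B^k$ that agrees with the given map on $S^{k-1}$ and is homotopic to the continuous extension rel boundary. The original boundary triangulation can be kept by choosing the interior triangulation as a cone-type refinement that adds no vertices on $S^{k-1}$. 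Each simplex of $B^k$ then maps affinely into one simplex, hence into one common cell $C_{S_\sigma}$, which is exactly the condition in Definition~\ref{def:simplexwise-affine}. All values use only addresses in $B$, which gives the finiteness clause of the statement.
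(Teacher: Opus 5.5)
Your connectivity step has a genuine gap, and that step is where the content of the theorem lies. Your fiber estimate runs the wrong way. A pattern $S'\in\mathcal P_{m-1,n}$ occupies $s$ columns with $m-1\le s\le n$, so $n-s\le n-m+1$, with equality only for hard patterns. The row-$m$ join over the $n-s$ free columns is therefore only $(n-s-2)$-connected: it is a discrete set when $s=n-1$ and empty when $s=n$. So the fibers are not uniformly $(n-m-1)$-connected, and the ordinary Quillen fiber lemma gives nothing.

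The obstacle you single out is, by contrast, harmless. On $f^{-1}(\mathcal P_{\ge S'})$ the map $S\mapsto(S'_1,\ldots,S'_{m-1},S_m)$ is an order-preserving retraction lying below the identity, so this fiber is homotopy equivalent to exactly that join.

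Neither fallback closes the gap as stated. In (a), row $m$ may use columns of $J$ that are vacated when $S_1,\ldots,S_{m-1}$ shrink, so $f^{-1}(\mathcal P_{\le S'})$ is not a join. In (c), a map $S^{k-1}\to\mathcal C_{m,n}(B)$ need not leave any single column unused by rows $1,\ldots,m-1$ at every point. Deforming row $m$ onto ``a free column'' therefore has no global meaning.

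Your route can be repaired with the graded form of the fiber lemma (Bj\"orner's version of Quillen's theorem), which allows the fiber connectivity to vary. It says that $\mathcal P_{m,n}$ is $(n-m-1)$-connected if the base is, and if for every $S'$ the fiber $f^{-1}(\mathcal P_{\ge S'})$ is $t$-connected while $\mathcal P_{<S'}$ is $(n-m-1-t-2)$-connected. Here $t=n-s-2$. The poset $\mathcal P_{<S'}$ is the face poset of $\partial C_{S'}$, a sphere of dimension $s-m$, hence $(s-m-1)$-connected. Since $(n-s-2)+(s-m-1)+2=n-m-1$, the count closes exactly and your induction on $m$ goes through.

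The paper handles the missing global free column differently.
\begin{itemize}
\item It covers $\mathcal C_{m,n}$ by the pieces $U_v$, indexed by the address used in the last column; all of them meet in $A_0\cong\mathcal C_{m,n-1}$.
\item It collapses each $U_v$ by the affine map $P_v$ onto $Y_v\cong\mathcal C_{m-1,n-1}$ (Proposition~\ref{prop:Uv-filling}).
\item It assembles the pieces with the cone-filling Lemma~\ref{lem:soft-pieces}.
\end{itemize}
Every step there is simplexwise affine by construction, so no simplicial approximation is needed.

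If you keep your realization step, note that the boundary map becomes simplicial only after subdividing both the sphere and the target. To restore the original boundary triangulation you need a collar $S^{k-1}\times[0,1]$, mapped by $f$ composed with the projection (which is affine on each $\sigma\times[0,1]$). A cone-type refinement does not do this.
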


\begin{theorem}\label{thm:continuous-soft-filling-main}
Let every $A_{i,j}$ be finite and nonempty.  For every integer
$1\le k\le n-m$, every continuous map
\[
 f:S^{k-1}\longrightarrow\mathcal C_{m,n}(A)
\]
extends to a continuous map
\[
 F:B^k\longrightarrow\mathcal C_{m,n}(A).
\]
If $n=m$, the space $\mathcal C_{m,m}(A)$ is nonempty.  Consequently,
$\mathcal C_{m,n}(A)$ is $(n-m-1)$-connected.
\end{theorem}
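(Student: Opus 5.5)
Our plan is to prove that the finite complex $X=\mathcal C_{m,n}(A)$ is $(n-m-1)$-connected by an induction on $m$ that identifies $X$ with an iterated join-type construction. Nonemptiness for $n=m$ (indeed for $n\ge m$) is immediate: choose any bijection of rows to $m$ distinct columns and one address in each chosen box, giving a hard configuration. For the connectivity we use the nerve/cover viewpoint. For each row--column box choose the closed subcomplex $X_{1,j}\subset X$ of configurations in which row~$1$ uses only addresses in column $j$. These do not cover $X$, since row~$1$ may spread its mass over several columns, so instead we view $X$ as a fibration-like union over the simplex of row-$1$ supports. For a nonempty set $J$ of columns, the configurations whose row-$1$ support lies exactly in $\bigsqcup_{j\in J}A_{1,j}$ form $C_J\times \mathcal C_{m-1,n-|J|}(A')$, where $C_J$ is the (contractible) complex of probability vectors on $\bigsqcup_{j\in J}A_{1,j}$ that use exactly one address per column of $J$, and $A'$ is the row--column system obtained by deleting row~$1$ and the columns of $J$. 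Thus $X$ is a homotopy colimit over the poset of nonempty column sets $J$ with $|J|\le n-m+1$.

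The key steps, in order, are: (1) prove by induction on $m$ the statement ``$\mathcal C_{m,n}(A)$ is $(n-m-1)$-connected'' with base case $m=1$, where $X$ is the space of probability vectors on $A_1$ with at most one positive address per column; this is a join of $n$ discrete nonempty sets $A_{1,1}*\cdots*A_{1,n}$, which is $(n-2)$-connected by Milnor's join connectivity. (2) For the inductive step, write $X$ as the union of the closed pieces $Y_j$ = configurations with row~$1$ supported in column $j$ together with the ``spread'' part, and apply the projection $p:X\to \mathcal C_{1,n}(A_1)$ restricted to row~$1$; its fiber over a point whose support uses column set $J$ is $\mathcal C_{m-1,n-|J|}(A')$, which by induction is $(n-|J|-m)$-connected. (3) Apply a Quillen fiber lemma / connectivity version of the nerve theorem (Björner's form: if the fibers over the $(|J|-1)$-cells of the base are $(k-|J|)$-connected and the base is $k$-connected, the total space is $k$-connected) with $k=n-m-1$: the base $A_{1,1}*\cdots*A_{1,n}$ is $(n-2)$-connected, hence $k$-connected, and the fiber over a cell with $|J|$ columns is $(n-m-|J|)=(k-|J|+1)$-connected, which is exactly the required degree. (4) Deduce the extension statement: $(n-m-1)$-connectedness gives $\pi_{k-1}(X)=0$ for $k\le n-m$, so every continuous $f:S^{k-1}\to X$ extends over $B^k$.

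The main obstacle is step (3): the row-$1$ projection is not a genuine fibration, since fiber dimensions jump with $|J|$, so we must set up the correct simplicial (poset) version. We would first try to realize $X$ up to homotopy as a subcomplex of the join of row spaces and apply Björner's poset fiber theorem to the order-preserving map from the face poset of $X$ to the face poset of the base join, checking that the preimage of the lower ideal below a face with column set $J$ is $\mathcal C_{m-1,n-|J|}(A')$ times a contractible factor, hence has the needed connectivity. If that bookkeeping fails, the fallback is a direct cellular argument: extend skeleton by skeleton, moving row mass into a vacant column as in the elementary hard moves, where the surplus $n-m\ge k$ guarantees a free column at each stage.
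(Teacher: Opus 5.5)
Your route differs from the paper's and can be made to work, but step (3) needs two corrections.

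\emph{First correction: use the upper fiber.} The fiber to use is $f^{-1}(Q_{\ge q})$, the patterns $S=(S_1,\dots,S_m)$ with $S_1\supseteq q$. The preimage of the lower ideal is the wrong object, because it is \emph{not} $\Delta(q)\times\mathcal C_{m-1,n-|J|}(A')$. It also contains patterns in which row~$1$ uses only $J'\subsetneq J$ while rows $2,\dots,m$ occupy columns of $J\setminus J'$. For example, take $m=2$, $J=\{1,2\}$, $S_1=\{a\}$ with $a\in A_{1,1}$, and $S_2=\{b\}$ with $b\in A_{2,2}$. For the upper fiber the bookkeeping does work. The map $S\mapsto(q,S_2,\dots,S_m)$ is order preserving, idempotent and below the identity. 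Hence the upper fiber is homotopy equivalent to the face poset of $\mathcal C_{m-1,n-|J|}(A')$.

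\emph{Second correction: the index in your paraphrase of Bj\"orner's fiber theorem is off by one.} The theorem requires $f^{-1}(Q_{\ge q})$ to be $(k-\ell(Q_{<q})-1)$-connected, where $\ell$ is the length of the longest chain. Here $\ell(Q_{<q})=|J|-2$, so the requirement is $(k-|J|+1)$, not $(k-|J|)$. The $(k-|J|)$ version is false: take two points lying over the two endpoints of an edge, with empty fiber over the edge. Your inductive fibers are $(n-m-|J|)=(k-|J|+1)$-connected, so they meet the correct hypothesis exactly. This includes the edge cases. When $n-|J|=m-1$ the fiber must be nonempty, and it is. When $n-|J|<m-1$ the requirement is vacuous, so it is harmless that $f$ misses the faces of the join with $|J|>n-m+1$.

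With these fixes, steps (1), (3) and (4) prove the theorem. The cover by the pieces $Y_j$ in step (2) and the cellular fallback can be dropped.

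\emph{Comparison with the paper.} The paper proves a finer statement. For uniform address multiplicity it shows that $\mathcal F_{a,b,c}$ is a finite wedge of $(b-a)$-spheres. It takes the singleton-color case from Abrams--Gay--Hower as a black box. It then adds colors one column at a time, using a cover $\{X_k\}_{k\in C}$ whose pairwise intersections all equal one subcomplex $Y\cong\mathcal F_{m-1}(a,b-1)$. A gluing lemma gives $\bigcup X_k\simeq\bigvee X_k\vee\bigvee^{|C|-1}\Sigma Y$, because each inclusion $Y\hookrightarrow X_k$ is null-homotopic. Finally it reaches arbitrary finite address sets by padding to uniform multiplicity and applying a coordinate-merging retraction.

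Your induction on rows needs only Milnor's join connectivity for the base $A_{1,1}*\cdots*A_{1,n}$ and Bj\"orner's fiber theorem. It treats non-uniform address sets directly, with no padding, and it avoids the Abrams--Gay--Hower input. The price is that it yields only connectivity, not the homotopy type, though connectivity is all the theorem requires. In spirit it is closer to the elementary last-column decomposition the paper uses for Theorem~\ref{thm:soft-filling-strong}, with row~$1$ in the role of the last column.
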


Theorem~\ref{thm:continuous-soft-filling-main} is proved in
Appendix~\ref{app:continuous-soft-filling}.  The appendix starts from the singleton-color theorem of
Abrams--Gay--Hower, adds colors one column at a time, and uses a gluing lemma
to preserve the wedge-of-spheres homotopy type.  A padding-and-retraction
argument then gives the general finite-address statement, followed by the
classical deleted-product, Hom-complex, join, and matching interpretations.

\subsection{The finite-address reduction}

We first show that every finite simplexwise-affine problem lives inside a finite
subcomplex, even when the ambient address classes are infinite.

\begin{lemma}\label{lem:finite-address-reduction}
Let every $A_{i,j}$ be nonempty, possibly infinite.  Let $K$ be a finite
simplicial complex and let
\[
 f:|K|\longrightarrow\mathcal C_{m,n}(A)
\]
be simplexwise affine.  Then there are finite nonempty sets
$B_{i,j}\subset A_{i,j}$ such that
\[
 f(|K|)\subseteq\mathcal C_{m,n}(B).
\]
The sets $B_{i,j}$ may be chosen to contain every address occurring in the
support of a vertex value $f(v)$, $v\in K^{(0)}$.
\end{lemma}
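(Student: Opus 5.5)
The plan is to take $B_{i,j}$ to be the addresses that actually occur at the vertices, padded so every box is nonempty. First let $E\subset\bigsqcup_iA_i$ be the union of the supports of the vertex values:
\[
 E=\bigcup_{v\in K^{(0)}}\bigcup_{i=1}^m\supp f(v)_i .
\]
Since $K$ has finitely many vertices and every abstract soft configuration has finitely supported components, $E$ is finite. For each pair $(i,j)$ put $E_{i,j}=E\cap A_{i,j}$. If $E_{i,j}\neq\varnothing$, set $B_{i,j}=E_{i,j}$. Otherwise choose one arbitrary element $b_{i,j}\in A_{i,j}$, which is possible because every $A_{i,j}$ is nonempty, and set $B_{i,j}=\{b_{i,j}\}$. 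Each $B_{i,j}$ is then finite, nonempty, and contains every vertex address of that box.

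Next I would identify $\mathcal C_{m,n}(B)$ with the set of those $\lambda\in\mathcal C_{m,n}(A)$ whose supports lie in $\bigsqcup_{i,j}B_{i,j}$. The cells of $\mathcal C_{m,n}(B)$ are exactly the $C_S$ with admissible $S$ and $S_i\subset\bigsqcup_jB_{i,j}$. Admissibility, meaning row normalization and column exclusion, is the same condition in both complexes, because it only refers to the column labels. So this identification is immediate from the definitions.

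It remains to check that every point of $f(|K|)$ lies there. After the finite subdivision built into the definition of simplexwise affine, take a simplex $\sigma=[v_0,\dots,v_p]$. The vertex values lie in a common cell $C_{S_\sigma}$ of $\mathcal C_{m,n}(A)$, and for $x=\sum t_kv_k$ we have $f(x)=\sum t_kf(v_k)$. Hence $\supp f(x)_i\subset\bigcup_k\supp f(v_k)_i\subset E$.

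The one subtle point is that a subdivision introduces new vertices whose values might seem to involve new addresses. They do not. The values at the new vertices are affine combinations of values at vertices of the original simplices, so their supports stay inside the supports of the original vertex values. The claim about vertex supports therefore holds for either triangulation, as long as $E$ is defined using the vertices of the subdivided complex; that is the convention I would state explicitly.

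Finally, $f(x)\in C_{S_\sigma}$, and its support lies in $E\subset\bigsqcup B_{i,j}$. Therefore $f(x)$ lies in the face $C_{S'}$ with $S'_i=S_{\sigma,i}\cap E$. That face is nonempty because $f(x)$ belongs to it, and it is admissible as a face of an admissible cell, so it is a cell of $\mathcal C_{m,n}(B)$. This gives $f(|K|)\subseteq\mathcal C_{m,n}(B)$, completing the argument.
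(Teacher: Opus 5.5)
Your argument is correct and is essentially the paper's proof. You take $B_{i,j}$ to be the vertex addresses lying in each box, padded by one arbitrary address where a box is empty. You then use that affine interpolation on a simplex cannot create support outside the union of the vertex supports.

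One claim in your subdivision paragraph is false and should be deleted. You say the new vertices of a subdivision carry only affine combinations of the original vertex values. In fact, when $f$ is affine only after subdividing, the values at the new vertices are free. The polygonal edge paths in Section~\ref{sec:soft-configurations} introduce new addresses in exactly this way.

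So $E$ must be built from the vertices of the triangulation on which $f$ is actually affine. That is the convention you end up adopting, and the paper uses it implicitly by letting $K$ be that triangulation. The original vertices are among these, so the final assertion of the lemma still holds.
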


\begin{proof}
For every row and column put
\[
 E_{i,j}
 =\bigcup_{v\in K^{(0)}}
   \bigl(\supp f(v)_i\cap A_{i,j}\bigr).
\]
The triangulation has finitely many vertices, and each vertex value is a
finitely supported configuration.  Hence every $E_{i,j}$ is finite.  If
$E_{i,j}$ is nonempty, set $B_{i,j}=E_{i,j}$; if it is empty, choose one
address $b_{i,j}\in A_{i,j}$ and set $B_{i,j}=\{b_{i,j}\}$.  Thus $B$ is a
finite complete row--column subsystem.

Let $\sigma=[v_0,\ldots,v_p]$ be a simplex of $K$ and write
$x=\sum_{\ell=0}^p t_\ell v_\ell$ in barycentric coordinates.  Since $f$ is
affine on $\sigma$,
\[
 f(x)=\sum_{\ell=0}^p t_\ell f(v_\ell).
\]
Every coordinate outside the union of the supports of the vertex values is
therefore zero.  All those supports are contained in the sets $B_{i,j}$, so
$f(\sigma)\subseteq\mathcal C_{m,n}(B)$.  Taking the union over the finitely
many simplices proves the assertion.
\end{proof}

This is a finite-address reduction, not a local-finiteness assertion about
the entire infinite complex.  Indeed, when some $A_{i,j}$ is infinite,
infinitely many cells can meet at one hard configuration.  What is used here
is only that a simplexwise affine problem on a finite triangulation activates
finitely many addresses.

To prove Theorem~\ref{thm:soft-filling-strong}, it now suffices to establish
the theorem when all $A_{i,j}$ are finite.  Once that finite case is known,
Lemma~\ref{lem:finite-address-reduction} places any boundary map in a finite
complete subcomplex $\mathcal C_{m,n}(B)$, and its extension there is also an
extension in $\mathcal C_{m,n}(A)$.  For the finite-case proof below, we
therefore assume that every $A_{i,j}$ is finite.  This assumption is used in
the last-column decomposition to make the cover by the sets $U_v$ finite.

\subsection{A finite-piece extension lemma}

The next lemma is a convenient device for the last-column decomposition.
All polyhedral subcomplexes below are unions of cells of one fixed finite
polyhedral structure.

\begin{lemma}\label{lem:soft-pieces}
Let $I$ be a finite nonempty index set and let
\[
 X=\bigcup_{\alpha\in I}X_\alpha
\]
be a finite polyhedral complex written as a union of subcomplexes.  Suppose
that a nonempty subcomplex $A$ satisfies
\[
 A\subseteq X_\alpha,\qquad \alpha\in I,
\]
and
\[
 X_\alpha\cap X_\beta=A,
 \qquad \alpha,\beta\in I,\quad \alpha\ne\beta.
\]
Fix $k\ge1$.  Assume that, for every $\alpha\in I$ and every
$0\le p\le k$, every simplexwise affine map from a triangulated $S^p$ into
$X_\alpha$ extends simplexwise affinely over $B^{p+1}$ without changing the
boundary triangulation.  Assume also that the same statement holds with
$A$ as target for every $0\le p\le k-1$.  Then every simplexwise affine map
from a triangulated $S^k$ into $X$ extends simplexwise affinely over
$B^{k+1}$.
\end{lemma}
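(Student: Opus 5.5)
Let $f:|K|\to X$ be simplexwise affine, where $K$ is a finite triangulation of $S^k$.  The plan is a combinatorial Mayer--Vietoris argument: cut the sphere into the regions sent to each piece $X_\alpha$, fill the interfaces inside $A$, and then fill each region inside its own $X_\alpha$.  First subdivide $K$, keeping the boundary triangulation as given, so that every simplex is mapped into a single cell of the fixed polyhedral structure on $X$.  Cells are convex and $f$ is affine on each simplex, so the preimage of any subcomplex is then a polyhedral subset of $|K|$.  Refine further so that each $f^{-1}(X_\alpha)$ and $f^{-1}(A)$ is a subcomplex.  Because $X_\alpha\cap X_\beta=A$, every simplex not mapped into $A$ has its relative interior mapped into exactly one $X_\alpha\setminus A$.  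Let $K_\alpha$ be the closed subcomplex of simplices mapped into $X_\alpha$.  Then $|K|=\bigcup_\alpha|K_\alpha|$ and $|K_\alpha|\cap|K_\beta|\subseteq f^{-1}(A)$ for $\alpha\neq\beta$.

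Next, reduce to regions with clean boundaries.  Take a regular neighbourhood, for instance via a second barycentric subdivision, of the subcomplex $L=f^{-1}(A)\cap\bigcup_{\alpha\ne\beta}(|K_\alpha|\cap|K_\beta|)$ that separates different pieces.  The separating interface $\Sigma$ is then a closed PL $(k-1)$-manifold: the frontier of the union of components labelled $\alpha$ inside $S^k$.  A cleaner way to proceed is to work cell by cell on a dual decomposition.  For each vertex $v$ of the subdivision, its closed star lies in a single $X_\alpha$, or meets the interface.  The resulting sets, which I think are the $U_v$ of the paper's last-column decomposition, cover the sphere.  I would construct the extension skeleton by skeleton over a cone triangulation of $B^{k+1}=\operatorname{cone}(S^k)$.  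This triangulation has simplices $\operatorname{cone}(\tau)$ for $\tau\in K$ and interior vertices, with $\dim\tau=p$ ranging from $0$ to $k$.

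The inductive step runs as follows.  Send the cone point to a fixed vertex $a_0\in A$; this uses that $A$ is nonempty.  Suppose the extension has been built on the cones over the $(p-1)$-skeleton.  The boundary of $\operatorname{cone}(\tau)$ is a triangulated $S^{p}$.  If $f(\tau)\subseteq X_\alpha$, and inductively all of $\partial\operatorname{cone}(\tau)$ has been mapped into $X_\alpha$, then the hypothesis for $X_\alpha$ at dimension $p\le k$ extends the map over $\operatorname{cone}(\tau)$ inside $X_\alpha$.  Keeping this invariant is the crux.  The cone on a face $\rho\subset\tau$ must lie in $X_\alpha$ for every $\alpha$ with $\tau\subseteq K_\alpha$.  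If $\rho\subseteq f^{-1}(A)$ is shared by several pieces, the cone over $\rho$ must land in $A$ itself.  This is possible because $\dim\rho\le k-1$: filling $\partial\operatorname{cone}(\rho)\cong S^{\dim\rho}$ inside $A$ uses the hypothesis for $A$ with $p\le k-1$.  So the rule is to fill cones over simplices of $f^{-1}(A)$ inside $A$ whenever all their faces are already in $A$.  Every other simplex lies in a unique $X_\alpha$ and is filled there.

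The main obstacle is the edge case where a top-dimensional $k$-simplex $\tau$ is mapped into $A$ while some of its faces are shared with other pieces.  Filling $\operatorname{cone}(\tau)$ would then require the $A$-hypothesis at $p=k$, which is not available.  My first attempt is to observe that such $\tau$ lies in some $K_\alpha$, and that its cone's boundary lies in $X_\alpha$ since $A\subseteq X_\alpha$.  Its cone can therefore be filled in $X_\alpha$ using the $X_\alpha$-hypothesis at $p=k$.  The only real constraint is that faces shared between different pieces be filled in $A$, and those faces have dimension at most $k-1$.  Making this assignment consistent, so that each simplex's cone is filled once in a target that contains all cones over its faces, is the bookkeeping to verify.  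The new vertices introduced by each filling are kept internal to that cone, so the boundary triangulation is unchanged, as required.
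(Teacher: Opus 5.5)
Your final scheme in the last two paragraphs is exactly the paper's proof: send the cone point to $a_0\in A$ and fill the cones over the simplices $\tau$ of $K$ in order of $\dim\tau$, using target $A$ when $f(\tau)\subseteq A$ and $\dim\tau\le k-1$, a fixed $X_{\alpha_0}$ when $f(\tau)\subseteq A$ and $\dim\tau=k$, and otherwise the unique piece containing $f(\tau)$. The consistency you leave as bookkeeping is immediate, since a proper face $\eta\subsetneq\tau$ has $\dim\eta\le k-1$: if $f(\eta)\subseteq A$ it gets target $A$, which lies in every possible target, and otherwise $f(\tau)\not\subseteq A$ and both lie in the same unique piece.

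You should drop the regular-neighbourhood and dual-cell detour of your second paragraph (where, incidentally, the $U_v$ are pieces of the target complex playing the role of the $X_\alpha$, not a cover of the sphere) and also the refinement of $K$. Since each simplex already maps affinely into a single cell, $f^{-1}(A)$ and $f^{-1}(X_\alpha)$ are automatically subcomplexes of $K$, and not subdividing keeps the boundary triangulation $K$ fixed, which the inductive use of this lemma requires.
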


\begin{proof}
Choose a finite triangulation $K$ of $S^k$ with respect to which the
boundary map $f:|K|\to X$ is simplexwise affine.  Thus $f$ is affine on
every simplex of $K$, and the image of each simplex lies in one cell of the
fixed polyhedral structure on $X$.
Choose $a_0\in A$, add an interior vertex $c$, and identify the cone
$c*K$ with $B^{k+1}$.  Set $F(c)=a_0$.  For every simplex $\sigma\in K$ put
\[
 \widehat\sigma=\operatorname{conv}(\{c\}\cup\sigma).
\]
We fill these conical simplices in increasing order of $\dim\sigma$; every
filling is allowed to subdivide the interior of $\widehat\sigma$ but must
leave its already triangulated boundary unchanged.

Let $C_\sigma$ be the unique smallest target cell containing
$f(\sigma)$.  If $C_\sigma\not\subseteq A$, then it is contained in a unique
piece $X_{\alpha(\sigma)}$: if it belonged to two distinct pieces, it would
be contained in their intersection $A$.  Fix $\alpha_0\in I$ and define
\[
 Z_\sigma=
 \begin{cases}
 X_{\alpha(\sigma)},&C_\sigma\not\subseteq A,\\
 A,&C_\sigma\subseteq A\text{ and }\dim\sigma<k,\\
 X_{\alpha_0},&C_\sigma\subseteq A\text{ and }\dim\sigma=k.
 \end{cases}
\]
The assignments are nested along faces:
\begin{equation}\label{eq:carrier-nesting}
 \eta\subsetneq\sigma\quad\Longrightarrow\quad Z_\eta\subseteq Z_\sigma.
\end{equation}
Indeed, $f(\eta)\subseteq f(\sigma)$ implies that the smallest cell
$C_\eta$ is a face of $C_\sigma$.  If $C_\sigma\not\subseteq A$, both cells
lie in the same unique piece unless $C_\eta\subseteq A$, in which case
$Z_\eta=A\subset X_{\alpha(\sigma)}$.  If $C_\sigma\subseteq A$, then
$C_\eta\subseteq A$ as well.

Suppose that the regions $\widehat\eta$ have already been filled for all
proper faces $\eta$ of $\sigma$.  The top face $\sigma$ maps into
$Z_\sigma$, and each side face $\widehat\eta$ maps into
$Z_\eta\subseteq Z_\sigma$.  Thus the complete triangulated boundary of the
$(\dim\sigma+1)$-simplex $\widehat\sigma$ maps into $Z_\sigma$.  If
$Z_\sigma=A$, then $\dim\sigma\le k-1$, so the extension hypothesis for $A$
applies with $p=\dim\sigma$.  Otherwise the hypothesis for the appropriate
piece $X_\alpha$ applies, since $\dim\sigma\le k$.  We therefore triangulate
$\widehat\sigma$ relative to its boundary and fill it inside $Z_\sigma$.

Proceeding through all simplices of $K$ fills the cone $c*K$.  Whenever two
filled regions meet, their common face had already been triangulated and
assigned vertex values before either interior was filled.  The affine
formulas therefore agree on that face.  Hence the resulting map is
simplexwise affine and continuous on $B^{k+1}$.
\end{proof}

\begin{figure}[htbp]
\centering
\begin{tikzpicture}[scale=1.1]
  \coordinate (c) at (0,0);
  \foreach \a in {0,45,...,315}{\coordinate (p\a) at (\a:2.05);}
  \draw[very thick] (p0)--(p45)--(p90)--(p135)--(p180)--(p225)--(p270)--(p315)--cycle;
  \foreach \a in {0,45,...,315}{\draw[gray!65] (c)--(p\a);}
  \fill[BrickRed] (c) circle (2.8pt) node[below right] {$c$};
  \foreach \a in {0,45,...,315}{\fill[MidnightBlue] (p\a) circle (2.2pt);}
  \draw[very thick,OrangeRed] (p90)--(p135)--(p180)--(p225);
  \draw[very thick,Purple] (p225)--(p270)--(p315)--(p0);
\end{tikzpicture}
\caption{The ball is filled from the interior vertex toward the boundary,
processing the regions associated with lower-dimensional boundary simplices
before those associated with higher-dimensional ones.}
\label{fig:radial-filling}
\end{figure}

\subsection{The last-column decomposition}

Assume throughout this subsection that $n>m$.  Let
\[
 A_{\bullet,n}=\bigsqcup_{i=1}^m A_{i,n}
\]
be the labeled addresses in the last column.  For
$v\in A_{\bullet,n}$, define
\[
 U_v=\Bigl\{\lambda\in\mathcal C_{m,n}(A):
   \lambda_i(a)=0\text{ for every }i\text{ and every }
   a\in A_{i,n}\setminus\{v\}\Bigr\}.
\]
Thus the last column is either unused, or $v$ is its only positive address.
Also put
\[
 A_0=\Bigl\{\lambda\in\mathcal C_{m,n}(A):
   \lambda_i(a)=0\text{ for every }i\text{ and every }a\in A_{i,n}\Bigr\}.
\]
Every configuration either leaves the last column unused or uses a unique
address $v$ there.  Consequently,
\begin{equation}\label{eq:last-column-cover}
 \mathcal C_{m,n}(A)=\bigcup_{v\in A_{\bullet,n}}U_v,
 \qquad U_v\cap U_w=A_0,
 \quad v,w\in A_{\bullet,n},\quad v\ne w.
\end{equation}

\begin{figure}[htbp]
\centering
\begin{tikzpicture}[>=Latex]
  \node[draw,rounded corners,fill=MidnightBlue!8,
        minimum width=2.4cm,minimum height=.8cm] (Uv) at (-5,1.65) {$U_v$};
  \node[draw,rounded corners,fill=MidnightBlue!8,
        minimum width=2.4cm,minimum height=.8cm] (Uw) at (0,1.65) {$U_w$};
  \node[draw,rounded corners,fill=MidnightBlue!8,
        minimum width=2.4cm,minimum height=.8cm] (Uz) at (5,1.65) {$U_z$};
  \node[draw,rounded corners,fill=ForestGreen!10,
        minimum width=4.1cm,minimum height=.9cm] (A0) at (0,-1)
        {$A_0$: last column unused};
  \draw[-{Latex[length=2.3mm]},thick] (A0.north west) -- (Uv.south);
  \draw[-{Latex[length=2.3mm]},thick] (A0.north) -- (Uw.south);
  \draw[-{Latex[length=2.3mm]},thick] (A0.north east) -- (Uz.south);
  \node[font=\scriptsize,fill=white,inner sep=2pt] at (-2.5,2.65)
        {$U_v\cap U_w=A_0$};
  \node[font=\scriptsize,fill=white,inner sep=2pt] at (2.5,2.65)
        {$U_w\cap U_z=A_0$};
\end{tikzpicture}
\caption{The last-column decomposition.  Each piece permits at most one
specified address in the last column, and all distinct pieces meet in the
common subspace $A_0$ where that column is unused.}
\label{fig:last-column-decomposition}
\end{figure}

We now define the reduced address families explicitly.  Deleting the last
column gives
\[
 A^-=(A^-_{i,j})_{1\le i\le m,\,1\le j\le n-1},
 \qquad A^-_{i,j}=A_{i,j}.
\]
Consequently,
\begin{equation}\label{eq:A0-smaller}
 A_0\cong\mathcal C_{m,n-1}(A^-).
\end{equation}
If $v\in A_{i_0,n}$, let
\[
 \iota_v:\{1,\ldots,m-1\}\longrightarrow
 \{1,\ldots,m\}\setminus\{i_0\}
\]
be the increasing bijection, and define
\[
 A^v_{\ell,j}=A_{\iota_v(\ell),j},
 \qquad 1\le\ell\le m-1,
 \quad 1\le j\le n-1.
\]
Thus $A^v$ is obtained by deleting row $i_0$ and column $n$ and relabeling
the remaining rows.

Fix $v\in A_{i_0,n}$.  Define
\[
 (P_v\lambda)_{i_0}(v)=1,
 \qquad
 (P_v\lambda)_{i_0}(a)=0,
 \quad a\in A_{i_0}\setminus\{v\},
\]
and set $(P_v\lambda)_i=\lambda_i$ for $i\ne i_0$.  Let $L$ be the linear map that leaves every row other than $i_0$
unchanged and replaces the $i_0$-th row by zero, and let $e_v$ be the vector
whose only nonzero coordinate is the value $1$ at $v$.  Then
\[
 P_v\lambda=L\lambda+e_v.
\]
Thus $P_v$ is the restriction of an affine map on the ambient coordinate
space and is therefore continuous.  It leaves the other rows unchanged and
replaces the $i_0$-th row by the fixed point mass at $v$.  Its image is
\[
 Y_v=\{\lambda\in U_v:\lambda_{i_0}(v)=1\}.
\]
For $m\ge2$, deleting the fixed row and the unused last column gives
\begin{equation}\label{eq:Yv-smaller}
 Y_v\cong\mathcal C_{m-1,n-1}(A^v),
\end{equation}
while for $m=1$ the set $Y_v$ is one point.

The vertexwise definition makes compositions transparent.  Fix
$v\in A_{i_0,n}$, and suppose that $f$ is affine on a simplex
$\sigma=[v_0,\ldots,v_p]$ and that
\[
 f(v_\ell)\in U_v,
 \qquad \ell=0,\ldots,p.
\]
In the application below, the whole map $f$ takes values in $U_v$.  If
$x=\sum_{\ell=0}^p t_\ell v_\ell$ is written in barycentric coordinates, then, for
every row $i$ and every address $a\in A_{i,n}\setminus\{v\}$,
\[
 f(x)_i(a)
 =\sum_{\ell=0}^p t_\ell f(v_\ell)_i(a)
 =0.
\]
Hence $f(\sigma)\subset U_v$.  Let $C_S$ be the smallest
soft-configuration cell containing the vertex values.  It follows that the
support pattern $S$ uses no last-column address other than possibly $v$.
The map $P_v$ is affine on $C_S$ and sends it into the cell obtained by
replacing the support of row $i_0$ by $\{v\}$.  Therefore,
\[
 P_v\!\left(\sum_{\ell=0}^p t_\ell f(v_\ell)\right)
 =\sum_{\ell=0}^p t_\ell P_v(f(v_\ell)).
\]
Thus $P_v\circ f$ is simplexwise affine on the same triangulation; no target
subdivision is required.

\begin{figure}[htbp]
\centering
\begin{minipage}[b]{.48\textwidth}
\centering
\ConfigurationTable{%
  \ConfigTL{1}{1}{.7}%
  \ConfigTR{1}{2}{.3}%
  \ConfigTL{2}{3}{1}%
  \ConfigTL{3}{4}{1}%
  \ConfigTL{4}{5}{1}%
}
\end{minipage}\hfill
\begin{minipage}[b]{.48\textwidth}
\centering
\ConfigurationTable{%
  \ConfigTR{1}{6}{1}%
  \ConfigTL{2}{3}{1}%
  \ConfigTL{3}{4}{1}%
  \ConfigTL{4}{5}{1}%
}
\end{minipage}
\caption{The affine map $P_v$ in one example.  On the left, the last
column is unused and the first row splits its mass as $0.7+0.3$.  On the
right, $P_v$ transfers the whole first-row mass to the chosen address
$v\in A_{1,6}$ and leaves the other rows unchanged.}
\label{fig:Pv-mass-transfer}
\end{figure}

\begin{proposition}\label{prop:Uv-filling}
Fix $v\in A_{i_0,n}$.  If $m=1$, every simplexwise affine map
$S^p\to U_v$ extends simplexwise affinely over $B^{p+1}$ for every $p\ge0$.
If $m\ge2$ and Theorem~\ref{thm:soft-filling-strong} is known for
$\mathcal C_{m-1,n-1}(A^v)$, the same conclusion holds for
$0\le p\le n-m-1$.
\end{proposition}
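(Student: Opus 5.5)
The plan is to fill the sphere in two stages. First we retract radially onto $Y_v$ along a straight-line homotopy. Then we fill inside $Y_v$, which is a single point when $m=1$ and is identified with $\mathcal C_{m-1,n-1}(A^v)$ by \eqref{eq:Yv-smaller} when $m\ge2$.

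Let $f:|K|\to U_v$ be simplexwise affine on a triangulated $S^p$, and let $g=P_v\circ f$. By the discussion preceding the proposition, $g$ is simplexwise affine on the same triangulation and takes values in $Y_v$. Realize $B^{p+1}$ as the union of a collar $|K|\times[0,1]$ and an inner ball $B'$ whose boundary is $|K|\times\{1\}$. On the collar we want the straight-line homotopy
\[
 H(x,s)=(1-s)f(x)+s\,g(x).
\]

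The first key step is to check that $H$ stays inside $U_v$. Fix $x$ and put $\lambda=f(x)$. The rows $i\ne i_0$ do not change. Row $i_0$ is $(1-s)\lambda_{i_0}+s\,e_v$, which is still a probability vector. Its support is $\supp\lambda_{i_0}\cup\{v\}$ for $0<s<1$. No other row can use a column-$n$ address: that address would have to be $v$, and $v$ lies in row $i_0$, not in the other row. The only column in which row $i_0$ gains a new address is column $n$, and since $\lambda\in U_v$ that column is either unused or used only at $v$, so column exclusion holds.

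The second step is to make $H$ simplexwise affine. We use the standard prism triangulation of $\sigma\times[0,1]$ for each simplex $\sigma$ of $K$, built from a fixed vertex order so that it is compatible on common faces. At a prism vertex $(w,0)$ assign $f(w)$, and at $(w,1)$ assign $g(w)$. We must check that all vertex values of each prism simplex lie in one common cell. Take the support pattern given by $\bigcup_\ell\supp f(w_\ell)$, with $v$ adjoined to row $i_0$. This pattern is admissible because all $f(w_\ell)$ lie in one cell of $U_v$. It contains every $f(w_\ell)$ and every $g(w_\ell)=P_vf(w_\ell)$. Affine interpolation then stays in that cell, and the formulas agree on common faces.

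On the inner sphere $|K|\times\{1\}$ the map is $g$, a simplexwise affine map into $Y_v$. If $m=1$, $Y_v$ is a point, so we extend $g$ by the constant map and are done for every $p$. If $m\ge2$, transport $g$ through the affine identification \eqref{eq:Yv-smaller}, which deletes the fixed row $i_0$ and the unused column $n$ and maps cells to cells. The result is a simplexwise affine map $S^p\to\mathcal C_{m-1,n-1}(A^v)$. The assumed Theorem~\ref{thm:soft-filling-strong} for this complex extends maps from $S^{k-1}$ for $1\le k\le (n-1)-(m-1)=n-m$. With $k=p+1$ this covers exactly $0\le p\le n-m-1$. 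Mapping the extension back into $Y_v\subset U_v$ fills $B'$ without changing its boundary triangulation, and gluing it to the collar gives the required extension.

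The main obstacle is the common-cell check on the prism simplices. It requires that adjoining $v$ to row $i_0$ keeps the combined support admissible, and this uses $f\in U_v$ in an essential way. The rest is bookkeeping to make the prism triangulations compatible, which the fixed vertex order on $K$ handles.
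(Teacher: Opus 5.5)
Your proposal is correct and is essentially the paper's proof: a staircase prism collar with values $f(z)$ on the outer copy and $P_v(f(z))$ on the inner copy, the common-cell check by adjoining $v$ to row $i_0$ (which is where $f\in U_v$ is used), and then filling the inner sphere inside $Y_v$, by the constant map when $m=1$ or by Theorem~\ref{thm:soft-filling-strong} for $\mathcal C_{m-1,n-1}(A^v)$ with $k=p+1\le n-m$ when $m\ge2$. One cosmetic point: the collar map is the vertexwise affine interpolation, not the literal homotopy $(1-s)f+sg$, which is bilinear in general, so it is your common-cell check rather than the pointwise check of $H$ that justifies the collar.
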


\begin{proof}
Let $K$ triangulate $S^p$ and let $f:|K|\to U_v$ be simplexwise affine.
Take an outer copy of $K$ and a concentric inner copy.  For each
$p'$-simplex $\sigma=[z_0,\ldots,z_{p'}]$ of $K$, triangulate the region
between its two copies, identified with $\sigma\times[0,1]$, by the standard
staircase simplices
\[
 \operatorname{conv}\{(z_0,0),\ldots,(z_i,0),
                      (z_i,1),\ldots,(z_{p'},1)\},
 \qquad i=0,\ldots,p'.
\]
Assign the outer vertex $(z,0)$ the value $f(z)$ and the inner vertex
$(z,1)$ the value $P_v(f(z))$.

For a fixed $\sigma$, all outer and inner vertex values lie in one common
cell: start with a cell containing the outer values and replace the support
of row $i_0$ by its union with $\{v\}$.  This remains admissible because $U_v$ permits no
other positive last-column address.  Affine interpolation on every staircase
simplex therefore stays in $U_v$.  The triangulations agree on common prism
faces, and the formulas there use the same vertex values, so the resulting
map on the region between the two spheres is continuous.  It equals $f$ on
the outer sphere and $P_v\circ f$ on the inner sphere.

If $m=1$, the inner map is constant and fills the inner ball.  If $m\ge2$,
then
\[
 p\le n-m-1=(n-1)-(m-1)-1,
\]
so the induction hypothesis fills the inner map inside
$Y_v\cong\mathcal C_{m-1,n-1}(A^v)$.  The inner filling keeps its boundary
triangulation and vertex values, hence it joins the prism interpolation
without any further compatibility argument.  Together the two parts form a
simplexwise affine filling of $f$ over a ball.
\end{proof}

\subsection{Proof of Theorem~\ref{thm:soft-filling-strong}}

All ingredients are now in place, and we combine them in an induction on the total
number of rows and columns.

\begin{proof}
By Lemma~\ref{lem:finite-address-reduction}, it is enough to prove the
finite-address case.  We now prove that case by induction on $m+n$.  If
$n=m$, choose one address $a_i\in A_{i,i}$ in every row and put weight one
at $a_i$.  The chosen columns are distinct, so
$\mathcal C_{m,m}(A)$ is nonempty.

Assume $n>m$ and that the finite-address theorem is known whenever the sum of
the row and column numbers is smaller.  Let $K$ triangulate $S^k$, where
$0\le k\le n-m-1$, and let $f:|K|\to\mathcal C_{m,n}(A)$ be simplexwise
affine.  Every piece $U_v$ has the required filling property through
dimension $n-m-1$ by Proposition~\ref{prop:Uv-filling}, and
$A_0\cong\mathcal C_{m,n-1}(A^-)$.

If $k=0$, write the two values of $f$ as $x$ and $y$.  Choose pieces
$U_v$ and $U_w$ containing them.  Since $n-1\ge m$, the set $A_0$ is
nonempty; choose $a_0\in A_0$.  Proposition~\ref{prop:Uv-filling} gives a
polygonal path from $x$ to $a_0$ inside $U_v$ and another from $a_0$ to $y$
inside $U_w$.  Their concatenation fills $S^0$ over $B^1$.

Now let $k\ge1$.  For every $v\in A_{\bullet,n}$ and every $0\le p\le k$, maps into
$U_v$ extend.  For $0\le p\le k-1$,
\[
 p\le k-1\le n-m-2=(n-1)-m-1,
\]
so the induction hypothesis gives the corresponding extensions into $A_0$.
The finite-piece Lemma~\ref{lem:soft-pieces}, applied to
\eqref{eq:last-column-cover}, fills $f$ over $B^{k+1}$.  Since
$j=k+1$ ranges from $1$ to $n-m$, this proves the finite-address case.  The
general case follows from Lemma~\ref{lem:finite-address-reduction}, and the
extension produced there lies in the finite subcomplex
$\mathcal C_{m,n}(B)$ asserted in the theorem.
\end{proof}

Under the hypothesis $n\ge m+d$, apply
Theorem~\ref{thm:soft-filling-strong} to the possibly infinite address sets
\[
 A_{i+1,j+1}=\mathcal A_{i,j},
 \qquad 0\le i\le m-1,\quad 0\le j\le n-1,
\]
and restrict to $1\le k\le d$.  This gives
Theorem~\ref{thm:soft-filling}.  Its finite-address conclusion is precisely
the conclusion of Lemma~\ref{lem:finite-address-reduction} in the lattice
setting.

\section{The fully dense case: partitions of unity and slots}
\label{sec:fully-dense-case}

Assume throughout this section that
\begin{equation}\label{eq:fully-dense-assumption}
 \overline{T+P}=\R^d,
 \qquad
 \covol(T)<\covol(P).
\end{equation}
The group $T+P$ is countable; ``fully dense'' refers to the closure in
\eqref{eq:fully-dense-assumption}.  In this case there is no discrete
quotient and hence no parameter torus on which a configuration field must be
filled.  The construction reduces to two elementary finite objects in the
connected Euclidean space: small functions forming a periodic partition of
unity and separated open sets into which their supports can be moved.

The geometric idea is the packet--slot matching used by Grepstad,
Kolountzakis, and Spyridakis~\cite{GrepstadKolountzakisSpyridakis2025};
related common-domain constructions appear in
\cite{GrepstadKolountzakis2026}.  At the level of characteristic functions,
one cuts a source fundamental domain into many small pieces and assigns each
piece wholly to a distinct target slot.  This is the connected analogue of a
hard configuration: every source packet chooses one slot, and no slot is used
twice.  We keep the same counting and density mechanism, but soften the source
side by replacing characteristic functions with nonnegative elements of a
partition of unity.  The argument below is self-contained and does not import
an existence theorem from those papers.

We develop the two finite objects in a form that will be reused in
Section~\ref{sec:nondiscrete-construction}.  Let $V$ be an
$\ell$-dimensional Euclidean space and let $T_0,P_0\subset V$ be full-rank
lattices.  A finite family $h_1,\ldots,h_N$ is a
\emph{finite $T_0$-partition of unity} if
\[
 \sum_{i=1}^N\sum_{\tau\in T_0}h_i(v+\tau)=1,
 \qquad v\in V.
\]
The functions $h_i$ are called the partition elements.  A
\emph{target slot family} is a finite family of bounded open sets
$\widetilde S_j$, $j\in\{1,\ldots,M\}$, such that
$\overline{\widetilde S_j}\cap(\overline{\widetilde S_{j'}}+p_0)=\varnothing$
for all $j,j'\in\{1,\ldots,M\}$ and $p_0\in P_0$, unless
$j=j'$ and $p_0=0$.  A partition element fits slot $j$ with margin
$\eta>0$ if a translate of $\supp h_i+B_\eta^V$ is contained in
$\widetilde S_j$, where
\[
 B_\eta^V=\{v\in V:\norm{v}_2<\eta\}.
\]

The construction will be carried out in either of two concrete vector
spaces.  Since a nonzero globally affine function cannot have compact
support, the ``affine'' alternative means continuous compactly supported
piecewise-affine functions.  We write $C_{c,\mathrm{PA}}(V)$ for the
functions that are affine on the cells of a finite polyhedral subdivision;
the second choice will be $C_c^\infty(V)$.

\subsection{Piecewise-affine and smooth partitions of unity}

We first prove the required fine partition-of-unity statement separately in
the two regularity classes.

\begin{lemma}[Piecewise-affine lattice partition of unity]
\label{lem:pa-source-packets}
For every full-rank lattice $T_0\subset V$, every $\lambda_1>1$, and all
sufficiently large integers $s$, there are $N=s^\ell$ nonzero functions
\[
 h_1,\ldots,h_N\in C_{c,\mathrm{PA}}(V),
 \qquad h_i\ge0,
\]
whose periodizations define a $T_0$-periodic continuous piecewise-affine map
\[
 H=(H_1,\ldots,H_N):V\longrightarrow\Delta^{N-1},
 \qquad
 H_i(v)=\sum_{\tau\in T_0}h_i(v+\tau),
\]
and satisfy
\begin{equation}\label{eq:pa-packet-partition}
 \sum_{i=1}^NH_i(v)=1.
\end{equation}
After a fixed linear normalization carrying $T_0$ to $\Z^\ell$, every
$\supp h_i$ is contained in a cube of side length less than
$\lambda_1/s$.
\end{lemma}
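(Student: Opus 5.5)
First normalize: choose $A\in GL(V)$ with $T_0=A\Z^\ell$ and work in coordinates $u=A^{-1}v$, so it suffices to treat $T_0=\Z^\ell$ on $\R^\ell$; piecewise-affinity and compact support are preserved by linear maps, and the cube condition is stated in these coordinates. The plan is to build a tensor-product hat-function partition of unity on the fine grid $\frac1s\Z^\ell$ and then group its translates by residues modulo $\Z^\ell$.

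Concretely, fix $\theta\in(0,1)$ with $1+\theta<\lambda_1$ after rescaling; more simply, use a one-dimensional piecewise-affine bump $\beta_s$ supported in an interval of length $(1+\theta)/s<\lambda_1/s$ with $\sum_{k\in\Z}\beta_s(x-k/s)=1$. For instance take the trapezoid that equals $1$ on $[\theta/(2s),(1-\theta/2)/s]$, rises linearly on $[-\theta/(2s),\theta/(2s)]$ and falls symmetrically on $[(1-\theta/2)/s,(1+\theta/2)/s]$; the overlapping linear ramps of consecutive translates sum to $1$. Then set, for a multi-index $\kappa\in\{0,\ldots,s-1\}^\ell$,
\[
 h_\kappa(u)=\prod_{r=1}^\ell\beta_s\bigl(u_r-\kappa_r/s\bigr).
\]
This gives $N=s^\ell$ nonzero nonnegative compactly supported functions, each supported in a cube of side $(1+\theta)/s<\lambda_1/s$. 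Summing over $\tau\in\Z^\ell$ and over $\kappa$ runs over all grid translates $\frac1s\Z^\ell$ exactly once, so the product structure gives $\sum_\kappa H_\kappa=\prod_r\sum_{k}\beta_s(u_r-k/s)=1$, which is \eqref{eq:pa-packet-partition}. The periodizations are locally finite sums, hence continuous and $\Z^\ell$-periodic, and $H$ maps into $\Delta^{N-1}$ since the entries are nonnegative and sum to one. The requirement that $s$ be sufficiently large is only needed if one insists the support cube be smaller than a fundamental cell, i.e. $(1+\theta)/s<1$, so that distinct translates of one $h_\kappa$ do not overlap; this holds for $s\ge2$.

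The one point needing care is piecewise-affinity: a product of one-dimensional affine pieces is multilinear, not affine, in dimension $\ell\ge2$. I therefore replace the tensor product by the standard barycentric (Freudenthal/staircase) hat functions. Triangulate $\R^\ell$ by the staircase triangulation \eqref{eq:staircase-simplex} scaled by $1/s$, and let $\psi_k$ for $k\in\Z^\ell$ be the Courant hat function of the vertex $k/s$: it is affine on each simplex, equals $1$ at $k/s$, and vanishes at all other vertices. These satisfy $\sum_k\psi_k\equiv1$ exactly, and $\supp\psi_k$ is the star of $k/s$, contained in the cube $k/s+[-1/s,1/s]^\ell$ of side $2/s$. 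To meet the side bound $\lambda_1/s$ for arbitrary $\lambda_1>1$, I first use a finer mesh $1/(Ms)$ with $M$ large, and then group the hats within each block: define $h_\kappa=\sum_{k:\,\lfloor k/M\rfloor=\kappa}\psi_k$ over the $M^\ell$ fine vertices assigned to the coarse cell $\kappa/s+[0,1/s)^\ell$. The sum is still piecewise affine and nonnegative, and its support lies in a cube of side $(1+2/M)/s<\lambda_1/s$ once $M>2/(\lambda_1-1)$. Periodizing and grouping $\kappa$ modulo $s\Z^\ell$ again gives $s^\ell$ functions summing to one. I expect this support bookkeeping, namely choosing the mesh refinement so that the constraint $\lambda_1>1$ is met uniformly, to be the only genuinely delicate step; the rest is routine.
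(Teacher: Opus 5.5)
Your final construction is correct and is essentially the paper's proof. You take the Courant hat functions of the Freudenthal triangulation at mesh $1/(Ms)$ and sum them over the fine vertices in each coarse cell $\kappa/s+[0,1/s)^\ell$; the relation $h_\kappa(\cdot+\tau)=h_{\kappa-s\tau}$ then gives the periodized identity. The paper likewise groups the nodal hat functions of a fine $\Z^\ell$-periodic triangulation according to the half-open coarse cube containing each vertex representative.

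The differences are cosmetic. Your uniform grid makes the star bound explicit, and each $h_\kappa$ is automatically nonzero because the vertex $\kappa/s$ lies in its cell. The paper instead prescribes an interior vertex in each cube and shrinks the vertex stars by subdivision. Your preliminary tensor-product paragraph can be dropped, since, as you observe, it is not piecewise affine for $\ell\ge2$.
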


\begin{proof}
Apply a linear map carrying $T_0$ to $\Z^\ell$.  Partition the half-open
unit cube into
\[
 C_a=\frac1s\bigl(a+[0,1)^\ell\bigr),
 \qquad a\in\{0,\ldots,s-1\}^\ell.
\]
Choose
\[
 0<r<\frac{\lambda_1-1}{2s}.
\]
On the torus $\R^\ell/\Z^\ell$, choose one point in the interior of every
$C_a$ and triangulate so that all these points are vertices and every closed
vertex star has diameter less than $r$.  This is obtained by starting from a
triangulation containing the finitely many prescribed points and taking
sufficiently many subdivisions.  Lift it to a $\Z^\ell$-periodic
triangulation of $\R^\ell$.

Let $\vartheta_v$ be the nodal hat function at a vertex $v$ of the lifted
triangulation.  Choose one representative of every vertex orbit modulo
$\Z^\ell$.  First assign the prescribed interior vertex of each $C_a$ to
that cube; assign every remaining representative to the unique half-open
cube containing it.  Define
\[
 h_a=\sum_{v\text{ assigned to }C_a}\vartheta_v,
 \qquad
 H_a(x)=\sum_{q\in\Z^\ell}h_a(x+q).
\]
Every $h_a$ is nonzero, nonnegative, compactly supported, and piecewise
affine.  The periodic triangulation makes each $H_a$ periodic and affine on
every simplex.  The nodal hat functions over all vertices of the periodic
triangulation sum to one.  Since every vertex orbit was assigned exactly
once,
\[
 \sum_aH_a(x)=\sum_a\sum_{q\in\Z^\ell}h_a(x+q)=1.
\]
Thus $H=(H_a)_a$ maps into the probability simplex.  Moreover, the star of a
vertex assigned to $C_a$ lies in $C_a+B_r$, so
\[
 \supp h_a\subset C_a+B_r,
\]
which is contained in a cube of side length
$1/s+2r<\lambda_1/s$.  Undo the initial linear normalization.
\end{proof}

\begin{lemma}[Smooth lattice partition of unity]
\label{lem:smooth-source-partition}
For every full-rank lattice $T_0\subset V$, every $\lambda_2>1$, and all
sufficiently large integers $s$, there are $N=s^\ell$ nonzero functions
\[
 h_1^\infty,\ldots,h_N^\infty\in C_c^\infty(V),
 \qquad h_i^\infty\ge0,
\]
whose periodizations form a smooth map
$H^\infty:V\to\Delta^{N-1}$ and satisfy
\begin{equation}\label{eq:smooth-source-partition}
 \sum_{i=1}^N\sum_{\tau\in T_0}h_i^\infty(v+\tau)=1,
 \qquad v\in V.
\end{equation}
After the same normalization carrying $T_0$ to $\Z^\ell$, every support is
contained in a cube of side length less than $\lambda_2/s$.
\end{lemma}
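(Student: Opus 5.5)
Normalize $T_0=\Z^\ell$ by a linear change of variables and build a periodic smooth partition of unity indexed by the small cubes $C_a=\frac1s(a+[0,1)^\ell)$, $a\in\{0,\dots,s-1\}^\ell$.

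Fix a nonnegative bump $\psi\in C_c^\infty(\R^\ell)$ with $\psi>0$ on $[0,1]^\ell$ and $\supp\psi\subset(-\delta,1+\delta)^\ell$, where $\delta=(\lambda_2-1)/3>0$. Put
\[
 \psi_a(x)=\psi(sx-a),
 \qquad
 \Psi(x)=\sum_{a\in\{0,\dots,s-1\}^\ell}\sum_{q\in\Z^\ell}\psi_a(x+q).
\]
The translates $\psi(sx-b)$, $b\in\Z^\ell$, are locally finite. Every $b\in\Z^\ell$ has a unique representation $b=a+sq$ with $a\in\{0,\dots,s-1\}^\ell$ and $q\in\Z^\ell$, so the double sum equals $\sum_{b\in\Z^\ell}\psi(sx-b)$. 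Hence $\Psi$ is smooth and $\Z^\ell$-periodic. It is also strictly positive: every $x$ lies in some closed cube $\frac1s(b+[0,1]^\ell)$, where $\psi(sx-b)>0$.

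Define
\[
 h_a^\infty=\psi_a/\Psi.
\]
Each $h_a^\infty$ is smooth, nonnegative and nonzero, and its support equals $\supp\psi_a\subset\frac1s\bigl(a+(-\delta,1+\delta)^\ell\bigr)$. That is a cube of side $(1+2\delta)/s<\lambda_2/s$, and this bound holds for every $s\ge1$. Because $\Psi$ is $\Z^\ell$-periodic, it factors out of the periodization:
\[
 \sum_a\sum_{q}h_a^\infty(x+q)=\frac{\sum_a\sum_q\psi_a(x+q)}{\Psi(x)}=1.
\]
This gives \eqref{eq:smooth-source-partition}. The periodizations $H_a^\infty(x)=\sum_q h_a^\infty(x+q)$ are locally finite sums of smooth functions, hence smooth, so $H^\infty=(H_a^\infty)_a$ is a smooth map into $\Delta^{N-1}$ with $N=s^\ell$.

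Finally, undo the normalization. Supports and smoothness are transported by the linear map, and the side-length statement is made in the normalized coordinates, exactly as in Lemma~\ref{lem:pa-source-packets}.

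The one step needing care is positivity of $\Psi$ everywhere, which is what makes the division legitimate. It holds because $\psi$ is positive on the closed unit cube and the closed cubes $\frac1s(b+[0,1]^\ell)$, $b\in\Z^\ell$, cover $\R^\ell$. Everything else is a direct computation.
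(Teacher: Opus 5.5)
Your proof is correct, but it takes a different route from the paper. The paper does not build the smooth partition from scratch. It picks $1<\lambda_1<\lambda_2$ and invokes the piecewise-affine Lemma~\ref{lem:pa-source-packets}, which uses a fine periodic triangulation with nodal hat functions grouped by the cube $C_a$ containing each vertex-orbit representative. It then convolves each $h_a$ with a nonnegative mollifier of radius less than $(\lambda_2-\lambda_1)/(2s)$. The identity \eqref{eq:smooth-source-partition} survives because periodization commutes with convolution and the mollifier has integral one, and the support cubes grow only slightly. You instead use the classical quotient construction $h_a^\infty=\psi_a/\Psi$, where $\Psi$ is the full periodized sum of the rescaled bumps. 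Since $\Psi$ is $\Z^\ell$-periodic (in fact $\frac1s\Z^\ell$-periodic), it factors out of the periodization, and positivity of $\psi$ on the closed unit cube gives $\Psi>0$. Your argument is self-contained and needs no triangulation. It works for every $s\ge1$ and gives the explicit side bound $(1+2\delta)/s$, with all estimates made directly in the normalized coordinates where the side-length claim lives. The paper's route buys economy within its framework. The piecewise-affine lemma is needed anyway for the choice $\mathcal X=C_{c,\mathrm{PA}}$, and your division trick could not produce that version, since a quotient of piecewise-affine functions is not piecewise affine. Deriving the smooth case by mollification also matches the convolution step used again in Lemma~\ref{lem:pou-replacement}.
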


\begin{proof}
Choose $1<\lambda_1<\lambda_2$ and apply
Lemma~\ref{lem:pa-source-packets}.  Choose a nonnegative mollifier
$\rho_s\in C_c^\infty(V)$ of integral one with
\[
 \supp\rho_s\subset B_{\varepsilon_s}^V,
 \qquad
 0<\varepsilon_s<\frac{\lambda_2-\lambda_1}{2s}.
\]
Set $h_i^\infty=h_i*\rho_s$.  The functions are nonzero, nonnegative,
smooth, and compactly supported, and convolution enlarges the side length of
every support cube by less than $(\lambda_2-\lambda_1)/s$.  Periodization
commutes with convolution and the sums are locally finite, so
\begin{align*}
 \sum_{i=1}^N\sum_{\tau\in T_0}h_i^\infty(v+\tau)
 &=\int_V\rho_s(y)
   \sum_{i=1}^N\sum_{\tau\in T_0}h_i(v-y+\tau)\,dy\\
 &=\int_V\rho_s(y)\,dy=1.
\end{align*}
The coordinate periodizations therefore define the asserted smooth
simplex-valued map.
\end{proof}

For the remainder of Sections~\ref{sec:fully-dense-case} and
\ref{sec:nondiscrete-construction}, fix one of the two choices
\begin{equation}\label{eq:function-space-X}
 \mathcal X(E)=C_{c,\mathrm{PA}}(E)
 \qquad\text{or}\qquad
 \mathcal X(E)=C_c^\infty(E)
\end{equation}
for every Euclidean space $E$, and use the same choice throughout the
argument.  Both choices are real vector spaces, stable under translations
and finite sums.  Those are the only vector-space properties used after the
partition elements have been constructed; no norm, topology, or
multiplicative structure is needed.  The preceding two lemmas supply the
additional existence statement that is essential here: arbitrarily fine
nonnegative lattice partitions of unity with compact support.

\subsection{Separated target slots}

We next count small cubes that sit strictly inside one $P_0$-fundamental
parallelepiped; after shrinking, their periodic lifts form a target slot
family with a uniform margin.

\begin{lemma}\label{lem:elementary-slot-count}
Let $P_0\subset\R^\ell$ be a full-rank lattice and let $F_P$ be a closed
fundamental parallelepiped.  Fix $\lambda>0$ and put $\rho_s=\lambda/s$.
For all sufficiently large $s$, the interior of $F_P$ contains
$M_s$ closed cubes of side length $\rho_s$, with pairwise disjoint
interiors, where
\begin{equation}\label{eq:elementary-slot-count}
 M_s=\frac{\covol(P_0)}{\lambda^\ell}s^\ell+O(s^{\ell-1}).
\end{equation}
\end{lemma}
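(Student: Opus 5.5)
The plan is to count the cubes of the standard grid of mesh $\rho_s$ that lie strictly inside $F_P$.  This is a lattice-point counting argument of Jordan-measure type, using only that $F_P$ is a convex polytope with $|F_P|=\covol(P_0)$.

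\emph{Step 1 (the grid).}  Consider the closed cubes
\[
 G_a=\rho_s\bigl(a+[0,1]^\ell\bigr),\qquad a\in\Z^\ell .
\]
These have side length $\rho_s$ and pairwise disjoint interiors.  Let $M_s$ be the number of $a$ with $G_a\subset\operatorname{int}F_P$.  Every such cube contributes $\rho_s^\ell$ to $|F_P|$, which gives the upper bound
\[
 M_s\rho_s^\ell\le |F_P|=\covol(P_0).
\]

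\emph{Step 2 (lower bound).}  Each cube has diameter $\sqrt\ell\,\rho_s$.  Hence any grid cube meeting the inner parallel set
\[
 F_P^{(s)}=\{x\in F_P:\dist(x,\partial F_P)>\sqrt\ell\,\rho_s\}
\]
is contained in $\operatorname{int}F_P$.  These cubes cover $F_P^{(s)}$, so $M_s\rho_s^\ell\ge |F_P^{(s)}|$.

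\emph{Step 3 (boundary estimate).}  This is the only point requiring care, and it is routine.  We need
\[
 |F_P\setminus F_P^{(s)}|\le |\partial F_P+B_{\sqrt\ell\rho_s}|\le C\rho_s .
\]
The boundary $\partial F_P$ is a union of $2\ell$ bounded facets, and the $r$-neighborhood of each facet has volume at most
\[
 \mathcal H^{\ell-1}(\text{facet})\cdot 2r+O(r^2)\qquad\text{for }r\le1 .
\]
Alternatively, Steiner's formula for convex bodies gives the same bound directly.

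\emph{Step 4 (conclusion).}  Combining Steps 1–3 with $\rho_s=\lambda/s$ gives
\[
 \covol(P_0)-C\lambda/s\le M_s\lambda^\ell s^{-\ell}\le\covol(P_0).
\]
Multiplying by $s^\ell/\lambda^\ell$ yields \eqref{eq:elementary-slot-count}.  The requirement that $s$ be sufficiently large is needed only so that $\sqrt\ell\,\rho_s\le1$, which keeps the neighborhood estimate in Step 3 valid, and so that $M_s\ge1$.

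I do not expect a genuine obstacle.  The only point deserving care is Step 3, and the convexity of the parallelepiped makes it standard.
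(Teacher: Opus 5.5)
Your proposal is correct and follows essentially the same argument as the paper: it counts the grid cubes of mesh $\rho_s$ that lie in $\operatorname{int}F_P$, bounds their total volume above by $\covol(P_0)$, bounds it below by the volume of the points at distance more than $\sqrt\ell\,\rho_s$ from $\partial F_P$, and controls the boundary strip by $C\rho_s$. The only difference is in that last step: you bound the strip via facet neighborhoods (or Steiner's formula), whereas the paper pulls $F_P$ back to the unit cube by the defining linear map and covers the strip by $2\ell$ slabs of thickness comparable to $\rho_s$.
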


\begin{proof}
Use the grid of half-open cubes
\[
 \rho_s\bigl(a+[0,1)^\ell\bigr),
 \qquad a\in\Z^\ell,
\]
and retain those whose closures lie in the interior of $F_P$.  Their
interiors are pairwise disjoint.  Let $U_s$ be their union.  A grid cube has
diameter $\sqrt\ell\,\rho_s$, so every point of $F_P$ at distance more than
$\sqrt\ell\,\rho_s$ from $\partial F_P$ belongs to one of the retained
cubes.

Because $F_P$ is a parallelepiped, the volume of its boundary strip of
width $r$ is at most $C r$ for $0<r<1$, with a constant depending only on
$F_P$.  Indeed, after applying the inverse of the linear map defining
$F_P$, this strip is contained in the union of $2\ell$ slabs of thickness
comparable to $r$ along the faces of the unit cube.  Consequently,
\[
 \covol(P_0)-C\sqrt\ell\,\rho_s
 \le |U_s|=M_s\rho_s^\ell
 \le \covol(P_0).
\]
Divide by $\rho_s^\ell=(\lambda/s)^\ell$ to obtain
\eqref{eq:elementary-slot-count}.
\end{proof}

The next lemma combines the partition-of-unity and target-slot
constructions.  The extra
integers $m,n,k$ are included now because Section~\ref{sec:nondiscrete-construction} will use the same
statement after the discrete rows and columns have been introduced.  The
fully dense case is the specialization $m=n=1$ and $k=0$.

\begin{lemma}[Partition-of-unity and slot supply]\label{lem:packet-slot-supply}
Fix one of the choices $\mathcal X(V)$ in~\eqref{eq:function-space-X}.
Let $T_0,P_0\subset V$ be full-rank lattices, let $m,n\ge1$, and let
$k\ge0$.  Put
\[
 a_T=\covol_V(T_0),
 \qquad
 a_P=\covol_V(P_0).
\]
If
\begin{equation}\label{eq:packet-slot-volume-gap}
 m a_T<n a_P,
\end{equation}
then there exist positive integers $N,M$, nonzero nonnegative functions
$h_1,\ldots,h_N\in\mathcal X(V)$, bounded open slots
$\widetilde S_1,\ldots,\widetilde S_M\subset V$, and $\eta>0$ such that
\begin{enumerate}[label=(\roman*)]
\item the functions generate a $T_0$-partition of unity:
\begin{equation}\label{eq:packet-slot-pou}
 \sum_{i=1}^N\sum_{\tau\in T_0}h_i(v+\tau)=1,
 \qquad v\in V;
\end{equation}
\item for every $i\in\{1,\ldots,N\}$ and
$j\in\{1,\ldots,M\}$ there is $y_{i,j}\in V$ with
\begin{equation}\label{eq:packet-fits-slot}
 (\supp h_i+B_\eta^V)+y_{i,j}\subset\widetilde S_j;
\end{equation}
\item $\overline{\widetilde S_j}\cap
(\overline{\widetilde S_{j'}}+p_0)=\varnothing$ for all
$j,j'\in\{1,\ldots,M\}$ and $p_0\in P_0$, unless
$j=j'$ and $p_0=0$;
\item
\begin{equation}\label{eq:packet-slot-surplus}
 nM\ge mN+k.
\end{equation}
\end{enumerate}
\end{lemma}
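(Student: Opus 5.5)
The plan is to combine the fine partitions of unity of Lemmas~\ref{lem:pa-source-packets} and~\ref{lem:smooth-source-partition} with the slot count of Lemma~\ref{lem:elementary-slot-count}, using a single scale parameter $s$ for both.  The surplus inequality \eqref{eq:packet-slot-surplus} will then follow from the strict volume gap \eqref{eq:packet-slot-volume-gap}, because both $N$ and $M$ grow like $s^\ell$ and the additive constant $k$ is negligible.

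\emph{Choice of constants.}  Fix a linear normalization $A$ carrying $T_0$ to $\Z^\ell$; in these coordinates $\covol(P_0)$ becomes $a_P/a_T$.  Since $m a_T<n a_P$, we can choose $1<\lambda_2<\lambda$ with $\lambda^\ell$ close enough to $1$ that
\[
 n\,\frac{a_P/a_T}{\lambda^\ell}>m .
\]
For each large $s$, apply the relevant partition-of-unity lemma with parameter $\lambda_2$.  This gives $N=s^\ell$ nonzero nonnegative functions $h_i\in\mathcal X(V)$ satisfying (i).  In normalized coordinates each $\supp h_i$ lies in a cube of side $<\lambda_2/s$.  Apply Lemma~\ref{lem:elementary-slot-count} to $AP_0$, with a closed fundamental parallelepiped $F_P$, and side $\rho_s=\lambda/s$.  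This yields $M_s=\frac{a_P/a_T}{\lambda^\ell}s^\ell+O(s^{\ell-1})$ closed cubes $Q_j$ in the interior of $F_P$ with disjoint interiors.  Define the slot $\widetilde S_j$ as the open cube concentric with $Q_j$ of side $\lambda'/s$, where $\lambda_2<\lambda'<\lambda$.

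\emph{Verification.}  For (ii), a cube of side $<\lambda_2/s$ fattened by a small $\eta$ still has side $<\lambda'/s$ provided $\eta<(\lambda'-\lambda_2)/(2s)$.  So a translate fits inside every $\widetilde S_j$.  For (iii), the closures $\overline{\widetilde S_j}$ are compact subsets of the interiors of $Q_j$, and the $Q_j$ lie in $\operatorname{int}F_P$.  For $j\ne j'$ and $p_0=0$, disjointness holds because the shrunken cubes sit strictly inside cubes with disjoint interiors.  For $p_0\ne0$, the translates $\operatorname{int}F_P+p_0$ are pairwise disjoint, since $F_P$ is a fundamental domain and interior points have unique representatives.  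Hence $\overline{\widetilde S_{j'}}+p_0$ misses $\operatorname{int}F_P\supset\overline{\widetilde S_j}$.  Undoing $A$ preserves all of these statements; $\eta$ must be replaced by a smaller positive radius so that the Euclidean ball $B^V_\eta$ sits inside the image of the normalized ball.

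\emph{Counting.}  Finally, $nM-mN=s^\ell\bigl(n\frac{a_P/a_T}{\lambda^\ell}-m\bigr)+O(s^{\ell-1})$, which tends to $+\infty$.  For $s$ large this is at least $k$, giving (iv).  The main point needing care is the order in which constants are chosen: $\lambda$ and then $\lambda_2<\lambda'$ are fixed before $s$, and $\eta$ is fixed after $s$.  This ensures that the margin in (ii) does not interfere with the asymptotic count.  The mild subtlety that normalization distorts Euclidean balls is handled by the comparability of norms.
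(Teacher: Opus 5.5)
Your proposal is correct and matches the paper's proof step for step: the same normalization $A(T_0)=\Z^\ell$, the same nested scales (your $\lambda_2<\lambda'<\lambda$ are the paper's $\lambda_1<\lambda_2<\lambda$), concentric shrunken slot cubes inside the counted outer cubes, a common margin $\eta$ fixed after $s$, and the count $nM-mN=(n\alpha/\lambda^\ell-m)s^\ell+O(s^{\ell-1})$ with $\alpha=a_P/a_T$ for (iv). Your check of (iii), using disjointness of the $P_0$-translates of $\operatorname{int}F_P$, simply writes out what the paper asserts in one line.
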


\begin{proof}
Fix a linear isomorphism $A:V\to\R^\ell$ with
$A(T_0)=\Z^\ell$.  In these coordinates $\alpha:=\covol(A(P_0))=a_P/a_T$, so the hypothesis is $m<n\alpha$.  Choose $1<\lambda_1<\lambda_2<\lambda$ such that $n\alpha/\lambda^\ell>m$.
For every sufficiently large $s$, apply
Lemma~\ref{lem:pa-source-packets} if
$\mathcal X(V)=C_{c,\mathrm{PA}}(V)$ and
Lemma~\ref{lem:smooth-source-partition} if
$\mathcal X(V)=C_c^\infty(V)$.  In either case we obtain
$N=s^\ell$ nonnegative partition elements in $\mathcal X(V)$ whose
$A$-images are supported in cubes of side less than $\lambda_1/s$.

Apply Lemma~\ref{lem:elementary-slot-count} to the lattice $A(P_0)$ and a
closed fundamental parallelepiped for it.  We obtain $M=M_s$ outer cubes of
side $\lambda/s$.  Inside each outer cube take the concentric open cube of
side $\lambda_2/s$, and let $\widetilde S_j$ be its inverse image under
$A$.  The outer cubes lie strictly inside one fundamental parallelepiped and
have disjoint interiors.  Their shrinking therefore gives, for all
$j,j'\in\{1,\ldots,M\}$ and $p_0\in P_0$ with
$(j,p_0)\ne(j',0)$,
\[
 \overline{\widetilde S_j}\cap
 (\overline{\widetilde S_{j'}}+p_0)=\varnothing.
\]
This proves~(iii).

For every $i$, the set $A(\supp h_i)$ lies in a cube strictly smaller than the inner
slot cube.  Aligning the two centers gives a vector $y_{i,j}\in V$, and the
strict difference of side lengths leaves a positive margin.  Since only
finitely many pairs $(i,j)$ occur, one common $\eta>0$ works in~(ii).
Item~(i) is the partition identity supplied by the corresponding
partition-of-unity lemma.

Finally, Lemma~\ref{lem:elementary-slot-count} gives
\[
 nM-mN
 =\left(\frac{n\alpha}{\lambda^\ell}-m\right)s^\ell
   +O(s^{\ell-1}).
\]
The leading coefficient is positive, so the right-hand side is at least
$k$ for all sufficiently large $s$.  This proves~(iv).
\end{proof}

\subsection{Direct construction when $\overline{T+P}=\R^d$}

We now combine the fixed function space $\mathcal X(\R^d)$ with the density of $T$ modulo
$P$.  No configuration field is needed in the fully dense case.

\begin{proposition}[Fully dense construction in $\mathcal X$]
\label{prop:fully-dense-pa}
Fix one of the choices $\mathcal X(\R^d)$ in~\eqref{eq:function-space-X}.
Under
\eqref{eq:fully-dense-assumption}, there are a nonnegative function
$\varphi_{\mathcal X}\in\mathcal X(\R^d)$ and a number
$\eta_{\mathcal X}>0$ such that $\varphi_{\mathcal X}$ is a
$T$-partition of unity and
\begin{equation}\label{eq:dense-pa-buffer}
 (\supp\varphi_{\mathcal X}+B_{\eta_{\mathcal X}})+P
 \quad\text{is a packing.}
\end{equation}
In particular, one may choose either
\[
 \varphi_{\rm pa}\in C_{c,\mathrm{PA}}(\R^d)
 \qquad\text{or}\qquad
 \varphi_\infty\in C_c^\infty(\R^d).
\]
For the piecewise-affine choice, the fundamental domain in
Theorem~\ref{thm:main-geometric} may be chosen as a finite union of bounded
half-open polyhedra.
\end{proposition}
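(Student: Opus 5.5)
Apply Lemma~\ref{lem:packet-slot-supply} with $V=\R^d$, $T_0=T$, $P_0=P$, $m=n=1$ and $k=0$.  Its hypothesis $a_T<a_P$ is exactly the covolume inequality in \eqref{eq:fully-dense-assumption}.  The lemma returns nonnegative $h_1,\ldots,h_N\in\mathcal X(\R^d)$ forming a finite $T$-partition of unity, bounded open slots $\widetilde S_1,\ldots,\widetilde S_M$ that are separated modulo $P$ in the sense of item~(iii), a margin $\eta>0$, and the count $M\ge N$.  Assign packet $i$ to slot $j=i$; this injective assignment is possible because $M\ge N$.  The idea is to move each $h_i$ by a vector $t_i\in T$ so that $\supp h_i+B_{\eta/2}+t_i$ lies inside $\widetilde S_i+p_i$ for some $p_i\in P$.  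Then $\varphi_{\mathcal X}=\sum_i h_i(\cdot-t_i)$ is still a $T$-partition of unity, because translating a summand by an element of $T$ does not change its $T$-periodization.  It also lies in $\mathcal X(\R^d)$, since that space is stable under translations and finite sums.

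The key step is choosing the $t_i$.  Item~(ii) gives $y_{i,i}$ with $\supp h_i+B_\eta+y_{i,i}\subset\widetilde S_i$.  Because $\overline{T+P}=\R^d$, the set $T+P$ is dense, so we can pick $t_i\in T$ and $q_i\in P$ with $\norm{t_i+q_i-y_{i,i}}_2<\eta/2$.  Then
\[
\supp h_i+B_{\eta/2}+t_i\subset \supp h_i+B_\eta+y_{i,i}-q_i\subset \widetilde S_i-q_i .
\]
Put $W_i=\widetilde S_i-q_i$.  We get $\supp\varphi_{\mathcal X}+B_{\eta/2}\subset\bigcup_i W_i$.

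Now check the packing.  Suppose $x\in(W_i+p)\cap W_{i'}$ with $p\in P\setminus\{0\}$.  Then
\[
\overline{\widetilde S_i}+(p-q_i+q_{i'})\ \text{ meets }\ \overline{\widetilde S_{i'}} .
\]
By item~(iii) this forces $i=i'$ and $p-q_i+q_{i'}=0$, hence $p=0$, a contradiction.  So the sets $\bigcup_iW_i+p$, $p\in P$, are pairwise disjoint.  Hence \eqref{eq:dense-pa-buffer} holds with $\eta_{\mathcal X}=\eta/2$.  This disjointness also gives $\varphi_{\mathcal X}(x)\varphi_{\mathcal X}(x-p)=0$ for $p\in P\setminus\{0\}$.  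Nonnegativity and compact support are immediate.

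For the polyhedral conclusion, take $\mathcal X=C_{c,\mathrm{PA}}$ and run the thresholding and first-hit argument of Section~\ref{sec:hard-configurations} on $\varphi_{\rm pa}$.  That argument produces a $T$-fundamental domain $\Omega\subset\{\varphi_{\rm pa}\ge 1/N'\}$ which is a finite union of bounded half-open polyhedra.  Since $\Omega\subset\supp\varphi_{\rm pa}$, the buffer above shows that $\Omega+B_\eps$ packs with $P$ for $\eps<\eta/4$.  The only delicate point is the density approximation in the key step.  It is routine once one notes that the needed closeness $\eta/2$ is uniform over the finitely many packets, so no obstacle is expected.
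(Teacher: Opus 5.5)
Your proof is correct and takes the same route as the paper. You apply the packet--slot lemma with $m=n=1$, $k=0$, assign packets to slots injectively, use density of $T+P$ to realize each placement by a $T$-translate modulo $P$, and finish the polyhedral claim with the threshold/first-hit selector. Halving the margin to $\eta/2$ and checking disjointness explicitly via item~(iii) only make precise the paper's remark that the strict slot inclusion survives a small perturbation.
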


\begin{proof}
Apply Lemma~\ref{lem:packet-slot-supply} with
\[
 V=\R^d,
 \qquad T_0=T,
 \qquad P_0=P,
 \qquad m=n=1,
 \qquad k=0,
\]
and with the chosen function space $\mathcal X(\R^d)$.  The strict
covolume inequality gives the hypothesis, and
\eqref{eq:packet-slot-surplus} gives $M\ge N$.  Choose distinct slot labels
$j(1),\ldots,j(N)$.

Let $\rho_P:\R^d\to\R^d/P$ be the quotient map.  The density assumption
implies
\[
 \overline{\rho_P(T)}
 =\rho_P(\overline{T+P})
 =\R^d/P.
\]
For each $i$, Lemma~\ref{lem:packet-slot-supply}(ii) gives a geometric
placement vector $y_{i,j(i)}$.  The inclusion in
\eqref{eq:packet-fits-slot} is strict and therefore remains valid after a
sufficiently small perturbation.  Density of $\rho_P(T)$ gives
$t_i\in T$ and $p_i\in P$ such that
\[
 (\supp h_i+B_\eta)+t_i
 \subset \widetilde S_{j(i)}+p_i.
\]
Define
\[
 \varphi_{\mathcal X}(x)=\sum_{i=1}^N h_i(x-t_i).
\]
Translation invariance and the vector-space property of
$\mathcal X(\R^d)$ imply that
$\varphi_{\mathcal X}\in\mathcal X(\R^d)$.  Since $t_i\in T$ and the
$h_i$ generate a $T$-partition of unity,
\[
 \sum_{t\in T}\varphi_{\mathcal X}(x+t)
 =\sum_{i=1}^N\sum_{t\in T}h_i(x+t)=1.
\]
The slot labels $j(i)$ are distinct, and the periodic slot closures satisfy
the disjointness condition in Lemma~\ref{lem:packet-slot-supply}(iii).
Hence \eqref{eq:dense-pa-buffer} holds with
$\eta_{\mathcal X}=\eta$.

Taking $\mathcal X(\R^d)=C_{c,\mathrm{PA}}(\R^d)$ and then
$\mathcal X(\R^d)=C_c^\infty(\R^d)$ gives the two asserted regularities.
For the polyhedral conclusion, use the piecewise-affine choice.  Let $D$ be
a uniform bound for the number of nonzero members of its $T$-partition of
unity and choose $0<\tau<1/D$.  Then
$K=\{\varphi_{\rm pa}\ge\tau\}$ is a compact finite union of polyhedra and
$K+T=\R^d$.  The first-hit construction of
Subsection~\ref{sec:first-hit-selector} selects one representative from each
$T$-coset inside $K$.  Its pieces are half-open polyhedra, and
\eqref{eq:dense-pa-buffer} supplies the positive $P$-packing buffer.
\end{proof}

\section{The general partially dense case}
\label{sec:nondiscrete-construction}

We now assume that
\[
 C=\overline{T+P}
\]
is neither discrete nor all of $\R^d$.  Thus the closed group $C$ has both a
nonzero connected factor and a nonzero discrete quotient.  We give the full
argument in five steps.  First, a linear normal form separates those two
parts.  Second, the partition-of-unity and slot construction of Section~\ref{sec:fully-dense-case} is
applied in the connected factor.  Third, the discrete quotient supplies rows
and columns, and the partition-element and slot labels refine them into a
larger finite table.  Fourth, the extension theorem of Section~\ref{sec:abstract-row-column} fills an
equivariant field of soft configurations over the quotient torus.  Finally,
the field is unrolled to a scalar partition of unity and then replaced by
piecewise-affine and smooth partitions of unity.

The connected-factor step starts from the same geometric packet--slot idea
used by Grepstad, Kolountzakis, and Spyridakis
\cite{GrepstadKolountzakisSpyridakis2025}.  In the hard version, each small
source packet is assigned wholly to one separated target slot, exactly as a
hard configuration assigns one address to one column.  In the present
partially dense setting these assignments must vary over the quotient torus,
so a hard choice is too rigid.  We soften it twice: the source packets are
replaced by functions forming a partition of unity, and the varying slot
assignments are encoded by soft configurations.  The support margin inside
the target slots retains the geometric separation of the hard construction.

The reason for formulating Lemma~\ref{lem:packet-slot-supply} with the
auxiliary integers $m,n,k$ is precisely this application.  Here $m$ and $n$
will be the numbers of discrete rows and columns, while $k$ is the dimension
of the quotient torus.  If the connected factor uses $N$ compactly supported
partition elements and $M$ target slots, then the refined table has
\[
 \widetilde m=mN,
 \qquad
 \widetilde n=nM,
\]
and the inequality $nM\ge mN+k$ supplies the connectivity required to fill
that $k$-dimensional torus.

Fix henceforth one of the function spaces $\mathcal X(V)$ in
\eqref{eq:function-space-X}.  All partition elements in the connected
factor will belong to this space.  The final replacement argument is made
with the same notation and may then be run once for each of the two choices.

\subsection{Closed-subgroup normal form}

Let $V=C^\circ$ be the connected component of $0$ in $C$.  A connected
closed subgroup of a Euclidean space is a vector subspace, so $V$ is a
linear subspace.  Put
\[
 \ell=\dim V,
 \qquad
 k=d-\ell.
\]
Let
\[
 q:\R^d\longrightarrow \R^d/V
\]
be the quotient map by the subspace $V$.  The group $q(C)$ is a closed
subgroup of the Euclidean space $\R^d/V$ and has trivial identity component;
hence it is discrete.  Since $C$ spans $\R^d$, the group $q(C)$ has rank
$k$.

Choose a linear complement $W$ of $V$.  The restriction
$q|_W:W\to\R^d/V$ is a linear isomorphism.  Define
\[
 L=(q|_W)^{-1}(q(C)).
\]
Then $L$ is a full-rank lattice in $W$, every $c\in C$ has a unique
decomposition $c=\ell_0+v$ with $\ell_0\in L$ and $v\in V$, and
\begin{equation}\label{eq:closed-subgroup-sum}
 \R^d=W\oplus V,
 \qquad
 C=L\oplus V.
\end{equation}
After one common invertible linear change of variables, we may and shall
assume
\[
 W=\R^k,
 \qquad
 L=\Z^k,
 \qquad
 V=\R^\ell.
\]

We now define the projection used below without suppressing the quotient
space.  Let
\[
 q_C:C\longrightarrow C/V
\]
be the quotient map of the closed group $C$ by its connected subgroup $V$.
The decomposition~\eqref{eq:closed-subgroup-sum} identifies $C/V$ with the
lattice $L$ by
\[
 (\ell_0+v)+V\longmapsto\ell_0.
\]
The map
\begin{equation}\label{eq:pi-explicit}
 \pi:C\longrightarrow L,
 \qquad
 \pi(\ell_0+v)=\ell_0,
\end{equation}
is the composition of $q_C$ with this identification.  Set
\[
 T'=\pi(T),
 \qquad
 P'=\pi(P),
 \qquad
 T_0=T\cap V,
 \qquad
 P_0=P\cap V.
\]

For completeness, we verify the finiteness statements.  Because $T$ is a
lattice in $\R^d$ and $C$ is closed with $T\subset C$, the quotient $C/T$ is
a closed subset of the compact torus $\R^d/T$, and is therefore compact.
The connected component of the identity in $C/T$ is the image of $V$; its
kernel is $T_0$.  Thus $V/T_0$ is compact, so $T_0$ is a full-rank lattice
in $V$.  The map
\[
 C/T\longrightarrow L/T',
 \qquad
 c+T\longmapsto\pi(c)+T',
\]
is continuous and surjective.  Hence $L/T'$ is compact.  Since it is also
discrete, it is finite.  Therefore $T'$ has finite index in $L$.  Repeating
the same argument with $P$ shows that $P_0$ is a full-rank lattice in $V$
and that $P'$ has finite index in $L$.  Moreover,
\[
 T'+P'=L,
\]
because $\pi(T+P)$ is dense in the discrete group $L$.

As in Section~\ref{sec:hard-configurations}, enumerate the quotient groups themselves by
\begin{equation}\label{eq:nondiscrete-rows-columns}
 L/T'=\{R_0=T',R_1,\ldots,R_{m-1}\},
 \qquad
 L/P'=\{Q_0=P',Q_1,\ldots,Q_{n-1}\}.
\end{equation}
Thus $R_r$ are the discrete rows and $Q_q$ are the discrete columns.  Put
\[
 m=[L:T'],
 \qquad
 n=[L:P'],
 \qquad
 a_T=\covol_V(T_0),
 \qquad
 a_P=\covol_V(P_0),
\]
where covolumes with subscript $V$ are computed using Lebesgue measure on
$V$.

\begin{lemma}\label{lem:product-graph-normal}
After applying a shear that fixes $V$ pointwise, one may assume
\[
 T=T'\times T_0.
\]
In the same coordinates there is a linear map $B:W\to V$ such that
\[
 P=\{(p',Bp'+p_0):p'\in P',\ p_0\in P_0\}.
\]
If Lebesgue measure on $W$ is normalized by $\covol_W(L)=1$, then
\[
 \covol(T)=m a_T,
 \qquad
 \covol(P)=n a_P.
\]
\end{lemma}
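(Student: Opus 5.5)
The proof is structural linear algebra on the decomposition $\R^d=W\oplus V$ with $W=\R^k$, $L=\Z^k$, $V=\R^\ell$. Write every element of $C$ as $(w,v)$ with $w\in L$, $v\in V$.

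\emph{The shear for $T$.} First show that $T\cap V=T_0$ and $\pi(T)=T'$ fit in an exact sequence
\[
0\to T_0\to T\xrightarrow{\ \pi\ } T'\to 0 .
\]
Since $T'$ is a finite-index subgroup of $\Z^k$, it is free of rank $k$, so the sequence splits. Choose $t_1',\dots,t_k'$ a $\Z$-basis of $T'$ and lifts $t_i=(t_i',v_i)\in T$. Define the linear map $A_T:W\to V$ by $A_T t_i'=v_i$; this is possible because the $t_i'$ form an $\R$-basis of $W$. Then
\[
T=\{(t',A_Tt'+t_0): t'\in T',\ t_0\in T_0\}.
\]
The shear $S(w,v)=(w,v-A_Tw)$ fixes $V$ pointwise, so it preserves $V$, $C$, $L$, $\pi$, $T_0$ and $P_0$. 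It carries $T$ to $T'\times T_0$.

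\emph{The graph form for $P$.} Apply the same splitting argument to $P$ in the sheared coordinates. This gives a linear $B:W\to V$ with $P=\{(p',Bp'+p_0)\}$; explicitly, $B=A_P-A_T$.

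\emph{Covolumes.} A fundamental domain of $T'\times T_0$ is a product of fundamental domains, so
\[
\covol(T)=\covol_W(T')\,\covol_V(T_0)=[L:T']\,a_T=m\,a_T,
\]
using $\covol_W(L)=1$. The shear has determinant one, so $\covol$ is unchanged. For $P$, the map $(w,v)\mapsto(w,v-Bw)$ has determinant one and maps $P$ onto $P'\times P_0$, so $\covol(P)=n\,a_P$.

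The only subtle point, and the main obstacle, is exactness in the middle of the sequence: that $\ker(\pi|_T)=T\cap V$. This holds because $\pi$ is projection along $V$ under the decomposition $C=L\oplus V$. The remaining steps are routine.
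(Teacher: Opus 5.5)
Your proposal is correct and follows essentially the same route as the paper: lift a $\Z$-basis of $T'$, shear away the linear map sending each basis vector to the $V$-component of its lift, repeat for $P$ to obtain $B$, and read off the covolumes from the resulting product (block-triangular) structure. One small imprecision: the shear does not preserve $L$ or $W$ as subsets of $\R^d$. It preserves only $V$, $C=L\oplus V$, and the first-coordinate map $\pi$, and that is all your argument uses.
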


\begin{proof}
Choose a basis $t'_1,\ldots,t'_k$ of the rank-$k$ lattice $T'$.  For each
$i$, choose a lift $t_i\in T$ with $\pi(t_i)=t'_i$, and write
\[
 t_i=(t'_i,v_i),
 \qquad v_i\in V.
\]
Every $t\in T$ has $\pi(t)=\sum_i z_i t'_i$ for uniquely determined
integers $z_i$.  Therefore
\[
 t-\sum_i z_i t_i\in T\cap V=T_0.
\]
Thus the chosen lifts together with $T_0$ generate all of $T$, and their
intersection with $T_0$ is trivial.

Define the linear map $A:W\to V$ by $A(t'_i)=v_i$ and extend linearly from
the basis $t'_1,\ldots,t'_k$ of $W$.  The shear
\[
 S_A:W\oplus V\longrightarrow W\oplus V,
 \qquad
 S_A(w,v)=(w,v-Aw),
\]
fixes every vector of $V$ and sends each lift $t_i$ to $(t'_i,0)$.  Hence,
after applying this common change of coordinates,
\[
 T=T'\times T_0.
\]

Now choose a basis $p'_1,\ldots,p'_k$ of $P'$ and lifts
$p_i=(p'_i,w_i)\in P$.  As above, every element of $P$ differs from an
integer combination of the $p_i$ by an element of $P_0$.  Define the unique
linear map $B:W\to V$ by $B(p'_i)=w_i$.  It follows directly that every
$p\in P$ has a unique representation
\[
 p=(p',Bp'+p_0),
 \qquad p'\in P',\quad p_0\in P_0,
\]
which is the asserted graph form.

Finally choose lattice bases of $T'$, $T_0$, $P'$, and $P_0$.  Relative to
the decomposition $W\oplus V$, the corresponding basis matrices for $T$
and $P$ are block triangular.  Their diagonal blocks give
\[
 \covol(T)=\covol_W(T')\covol_V(T_0),
 \qquad
 \covol(P)=\covol_W(P')\covol_V(P_0).
\]
Because $\covol_W(L)=1$, the finite-index formulas give
$\covol_W(T')=[L:T']=m$ and $\covol_W(P')=[L:P']=n$, proving the result.
\end{proof}

The strict covolume hypothesis is now
\begin{equation}\label{eq:nondiscrete-volume}
 m a_T<n a_P.
\end{equation}

\subsection{Partition of unity in the connected space}

We now apply the abstract packet--slot result in the connected space $V$.
The only regularity used at this stage is that the partition elements belong
to the fixed function space $\mathcal X(V)\subset C_c(V)$.

\begin{lemma}\label{lem:packet-slot-refinement}
Assume $\ell\ge1$ and $m a_T<n a_P$.  There exist positive integers $N,M$,
nonzero nonnegative functions $h_1,\ldots,h_N\in\mathcal X(V)$, bounded
open slot lifts $\widetilde S_1,\ldots,\widetilde S_M\subset V$, and
$\eta>0$ such that
\begin{enumerate}[label=(\roman*)]
\item the functions generate a $T_0$-partition of unity:
\[
 \sum_{i=1}^N\sum_{\tau\in T_0}h_i(v+\tau)=1,
 \qquad v\in V;
\]
\item for every $i\in\{1,\ldots,N\}$ and
$j\in\{1,\ldots,M\}$ there is $y_{i,j}\in V$ such that
\[
 (\supp h_i+B_\eta^V)+y_{i,j}\subset\widetilde S_j;
\]
\item $\overline{\widetilde S_j}\cap
(\overline{\widetilde S_{j'}}+p_0)=\varnothing$ for all
$j,j'\in\{1,\ldots,M\}$ and $p_0\in P_0$, unless
$j=j'$ and $p_0=0$;
\item
\[
 nM\ge mN+k.
\]
\end{enumerate}
\end{lemma}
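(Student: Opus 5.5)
This lemma is a direct specialization of Lemma~\ref{lem:packet-slot-supply}, so the plan is to verify its hypotheses in the present setting and read off the conclusions. We take the Euclidean space $V=C^\circ$ of dimension $\ell\ge1$, with the full-rank lattices $T_0=T\cap V$ and $P_0=P\cap V$. Both are full-rank lattices in $V$; this was checked in the closed-subgroup normal form, using compactness of $V/T_0$ and $V/P_0$.

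For the integers we use the discrete row and column counts $m=[L:T']$ and $n=[L:P']$, together with $k=d-\ell$, the rank of the discrete quotient. The function space is the fixed choice $\mathcal X(V)$ from~\eqref{eq:function-space-X}.

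The only hypothesis of Lemma~\ref{lem:packet-slot-supply} that needs checking is the volume gap~\eqref{eq:packet-slot-volume-gap}, namely $m a_T<n a_P$. This is exactly~\eqref{eq:nondiscrete-volume}. It follows from the strict inequality $\covol(T)<\covol(P)$ by means of Lemma~\ref{lem:product-graph-normal}, which gives $\covol(T)=m a_T$ and $\covol(P)=n a_P$ once Lebesgue measure on $W$ is normalized so that $\covol_W(L)=1$.

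One point deserves a remark. The covolumes $a_T$ and $a_P$ are computed with Lebesgue measure on $V$, and the common linear changes of variables (the normalization $L=\Z^k$ and the shear of Lemma~\ref{lem:product-graph-normal}) multiply both covolumes by the same positive factor. The inequality is therefore invariant, and it does not matter in which coordinates it is checked. The shear fixes $V$ pointwise, so $T_0$ and $P_0$ are unchanged.

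With these data, Lemma~\ref{lem:packet-slot-supply} produces $N$, $M$, $h_i$, $\widetilde S_j$ and $\eta$. Its items (i)--(iv) are, word for word, items (i)--(iv) here, with $B_\eta^V$ the ball in $V$. There is no real obstacle. The one thing to confirm is that the slot-separation statement (iii) refers to the lattice $P_0=P\cap V$ and not to some projection of $P$. It does, because Lemma~\ref{lem:packet-slot-supply} was stated for an arbitrary full-rank lattice $P_0\subset V$.
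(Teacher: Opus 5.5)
Your proposal is correct and is the paper's proof: a direct application of Lemma~\ref{lem:packet-slot-supply} to the space $V$, the function space $\mathcal X(V)$, the lattices $T_0,P_0$, and the integers $m,n,k$. The gap $m a_T<n a_P$ is already an explicit hypothesis of the statement, so deriving it from $\covol(T)<\covol(P)$ via Lemma~\ref{lem:product-graph-normal}, including your coordinate-invariance remark, is correct but not needed here.
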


\begin{proof}
Apply Lemma~\ref{lem:packet-slot-supply} in the Euclidean space $V$ to the
fixed function space $\mathcal X(V)$, the lattices $T_0,P_0$, and the integers
$m,n,k$ defined above.  Its hypothesis is exactly
$m a_T<n a_P$, and its four conclusions give the assertions here.
\end{proof}

\subsection{Rows, columns, and admissible addresses}

We now keep the row--column notation of Sections~\ref{sec:hard-configurations} and~\ref{sec:soft-configurations}.  The discrete rows
and columns are the cosets $R_r\in L/T'$ and $Q_q\in L/P'$ from
\eqref{eq:nondiscrete-rows-columns}.  Refinement adds a partition-element label to a row
and a slot label to a column:
\[
 \widetilde R_{r,i}=(R_r,i),
 \qquad
 0\le r\le m-1,
 \quad 1\le i\le N,
\]
and
\[
 \widetilde Q_{q,j}=(Q_q,j),
 \qquad
 0\le q\le n-1,
 \quad 1\le j\le M.
\]
Thus the refined table has
\[
 \widetilde m=mN
 \qquad\text{rows and}\qquad
 \widetilde n=nM
\]
columns, and Lemma~\ref{lem:packet-slot-refinement} gives
\begin{equation}\label{eq:refined-gap}
 \widetilde n\ge\widetilde m+k.
\end{equation}

It is useful to spell out the moving address sets explicitly.  The graph
form of $P$ gives a canonical homeomorphism
\begin{equation}\label{eq:graph-quotient-trivialization}
 \Theta:(W\oplus V)/P\longrightarrow (W/P')\times(V/P_0),
 \qquad
 \Theta\bigl((w,v)+P\bigr)=\bigl(w+P',v-Bw+P_0\bigr).
\end{equation}
Indeed, replacing $(w,v)$ by $(w+p',v+Bp'+p_0)$ leaves the right-hand
side unchanged.

For a base point $u\in W$, a refined row $(R_r,i)$, and a refined column
$(Q_q,j)$, an address is a labeled quadruple
\[
 a=(\ell,\tau;i,j),
 \qquad
 \ell\in R_r\cap Q_q,
 \quad \tau\in T_0.
\]
The sign convention is chosen to match the unrolling formula below: the
address $\ell$ at the base point $u$ contributes on the affine fiber with
$W$-coordinate $u-\ell$.  Define
\begin{equation}\label{eq:moving-address-definition}
\begin{split}
 \mathcal A_{(r,i),(q,j)}(u)
 =\bigl\{(\ell,\tau;i,j):{}&\ \ell\in R_r\cap Q_q,
 \ \tau\in T_0,\ \text{and for some }p_0\in P_0,\\[-1mm]
 & (\supp h_i+\overline{B_\eta^V})+\tau-B(u-\ell)
    \subset \widetilde S_j+p_0\bigr\}.
\end{split}
\end{equation}
After decreasing $\eta$ once, if necessary, the closed-ball version of the
packet inclusion follows from the strict containments in
Lemma~\ref{lem:packet-slot-refinement}(ii).  Lemma~\ref{lem:packet-slot-refinement}(iii)
shows that the element $p_0$ in~\eqref{eq:moving-address-definition} is
unique; denote it by $\kappa(a,u)$.  Formula~\eqref{eq:graph-quotient-trivialization}
explains the expression in~\eqref{eq:moving-address-definition}: for a point
$(u-\ell,v)$ in the translated packet, its connected coordinate modulo $P$
is $v-B(u-\ell)+P_0$.

The definition now makes all dependence on $u$ explicit.  Since the compact
set in~\eqref{eq:moving-address-definition} lies strictly inside an open
slot, admissibility is open in $u$.  If $\gamma\in L$, put
\[
 a+\gamma=(\ell+\gamma,\tau;i,j).
\]
Then $u+\gamma-(\ell+\gamma)=u-\ell$, and hence
\begin{equation}\label{eq:moving-address-equivariance}
 \mathcal A_{(R_r+\gamma,i),(Q_q+\gamma,j)}(u+\gamma)
 =\mathcal A_{(R_r,i),(Q_q,j)}(u)+\gamma.
\end{equation}
Thus the moving address table is $L$-equivariant.

\begin{lemma}\label{lem:dense-row-orbits}
For every $u\in W$, every refined row $\widetilde R_{r,i}$, and every
refined column $\widetilde Q_{q,j}$, the address set
$\mathcal A_{(r,i),(q,j)}(u)$ is nonempty.  More precisely, for any
$\ell_0\in R_r\cap Q_q$, the set
\begin{equation}\label{eq:dense-connected-coordinates}
 \bigl\{\tau-B(u-\ell)+P_0:
   \ell\in R_r\cap Q_q,\ \tau\in T_0\bigr\}
\end{equation}
is dense in $V/P_0$.
\end{lemma}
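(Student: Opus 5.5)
The plan is to reduce the density assertion to the density of $T+P$ in $C=L\oplus V$, and then to derive nonemptiness from density together with the openness of the slot condition.

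\emph{Step 1 (identify the set as a $P$-quotient).} Fix $\ell_0\in R_r\cap Q_q$. For $\ell\in R_r\cap Q_q$ and $\tau\in T_0$, consider the point $c=(u-\ell,\tau)\in W\oplus V$. By the trivialization \eqref{eq:graph-quotient-trivialization}, its class modulo $P$ is $\bigl(u-\ell+P',\ \tau-B(u-\ell)+P_0\bigr)$. The elements $\ell\in R_r\cap Q_q$ are exactly the $\ell\in L$ with $\ell\in\ell_0+T'$ and $\ell\in\ell_0+P'$. So it suffices to show the following: as $\ell$ ranges over $\ell_0+(T'\cap P')$ and $\tau$ over $T_0$, the second coordinates $\tau-B(u-\ell)$ are dense modulo $P_0$.

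\emph{Step 2 (use density of $T+P$).} Since $\overline{T+P}=C=L\oplus V$, in the normal form of Lemma~\ref{lem:product-graph-normal} every $v\in V$ is approximated by elements $t+p\in(T+P)\cap V$. Write $t=(t',\tau_1)$ with $t'\in T'$ and $\tau_1\in T_0$, and $p=(p',Bp'+p_0)$. Lying in $V$ forces $t'=-p'\in T'\cap P'$. The $V$-coordinate is then $\tau_1+Bp'+p_0=\tau_1-Bt'+p_0$. Hence
\[
 \{\tau-B\gamma+P_0:\ \gamma\in T'\cap P',\ \tau\in T_0\}
\]
is dense in $V/P_0$; it is the image of $(T+P)\cap V$, and $(T+P)\cap V$ is dense in $V$. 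This last claim is the step I expect to need care. Density of $T+P$ in $C$ gives density of $(T+P)\cap(\ell'+V)$ in $\ell'+V$ for each $\ell'\in L$, because $L$ is discrete: any element of $T+P$ close to a point of $\ell'+V$ must have $\pi$-image exactly $\ell'$. Applying this with $\ell'=0$ gives the claim.

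\emph{Step 3 (translate to $\ell_0$).} Now set $\ell=\ell_0-\gamma$ with $\gamma\in T'\cap P'$, so that $\ell$ ranges over $\ell_0+(T'\cap P')$. Then $\tau-B(u-\ell)=\tau-B(u-\ell_0)-B\gamma$. By Step~2 this set is dense modulo $P_0$, since it is a fixed translate of a dense set. This proves the density of \eqref{eq:dense-connected-coordinates}; in fact it already holds using only $\ell\in\ell_0+(T'\cap P')\subset R_r\cap Q_q$.

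\emph{Step 4 (nonemptiness).} By Lemma~\ref{lem:packet-slot-refinement}(ii), with $\eta$ decreased once as stated, $(\supp h_i+\overline{B^V_\eta})+y_{i,j}$ is a compact subset of the open slot $\widetilde S_j$. Hence there is $\delta>0$ such that every translate by $y'$ with $\norm{y'-y_{i,j}}_2<\delta$ also lies in $\widetilde S_j$. By density, choose $\ell$, $\tau$ and $p_0\in P_0$ with $\tau-B(u-\ell)-p_0$ within $\delta$ of $y_{i,j}$. Then
\[
 (\supp h_i+\overline{B^V_\eta})+\tau-B(u-\ell)\subset\widetilde S_j+p_0,
\]
so $(\ell,\tau;i,j)\in\mathcal A_{(r,i),(q,j)}(u)$.
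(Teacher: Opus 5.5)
Your proposal is correct and follows essentially the same route as the paper: write $R_r\cap Q_q=\ell_0+(T'\cap P')$, read off the $V$-coordinates of the elements of $T+P$ with zero discrete component to get density of $T_0+B(T'\cap P')$ modulo $P_0$, translate by $-B(u-\ell_0)$, and then use openness of the compact-in-open slot condition to produce an address. Your explicit argument, via discreteness of $L$, that $(T+P)\cap V$ is dense in $V$ fills in a step the paper only asserts.
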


\begin{proof}
Put $H=T'\cap P'$.  Since $R_r\cap Q_q=\ell_0+H$, the set in
\eqref{eq:dense-connected-coordinates} is a translate of the image of
$T_0+BH$ in $V/P_0$.  We claim that this image is dense.  The elements of
$T+P$ with zero discrete component are precisely
\[
 (0,\tau+p_0+Bh),
 \qquad \tau\in T_0,\quad p_0\in P_0,\quad h\in H.
\]
Because $\overline{T+P}=L\oplus V$, their connected coordinates are dense
in $V$.  Equivalently, $T_0+BH$ is dense modulo $P_0$, proving the claim and
hence~\eqref{eq:dense-connected-coordinates}.

For fixed $i,j$, Lemma~\ref{lem:packet-slot-refinement}(ii) gives a
nonempty open set of translation classes $y+P_0\in V/P_0$ for which
$(\supp h_i+\overline{B_\eta^V})+y$ lies in a translate of
$\widetilde S_j$.  The dense set~\eqref{eq:dense-connected-coordinates}
meets this open set.  The corresponding $\ell$ and $\tau$ satisfy
\eqref{eq:moving-address-definition}, and therefore give an address in
$\mathcal A_{(r,i),(q,j)}(u)$.
\end{proof}

\subsection{Triangulating the moving address data}

For $u\in W$, let
\[
 \mathcal A(u)
 =\bigl(\mathcal A_{(r,i),(q,j)}(u)\bigr)_{
   \substack{0\le r<m,\ 1\le i\le N\\
             0\le q<n,\ 1\le j\le M}}
\]
be the refined address table.  Lemma~\ref{lem:dense-row-orbits} says that
every entry of this table is nonempty.  An individual address $a$ has an open
admissibility set
\[
 U_a=\{u\in W:a\text{ is valid at }u\}.
\]
The $L$-action relabels addresses, rows, and columns.  We use the same sign
convention as in Section~\ref{sec:soft-configurations}:
\[
 (\rho_\gamma\lambda)(a)=\lambda(a-\gamma),
 \qquad \gamma\in L.
\]
Then $U_{a+\gamma}=U_a+\gamma$.

The following lemma is the moving-table counterpart of the cube construction
in Section~\ref{sec:soft-configurations}.  Its proof is included to make clear where finiteness and
continuity enter.

\begin{lemma}\label{lem:moving-address-field}
Assume that the refined table $\mathcal A(u)$ has
$\widetilde m$ rows and $\widetilde n$ columns, that every entry is nonempty,
and that address admissibility is open and $L$-equivariant as above.  If
\[
 \widetilde n\ge\widetilde m+k,
\]
then there is an $L$-equivariant continuous simplexwise affine field
$\Lambda$ on $W$ such that, for every $u\in W$, the value $\Lambda(u)$ is a
soft configuration with $\widetilde m$ rows and $\widetilde n$ columns and
uses only addresses belonging to $\mathcal A(u)$.  The field uses only finitely many $L$-orbits of actual addresses.
\end{lemma}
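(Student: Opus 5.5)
The plan is to copy the cube construction of Section~\ref{sec:soft-configurations}, with the fixed address table replaced by the moving table $\mathcal A(u)$. The new difficulty is that the admissibility sets $U_a$ are open but not all of $W$. So I first choose a fine $L$-periodic triangulation of $W=\R^k$, fine enough that every closed simplex lies in one small set on which a fixed finite address subsystem is valid throughout. Then every filling is carried out inside that subsystem.

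\emph{Step 1 (local finite subsystems).} Fix the fundamental cube $F=[0,1]^k$. For each $u\in F$ and each refined box $((r,i),(q,j))$, Lemma~\ref{lem:dense-row-orbits} gives an address valid at $u$. One address per box gives a finite family $A(u)$ with $\widetilde m\widetilde n$ entries. The set of points where all of $A(u)$ is valid is open and contains $u$. By compactness of $F$ and a Lebesgue-number argument, there is $\delta>0$ such that every set of diameter $<\delta$ meeting $F$ lies in one of these finitely many open sets. Subdivide the staircase triangulation of $F$ (barycentrically, enough times) until all simplex stars have diameter $<\delta$. The subdivision of the staircase triangulation remains compatible under the face translations $e_i$, and translating by $L$ gives an $L$-periodic triangulation. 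To each simplex $\sigma$ I then attach a finite complete subsystem $A_\sigma$ of addresses valid on all of the closed star of $\sigma$. The $L$-translates of these subsystems are valid on the translated stars, by $U_{a+\gamma}=U_a+\gamma$.

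\emph{Step 2 (skeletal filling).} At each vertex orbit representative $u$, choose a hard configuration in $A_u$. This is possible because $\widetilde n\ge\widetilde m$ and every box is nonempty. Extend to the other vertices of the orbit by $\rho_\gamma$. Next, proceeding through the $j$-skeleta for $j=1,\dots,k$ and choosing one representative from each $L$-identification class of simplices as before, fill each simplex. The boundary data of $\sigma$ consist of vertex values and earlier fillings built from the systems $A_\tau$ with $\tau\subset\sigma$. I would insist inductively that every address used on a simplex $\tau$ is valid on all of $\operatorname{st}(\tau)$; this covers $\sigma$, since $\sigma\subset\operatorname{st}(\tau)$ for every face $\tau\subset\sigma$. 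Take $A'_\sigma$ to be the union of the boundary addresses with $A_\sigma$. This is a finite complete system, and every member of it is valid on all of $\sigma$. Theorem~\ref{thm:soft-filling-strong} with $j\le k\le\widetilde n-\widetilde m$ fills $\partial\sigma$ inside $\mathcal C(A'_\sigma)$, leaving the boundary unchanged. Simplexwise affine interpolation only uses addresses from vertex supports, so at each $u\in\sigma$ the value $\Lambda(u)$ uses only addresses valid at $u$. I then extend the filling to the class members by $\rho_\gamma$. Continuity and $L$-equivariance follow exactly as in the gluing argument for \eqref{eq:global-field-from-cube}.

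\emph{Main obstacle.} The delicate point is the validity bookkeeping in Step 2. New interior vertices created in a filling of $\sigma$ carry addresses that must be valid on the whole star of each new vertex, and that star is contained in $\sigma$ when the vertex is interior. Addresses imported from the boundary are valid only on the stars of their faces. Requiring validity on stars, rather than only on $\sigma$, is what makes later simplices sharing a face admissible, and the $\delta$-fineness in Step 1 must be chosen with this in mind. Finiteness of the orbits of addresses follows because $F$ carries finitely many simplices, each filled with finitely many addresses, and every other value is an $L$-translate of these.
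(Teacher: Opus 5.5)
Your proposal is correct and follows essentially the paper's proof: compactness yields finitely many locally valid complete tables, a fine $L$-periodic triangulation puts every closed star inside one validity region, and Theorem~\ref{thm:soft-filling-strong} fills skeleton by skeleton, with equivariant translation of each filled representative. The only difference is bookkeeping. The paper attaches tables $A_v$ to vertices, each valid on the closed vertex star, and sets $A_\sigma=\bigcup_{v\in\sigma}A_v$; this is monotone under passing to faces, so every filling automatically stays in $\mathcal C_{\widetilde m,\widetilde n}(A_\sigma)$, which makes your star-validity invariant, and your concern about stars of new interior vertices, unnecessary.
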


\begin{proof}
The quotient $W/L$ is a compact $k$-torus.  At a point $\bar u\in W/L$,
choose one address from every entry of the finite
$\widetilde m\times\widetilde n$ table.  This gives a finite complete table
$A_{\bar u}$.  Since all chosen admissibility sets are open, there is an open
neighborhood $O_{\bar u}$ on which every address of $A_{\bar u}$ remains
valid.  Compactness provides finitely many such neighborhoods.

Choose a finite triangulation $K$ of $W/L$ sufficiently fine that the closed
star of every vertex lies in one of the neighborhoods $O_{\bar u}$.  One way
to obtain this is to start from any triangulation, subdivide until its mesh is
smaller than a Lebesgue number of the finite cover, and then take one
barycentric subdivision.  Lift $K$ to an $L$-periodic triangulation
$\widetilde K$ of $W$.

For one representative of every vertex orbit, choose a complete finite table
that is valid on the whole closed star of that vertex, and translate these
tables by $L$.  Denote the table at a lifted vertex $v$ by $A_v$.  If
$\sigma$ is a simplex, put
\[
 A_\sigma=\bigcup_{v\in\sigma^{(0)}}A_v.
\]
Every address in $A_\sigma$ is valid at every point of $\sigma$.  Moreover,
if $\tau$ is a face of $\sigma$, then
\[
 \mathcal C_{\widetilde m,\widetilde n}(A_\tau)
 \subset
 \mathcal C_{\widetilde m,\widetilde n}(A_\sigma).
\]

We now fill the lifted triangulation skeleton by skeleton.  At each vertex
choose a hard configuration from its complete table.  Make these choices on
one representative of every vertex orbit and extend them $L$-equivariantly.
Assume inductively that the field has been defined simplexwise affinely on
the $(s-1)$-skeleton.  For an $s$-simplex $\sigma$, the already defined
boundary map takes values in the finite complex
\[
 \mathcal C_{\widetilde m,\widetilde n}(A_\sigma).
\]
Since
\[
 s\le k\le\widetilde n-\widetilde m,
\]
Theorem~\ref{thm:soft-filling-strong} extends this boundary map over
$\sigma$, after subdividing only its interior and without changing its
boundary triangulation or boundary vertex values.  Fill one representative
of every $L$-orbit of $s$-simplices and translate the filling by the action
$\rho_\gamma$.

The relative boundary condition ensures that fillings of adjacent simplices
agree on their common face.  Each point of $W$ has a neighborhood meeting
only finitely many simplices of the periodic triangulation, so the finite
pasting lemma gives continuity.  The quotient triangulation has finitely
many simplices and each filling uses finitely many addresses.  Therefore
only finitely many address orbits occur.
\end{proof}

\subsection{Constructing the function $\varphi$}

We now convert the soft-configuration field into a scalar partition of unity, in exact
analogy with Subsection~\ref{sec:commensurable-construction}.  Write a point
of $\R^d=W\oplus V$ uniquely as
\[
 x=(u-\ell,v),
 \qquad
 u\in[0,1)^k,
 \quad \ell\in L,
 \quad v\in V.
\]
An address in refined row $(R_r,i)$ and refined column $(Q_q,j)$ is a
labeled quadruple $a=(\ell,\tau;i,j)$ belonging to
$\mathcal A_{(r,i),(q,j)}(u)$.  We write
$\Lambda(u)_{r,i}(\ell,\tau,j)$ for its probability weight and let
$r(\ell)$ be determined by $\ell+T'=R_{r(\ell)}$.  Define
\begin{equation}\label{eq:nondiscrete-scalar}
 \varphi(u-\ell,v)
 =\sum_{i=1}^N\sum_{\tau\in T_0}\sum_{j=1}^M
 \Lambda(u)_{r(\ell),i}(\ell,\tau,j)h_i(v-\tau).
\end{equation}
The sum is locally finite.  Indeed, on one fundamental cube in $W$ the
field uses only finitely many $L$-orbits of addresses, and for $v$ in a
bounded set only finitely many $T_0$-translates of the compact supports of
the $h_i$ can occur.

The equivariance relation
\[
 \Lambda(u+\gamma)=\rho_\gamma\Lambda(u),
 \qquad \gamma\in L,
\]
shows that~\eqref{eq:nondiscrete-scalar} is independent of the half-open
representative.  Indeed, the same point can be written as
$(u-\ell,v)=((u+\gamma)-(\ell+\gamma),v)$, and equivariance gives equal
weights at the addresses $\ell$ and $\ell+\gamma$.  The formulas therefore
agree on opposite faces, and $\varphi$ is continuous.  On a fixed
fundamental cube only finitely many discrete address orbits occur; together
with the compact supports of the partition elements, this shows that
$\varphi\in C_c(\R^d)$.

We verify the $T$-partition-of-unity identity.  Let $x=(u-\ell,v)$ and write
an element of $T=T'\times T_0$ as $(t',\sigma)$.  Then
\begin{align*}
 \sum_{(t',\sigma)\in T'\times T_0}
 \varphi(u-\ell+t',v+\sigma)
 &=\sum_{i=1}^N
   \sum_{\ell'\in R_{r(\ell)}}
   \sum_{\tau\in T_0}\sum_{j=1}^M
   \Lambda(u)_{r(\ell),i}(\ell',\tau,j)
   \sum_{\sigma\in T_0}h_i(v+\sigma-\tau)\\
 &=\sum_{i=1}^N\sum_{\sigma\in T_0}h_i(v+\sigma)=1.
\end{align*}
Here we used $\ell'=\ell-t'$ in the first equality, the row-normalization
of $\Lambda(u)$ in the second, and the connected-factor partition of unity
from Lemma~\ref{lem:packet-slot-refinement}(i) in the last.  Thus
\begin{equation}\label{eq:nondiscrete-partition}
 \sum_{t\in T}\varphi(x+t)=1,
 \qquad x\in\R^d.
\end{equation}

It remains to prove the packing identity.  Let
$p=(p',Bp'+p_0)\in P$ and suppose, for contradiction, that both
$\varphi(x)$ and $\varphi(x-p)$ are positive.  Choose positive summands in
\eqref{eq:nondiscrete-scalar}.  They correspond to labeled addresses
\[
 a=(\ell,\tau;i,j),
 \qquad
 a'=(\ell+p',\tau';i',j'),
\]
because
\[
 x-p=(u-(\ell+p'),v-Bp'-p_0).
\]
Set $y=v-B(u-\ell)$.  The definition of admissibility and positivity of the
partition elements give
\[
 y\in\widetilde S_j+\kappa(a,u).
\]
For the second summand,
\[
 (v-Bp'-p_0)-B\bigl(u-(\ell+p')\bigr)=y-p_0,
\]
so
\[
 y-p_0\in\widetilde S_{j'}+\kappa(a',u),
 \qquad\text{or equivalently}\qquad
 y\in\widetilde S_{j'}+\kappa(a',u)+p_0.
\]
The disjointness of the closed periodic slot translates in
Lemma~\ref{lem:packet-slot-refinement}(iii) therefore forces
\begin{equation}\label{eq:slot-label-comparison}
 j=j',
 \qquad
 \kappa(a,u)=\kappa(a',u)+p_0.
\end{equation}
The discrete addresses $\ell$ and $\ell+p'$ lie in the same class modulo
$P'$, and~\eqref{eq:slot-label-comparison} says that their slot labels also
agree.  Hence $a$ and $a'$ lie in the same refined column.  If they are
distinct labeled addresses, this contradicts column exclusion.  If they are
the same labeled address, then $p'=0$ and
$\kappa(a,u)=\kappa(a',u)$;~\eqref{eq:slot-label-comparison} then gives
$p_0=0$.  In either case a nonzero $p$ is impossible.  Therefore
\begin{equation}\label{eq:nondiscrete-separation}
 \varphi(x)\varphi(x-p)=0,
 \qquad p\in P\setminus\{0\}.
\end{equation}

The function in~\eqref{eq:nondiscrete-scalar} is continuous, but it need
not be piecewise affine: even when $\Lambda$ and the partition elements are
piecewise affine separately, their products in the base and fiber variables
are generally piecewise bilinear.  Choosing the partition elements smooth
does not make the result smooth either, because the configuration field is
only simplexwise affine in the base variable.  The following replacement
lemma produces both regularities after the exact partition-of-unity and
separation identities have been established.

\subsection{Piecewise-affine and smooth replacement}
\label{sec:pou-replacement}

The intermediate product construction is only continuous, so we now replace it by
partitions of unity with the two regularities needed in the applications.

\begin{lemma}[Regularity replacement in $\mathcal X$]
\label{lem:pou-replacement}
Fix one of the choices $\mathcal X(\R^d)$ in~\eqref{eq:function-space-X}.
Let $T,P\subset\R^d$ be full-rank lattices.  Suppose that
$\varphi\in C_c(\R^d)$ is a nonnegative $T$-partition of unity and satisfies
\begin{equation}\label{eq:replacement-input-separation}
 \varphi(x)\varphi(x-p)=0,
 \qquad x\in\R^d,\quad p\in P\setminus\{0\}.
\end{equation}
Then there exist a nonnegative function
$\psi_{\mathcal X}\in\mathcal X(\R^d)$ and a number
$\eta_{\mathcal X}>0$ such that $\psi_{\mathcal X}$ is a
$T$-partition of unity and
\begin{equation}\label{eq:replacement-output-buffer}
 (\supp\psi_{\mathcal X}+B_{\eta_{\mathcal X}})+P
 \quad\text{is a packing.}
\end{equation}
\end{lemma}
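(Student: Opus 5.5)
The plan is to threshold $\varphi$ as in the thresholding step of Section~\ref{sec:soft-configurations}, and then to build a new partition of unity out of a fine lattice partition of unity whose pieces are placed only where the threshold set is available.

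\emph{Step 1: thresholding.} Since $\varphi$ is compactly supported, only finitely many $T$-translates meet a fundamental parallelepiped. Hence at most $D<\infty$ terms of $\sum_{t}\varphi(x+t)$ are nonzero at any point, and one of them is at least $1/D$. Fix $0<\tau<1/D$ and put
\[
U=\{\varphi>\tau\},\qquad K=\{\varphi\ge\tau\}.
\]
Then $U$ is open, $K$ is compact, and $U+T=\R^d$. By \eqref{eq:replacement-input-separation}, $K\cap(K+p)=\varnothing$ for $p\in P\setminus\{0\}$. Exactly as in \eqref{eq:positive-K-separation}, compactness and discreteness of $P$ give $\delta=\inf_{p\ne0}\dist(K,K+p)>0$.

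\emph{Step 2: the fine partition.} Apply Lemma~\ref{lem:pa-source-packets} (for $\mathcal X=C_{c,\mathrm{PA}}$) or Lemma~\ref{lem:smooth-source-partition} (for $\mathcal X=C_c^\infty$) in $V=\R^d$ with $T_0=T$. This yields nonnegative $h_1,\dots,h_N\in\mathcal X(\R^d)$ with $\sum_i\sum_{t}h_i(\cdot+t)=1$, each supported in a set of diameter at most $c/s$, where $c$ is fixed and $s$ is large.

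Take a Lebesgue number $r$ of the open cover $\{U-t\}_{t\in T}$ restricted to a compact fundamental parallelepiped. This cover is effectively finite and, by periodicity, Lebesgue numbers transfer to all of $\R^d$. Choose $s$ so large that $c/s<r$. Then for each $i$ there is $t_i\in T$ with $\supp h_i\subset U-t_i$.

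\emph{Step 3: the new function.} Set
\[
\psi_{\mathcal X}(x)=\sum_{i=1}^N h_i(x-t_i).
\]
Closure of $\mathcal X$ under translations and finite sums gives $\psi_{\mathcal X}\in\mathcal X(\R^d)$. Since $t_i\in T$, the $T$-periodization of $\psi_{\mathcal X}$ equals that of $\sum_ih_i$, which is $1$. Moreover $\supp\psi_{\mathcal X}\subset\overline U\subset K$. Hence any $0<\eta_{\mathcal X}<\delta/2$ makes $(\supp\psi_{\mathcal X}+B_{\eta_{\mathcal X}})+P$ a packing, which is \eqref{eq:replacement-output-buffer}.

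The main obstacle is the Lebesgue-number step: one needs each small support to fit inside a \emph{single} translate $U-t$, uniformly over all of $\R^d$ and not only on a compact set. I would handle this by working on the torus $\R^d/T$. There the images of $U-t$ form an open cover, since $U+T=\R^d$, and the torus is compact, so a uniform Lebesgue number $r$ exists. The supports of the $h_i$ have diameter below $r$, so each lies in a lift of one cover element. This also ensures that the resulting translate $t_i$ is well defined for each packet $h_i$.
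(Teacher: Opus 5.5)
Your proof is correct, but it reaches the new partition of unity by a somewhat different mechanism than the paper.

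\textbf{The paper's route.} It does not call on the fine packet lemmas. After thresholding to $K=\{\varphi\ge\tau\}$ with separation $\delta$, it takes a $T$-periodic triangulation whose closed vertex stars have diameter $<r<\delta/3$. In each of the finitely many vertex orbits it picks one vertex lying in $K$, which is possible because $K+T=\R^d$. It then lets $\psi_{\rm pa}$ be the piecewise-affine function with nodal value $1$ at these vertices and $0$ at all others. The partition identity follows from barycentric coordinates, the support lies in $K+B_r$, and the separation degrades only to $\delta-2r>0$. The smooth case is obtained afterwards by mollifying $\psi_{\rm pa}$ at a scale below $\eta_{\rm pa}/3$.

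\textbf{Your route.} It is the same idea with coarser packets. You translate whole packets $h_i$ from Lemma~\ref{lem:pa-source-packets} or Lemma~\ref{lem:smooth-source-partition} so that their entire supports sit inside the open set $U$. This gives $\supp\psi_{\mathcal X}\subset U\subset K$ with no loss of separation, and it produces the smooth case directly, with no final mollification.

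\textbf{Trade-off.} Your version needs a uniform fitting step: every small support must lie in a single translate $U-t$. The paper avoids that step entirely, because it only needs one point per orbit (the vertex) to lie in $K$, and it absorbs the vertex stars through the $r$-fattening.

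\textbf{The fitting step needs a cleaner justification than your last paragraph gives.} On the torus $\R^d/T$ all the sets $U-t$ have the same image, namely the whole torus. So a Lebesgue number for that ``cover'' says nothing about fitting a small set into a single translate $U-t$, and the torus argument as written does not work.

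The argument sketched in your Step~2 is the right one, but it must use the Lebesgue lemma in its ambient form. Take the compact closed fundamental parallelepiped $\overline F$ and the open cover $\{U-t\}_{t\in T}$ of $\R^d$. There is $r>0$ such that every ball $B(y,r)$ with $y\in\overline F$ lies in a single $U-t$; equivalently, take a Lebesgue number for the compact set $\overline F+\overline{B_1}$. A Lebesgue number for subsets of $\overline F$ alone is not enough, since a translated support may stick out of $\overline F$.

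Now let $E$ have diameter $<r$. Some $t'\in T$ moves a point of $E$ into $\overline F$, so $E+t'\subset B(y,r)\subset U-t''$ for some $y\in\overline F$ and $t''\in T$. Hence $E\subset U-(t'+t'')$. With this correction, the rest of your argument goes through as written: the periodization identity, membership in $\mathcal X$, and the buffer $0<\eta_{\mathcal X}<\delta/2$.
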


\begin{proof}
Compactness of $\supp\varphi$ gives a finite number
\[
 D=\sup_{x\in\R^d}\#\{t\in T:\varphi(x+t)>0\}.
\]
Choose $0<\tau<1/D$ and set
\[
 K=\{x:\varphi(x)\ge\tau\}.
\]
Because $\varphi$ is a $T$-partition of unity, at least one member of the
partition is at least $1/D$ at every point.  Hence
\begin{equation}\label{eq:replacement-cover}
 K+T=\R^d.
\end{equation}
By~\eqref{eq:replacement-input-separation}, the compact sets $K$ and $K+p$
are disjoint for every $p\in P\setminus\{0\}$.  Since $P$ is discrete and
$K$ is bounded,
\begin{equation}\label{eq:replacement-delta}
 \delta=\inf_{p\in P\setminus\{0\}}
 \operatorname{dist}(K,K+p)>0.
\end{equation}
Choose $0<r<\delta/3$ and a $T$-periodic locally finite triangulation of
$\R^d$ whose closed vertex stars have diameter less than $r$.  There are
only finitely many vertex orbits modulo $T$; choose representatives
$v_1,\ldots,v_s$.  By~\eqref{eq:replacement-cover}, for every $j$ there is
$t_j\in T$ such that
\[
 v_j+t_j\in K.
\]
Assign the value $1$ to the vertex $v_j+t_j$ and the value $0$ to every
other vertex in the orbit $v_j+T$.  Extend these values affinely over every
simplex.  The resulting function $\psi_{\rm pa}$ is nonnegative and
continuous.  Only the closed stars of the finitely many selected vertices
can meet its support; hence
\[
 \supp\psi_{\rm pa}\subset K+B_r,
\]
so it is compactly supported and piecewise affine.

Let $\sigma=[w_0,\ldots,w_q]$ be a simplex and write
$x=\sum_i\alpha_iw_i$ in barycentric coordinates.  Periodicity of the
triangulation gives the same barycentric coordinates for $x+t$ in
$\sigma+t$.  Therefore
\[
 \sum_{t\in T}\psi_{\rm pa}(x+t)
 =\sum_i\alpha_i\sum_{t\in T}\psi_{\rm pa}(w_i+t)
 =\sum_i\alpha_i=1,
\]
because exactly one vertex in every $T$-orbit was assigned the value $1$.
Thus $\psi_{\rm pa}$ is a $T$-partition of unity.  Moreover,
\[
 \operatorname{dist}(\supp\psi_{\rm pa},\supp\psi_{\rm pa}+p)
 \ge\delta-2r>0,
 \qquad p\in P\setminus\{0\}.
\]
Choose $0<\eta_{\rm pa}<(\delta-2r)/2$.

If $\mathcal X(\R^d)=C_{c,\mathrm{PA}}(\R^d)$, set
$\psi_{\mathcal X}=\psi_{\rm pa}$ and
$\eta_{\mathcal X}=\eta_{\rm pa}$.  If
$\mathcal X(\R^d)=C_c^\infty(\R^d)$, choose a nonnegative mollifier
$\rho\in C_c^\infty(B_\varepsilon)$ of integral one, where
$0<\varepsilon<\eta_{\rm pa}/3$, and set
\[
 \psi_{\mathcal X}=\psi_{\rm pa}*\rho.
\]
Then $\psi_{\mathcal X}$ is nonnegative, smooth, and compactly supported.
Since periodization commutes with convolution,
\begin{align*}
 \sum_{t\in T}\psi_{\mathcal X}(x+t)
 &=\int_{\R^d}\rho(y)
   \sum_{t\in T}\psi_{\rm pa}(x-y+t)\,dy\\
 &=1.
\end{align*}
Moreover,
\[
 \supp\psi_{\mathcal X}\subset\supp\psi_{\rm pa}+B_\varepsilon.
\]
For any $0<\eta_{\mathcal X}<\eta_{\rm pa}-\varepsilon$, we obtain the required
$P$-packing buffer.
\end{proof}

Apply Lemma~\ref{lem:pou-replacement} twice to the continuous function
constructed in~\eqref{eq:nondiscrete-scalar}, using
\eqref{eq:nondiscrete-partition} and
\eqref{eq:nondiscrete-separation}: once with
$\mathcal X(\R^d)=C_{c,\mathrm{PA}}(\R^d)$ and once with
$\mathcal X(\R^d)=C_c^\infty(\R^d)$.  Denote the two outputs by
$\psi_{\rm pa}$ and $\psi_\infty$, and their buffers by
$\eta_{\rm pa}$ and $\eta_\infty$.  The polyhedral selector uses
$\psi_{\rm pa}$.

Let
\[
 D_\psi=\sup_x\#\{t\in T:\psi_{\rm pa}(x+t)>0\}
\]
and choose $0<\tau_\psi<1/D_\psi$.  Then
\[
 K_\psi=\{x:\psi_{\rm pa}(x)\ge\tau_\psi\}
\]
is a compact finite union of polyhedra and $K_\psi+T=\R^d$.  Choose a
half-open $T$-fundamental parallelepiped $F$.  Only finitely many translates
$F+t_1,\ldots,F+t_N$ meet $K_\psi$.  Put
\[
 D_j=\{x\in F:x+t_j\in K_\psi\},
 \qquad
 E_1=D_1,
 \qquad
 E_j=D_j\setminus\bigcup_{i<j}D_i,
\]
and define
\begin{equation}\label{eq:nondiscrete-polyhedral-selector}
 \Omega=\bigcup_{j=1}^N(E_j+t_j).
\end{equation}
The sets $E_j$ form a disjoint partition of $F$, so $\Omega$ contains
exactly one representative of every $T$-coset and $\Omega+T$ is an exact
tiling.  Since $K_\psi$ is polyhedral, the $E_j$ and hence $\Omega$ are
finite unions of bounded half-open polyhedra.  Moreover,
$\overline\Omega\subset K_\psi\subset\supp\psi_{\rm pa}$, so any sufficiently small
$0<\varepsilon<\eta_{\rm pa}$ makes $\Omega+B_\varepsilon$ pack with
$P$ by~\eqref{eq:replacement-output-buffer}, applied with
$\mathcal X(\R^d)=C_{c,\mathrm{PA}}(\R^d)$.

\begin{corollary}\label{cor:global-partitions}
Whenever the condition~\eqref{eq:TeP} holds, there exist both a nonnegative
continuous compactly supported piecewise-affine $T$-partition of unity
$\varphi_{\rm pa}$ and a nonnegative compactly supported smooth
$T$-partition of unity $\varphi_\infty$.  Moreover, there are
$\eta_{\rm pa},\eta_\infty>0$ such that
\[
 (\supp\varphi_\star+B_{\eta_\star})+P
 \quad\text{is a packing},
 \qquad \star\in\{\mathrm{pa},\infty\}.
\]
\end{corollary}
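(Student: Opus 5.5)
The corollary assembles the three constructions already carried out, according to the structure of $C=\overline{T+P}$. In each case we first produce a nonnegative continuous compactly supported $T$-partition of unity $\varphi$ with the pointwise separation property $\varphi(x)\varphi(x-p)=0$ for $p\in P\setminus\{0\}$. We then apply Lemma~\ref{lem:pou-replacement} twice, once with $\mathcal X=C_{c,\mathrm{PA}}$ and once with $\mathcal X=C_c^\infty$, to obtain $\varphi_{\rm pa}$, $\varphi_\infty$ and buffers $\eta_{\rm pa},\eta_\infty>0$. That lemma needs only continuity, compact support, the partition identity, and pointwise separation. It makes no reference to the structure of $T+P$, so once the input $\varphi$ exists, the conclusion is uniform across cases.

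\emph{Discrete case.} Here $T+P$ is a lattice. After the common linear normalization we may assume \eqref{eq:commensurable-normalization}, and \eqref{eq:TeP} becomes $n\ge m+d$. Section~\ref{sec:soft-configurations} then builds the equivariant field $\Lambda$ using Theorem~\ref{thm:soft-filling}. Its zero coordinate $\varphi$ is a compactly supported piecewise-affine $T$-partition of unity by \eqref{eq:phi-partition}, and it satisfies \eqref{eq:phi-separation}. This $\varphi$ already serves as $\varphi_{\rm pa}$, and the positive buffer comes from the compactness argument behind \eqref{eq:positive-K-separation}. For uniformity we simply feed it into Lemma~\ref{lem:pou-replacement}. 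Finally, we undo the normalization $S\in GL_d(\R)$. Pulling back a partition of unity by $S$ preserves the partition and separation identities, continuity, piecewise affinity, and smoothness. A buffer ball $B_\eta$ in normalized coordinates contains, after applying $S^{-1}$, a Euclidean ball of positive radius, so the buffer survives.

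\emph{Fully dense case.} If $C=\R^d$, then \eqref{eq:TeP} reduces to $\covol(T)<\covol(P)$, and Proposition~\ref{prop:fully-dense-pa} applied with both choices of $\mathcal X$ gives the two functions directly.

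\emph{Intermediate case.} If $C$ is neither discrete nor $\R^d$, then $\covol(T)<\covol(P)$ is exactly \eqref{eq:nondiscrete-volume} by Lemma~\ref{lem:product-graph-normal}. Lemmas~\ref{lem:packet-slot-refinement} and~\ref{lem:moving-address-field} provide the refined field. Formula \eqref{eq:nondiscrete-scalar} then yields a continuous compactly supported $\varphi$ satisfying \eqref{eq:nondiscrete-partition} and \eqref{eq:nondiscrete-separation}, and Lemma~\ref{lem:pou-replacement} finishes the case. This case also involves a shear and a linear change of variables, and the same pullback remark applies.

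No single step here is a genuine obstacle, because all the substance lies in the cited results. The point requiring care is the change of coordinates. We must check that the normalizing maps are common to $T$ and $P$, so that the $T$-partition and $P$-separation identities transform correctly, and that the buffer is only rescaled, not lost, as argued at the end of Section~\ref{sec:soft-configurations}. One should also confirm that the three cases exhaust all possibilities, which holds because $C$ is a closed subgroup containing a full-rank lattice.
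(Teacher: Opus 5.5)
Your proposal is correct and follows the paper's proof. It uses the same three-way split by the structure of $\overline{T+P}$: Lemma~\ref{lem:pou-replacement} is applied twice in the discrete and intermediate cases, and Proposition~\ref{prop:fully-dense-pa} is applied once for each choice of $\mathcal X$ in the fully dense case.

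One correction: the raw zero-coordinate function of Section~\ref{sec:soft-configurations} does not by itself serve as $\varphi_{\rm pa}$, and passing it through Lemma~\ref{lem:pou-replacement} is necessary, not merely a matter of uniformity. The compactness argument behind \eqref{eq:positive-K-separation} separates only the superlevel set $K=\{\varphi\ge\tau\}$, while $\supp\varphi$ may touch its $P$-translates where $\varphi$ vanishes. For instance, in the worked example $T=\Z^2$, $P=3\Z\times\Z$, both $(0,\tfrac12)$ and $(0,-\tfrac12)$ lie in $\supp\varphi$, and they differ by $(0,1)\in P$.
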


\begin{proof}
If $T+P$ is discrete, Section~\ref{sec:soft-configurations} constructs a
piecewise-affine $T$-partition of unity directly; apply
Lemma~\ref{lem:pou-replacement} once for each choice in
\eqref{eq:function-space-X}.  If $T+P$ is dense in $\R^d$, apply
Proposition~\ref{prop:fully-dense-pa} once for each choice.  In the remaining
non-discrete case, the conclusion is the pair of applications of
Lemma~\ref{lem:pou-replacement} made above.
\end{proof}

This completes the intermediate non-discrete construction.

\section{Necessity in the Tiling--$\varepsilon$--Packing theorem}\label{sec:geometric-necessity}

The necessity argument has two parts: the strict covolume obstruction and,
when $T+P$ is discrete, the finite-index obstruction.

\subsection{The strict covolume obstruction}

If a bounded $T$-fundamental domain $\Omega$ satisfies that
$\Omega+B_\eps$ packs with $P$, then volume comparison gives
\[
 \covol(T)=|\Omega|<|\Omega+B_\eps|\le \covol(P).
\]
This is the strict covolume obstruction stated after
Theorem~\ref{thm:main-geometric}.

\subsection{The discrete index obstruction}

Assume that $H=T+P$ is a lattice.  After a common invertible linear change
of variables, we may suppose $H=\Z^d$.  For this proof only, enumerate the
cosets as
\[
 \Z^d/T=\{R_0=T,R_1,\ldots,R_{m-1}\},
 \qquad
 \Z^d/P=\{Q_0=P,Q_1,\ldots,Q_{n-1}\},
\]
where
\[
 m=[\Z^d:T],
 \qquad
 n=[\Z^d:P].
\]
No configuration-space terminology is needed in the necessity argument.

\begin{theorem}\label{thm:discrete-necessity}
If $n\le m+d-1$, then there is no Borel $T$-fundamental domain
$\Omega$ and no $\eps>0$ such that $\Omega+B_\eps$ packs with $P$.
\end{theorem}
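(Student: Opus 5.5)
The plan is to turn a hypothetical buffered tile into a continuous, lattice-equivariant field of soft configurations, exactly as in Section~\ref{sec:soft-configurations} but read in reverse, and then show that such a field cannot exist when the column surplus $n-m$ is at most $d-1$.

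\textbf{Step 1: smoothing the tile.} Suppose $\Omega$ is a Borel $T$-fundamental domain and $\Omega+B_\eps$ packs with $P$. Let $\rho\ge0$ be a continuous bump of integral one supported in $B_{\eps/2}$, and put $\varphi=\mathbf 1_\Omega*\rho$. Then $\varphi$ is continuous, nonnegative and compactly supported, since we reduce first to bounded $\Omega$ as in Theorem~\ref{thm:main-geometric}. Because $\Omega$ tiles by $T$ and periodization commutes with convolution, $\varphi$ is a $T$-partition of unity. Since $\supp\varphi\subset\overline{\Omega+B_{\eps/2}}$ and $\Omega+B_\eps$ packs with $P$, we get $\varphi(x)\varphi(x-p)=0$ for $p\in P\setminus\{0\}$.

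\textbf{Step 2: the equivariant field.} Set $\Lambda(x)(z)=\varphi(x-z)$ for $z\in\Z^d$. By \eqref{eq:scalar-all-coordinates} read backwards, $\Lambda$ is a continuous map $\R^d\to\mathcal C(T,P)$ satisfying $\Lambda(x+b)=\rho_b\Lambda(x)$ for $b\in\Z^d$. Row normalization is the partition-of-unity identity. Column exclusion holds because two distinct positive addresses $z,z'$ in the same column differ by some $p\in P\setminus\{0\}$, and then $\varphi(x-z)\varphi(x-z')=0$. Compact support means only finitely many addresses ever occur, so the whole field lives in one finite complex $\mathcal C_{m,n}(B)$.

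\textbf{Step 3: the obstruction.} The remaining task is to show that no continuous $\Z^d$-equivariant map $\R^d\to\mathcal C_{m,n}(B)$ exists when $n-m\le d-1$. I would encode the field as a continuous quasi-periodic matrix. For each row $R_i$ and column $Q_j$, define
\[
 \Psi_{i,j}(x,\xi)=\sum_{z\in R_i\cap Q_j}\sqrt{\varphi(x-z)}\,e^{2\pi i\langle \xi,z\rangle}.
\]
This is a Zak-type transform, continuous in $(x,\xi)$. Column exclusion gives at most one nonzero entry in each column of the $m\times n$ matrix $\Psi(x,\xi)$. Row normalization gives each row unit norm. The rows therefore have disjoint column supports, so $\Psi(x,\xi)$ has orthonormal rows everywhere. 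The shifts by $\Z^d$ in $x$ and by $(T\cap P)^*$ in $\xi$ act on $\Psi$ through explicit permutations of rows and columns combined with characters, giving Zak quasi-periodicity. So $\Psi$ is a continuous rank-$m$ section of a twisted $m\times n$ matrix bundle over a $2d$-torus.

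The step I expect to be the main obstacle is proving that this is impossible when $n<m+d$. I would first try a Chern-class count. The rows of $\Psi$ span a rank-$m$ subbundle $E$ of a rank-$n$ bundle over the $2d$-torus, and quasi-periodicity pins down the total Chern class of the ambient bundle and of the line bundle $\det E$. A complementary bundle of rank $n-m<d$ would then need a nonvanishing top Chern class in degree $2d$, which is forced to be nonzero by the twist. Concretely, the $d$-th power of $c_1$ of the determinant line is a nonzero multiple of the volume class, which is incompatible with a splitting $E\oplus E^\perp$ into a trivial-enough ambient bundle and a complement of rank below $d$. If this bookkeeping becomes unwieldy, I would fall back on reducing via Smith normal form to the diagonal model with $b_\ell,a_\ell$ coprime. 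The problem then factors coordinatewise into rank-one quasi-periodic determinant computations, and a Borsuk--Ulam/degree argument in each of the $d$ independent directions should show that at least $d$ spare columns are needed.
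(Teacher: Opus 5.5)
Your Steps 1 and 2 are fine, with two small fixes. The statement allows unbounded Borel $\Omega$, so the reduction to bounded $\Omega$ still needs the one-line measure argument. Also, globally the field uses infinitely many addresses; only finitely many occur over a bounded set of $x$. Your matrix $\Psi$ does satisfy $\Psi\Psi^*=I_m$.

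The genuine gap is Step~3, which is the entire content of the theorem. You state the needed obstruction but do not prove it, and neither sketch works as formulated.

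\emph{The Chern-class sketch.} Knowing that $c_1(\det E)^d$ is a nonzero multiple of the volume class does not control $c_d(E^\perp)$, the degree-$2d$ part of $c(\text{ambient})\,c(E)^{-1}$. Already for $d=2$ with a trivial ambient bundle, this part is $c_1(E)^2-c_2(E)$. For $E=L_1\oplus L_2$ with $a=c_1(L_1)$, $b=c_1(L_2)$, it equals $a^2+ab+b^2$, which can vanish while $(a+b)^2\neq0$. You need the full Chern classes of $E$ and of the ambient bundle, that is, you must say exactly where the twist lives, and you have not done this.

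\emph{The fallback.} It is not viable. $\varphi$ is not a product function, so nothing factors coordinatewise. Moreover, the threshold $\prod_\ell a_\ell\ge\prod_\ell b_\ell+d$ is not a conjunction of one-dimensional conditions.

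\emph{What would close your route.} Fix $z_{ij}\in R_i\cap Q_j$, rephase
$\widetilde\Psi_{ij}(x,\xi)=\sum_{z\in R_i\cap Q_j}\sqrt{\varphi(x-z)}\,e^{2\pi i\ip{\xi}{z-z_{ij}}}$,
and use only the coarse shifts $c\in T\cap P$, $\omega\in(T\cap P)^*$. Then $\widetilde\Psi(x+c,\xi+\omega)=e^{2\pi i\ip{\xi}{c}}\widetilde\Psi(x,\xi)$, with no permutations at all. After the linear change of variables taking $T\cap P$ to $\Z^d$, the transpose $\widetilde\Psi^{\mathsf T}$ is a continuous Zak-quasiperiodic $n\times m$ matrix of full column rank everywhere. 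Theorem~\ref{thm:rectangular-common-zero-gap} then gives $n\ge m+d$.

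In bundle language: the rows are $m$ pointwise independent sections of $L^{\oplus n}$, where $L$ is the Zak line bundle. The rank-$(n-m)$ quotient then has $c_k=\binom nk c_1(L)^k\neq0$ for all $k\le\min(n,d)$, which is impossible when $n-m<d$. This is precisely the Part~B Gabor obstruction for the window $\sqrt\varphi$. It rests on the de Dios Pont--Liehr--Taylor common-zero theorem together with the amplification trick, or on an honest characteristic-class computation.

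\emph{The paper's route.} The paper avoids all of this machinery. The closed translates $\cl\Omega+t$, $t\in T$, form a locally finite cover of $\R^d$ by sets of bounded diameter. Lebesgue's covering theorem (Theorem~\ref{thm:lebesgue-covering}) therefore gives a point $x_0$ lying in $d+1$ of them. Translating the cover by representatives $u_i$ of the $m-1$ nonzero $T$-cosets gives $m-1$ further translates through $x_0$. This produces $m+d>n$ distinct shifts in $\Z^d$, so two of them lie in one $P$-coset. Hence $\cl\Omega$ meets a nonzero $P$-translate of itself, which contradicts $\cl\Omega\subset\Omega+B_\eps$. No smoothing, configuration field, Zak matrix, or characteristic classes are needed.
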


The proof uses the classical covering-dimension theorem of Lebesgue: a finite
closed cover of a $d$-cube by sets of sufficiently small mesh must have
multiplicity at least $d+1$.  We use the following standard form; see
\cite[Chapter~IV]{HurewiczWallman1941}.

\begin{theorem}[Lebesgue~\protect\cite{HurewiczWallman1941}]\label{thm:lebesgue-covering}
Let $Q\subset\R^d$ be a cube.  If a finite closed cover of $Q$ has mesh
strictly smaller than the side length of $Q$, then some point of $Q$ belongs
to at least $d+1$ members of the cover.
\end{theorem}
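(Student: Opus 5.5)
We would prove Theorem~\ref{thm:lebesgue-covering} by contradiction, following the classical route through a face-preserving map of the cube into a low-dimensional polyhedron. By scaling and translating, assume $Q=[0,1]^d$. Let $K_1,\ldots,K_N$ be the finite closed cover. Its mesh is $<1$, so no $K_j$ meets two opposite faces $\{x_i=0\}$ and $\{x_i=1\}$. Suppose, contrary to the claim, that every point lies in at most $d$ members, i.e.\ every $(d+1)$-fold intersection $K_{j_0}\cap\cdots\cap K_{j_d}$ is empty.

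\emph{Step 1: thicken the cover.} Finitely many disjoint compact configurations are involved, so by compactness there is $\varepsilon>0$ such that the open neighborhoods $U_j=(K_j+B_\varepsilon)\cap Q$ satisfy three conditions. They still have no $(d+1)$-fold intersections. They still have diameter $<1$, so no $U_j$ meets two opposite faces. And they cover $Q$. Choose a continuous partition of unity $\psi_1,\ldots,\psi_N$ on $Q$ subordinate to $(U_j)$.

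\emph{Step 2: the nerve map.} Define $p_j\in\{0,1\}^d$ coordinatewise: $(p_j)_i=1$ if $U_j$ meets $\{x_i=1\}$, and $(p_j)_i=0$ otherwise. This is well defined because $U_j$ cannot meet both opposite faces. Put
\[
 g(x)=\sum_{j=1}^N\psi_j(x)\,p_j .
\]
If $x_i=0$, every $j$ with $\psi_j(x)>0$ has $U_j$ meeting $\{x_i=0\}$, hence not $\{x_i=1\}$, so $(p_j)_i=0$ and $g(x)_i=0$. Similarly $x_i=1$ forces $g(x)_i=1$. Thus $g\colon Q\to Q$ maps every face into itself. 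At each $x$ at most $d$ of the $\psi_j$ are positive. Hence $g(Q)$ lies in the finite union of the convex hulls $\operatorname{conv}\{p_j:j\in J\}$ with $|J|\le d$. These sets have dimension at most $d-1$, so $g(Q)$ has Lebesgue measure zero. In particular some interior point $y$ of $Q$ is missed by $g$.

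\emph{Step 3: face-preserving maps are onto.} This is the step we expect to be the only real obstacle; it is where Brouwer's theorem enters. Suppose $y\in\operatorname{int}Q$ is not in $g(Q)$. Let $r\colon Q\setminus\{y\}\to\partial Q$ be the radial retraction from $y$, and consider
\[
 h=-\bigl(r\circ g\bigr)+c\colon Q\to Q,
\]
where $c=(1,\ldots,1)$, so that $h$ is the composition of $r\circ g$ with the antipodal reflection $x\mapsto c-x$ of the cube. By Brouwer, $h$ has a fixed point $x$. Since $h(Q)\subset\partial Q$, the point $x$ lies on some face, say $x_i=0$ (the case $x_i=1$ is symmetric). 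Then $g(x)_i=0$. The retraction moves $g(x)$ along a ray from $y$ with $0<y_i<1$, so it cannot increase the $i$-th coordinate past $0$; hence $r(g(x))_i=0$. Consequently $h(x)_i=1\ne0=x_i$, a contradiction.

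Equivalently, one can note that the straight-line homotopy $(1-s)\,\mathrm{id}+s\,g$ keeps $\partial Q$ inside $\partial Q$, so $g|_{\partial Q}$ has degree one around $y$. Either way, $g$ is surjective, which contradicts the measure-zero conclusion of Step~2. Therefore some point of $Q$ lies in at least $d+1$ members of the cover.

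The only delicate bookkeeping is in Step~1: shrinking $\varepsilon$ so that the mesh stays below the side length and the multiplicity stays at most $d$. Both follow from finiteness of the cover and compactness of the empty-intersection conditions.
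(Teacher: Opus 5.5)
Your argument is correct. The paper does not prove Theorem~\ref{thm:lebesgue-covering}. It quotes the classical statement from Hurewicz--Wallman and uses it only as a black box in the proof of Theorem~\ref{thm:discrete-necessity}.

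You instead give the standard self-contained proof. You thicken the closed cover to an open one with the same multiplicity bound and with diameters still below the side length. A subordinate partition of unity then yields a map $g\colon Q\to Q$ that preserves every face $\{x_i=0\}$, $\{x_i=1\}$, yet takes values in the Lebesgue-null set $\bigcup_{|J|\le d}\operatorname{conv}\{p_j:j\in J\}$. Finally, Brouwer's theorem shows that a face-preserving self-map of the cube is onto. This reduces the only external input to Brouwer's fixed-point theorem, at the cost of bookkeeping the paper avoids by citing. Nothing downstream depends on the method, only on the statement.

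Two sentences deserve tightening, though neither hides a gap. In Step~1, spell out the compactness. For each of the finitely many $(d+1)$-element index sets $J$, points $x_k\in Q$ with $\operatorname{dist}(x_k,K_j)<1/k$ for all $j\in J$ would accumulate at a point of $Q\cap\bigcap_{j\in J}K_j$, which is empty. Hence some $\varepsilon_J>0$ works, and you take $\varepsilon<\min_J\varepsilon_J$ and $\varepsilon<\tfrac12\bigl(1-\max_j\operatorname{diam}K_j\bigr)$. If the members are not subsets of $Q$, first replace $K_j$ by $K_j\cap Q$.

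In Step~3, the remark that the retraction ``cannot increase the $i$-th coordinate past $0$'' is muddled, since along the ray from $y$ that coordinate decreases from $y_i>0$. The clean reason is that $g(x)_i=0$ already puts $g(x)$ on $\partial Q$. Radial projection from an interior point of a convex body fixes boundary points, so $r(g(x))=g(x)$ and $h(x)_i=1\neq0=x_i$. The case $x_i=1$ is symmetric.
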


\begin{proof}[Proof of Theorem~\ref{thm:discrete-necessity}]
Assume, to obtain a contradiction, that such $\Omega$ and $\eps$ exist.
We may assume that $\Omega$ is bounded: otherwise one can choose infinitely
many points of $\Omega$ separated by more than $2\eps$, so
$\Omega+B_\eps$ contains infinitely many disjoint balls of radius
$\eps/2$ and has infinite measure, which is impossible for a $P$-packing.
Choose representatives
\[
 u_i\in R_i,
 \qquad i=0,\ldots,m-1,
 \qquad u_0=0.
\]
Because $\Omega+T$ is an exact tiling of $\R^d$, the closed translates
\[
 \{\cl\Omega+t:t\in T\}
\]
cover $\R^d$.  Their diameters are uniformly bounded by
$\operatorname{diam}(\cl\Omega)$.  Every bounded cube meets only finitely
many of them: if a cube $Q$ meets $\cl\Omega+t$, then
$t\in Q-\cl\Omega$, which is bounded, and the lattice $T$ has only finitely
many points in a bounded set.

Choose a cube $Q$ whose side length is strictly larger than
$\operatorname{diam}(\cl\Omega)$, and cover it by the finitely many closed
sets $Q\cap(\cl\Omega+t)$ that meet it.  The cover has mesh smaller than the
side length of $Q$.  Theorem~\ref{thm:lebesgue-covering} gives distinct
$t_0,\ldots,t_d\in T$ and $x_0\in Q$ such that
\[
 x_0\in\bigcap_{j=0}^d(\cl\Omega+t_j).
\]

For each $i=1,\ldots,m-1$, the family
\[
 \{\cl\Omega+t+u_i:t\in T\}
\]
is a translate of the preceding closed cover.  Choose
$\widetilde t_i\in T$ such that
\[
 x_0\in\cl\Omega+\widetilde t_i+u_i.
\]
The elements
\[
 t_0,\ldots,t_d,
 \widetilde t_1+u_1,\ldots,\widetilde t_{m-1}+u_{m-1}
\]
are $m+d$ distinct elements of $\Z^d$: the first $d+1$ lie in the zero
class modulo $T$ and are distinct, while the remaining elements lie in the
pairwise distinct nonzero cosets $R_i$.

There are only $n\le m+d-1$ classes modulo $P$.  Hence two of the displayed
elements, say $z\ne z'$, belong to the same coset $Q_j$ modulo $P$.  Therefore
\[
 p_0=z-z'\in P\setminus\{0\}.
\]
By construction,
\[
 x_0\in(\cl\Omega+z)\cap(\cl\Omega+z').
\]
Subtracting $z'$ gives a point in
$(\cl\Omega+p_0)\cap\cl\Omega$.  Since
$\cl\Omega\subset\Omega+B_\eps$, this contradicts the assumed
$P$-packing.
\end{proof}

This completes Part~A.

\displaypart{Existence of Schwartz-Class and $C^\infty_c$ Gabor Windows}

\section{Gabor systems, duality, and the rational Zak matrix model}
\label{sec:gabor-background}

This section develops the common Gabor-theoretic and rational Zak framework
used throughout Part~B.  The same finite Zak matrix will later be studied in
two regularity regimes: continuity leads to the arithmetic gap in~\eqref{eq:SG},
whereas Sobolev regularity permits finite-uncertainty windows under the sole
strict-density condition.  The definitions of time shifts, frequency shifts,
Gabor systems, frames, tight frames, and Parseval frames are those introduced
in Subsection~\ref{subsec:intro-gabor}.  Tools used later, such as the
symplectic normal form or metaplectic reduction, are introduced only after the simplest case of
diagonal lattices has been fully resolved.

\subsection{Frames, duality, and density}\label{subsec:frames-duality-density}

The frame operator of a Gabor frame $\G(g,\Lambda)$ is
\[
 S_{g,\Lambda}f
   =\sum_{\lambda\in\Lambda}
      \ip{f}{\pi(\lambda)g}\,\pi(\lambda)g.
\]
The series converges unconditionally in \(L^2(\R^d)\).  The frame is
Parseval precisely when \(S_{g,\Lambda}=I\).

Let \(S=S_{g,\Lambda}\).  Since \(S\) is positive and invertible, its positive square root \(S^{1/2}\) and inverse
square root \(S^{-1/2}\) are well defined.  The canonical tight window
\[
                         \widetilde g=S^{-1/2}g
\]
generates a Parseval Gabor frame on the same lattice.  If
\(g\in\Sclass(\R^d)\), respectively \(g\in\Szero(\R^d)\), then
\(S^{-1/2}g\) belongs to the same space; see
\cite{Luef2009Projections,JakobsenLuef2020} and
\cite[Chapters~12--13]{Grochenig2001}.  Conversely, every Parseval frame is
tight and every tight frame is a frame, while a tight frame with bound
\(A>0\) becomes Parseval after replacing \(g\) by \(A^{-1/2}g\).
Consequently, on a fixed lattice, frame, tight-frame, and Parseval-frame
existence are equivalent for windows in either the Schwartz class or the
Feichtinger algebra.

For a finite family \(\mathbf g=(g_1,\ldots,g_q)\), the multiwindow frame
operator is obtained by summing the corresponding operators over
\(j=1,\ldots,q\).  We refer to
\cite[Chapters~5--7]{Grochenig2001} for the standard frame-operator theory,
canonical dual and tight windows, and unconditional Gabor expansions.

Recall from Subsection~\ref{subsec:intro-gabor} that the adjoint lattice of a full-rank phase-space lattice $\Lambda$ is
\[
 \Lambda^\circ=\{z\in\R^{2d}:\sigma(z,\lambda)\in\Z
 \text{ for all }\lambda\in\Lambda\}.
\]
It satisfies
\(
 \covol(\Lambda^\circ)=\covol(\Lambda)^{-1}.
\)
For a separable lattice $\Lambda=\Gamma\times\Phi$, one has
\(
                         \Lambda^\circ=\Phi^*\times\Gamma^*.
\)
We use the following normalized form of the duality principle; see
\cite{Janssen1995,RonShen1997,JakobsenLuef2020} and
\cite[Chapter~7]{Grochenig2001}.

\begin{theorem}[Janssen--Ron--Shen~\protect\cite{Janssen1995,RonShen1997}]\label{thm:gabor-duality}
The system $\G(g,\Lambda)$ is a Parseval frame if and only if
$\displaystyle \G\bigl(\covol(\Lambda)^{-1/2}g,\Lambda^\circ\bigr)$
is an orthonormal system.
\end{theorem}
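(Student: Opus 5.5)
The plan is to deduce the statement from the Janssen representation of the frame operator, i.e. from the fundamental identity of Gabor analysis. Write $c=\covol(\Lambda)$. For $\mu,\mu'\in\Lambda^\circ$ one has $\ip{\pi(\mu)g}{\pi(\mu')g}=\theta\,\ip{g}{\pi(\mu'-\mu)g}$ with $|\theta|=1$. Hence $\G(c^{-1/2}g,\Lambda^\circ)$ is an orthonormal system if and only if
\[
 \ip{g}{\pi(\mu)g}=c\,\delta_{\mu,0},\qquad \mu\in\Lambda^\circ .
\]
So the task is to show that $S_{g,\Lambda}=I$ is equivalent to this biorthogonality relation for the single window $g$.

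\textbf{Step 1: Bessel bounds.} I would first record the Bessel duality of Ron--Shen: $\G(g,\Lambda)$ is a Bessel sequence with bound $B$ if and only if $\G(g,\Lambda^\circ)$ is Bessel with bound $cB$. Both hypotheses in the statement give a Bessel condition on one side. A Parseval frame is Bessel with bound $1$, and an orthonormal system is Bessel with bound $1$. Thus in either direction both $\G(g,\Lambda)$ and $\G(g,\Lambda^\circ)$ are Bessel.

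\textbf{Step 2: the fundamental identity.} For $f_1,f_2$ in a dense class, say $\Szero(\R^d)$, I would prove
\[
 \sum_{\lambda\in\Lambda}\ip{f_1}{\pi(\lambda)g}\ip{\pi(\lambda)g}{f_2}
 =c^{-1}\sum_{\mu\in\Lambda^\circ}\ip{g}{\pi(\mu)g}\ip{\pi(\mu)f_1}{f_2}.
\]
The proof periodizes the function $z\mapsto \ip{f_1}{\pi(z)g}\ip{\pi(z)g}{f_2}$ over $\Lambda$ and applies Poisson summation, using the symplectic Fourier transform on $\R^{2d}/\Lambda$, whose dual group is $\Lambda^\circ$. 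The Fourier coefficients are computed by the orthogonality relations for the short-time Fourier transform. Absolute convergence of both sides comes from Step 1 together with $f_j\in\Szero$, since then $(\ip{\pi(\mu)f_1}{f_2})_\mu\in\ell^1$.

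\textbf{Step 3: conclusion.} Suppose $\ip{g}{\pi(\mu)g}=c\,\delta_{\mu,0}$. Then the right-hand side of Step 2 reduces to $\ip{f_1}{f_2}$. Hence $S_{g,\Lambda}=I$ on a dense set, and by boundedness on all of $L^2$.

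Conversely, suppose $S_{g,\Lambda}=I$. Then the operator
\[
 c^{-1}\sum_{\mu}\bigl(\ip{g}{\pi(\mu)g}-c\,\delta_{\mu,0}\bigr)\pi(\mu)
\]
vanishes weakly on $\Szero$. Its coefficients lie in $\ell^2(\Lambda^\circ)$ by the Bessel property. To show they all vanish, I would test against pairs $f_1$ and $f_2=\pi(\mu_0)f_1$ with $f_1$ Gaussian, which isolates each coefficient. This gives the biorthogonality relation, and hence orthonormality.

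\textbf{Main obstacle.} I expect the hardest part to be the uniqueness of coefficients in Step 3, together with the rigorous justification of Step 2 for a general $L^2$ window, where only Bessel bounds are available. I would try first to establish Step 2 for $g\in\Szero$ and then extend it by continuity in $g$, using the Bessel-norm control from Step 1. If that fails, I would cite the Ron--Shen fiberization via Zibulski--Zeevi matrices. That route turns both conditions into the identity $\mathcal Z\mathcal Z^*=I$ versus $\mathcal Z^*\mathcal Z=I$ on a.e. fiber, after the rational reduction, or via Janssen's argument in general.
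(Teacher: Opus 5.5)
The paper gives no proof of this statement. It quotes it from Janssen, Ron--Shen, Jakobsen--Luef and \cite[Chapter~7]{Grochenig2001}, and your route through the Janssen representation is the standard one in those sources. Several of your steps are sound: the reduction of orthonormality to $\ip{g}{\pi(\mu)g}=c\,\delta_{\mu,0}$ on $\Lambda^\circ$, the use of Bessel duality, and the implication from that relation to $S_{g,\Lambda}=I$.

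The converse in Step~3, however, has a genuine gap. Testing with a Gaussian $f_1=\varphi$ and $f_2=\pi(\mu_0)\varphi$ does not isolate $a_{\mu_0}=\ip{g}{\pi(\mu_0)g}-c\,\delta_{\mu_0,0}$. The quantity $|\ip{\pi(\mu)\varphi}{\pi(\mu_0)\varphi}|$ is a positive multiple of $e^{-\pi|\mu-\mu_0|^2/2}$, so it is nonzero for every $\mu$. Each such test therefore returns a phase-twisted Gaussian average of all the coefficients. Equivalently, with $A=\sum_\mu a_\mu\pi(\mu)$, you only learn that $A\varphi\perp\pi(\mu_0)\varphi$ for $\mu_0\in\Lambda^\circ$. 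Since $\Lambda^\circ$ has covolume $c^{-1}$, the system $\G(\varphi,\Lambda^\circ)$ need not be complete; for $d=1$ and $c<1$ it is not. So nothing in these equations evidently forces $a=0$.

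The missing idea is covariance. Because $\Szero$ is invariant under time--frequency shifts, the weak identity also holds for the pair $(\pi(z)^*f_1,\pi(z)^*f_2)$ for every $z\in\R^{2d}$. The commutation relation $\pi(z)\pi(\mu)\pi(z)^*=e^{-2\pi i\sigma(z,\mu)}\pi(\mu)$ turns it into
\[
 \sum_{\mu\in\Lambda^\circ}a_\mu b_\mu\,e^{-2\pi i\sigma(z,\mu)}=0
 \quad (z\in\R^{2d}),
 \qquad b_\mu=\ip{\pi(\mu)f_1}{f_2}.
\]
This is an absolutely convergent Fourier series on the torus $\R^{2d}/\Lambda$, whose characters are indexed injectively by $\Lambda^\circ$. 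Hence $a_\mu b_\mu=0$ for every $\mu$. Only now does your choice $f_2=\pi(\mu_0)f_1$, which gives $b_{\mu_0}=\norm{f_1}_2^2\neq0$, yield $a_{\mu_0}=0$.

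Two lesser points concern Step~2.

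First, absolute convergence of the two lattice sums does not by itself license Poisson summation. What does is the following. The $\Lambda$-periodization of $z\mapsto\ip{f_1}{\pi(z)g}\ip{\pi(z)g}{f_2}$ equals $\ip{S_{g,\Lambda}\pi(z)^*f_1}{\pi(z)^*f_2}$, since the phases cancel. This function is continuous because $S_{g,\Lambda}$ is bounded. Its Fourier coefficients are summable when $f_1,f_2\in\Szero$, so it agrees with its Fourier series at $z=0$. This argument works directly for every $g\in L^2(\R^d)$ for which $\G(g,\Lambda)$ is Bessel.

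Second, your fallback of extending from $g\in\Szero$ using Bessel-norm control would fail if it means approximation in the Bessel norm. For $\Lambda=\Z^2$ the optimal Bessel bound is $\norm{Zg}_\infty^2$. The closure of $\Szero$ in that norm therefore contains only windows with continuous Zak transform, and it misses the orthonormal-basis window $\mathbf 1_{[0,1)}$, which the theorem must cover. Approximation in the $L^2$-norm of $g$, for fixed $f_1,f_2\in\Szero$, does work, because $\G(f_j,\Lambda)$ is Bessel.
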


The density theorem gives the basic restriction: if $\G(g,\Lambda)$ is a
frame, then $\covol(\Lambda)\le1$; see
\cite[Chapters~7--8]{Grochenig2001}.  At critical density,
$\covol(\Lambda)=1$, the Balian--Low phenomenon excludes well-localized
windows.  In particular, a frame window cannot belong to
$\Szero(\R^d)$, and hence cannot belong to $\Sclass(\R^d)$; see
\cite{Balian1981,Low1985,Battle1988,GrochenigHanHeilKutyniok2002,CabrelliMolterPfander2016,FeichtingerKaiblinger2004,GrochenigOrtegaRomero2015}.

\subsection{The scalar Zak transform and its regularity classes}

For general Zak-transform background and its role in Gabor analysis, see
\cite[Chapter~8]{Grochenig2001}.  We use the convention
\begin{equation}\label{eq:zak-definition}
 Zf(x,\omega)
   =\sum_{k\in\Z^d}f(x-k)e^{2\pi i\ip{k}{\omega}},
 \qquad x,\omega\in\R^d.
\end{equation}

\begin{proposition}\label{prop:zak-standard}
Let $f\in L^2(\R^d)$.
\begin{enumerate}[label=\textup{(\roman*)}]
\item The Zak transform extends to a unitary map
\[
 Z:L^2(\R^d)\longrightarrow L^2([0,1)^{2d}).
\]
\item For $n,m\in\Z^d$,
\begin{align}
 Zf(x+n,\omega)&=e^{2\pi i\ip{n}{\omega}}Zf(x,\omega),
                                                        \label{eq:zak-x-quasi}\\
 Zf(x,\omega+m)&=Zf(x,\omega).          \label{eq:zak-w-periodic}
\end{align}
\item For $n\in\Z^d$ and $y\in\R^d$,
\begin{equation}\label{eq:zak-covariance}
 Z(M_nT_yf)(x,\omega)
   =e^{2\pi i\ip{n}{x}}Zf(x-y,\omega).
\end{equation}
\item If $f\in\Sclass(\R^d)$, then $Zf$ is smooth on $\R^{2d}$, and
all partial derivatives may be obtained by differentiating the defining
series term by term.
\item One has
\[
 \Szero(\R^d)
 \subset \Wzero(\R^d)
 \subset \Wiener(\R^d)
 \subset \CZ(\R^d).
\]
More precisely, if $f\in\Wiener(\R^d)$, then the Zak series converges
absolutely and uniformly on $[0,1]^d\times[0,1]^d$, and its quasiperiodic
extension is continuous on $\R^{2d}$.
\item If $f\in\CZ(\R^d)$, then the identities
\eqref{eq:zak-x-quasi}--\eqref{eq:zak-w-periodic} hold everywhere for the
chosen continuous representative.
\item With the Fourier-transform convention $\mathcal Ff(\omega)=\widehat f(\omega)=\int_{\R^d}f(x)e^{-2\pi i\ip{x}{\omega}}\,dx$, one has
\begin{equation}\label{eq:zak-fourier-covariance}
 Z(\widehat f)(x,\omega)
 =e^{2\pi i\ip{x}{\omega}}Zf(-\omega,x)
\end{equation}
in the $L^2$ sense, and hence pointwise whenever the two sides have
continuous representatives.  In particular, the Fourier transform maps
$\CZ(\R^d)$ onto itself.
\item The finite-uncertainty space is represented by first-order Sobolev
regularity on the Zak cube.  More precisely,
$f\in\mathbb H^1(\R^d)$ if and only if the restriction of $Zf$ to
$[0,1]^{2d}$ belongs to $W^{1,2}$, with the quasiperiodic traces prescribed
by~\eqref{eq:zak-x-quasi}--\eqref{eq:zak-w-periodic}.  In the distributional
sense,
\begin{equation}\label{eq:zak-H1-identities}
 Z(\partial_j f)=\partial_{x_j}Zf,
 \qquad
 Z(x_jf)=x_jZf-\frac1{2\pi i}\partial_{\omega_j}Zf.
\end{equation}
\end{enumerate}
\end{proposition}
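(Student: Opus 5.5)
The statement collects standard facts about the Zak transform, so the plan is to verify each item directly from the series definition~\eqref{eq:zak-definition}, working first on a dense class and then extending by density. I take the Schwartz class, or even $C_c(\R^d)$, as the dense class; on it the sum has finitely many nonzero terms locally in $x$.

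For (i), fix $x\in[0,1)^d$. Then $\omega\mapsto Zf(x,\omega)$ is a trigonometric series with coefficients $(f(x-k))_k$. Parseval on $[0,1)^d$ gives
\[
 \int_{[0,1)^d}|Zf(x,\omega)|^2\,d\omega=\sum_k|f(x-k)|^2 .
\]
Integrating in $x$ over $[0,1)^d$ recovers $\norm{f}_2^2$, so $Z$ is an isometry on the dense class and extends to $L^2(\R^d)$. Surjectivity follows by inverting: for $F\in L^2$ of the cube, extended quasiperiodically, set $f(x-k)=\int F(x,\omega)e^{-2\pi i\ip{k}{\omega}}\,d\omega$.

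Items (ii) and (iii) are reindexings $k\mapsto k+n$ of the series, using $e^{2\pi i\ip{n}{x}}e^{2\pi i\ip{n}{k}}$-type cancellations since $n,k\in\Z^d$. Both extend to $L^2$ as a.e.\ identities by continuity of $Z$.

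For (iv) and (v), I note that for $f\in\Wiener$ one has $\sup_{x\in[0,1]^d}|f(x-k)|$ summable in $k$. The Weierstrass M-test then gives uniform absolute convergence on $[0,1]^{2d}$. The same bound applied to each of the finitely many cubes around any compact set in $x$ gives continuity on $\R^{2d}$. For $f\in\Sclass$, derivatives of $f$ are again rapidly decreasing, so term-by-term differentiation is justified. The inclusions $\Szero\subset\Wzero$ are quoted from Feichtinger algebra theory: $\Szero$ is Fourier invariant and embeds in $\Wiener$.

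Item (vi) holds because a continuous function agreeing a.e.\ with the quasiperiodic relation agrees everywhere, since both sides are continuous.

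For (vii), I apply Poisson summation to the function $t\mapsto f(t)e^{-2\pi i\ip{t}{\omega}}$ translated by $x$. On Schwartz functions this gives
\[
 \sum_k\widehat f(x-k)e^{2\pi i\ip{k}{\omega}}=e^{2\pi i\ip{x}{\omega}}\sum_m f(-\omega-m)e^{2\pi i\ip{m}{x}} ,
\]
with possible sign bookkeeping I would check carefully. The identity then extends by unitarity. The pointwise statement and $\mathcal F(\CZ)=\CZ$ follow because the right side is a continuous change of variables of $Zf$.

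Item (viii) is the main obstacle. The identities~\eqref{eq:zak-H1-identities} follow on $\Sclass$ by differentiating term by term. For the second identity, write
\[
 x_jf(x-k)=(x_j-k_j)f(x-k)+k_jf(x-k),
\]
and note that $k_je^{2\pi i\ip{k}{\omega}}=\frac1{2\pi i}\partial_{\omega_j}$; the sign is to be confirmed. These identities pass to distributions by duality. The equivalence then reads as follows: $f\in\mathbb H^1$, meaning $f$, $\partial_jf$, and $x_jf$ are all in $L^2$, holds iff $Zf$, $\partial_{x_j}Zf$, and $\partial_{\omega_j}Zf$ lie in $L^2$ of the cube. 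Here one must check that distributional derivatives on the open cube, together with the quasiperiodic trace conditions, correspond exactly to distributional derivatives on $\R^{2d}$ of the quasiperiodic extension. I would handle this by testing against quasiperiodic smooth functions, which are images of Schwartz functions under $Z$, and by using unitarity to transfer adjoints.
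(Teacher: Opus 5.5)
Your proposal is correct and follows essentially the same route as the paper. The paper treats (i)--(vii) as standard facts and cites Gr\"ochenig's monograph, whose proofs are exactly your reindexings, Parseval in $\omega$, Weierstrass M-test and Poisson summation; for (viii) it derives the identities on $\Sclass(\R^d)$ by term-by-term differentiation and then extends them to distributions by density. Your duality step for (viii), pairing against $Z\psi$ with $\psi\in\Sclass(\R^d)$ and integrating by parts on the cube (the boundary terms cancel because $Zf\,\overline{Z\psi}$ is $\Z^{2d}$-periodic), is what actually gives the converse $Zf\in W^{1,2}\Rightarrow f\in\mathbb H^1$, which the paper leaves implicit; and for surjectivity in $\mathcal F(\CZ)=\CZ$, note that the same identity shows $Zf$ is continuous whenever $Z\widehat f$ is.
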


These are standard Zak-transform and Wiener-amalgam facts.  The Sobolev
identities follow first for Schwartz functions by differentiation and then in
the distributional sense by density.  Proofs of the remaining unitarity,
quasiperiodicity, covariance, smoothness, Fourier covariance, and
uniform-convergence statements may be found in
\cite[Chapter~8 and Chapters~11--12]{Grochenig2001}.  The notation
$\Wzero$ for the Fourier-invariant Wiener class is the one used in
\cite{deDiosLiehrTaylor2026}.

\subsection{The rational Zibulski--Zeevi matrix model}
\label{sec:rational-zak-matrix-model}

We now record the finite matrix model once, so that both the continuous and
the Sobolev arguments can use it without repeating the Zak bookkeeping.
Fix
\begin{equation}\label{eq:zak-D-data}
 \begin{gathered}
 D=\mathsf B\mathsf A^{-1},\qquad
 \mathsf A=\operatorname{diag}(a_1,\ldots,a_d),\qquad
 \mathsf B=\operatorname{diag}(b_1,\ldots,b_d),\\
 N=\det\mathsf A,\qquad R=\det\mathsf B.
 \end{gathered}
\end{equation}
where the pairs $(a_i,b_i)$ are coprime positive integers.  Set
\begin{equation}\label{eq:zak-index-sets}
 \mathcal I_{\mathsf A}=\prod_{i=1}^d\{0,\ldots,a_i-1\},\qquad
 \mathcal I_{\mathsf B}=\prod_{i=1}^d\{0,\ldots,b_i-1\},
\end{equation}
so that $|\mathcal I_{\mathsf A}|=N$ and
$|\mathcal I_{\mathsf B}|=R$, and put
\begin{equation}\label{eq:zak-base-rectangle}
 \mathcal Q_{\mathsf B}=[0,1)^d\times
       \prod_{i=1}^d[0,b_i^{-1}).
\end{equation}
Following Zibulski--Zeevi~\cite{ZibulskiZeevi1997}, define
\begin{equation}\label{eq:zz-vector-transform}
 \mathcal Zf(x,\eta)
 =\bigl(Zf(x,\eta+\mathsf B^{-1}s)\bigr)_{s\in\mathcal I_{\mathsf B}}
 \in\C^R
\end{equation}
and the $R\times N$ matrix
\begin{equation}\label{eq:zak-H-definition}
 \mathcal Z_g(x,\eta)
 =\bigl[\mathcal Z(T_{Dt}g)(x,\eta)\bigr]_{t\in\mathcal I_{\mathsf A}},
 \qquad
 (\mathcal Z_g(x,\eta))_{s,t}
 =Zg(x-Dt,\eta+\mathsf B^{-1}s).
\end{equation}
The map $f\mapsto\mathcal Zf$ is unitary from $L^2(\R^d)$ onto
$L^2(\mathcal Q_{\mathsf B};\C^R)$, and
\begin{equation}\label{eq:zak-fiber-norm}
 \|f\|_2^2=\int_{\mathcal Q_{\mathsf B}}
        \|\mathcal Zf(x,\eta)\|_{\C^R}^2\,dx\,d\eta.
\end{equation}
Every $k\in\Z^d$ has a unique decomposition
\begin{equation}\label{eq:zak-k-decomposition}
 k=\mathsf A\ell+t,
 \qquad \ell\in\Z^d,
 \quad t\in\mathcal I_{\mathsf A}.
\end{equation}
A direct Fourier-series calculation on $\mathcal Q_{\mathsf B}$ gives
\begin{equation}\label{eq:zak-coefficient-identity}
 \sum_{n,k\in\Z^d}|\ip{f}{M_nT_{Dk}g}|^2
 =\frac1R\int_{\mathcal Q_{\mathsf B}}
      \|\mathcal Z_g(x,\eta)^*\mathcal Zf(x,\eta)\|_{\C^N}^2
      \,dx\,d\eta.
\end{equation}
Initially the identity follows for Schwartz test functions; whenever
$\mathcal Z_g$ is essentially bounded it extends to all $f\in L^2$ by
density.

\begin{proposition}[Rational Zak matrix criterion]
\label{prop:zz-matrix-criterion}
Let $g\in L^2(\R^d)$ and assume that $\mathcal Z_g$ is essentially bounded.
Then $\G(g,D\Z^d\times\Z^d)$ is a frame with bounds $A_0,B_0$ if and only if
\begin{equation}\label{eq:zak-matrix-frame-bounds}
 R A_0 I_R\le \mathcal Z_g(x,\eta)\mathcal Z_g(x,\eta)^*
 \le R B_0 I_R
\end{equation}
for almost every $(x,\eta)\in\mathcal Q_{\mathsf B}$.  In particular, the
frame is Parseval if and only if
$\mathcal Z_g\mathcal Z_g^*=R I_R$ almost everywhere.
\end{proposition}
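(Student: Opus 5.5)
The plan is to read the frame inequality directly off the coefficient identity \eqref{eq:zak-coefficient-identity}, combined with the fiberwise unitarity \eqref{eq:zak-fiber-norm}.  Since $\mathcal Z_g$ is essentially bounded, \eqref{eq:zak-coefficient-identity} holds for every $f\in L^2(\R^d)$.  The Gabor system $\G(g,D\Z^d\times\Z^d)$ is indexed by $(Dk,n)$, and $\pi(Dk,n)=M_nT_{Dk}$, so the left side of \eqref{eq:zak-coefficient-identity} is exactly the frame sum.  The integrand is $\ip{\mathcal Z_g\mathcal Z_g^*\mathcal Zf}{\mathcal Zf}_{\C^R}$.  The frame inequality with bounds $A_0,B_0$ is therefore equivalent to
\[
 \int_{\mathcal Q_{\mathsf B}}\ip{(\mathcal Z_g\mathcal Z_g^*-RA_0I_R)\mathcal Zf}{\mathcal Zf}\ge0
\]
together with the analogous upper inequality, for all $f$.

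Sufficiency is then immediate.  If \eqref{eq:zak-matrix-frame-bounds} holds almost everywhere, integrate the pointwise matrix inequality against $\|\mathcal Zf\|^2$ and apply \eqref{eq:zak-fiber-norm}.

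For necessity, I use that $\mathcal Z$ maps $L^2(\R^d)$ onto $L^2(\mathcal Q_{\mathsf B};\C^R)$, so $\mathcal Zf$ ranges over all vector fields.  Let $H=\mathcal Z_g\mathcal Z_g^*-RA_0I_R$, which is measurable and essentially bounded, and suppose $H$ fails to be positive semidefinite on a set of positive measure.  Take a countable dense set $\{v_j\}\subset\C^R$ (rational vectors).  Some $j$ gives a set $E$ of positive measure on which $\ip{H v_j}{v_j}<0$.  Choosing $f$ with $\mathcal Zf=\mathbf 1_E v_j$ makes the integral negative, a contradiction.  The upper bound is handled the same way.  This countable-density reduction is the only point needing care: pointwise negativity may occur in different directions at different points, and density plus continuity of $v\mapsto\ip{Hv}{v}$ shows that some fixed rational direction is bad on a positive-measure set.

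The Parseval case is $A_0=B_0=1$, where the two inequalities force $\mathcal Z_g\mathcal Z_g^*=RI_R$ almost everywhere.  I expect the main (mild) obstacle to be justifying \eqref{eq:zak-coefficient-identity} for all $f$.  Both sides are continuous in $f$ on $L^2$: the right side because $\mathcal Z_g$ is bounded and $\mathcal Z$ is unitary, and the left side then by Fatou applied to the dense class of Schwartz functions, which also yields the Bessel bound.
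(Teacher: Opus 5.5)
Your proposal is correct and follows the paper's own argument. Both combine \eqref{eq:zak-coefficient-identity} with \eqref{eq:zak-fiber-norm}, obtain sufficiency by integrating the pointwise matrix bounds, and obtain necessity by testing on fields $\mathbf 1_E v$ with $v$ taken from a countable dense subset of $\C^R$. The only cosmetic difference is how necessity is finished: you argue by contradiction on a positive-measure set where one fixed rational direction is bad, whereas the paper divides by $|E|$ and shrinks $E$ to a Lebesgue point.
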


\begin{proof}
Combine~\eqref{eq:zak-fiber-norm} and
\eqref{eq:zak-coefficient-identity}.  The implication from the matrix bounds
to the frame inequalities is immediate.  Conversely, test the frame
inequalities on vector-valued functions of the form $\mathbf1_Ez$, divide by
$|E|$, and let $E$ shrink to a Lebesgue point.  A countable dense set of
vectors $z\in\C^R$ yields the asserted quadratic-form inequalities.
\end{proof}

The matrix carries simple face relations.  For $1\le i\le d$, define
unitaries on $\C^{\mathcal I_{\mathsf B}}$ by
\begin{equation}\label{eq:zak-Delta-C}
 (\Delta_i z)_s=e^{2\pi i s_i/b_i}z_s,
 \qquad
 (C_i z)_s=z_{s+e_i\ ({\rm mod}\ b_i)}.
\end{equation}
Then
\begin{align}
 \mathcal Z_g(x+e_i,\eta)
  &=e^{2\pi i\eta_i}\Delta_i\mathcal Z_g(x,\eta),
                                                        \label{eq:zak-fine-x}\\
 \mathcal Z_g(x,\eta+b_i^{-1}e_i)
  &=C_i\mathcal Z_g(x,\eta),                   \label{eq:zak-fine-eta}
\end{align}
and hence
\begin{align}
 \mathcal Z_g(x+b_i e_i,\eta)
  &=e^{2\pi i b_i\eta_i}\mathcal Z_g(x,\eta),
                                                        \label{eq:zak-coarse-x}\\
 \mathcal Z_g(x,\eta+e_i)
  &=\mathcal Z_g(x,\eta).                       \label{eq:zak-coarse-eta}
\end{align}
All these transformations are unitary row operations and unimodular scalar
multiplications; in particular they preserve rank and singular values.

For the Sobolev construction it is convenient to use the still finer closed
rectangle
\begin{equation}\label{eq:H1-fine-Zak-box}
 \mathcal Q=\prod_{i=1}^d[0,a_i^{-1}]
  \times\prod_{i=1}^d[0,b_i^{-1}].
\end{equation}
If $F$ is scalar Zak-quasiperiodic, define
\begin{equation}\label{eq:H1-fine-matrix}
 \mathcal M_F(u,\eta)_{s,t}
 =F(u-Dt,\eta+\mathsf B^{-1}s),
 \qquad s\in\mathcal I_{\mathsf B},\quad
        t\in\mathcal I_{\mathsf A}.
\end{equation}
The $RN$ translated rectangles on the right partition the unit Zak cube,
up to their boundaries.  Thus the entries of $\mathcal M_F$ are precisely
the restrictions of one scalar Zak function.  Conversely, they reassemble
into such a function provided that the paired faces satisfy the following
rules.  Let $\tau_i(t)$ agree with $t$ away from the $i$-th coordinate and be
determined there by
\[
 b_i\tau_i(t)_i\equiv b_it_i-1\pmod{a_i},
 \qquad 0\le\tau_i(t)_i<a_i,
\]
and put
\[
 q_i(t)=\frac{1-b_it_i+b_i\tau_i(t)_i}{a_i}\in\Z.
\]
On the faces $u_i=0$ and $\eta_i=0$ one requires
\begin{align}
 M_{s,t}(u+a_i^{-1}e_i,\eta)
 &=e^{2\pi i q_i(t)(\eta_i+s_i/b_i)}
   M_{s,\tau_i(t)}(u,\eta),
                                                        \label{eq:H1-fine-u-face}\\
 M_{s,t}(u,\eta+b_i^{-1}e_i)
 &=M_{s+e_i\ ({\rm mod}\ b_i),t}(u,\eta).
                                                        \label{eq:H1-fine-eta-face}
\end{align}
The first operation is a diagonal unitary followed by a column permutation,
and the second is a row permutation.  The corner compatibility follows from
scalar Zak quasiperiodicity.  If the matrix entries belong to
$W^{1,2}(\mathcal Q)$ and have matching traces under these rules, the
reassembled scalar function belongs to $W^{1,2}([0,1]^{2d})$.  Proposition
\ref{prop:zak-standard}(viii) then converts it into a window in
$\mathbb H^1(\R^d)$.

This is the common framework for the two arguments below.  A continuous
matrix satisfying the frame bounds has full rank everywhere and is subject
to the topological gap $N\ge R+d$.  A $W^{1,2}$ matrix is required to have
full rank only almost everywhere; the elementary cell-by-cell construction
in Section~\ref{sec:full-multivariate-balian-low} works under the strictly
weaker condition $N>R$.

\subsection{Continuous quasiperiodic matrices and the rectangular rank gap}
\label{sec:rectangular-common-zero}

Let $V$ be a complex vector space.  A continuous function
$s:\R^q\times\R^q\to V$ is called \emph{Zak-quasiperiodic} if
$s(u+n,v+m)=e^{2\pi i\ip{n}{v}}s(u,v)$ for all $n,m\in\Z^q$.
We use the following common-zero theorem of de Dios Pont, Liehr, and Taylor.

\begin{theorem}[de Dios Pont--Liehr--Taylor~\protect\cite{deDiosLiehrTaylor2026}]
\label{thm:ddlt-common-zero}
Let $s:\R^q\times\R^q\to\C^M$ be continuous and Zak-quasiperiodic.  If $M\le q$, then $s$ has a zero.  If $M>q$, there are smooth such maps with no zero.
\end{theorem}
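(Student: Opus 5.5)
The plan is to translate Zak-quasiperiodicity into the language of line bundles over the torus $\mathbb T^{2q}=\R^{2q}/\Z^{2q}$, so that both halves become statements about characteristic classes and general position. The multiplier $e^{2\pi i\ip{n}{v}}$ is a cocycle for the action of $\Z^q\times\Z^q$ on $\R^{2q}$. It therefore defines a complex line bundle $\mathcal L\to\mathbb T^{2q}$ whose continuous sections are exactly the continuous scalar Zak-quasiperiodic functions. A continuous $\C^M$-valued Zak-quasiperiodic map $s$ is then the same thing as a continuous section of $\mathcal L^{\oplus M}=\mathcal L\otimes\C^M$. Moreover, $s$ has no zero if and only if this section is nowhere vanishing.

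For the first assertion ($M\le q$), I would compute the first Chern class of $\mathcal L$. Using a connection built from the factor of automorphy, for instance $\nabla=d-2\pi i\ip{u}{dv}$ after checking the gauge transformations, the curvature is $\pm2\pi i\sum_{j=1}^q du_j\wedge dv_j$. Hence
\[
 c_1(\mathcal L)=\pm\omega,\qquad \omega=\sum_{j=1}^q du_j\wedge dv_j\in H^2(\mathbb T^{2q};\Z).
\]
This is the standard fact that the Zak bundle has Chern number $1$ on each symplectic $2$-torus; one can check it directly on the $(u_j,v_j)$-subtori by a winding-number computation of the transition function. By the Whitney sum formula, the top Chern class of the rank-$M$ bundle is
\[
 c_M(\mathcal L^{\oplus M})=c_1(\mathcal L)^M=\pm\omega^M .
\]
In the exterior algebra $H^*(\mathbb T^{2q};\Z)$ this equals $\pm M!\sum_{|J|=M}\prod_{j\in J}du_j\wedge dv_j$, which is nonzero whenever $M\le q$. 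The top Chern class is the Euler class of the underlying real oriented bundle of rank $2M$, and a nowhere-vanishing section forces the Euler class to vanish. Therefore every continuous section has a zero, which is the first assertion.

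For the second assertion ($M>q$), the real rank $2M$ exceeds the base dimension $2q$, so a generic smooth section misses the zero section. Concretely, I would first produce a finite family of smooth Zak-quasiperiodic functions $Zf_1,\ldots,Zf_K$, with $f_k$ time--frequency shifts of a Gaussian, that have no common zero on $\mathbb T^{2q}$. This is possible because the Gaussian's Zak transform has isolated zeros, and translates in $(x,\omega)$ via~\eqref{eq:zak-covariance} move these zeros so that a compactness argument gives a finite family. Next, I would consider sections of the form $s_C=\bigl(\sum_k c_{mk}Zf_k\bigr)_{m=1}^M$ with $C\in\C^{M\times K}$. The evaluation map $(C,z)\mapsto s_C(z)$ is a submersion onto the fibre, since the $Zf_k(z)$ do not all vanish. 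The parametric transversality theorem then shows that for almost every $C$ the section $s_C$ is transverse to the zero section. A transverse zero set has dimension $2q-2M<0$, so it is empty. Smoothness of $s_C$ follows from Proposition~\ref{prop:zak-standard}(iv).

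The main obstacle is the careful identification $c_1(\mathcal L)=\pm\omega$, including the integrality and sign conventions, and the verification that $\omega^M\ne0$ integrally; the latter is only needed rationally, where it is clear. I would carry this out by restricting to the product subtori $\mathbb T^2_{(u_j,v_j)}$ and computing the degree of the clutching map $v\mapsto e^{2\pi i v}$, which is $1$. The transversality half is routine once the finite generating family is in hand.
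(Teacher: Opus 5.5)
Your proposal is correct, but on both halves it takes a different route from the paper.

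\textbf{Existence of a zero ($M\le q$).} You pass to the Zak line bundle $\mathcal L\to\mathbb T^{2q}$ and identify $c_1(\mathcal L)=\pm[\omega]$ by Chern--Weil. You then use that $e(\mathcal L^{\oplus M})=c_1(\mathcal L)^M$ is already nonzero in real cohomology, since $\omega^M$ restricts to $M!$ times the volume form on a coordinate subtorus $\mathbb T^{2M}$. Because the Euler class is a topological obstruction, this handles continuous $s$ and all $M\le q$ at once, with no smoothing or restriction step. The price is importing the Whitney formula, $c_{\mathrm{top}}=e$, and the obstruction property of $e$.

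Appendix~\ref{app:pfaffian-common-zero} proves the same obstruction while deliberately avoiding that machinery:
\begin{itemize}
\item it smooths covariantly, reduces to $N=q$ pairs, and normalizes $U=S/\norm{S}_2$;
\item it sets $a_r=(2\pi i)^{-1}U^*\partial_rU$ and $F_{rs}=\partial_ra_s-\partial_sa_r$;
\item it shows $\Pf F\equiv0$ from the factorization $F=W^{\mathsf T}\Omega W$ through the $(2N-2)$-dimensional space $U^\perp$;
\item it shows $\int_{[0,1]^{2N}}\Pf F=1$ from the increments $a_{y_k}(x+e_j,y)=a_{y_k}(x,y)+\delta_{jk}$, via a divergence identity and induction on $N$.
\end{itemize}
In your language, those increments say that $a-\sum_jx_j\,dy_j$ is $\Z^{2N}$-periodic. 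Hence $da$ represents $\sum_jdx_j\wedge dy_j$, and $\Pf F\,dt_1\wedge\cdots\wedge dt_{2N}=(da)^{\wedge N}/N!$ integrates to $1$. The factorization $F=W^{\mathsf T}\Omega W$ says that $da$ is pulled back from $\C P^{N-1}$ by $z\mapsto[U(z)]$, so its $N$th power vanishes pointwise. Your version is shorter and conceptual; the paper's is self-contained.

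\textbf{Zero-free examples ($M>q$).} The paper writes down $s_r(u,v)=\prod_{j=1}^qh_{\beta_r}(u_j,v_j)$, $r=0,\ldots,q$, where each $h_{\beta_r}$ has the single zero $(1/2,1/2+\beta_r)$ per unit cell, and concludes by pigeonhole. Your generic-combination argument is also valid: the $C$-derivative of $s_C(z)$ is onto $\C^M$ wherever some $Zf_k(z)\ne0$, so a transverse zero set would have dimension $2q-2M<0$. However, it is existential and uses Sard-type transversality where an explicit formula suffices.

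\textbf{Two small inaccuracies.} Neither breaks the argument.
\begin{itemize}
\item For the standard Gaussian in dimension $q\ge2$ the Zak transform does not have isolated zeros. Its zero set is the union of the codimension-two subtori $\{x_j\equiv\omega_j\equiv\tfrac12\}$.
\item \eqref{eq:zak-covariance} with $n\in\Z^d$ only translates in $x$. An $\omega$-translation needs $Z(M_\xi f)(x,\omega)=e^{2\pi i\ip{\xi}{x}}Zf(x,\omega-\xi)$ with non-integer $\xi$.
\end{itemize}
Your compactness step only needs the zero set to be a proper closed subset, so it survives. In fact the $x$-translates of the product Gaussian by $\tfrac{r}{q+1}(1,\ldots,1)$, $r=0,\ldots,q$, have no common zero by exactly the paper's pigeonhole. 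So you can take $K=q+1$, pad with zero components when $M>q+1$, and drop transversality altogether.
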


Appendix~\ref{app:pfaffian-common-zero} gives an independent
first-principles proof of the implication $M\le q$.  We now use both parts of
Theorem~\ref{thm:ddlt-common-zero} to derive the rectangular rank gap needed
below.  For completeness, we first give the short explicit construction for
the second assertion.
Put
\[
 \rho(t)=\begin{cases}0,&t\le0,\\ e^{-1/t},&t>0,\end{cases}
 \qquad
 \chi(t)=\frac{\rho(t)}{\rho(t)+\rho(1-t)}.
\]
Then $\chi\in C^\infty(\R)$, $\chi=0$ on $(-\infty,0]$, $\chi=1$ on
$[1,\infty)$, every derivative of $\chi$ vanishes at $0$ and $1$, and
$\chi(1-t)=1-\chi(t)$.  On $0\le t\le1$ set
$h_*(t,v)=1-\chi(t)+\chi(t)e^{2\pi iv}$.  If $u=n+t$ with $n\in\Z$ and
$0\le t<1$, define
\[
 h(u,v)=e^{2\pi inv}\bigl(1-\chi(t)+\chi(t)e^{2\pi iv}\bigr).
\]
Flatness at the endpoints makes this quasiperiodic extension smooth, and its
only zero modulo $\Z^2$ is $(1/2,1/2)$.  For $\beta\in\R$, put
$h_\beta(u,v)=e^{2\pi i\beta u}h(u,v-\beta)$; its zero is
$(1/2,1/2+\beta)$.  Taking $\beta_r=r/(q+1)$ and
\[
 s_r(u,v)=\prod_{j=1}^q h_{\beta_r}(u_j,v_j),\qquad r=0,\ldots,q,
\]
gives $q+1$ smooth Zak-quasiperiodic components without a common zero,
since each coordinate can annihilate at most one component.  Appending zero components yields examples for every $M>q+1$.

For the corresponding continuous example one may use the identity cutoff
$\chi(t)=t$, which gives the transparent formula
$h_*(t,v)=1-t+te^{2\pi iv}$ and the same zero.  Its quasiperiodic extension
has a derivative jump at the integers, however, so the identity cannot be
used for the smooth assertion of the theorem; the flat cutoff above is used
throughout the smooth construction.

\begin{theorem}
\label{thm:rectangular-common-zero-gap}
Let
\[
 S:\R^d\times\R^d\longrightarrow M_{N,R}(\C)
\]
be a continuous Zak-quasiperiodic map.  If $S(u,v)$ has full column rank
$R$ for every $(u,v)$, then
\begin{equation}\label{eq:rectangular-gap-conclusion}
                              N\ge R+d.
\end{equation}
Conversely, whenever $N\ge R+d$, there exists a smooth
Zak-quasiperiodic map having full column rank everywhere.
Thus the bound is sharp.
\end{theorem}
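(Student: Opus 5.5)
The plan has two parts: necessity through a characteristic-class count, and sharpness through a banded matrix built from the smooth nonvanishing example constructed after Theorem~\ref{thm:ddlt-common-zero}.

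\textbf{Necessity: orthonormalize the columns.} Assume $S$ is continuous and Zak-quasiperiodic with full column rank everywhere. Apply Gram--Schmidt to the columns $S_1,\ldots,S_R$. Quasiperiodicity multiplies all columns by the same unimodular scalar $e^{2\pi i\ip{n}{v}}$. Hence the Gram matrix $S^*S$ is $\Z^{2d}$-periodic, and the Gram--Schmidt coefficients are periodic and continuous. The resulting matrix $Q(u,v)$ therefore has orthonormal columns and is again Zak-quasiperiodic. The projector $Q Q^*$ is periodic, so its range $E$ and the complement $E^\perp$ are continuous vector bundles over the torus $\mathbb T^{2d}=\R^{2d}/\Z^{2d}$. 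Moreover $E\oplus E^\perp$ is the trivial bundle $\mathbb T^{2d}\times\C^N$.

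\textbf{Necessity: the Chern class count.} Each column of $Q$ is a nowhere-vanishing section of $E\otimes L^{-1}$, where $L$ is the Zak line bundle, i.e. the bundle whose sections are scalar Zak-quasiperiodic functions. Consequently $E\cong L^{\oplus R}$. The first Chern class of $L$ is $x=\pm\sum_{j}du_j\wedge dv_j$, so $x^d=\pm d!\,\mathrm{vol}\neq0$ in $H^{2d}(\mathbb T^{2d};\Z)$. Since $E\oplus E^\perp$ is trivial, the Whitney formula gives
\[
 c(E^\perp)=(1+x)^{-R},
 \qquad
 c_d(E^\perp)=(-1)^d\binom{R+d-1}{d}x^d\neq0 .
\]
A bundle of rank $N-R$ has $c_d=0$ whenever $d>N-R$. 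Hence $N-R\ge d$, which is~\eqref{eq:rectangular-gap-conclusion}. The case $R=1$ is exactly the first half of Theorem~\ref{thm:ddlt-common-zero}, applied to $S$ with $M=N$ and $q=d$, and serves as a consistency check. The main obstacle is justifying the identification $E\cong L^{\oplus R}$ together with the Chern class of $L$ cleanly. If a more elementary route is preferred, I would first try to reduce the general $R$ to the scalar theorem by an induction that splits off one quasiperiodic column at a time. That reduction is less transparent, so the characteristic-class argument is the primary plan.

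\textbf{Sufficiency.} Let $s=(s_0,\ldots,s_d)$ be the smooth Zak-quasiperiodic map into $\C^{d+1}$ without zeros constructed after Theorem~\ref{thm:ddlt-common-zero}. Define the $(R+d)\times R$ banded matrix whose $r$-th column carries $s_0,\ldots,s_d$ in rows $r,\ldots,r+d$ and zeros elsewhere. Append $N-R-d$ zero rows to reach $N$ rows. Every entry is smooth and Zak-quasiperiodic, so the matrix is too.

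For full column rank, identify a column vector $(c_k)$ with the polynomial $\sum_k c_kz^k$. Under this identification the $r$-th column becomes $z^rp(z)$, where $p(z)=\sum_k s_k(u,v)z^k$. A vanishing combination $\sum_r\alpha_rz^rp(z)=0$ forces $\alpha=0$, because $p\neq0$ at every point, as $s$ never vanishes. Hence the matrix has full column rank $R$ everywhere, and the bound $N\ge R+d$ is sharp.
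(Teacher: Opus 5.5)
Your sufficiency half is the same as the paper's: the banded matrix $(S_0)_{\ell k}=s_{\ell-k}$ for $0\le\ell-k\le d$, padded by zero rows, with full column rank coming from the absence of zero divisors in $\C[z]$. For necessity you take a genuinely different route, and it is correct.

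\emph{The paper's route.} It stays entirely inside the common-zero theorem. For $R\ge2$ it uses the \emph{existence} half of Theorem~\ref{thm:ddlt-common-zero} to obtain smooth Zak-quasiperiodic $\rho_1,\ldots,\rho_R$ in $R-1$ auxiliary coordinate pairs with no common zero. The $N$-vector $S(u,v)\rho(x,y)$ is then Zak-quasiperiodic in $d+R-1$ coordinate pairs, and it is nowhere zero because $\ker S(u,v)=0$. The obstruction half now gives $N>d+R-1$ at once. This single amplification step is the elementary reduction you were looking for; it is not a column-by-column induction.

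\emph{Your route.} Your Whitney-sum argument works with the bundle $E$ spanned by the columns. What makes $E\otimes L^{\pm1}$ trivial is that the columns of $Q$ are pointwise linearly independent, not merely that each column is nonvanishing; this gives $E\cong(L^{\mp1})^{\oplus R}$. Since $c_1(L)^d$ is $\pm d!$ times a generator of the torsion-free group $H^{2d}(\mathbb T^{2d};\Z)$, the class $c_d(E^\perp)=(-1)^d\binom{R+d-1}{d}c_1(L^{\mp1})^d$ is nonzero. This forces $N-R\ge d$, and the sign convention for $L$ does not matter.

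\emph{What each buys.} Your route needs no input from the common-zero theorem (it reproves it at $R=1$) and exhibits the obstruction as an explicit characteristic class. The paper's route costs one auxiliary construction, but it reduces everything to the vector common-zero statement. Appendix~\ref{app:pfaffian-common-zero} proves that statement by coordinate calculus precisely so as to avoid vector bundles and Chern classes.
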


\begin{proof}
If $R=1$, the $N$ entries of the single column are continuous
Zak-quasiperiodic functions in dimension $d$ with no common zero.  The first
part of Theorem~\ref{thm:ddlt-common-zero} gives $N>d$, hence
$N\ge d+1=R+d$.

Assume $R\ge2$.  Set $q=R-1$ and apply the second part of
Theorem~\ref{thm:ddlt-common-zero} with $M=R$ in dimension $q$.  It provides
smooth Zak-quasiperiodic functions $\rho_1,\ldots,\rho_R$ with no common
zero.  Thus the vector
\begin{equation}\label{eq:rho-nowhere-zero}
 \rho(x,y):=(\rho_1(x,y),\ldots,\rho_R(x,y))^{\mathsf T}
 \ne0,
 \qquad (x,y)\in\R^q\times\R^q.
\end{equation}
For each row index $\ell=1,\ldots,N$, define a scalar function in $d+q$
Zak dimensions by
\begin{equation}\label{eq:rectangular-amplified-functions}
 F_\ell\bigl((u,x),(v,y)\bigr)
   =\sum_{k=1}^R S_{\ell k}(u,v)\rho_k(x,y).
\end{equation}
In vector form,
\begin{equation}\label{eq:rectangular-vector-form}
 \bigl(F_1,\ldots,F_N\bigr)^{\mathsf T}
       =S(u,v)\rho(x,y).
\end{equation}

Let $n,m\in\Z^d$ and $p,r\in\Z^q$.  The quasiperiodicity of $S$ and $\rho$
gives
\begin{align*}
 F_\ell\bigl((u+n,x+p),(v+m,y+r)\bigr)
  &=e^{2\pi i\ip{n}{v}}
           e^{2\pi i\ip{p}{y}}
           F_\ell\bigl((u,x),(v,y)\bigr)\\
  &=e^{2\pi i\ip{(n,p)}{(v,y)}}
           F_\ell\bigl((u,x),(v,y)\bigr).
\end{align*}
Thus $F_1,\ldots,F_N$ are continuous Zak-quasiperiodic functions in
$d+R-1$ dimensions.

They have no common zero.  Indeed, if they all vanished at some point, then
\eqref{eq:rectangular-vector-form} would give
\[
                         S(u,v)\rho(x,y)=0.
\]
Full column rank means that the kernel of $S(u,v)$ is zero, so this would
force $\rho(x,y)=0$, contrary to
\eqref{eq:rho-nowhere-zero}.

The first half of Theorem~\ref{thm:ddlt-common-zero}, applied in dimension
$q'=d+q=d+R-1$, therefore forces
\(
                            N>d+R-1.
\)
All quantities are integers, hence $N\ge R+d$, which is
\eqref{eq:rectangular-gap-conclusion}.

For the converse, take smooth Zak-quasiperiodic functions
$s_0,\ldots,s_d$ with no common zero from the explicit construction after
Theorem~\ref{thm:ddlt-common-zero}.  First suppose $N=R+d$.  Index the rows
by $0,\ldots,R+d-1$ and the columns by $0,\ldots,R-1$, and define
\[
 (S_0(u,v))_{\ell k}=
 \begin{cases}
  s_{\ell-k}(u,v),&0\le \ell-k\le d,\\
  0,&\text{otherwise}.
 \end{cases}
\]
Every entry has the required quasiperiodicity.  To check the rank, fix
$(u,v)$ and suppose $S_0(u,v)c=0$, where
$c=(c_0,\ldots,c_{R-1})^{\mathsf T}$.  Form the two polynomials
\[
 a(z)=\sum_{j=0}^d s_j(u,v)z^j,
 \qquad
 c(z)=\sum_{k=0}^{R-1}c_kz^k.
\]
The entries of $S_0(u,v)c$ are exactly the coefficients of the product
$a(z)c(z)$.  Since the $s_j$ have no common zero, $a$ is not the zero
polynomial.  The ring $\C[z]$ has no zero divisors, so $c(z)=0$ and hence
$c=0$.  Thus $S_0$ has full column rank everywhere.  If $N>R+d$, append
$N-R-d$ zero rows.  This completes the sharp converse.  Notice that when
\eqref{eq:rectangular-gap-conclusion} fails, the first part of the theorem
rules out an everywhere full-rank map; the existence assertion is precisely
for the range in which the inequality holds.
\end{proof}

\section{The continuous and smooth diagonal rational regime}
\label{sec:diagonal-model}

With the general Gabor and Zak-transform tools in place, we now prove the
one-window result for the diagonal lattice
\begin{equation}\label{eq:diagonal-data}
 \Lambda_D=D\Z^d\times\Z^d,
 \qquad
 D=\operatorname{diag}\left(\frac{b_1}{a_1},\ldots,
                             \frac{b_d}{a_d}\right),
 \qquad (a_i,b_i)=1.
\end{equation}
All $a_i,b_i$ are positive integers.  Put
\(
 N=\prod_{i=1}^d a_i,
 \qquad
 R=\prod_{i=1}^d b_i.
\)
For the sufficiency direction we work in the nontrivial density range
\begin{equation}\label{eq:diagonal-density-range}
 \covol(\Lambda_D)=\det D=\frac{R}{N}<1,
 \qquad
 \operatorname{density}(\Lambda_D)=\frac{N}{R}>1.
\end{equation}

\begin{lemma}
\label{lem:diagonal-arithmetic}
For the rational lattice  $\Lambda_D$ in \eqref{eq:diagonal-data},
\begin{equation}\label{eq:diagonal-invariants}
 \nu(\Lambda_D)=N,
 \qquad
 \nu(\Lambda_D^\circ)=R,
 \qquad
 \covol(\Lambda_D)=\frac{R}{N}.
\end{equation}
Consequently, the condition~\eqref{eq:SG} for $\Lambda_D$ is equivalent to
\begin{equation}\label{eq:diagonal-SG-gap}
                              N\ge R+d.
\end{equation}
\end{lemma}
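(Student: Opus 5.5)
The plan is a direct computation from the definitions, working separately in each coordinate pair and then multiplying. First, the covolume is immediate: $\Lambda_D$ has basis matrix $\operatorname{diag}(D,I_d)$, so $\covol(\Lambda_D)=\det D=R/N$. Next we identify the adjoint lattice. For the separable lattice $\Gamma\times\Phi$ the paper records $\Lambda^\circ=\Phi^*\times\Gamma^*$. With $\Gamma=D\Z^d$ and $\Phi=\Z^d$ this gives
\[
 \Lambda_D^\circ=\Z^d\times D^{-1}\Z^d
 =\prod_{i=1}^d\Bigl(\Z\times\frac{a_i}{b_i}\Z\Bigr),
\]
after the coordinates are grouped as $(x_i,\omega_i)$. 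The symplectic form is a sum of the planar forms $\sigma_i$. The lattices $\Lambda_D$ and $\Lambda_D^\circ$ both split as products over $i$ of planar lattices $\Lambda_i=\frac{b_i}{a_i}\Z\times\Z$ and $\Lambda_i^\circ=\Z\times\frac{a_i}{b_i}\Z$. Hence $\Lambda_D\cap\Lambda_D^\circ=\prod_i(\Lambda_i\cap\Lambda_i^\circ)$, and the index $[\Lambda_D:(\Lambda_D)_{\rm int}]$ is the product of the planar indices.

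In one planar factor we compute directly. An element $x\in\frac{b}{a}\Z$ with $\gcd(a,b)=1$ lies in $\Z$ exactly when it lies in $b\Z$. Likewise $\Z\cap\frac{a}{b}\Z=a\Z$. So
\[
 \Lambda_i\cap\Lambda_i^\circ=b_i\Z\times a_i\Z,
 \qquad
 [\Lambda_i:\Lambda_i\cap\Lambda_i^\circ]=\frac{b_i\cdot a_i}{b_i/a_i}=a_i^2 .
\]
Taking the product gives $[\Lambda_D:(\Lambda_D)_{\rm int}]=N^2$, so $\Lambda_D$ is symplectically rational and $\nu(\Lambda_D)=N$. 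The same computation applies to $\Lambda_D^\circ$. Since $(\Lambda^\circ)^\circ=\Lambda$, its integral subgroup is again $\Lambda_D\cap\Lambda_D^\circ$, and the planar index is $\frac{a_ib_i}{a_i/b_i}=b_i^2$. Hence $\nu(\Lambda_D^\circ)=R$. This is consistent with~\eqref{eq:covol-nu-ratio-intro}.

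For the consequence, $\Lambda_D$ is symplectically rational, so \eqref{eq:SG} reads $R/N<1$ together with $N\ge R+d$. The second inequality implies $N>R$, so \eqref{eq:SG} is equivalent to $N\ge R+d$. There is no real obstacle here. The only point needing care is justifying that the adjoint and the intersection factor coordinatewise; this holds because $\sigma$ is an orthogonal sum of the planar forms and both lattices are products over $i$.
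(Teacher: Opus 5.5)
Your proposal is correct and follows essentially the same route as the paper. Like the paper, you identify $\Lambda_D^\circ=\Z^d\times D^{-1}\Z^d$ and use coprimality to get $\Lambda_D\cap\Lambda_D^\circ=\mathsf B\Z^d\times\mathsf A\Z^d$. You then read off the indices $N^2$ and $R^2$ and substitute into \eqref{eq:SG}. The only difference is cosmetic: you regroup into planar factors and compute indices as covolume ratios, whereas the paper factors the index over the two $\R^d$ components directly.
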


\begin{proof}
Write a point of $\R^{2d}$ as $(x,\xi)$.  By the definition of the adjoint
lattice, $(x,\xi)\in\Lambda_D^\circ$ if and only if
\[
 \ip{n}{x}\in\Z\quad\text{for every }n\in\Z^d,
 \qquad
 \ip{\xi}{Dk}\in\Z\quad\text{for every }k\in\Z^d.
\]
Hence
\begin{equation}\label{eq:diagonal-adjoint-explicit}
 \Lambda_D^\circ=\Z^d\times D^{-1}\Z^d.
\end{equation}

Let $\mathsf A=\operatorname{diag}(a_1,\ldots,a_d)$ and
$\mathsf B=\operatorname{diag}(b_1,\ldots,b_d)$.  Coprimality gives, in
each coordinate,
\[
 \frac{b_i}{a_i}\Z\cap\Z=b_i\Z,
 \qquad
 \Z\cap\frac{a_i}{b_i}\Z=a_i\Z.
\]
Therefore
\begin{equation}\label{eq:diagonal-integral-subgroup-explicit}
 \Lambda_{D,\mathrm{int}}
 =\Lambda_D\cap\Lambda_D^\circ
 =\mathsf B\Z^d\times\mathsf A\Z^d.
\end{equation}
The index factors over the two coordinates:
\[
 [\Lambda_D:\Lambda_{D,\mathrm{int}}]
 =[D\Z^d:\mathsf B\Z^d]
  [\Z^d:\mathsf A\Z^d]
 =N^2,
\]
so $\nu(\Lambda_D)=N$.  The same intersection is the integral subgroup of
$\Lambda_D^\circ$, and
\[
 [\Lambda_D^\circ:\Lambda_{D,\mathrm{int}}]
 =[\Z^d:\mathsf B\Z^d]
  [D^{-1}\Z^d:\mathsf A\Z^d]
 =R^2.
\]
Thus $\nu(\Lambda_D^\circ)=R$.  Finally,
$\covol(\Lambda_D)=\det D=R/N$, and substituting these identities into
\eqref{eq:SG} yields $N\ge R+d$.
\end{proof}

\subsection{Compactly supported Parseval windows in the diagonal model (proof of Theorem~\ref{thm:strengthened-one-window}\textup{(a)} in diagonal form)}
\label{sec:gabor-sufficiency}

Assume that the condition~\eqref{eq:SG} holds for $\Lambda_D$.  By
Lemma~\ref{lem:diagonal-arithmetic}, the condition~\eqref{eq:TeP} holds
for the pair $(D\Z^d,\Z^d)$.  The following theorem is the diagonal existence result.

\begin{theorem}
\label{thm:diagonal-positive}
Let $D$ satisfy \eqref{eq:diagonal-data}.  If $N\ge R+d$, then there exists
a nonnegative $g\in C_c^\infty(\R^d)$ such that
$\G(g,D\Z^d\times\Z^d)$ is a Parseval frame.  With
$\Gamma=D\Z^d$ and $\Phi=\Z^d$, the window can be chosen to satisfy
\begin{align}
 \supp g\cap(\supp g+\xi^*)&=\varnothing,
 &&\xi^*\in\Phi^*\setminus\{0\},
 \label{eq:g-support-separation}\\
 \sum_{\gamma\in\Gamma}|g(x+\gamma)|^2&=\covol(\Phi),
 &&x\in\R^d.
 \label{eq:g-periodization}
\end{align}
Equivalently, $\covol(\Phi)^{-1}|g|^2$ is a smooth
$\Gamma$-partition of unity.
Alternatively, there is a nonnegative $g_{\rm pa}\in C_c(\R^d)$ for which
$\covol(\Phi)^{-1}g_{\rm pa}^2$ is a continuous piecewise-affine
$\Gamma$-partition of unity and
\eqref{eq:g-support-separation}--\eqref{eq:g-periodization} hold with
$g_{\rm pa}$ in place of $g$.
\end{theorem}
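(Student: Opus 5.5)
The plan is to combine the geometric partition of unity from Part~A with the classical painless-nonorthogonal (Daubechies--Grossmann--Meyer) criterion. Put $\Gamma=D\Z^d$ and $\Phi=\Z^d$, so that $\Phi^*=\Z^d$. Lemma~\ref{lem:diagonal-arithmetic} and Proposition~\ref{prop:TeP-SG-relation} show that $N\ge R+d$ is exactly \eqref{eq:TeP} for the pair $(T,P)=(\Gamma,\Phi^*)$. Indeed, $\Gamma+\Z^d=\mathsf A^{-1}\Z^d$, which gives $[\Gamma+\Z^d:\Z^d]=N$ and $[\Gamma+\Z^d:\Gamma]=R$. Corollary~\ref{cor:global-partitions} then provides nonnegative $T$-partitions of unity $\varphi_\infty\in C_c^\infty(\R^d)$ and $\varphi_{\rm pa}$ (continuous, piecewise affine), each with a positive buffer $\eta$ such that $(\supp\varphi_\star+B_\eta)+\Z^d$ is a packing. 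In particular, $\supp\varphi_\star\cap(\supp\varphi_\star+\xi^*)=\varnothing$ for every $\xi^*\in\Z^d\setminus\{0\}$.

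Next I set $g_{\rm pa}=\sqrt{\varphi_{\rm pa}}$, using $\covol(\Phi)=1$. Then \eqref{eq:g-support-separation} and \eqref{eq:g-periodization} hold immediately for $g_{\rm pa}$. To check that $\G(g,\Gamma\times\Z^d)$ is Parseval, I would use the standard computation. For fixed $\gamma$, the function $f\,\overline{T_\gamma g}$ is supported in a translate of $\supp g$, and that translate meets none of its nontrivial $\Z^d$-translates. Hence it is supported inside a single fundamental domain of $\Z^d$. Parseval's identity for Fourier series on that set then gives
\[
 \sum_{n\in\Z^d}|\ip{f}{M_nT_\gamma g}|^2=\int|f|^2|T_\gamma g|^2 .
\]
Summing over $\gamma$ and using \eqref{eq:g-periodization} yields $\|f\|_2^2$. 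Equivalently, one can verify the identity $\mathcal Z_g\mathcal Z_g^*=RI_R$ of Proposition~\ref{prop:zz-matrix-criterion}, but the direct computation is cleaner.

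The smooth case is the main obstacle. Taking $\sqrt{\varphi_\infty}$ directly fails, since the square root of a smooth nonnegative function need not be smooth near its zero set. I would follow the recipe stated with the figure: truncate, mollify, and normalize. Choose $0<\tau<1/D_\varphi$, where $D_\varphi$ bounds the number of overlapping $T$-translates of $\varphi_{\rm pa}$. Let $\theta\in C^\infty(\R)$ be nondecreasing with $\theta=0$ on $(-\infty,\tau/2]$ and $\theta>0$ on $(\tau/2,\infty)$ (for instance $\theta(s)=\rho(s-\tau/2)$), and set $u=\theta\circ\varphi_\infty$. For this we need $\varphi_\infty\ge\tau$ to cover $\R^d$ modulo $T$. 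That holds if $\varphi_\infty$ is obtained from $\varphi_{\rm pa}$ by a sufficiently fine mollification, so that the sup-norm error is below $1/D_\varphi-\tau$, and this is compatible with Lemma~\ref{lem:pou-replacement}.

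Then $u\in C_c^\infty$, $u\ge0$, and $\supp u\subset\supp\varphi_\infty$, so the buffer is preserved. Moreover, the periodization $\Sigma(x)=\sum_{t\in T}u(x+t)$ is smooth, $T$-periodic, and bounded below by $\theta(\tau)>0$. The normalized function $\psi=u/\Sigma$ is therefore a smooth $T$-partition of unity supported in $\supp\varphi_\infty$. It remains to take the square root. I would pick $\theta$ of the form $\theta=\vartheta^2$ with $\vartheta$ smooth, for example $\vartheta(s)=\rho((s-\tau/2)/2)$, and define
\[
 g=\frac{\vartheta\circ\varphi_\infty}{\sqrt\Sigma}.
\]
This $g$ is nonnegative, lies in $C_c^\infty(\R^d)$, and satisfies $g^2=\psi$. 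The Parseval argument above then applies verbatim to give \eqref{eq:g-support-separation}, \eqref{eq:g-periodization}, and the Parseval property.
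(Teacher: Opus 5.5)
Your proposal is correct and follows the paper's proof: the same Part~A partition of unity with a $\Z^d$-buffer, $g_{\rm pa}=\sqrt{\varphi_{\rm pa}}$, and a smooth window of the form $w/\bigl(\sum_{\gamma\in\Gamma}w(\cdot+\gamma)^2\bigr)^{1/2}$, where $w\ge0$ is smooth and buffered and $\{w>0\}+\Gamma=\R^d$ (your $w=\vartheta\circ\varphi_\infty$ gives exactly this form). The differences are minor: the paper builds $w$ by mollifying the truncation $(\varphi-b)_+$ rather than composing a mollified $\varphi$ with a flat cutoff, and it verifies the Parseval property through the duality principle (orthonormality of $\G(h_0,\Phi^*\times\Gamma^*)$) rather than by your direct Fourier-series computation on a $\Z^d$-fundamental domain, which is equally valid and avoids citing duality.
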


\begin{proof}
Corollary~\ref{cor:global-partitions} supplies a nonnegative compactly
supported piecewise-affine $\Gamma$-partition of unity $\varphi$ satisfying
\begin{align}
 \sum_{\gamma\in\Gamma}\varphi(x+\gamma)&=1,
 \label{eq:phi-gabor-partition}\\
 (\supp\varphi+B_\eta)+\Phi^*&\text{ is a packing}
 \label{eq:phi-gabor-separation}
\end{align}
for some $\eta>0$.  The unsmoothed alternative is immediate: put
\begin{equation}\label{eq:g-pa-from-phi}
 g_{\rm pa}(x)=\bigl(\covol(\Phi)\,\varphi(x)\bigr)^{1/2}.
\end{equation}
Then $\covol(\Phi)^{-1}g_{\rm pa}^2=\varphi$ is the
piecewise-affine $\Gamma$-partition of unity,
$\supp g_{\rm pa}=\supp\varphi$, and
\eqref{eq:g-support-separation}--\eqref{eq:g-periodization} hold.  The
standard adjoint-lattice argument below therefore shows that
$\G(g_{\rm pa},\Gamma\times\Phi)$ is Parseval.

To obtain a smooth window, we mollify a positive part of the same
piecewise-affine partition of unity while
preserving its positive support separation, and then normalize its energy.

Because $\varphi$ has compact support, there is a finite integer $D_0$ such
that at most $D_0$ terms in \eqref{eq:phi-gabor-partition} are nonzero at
any point.  Consequently, a single translate of $\varphi$ must contribute at least $1/D_0$ to the constant sum. Choose
\[
                         0<a<b<c<\frac1{D_0}
\]
and set
\[
 K_b=\{\varphi\ge b\},
 \qquad
 V_a=\{\varphi>a\}.
\]
Then $K_b$ is compact and $K_b\Subset V_a$, where $\Subset$ denotes compact
containment, and $\{\varphi\ge c\}+\Gamma=\R^d$.  For a real number $r$,
write $r_+=\max\{r,0\}$, and set
\[
                         h=(\varphi-b)_+.
\]
Choose a standard nonnegative mollifier $\rho\in C_c^\infty(B_1)$ with
$\int\rho=1$, put $\rho_\delta(x)=\delta^{-d}\rho(x/\delta)$, and define
\[
                         \psi=h*\rho_\delta.
\]
For sufficiently small $\delta>0$,
\[
 \psi\in C_c^\infty(\R^d),
 \qquad
 \supp\psi\subset V_a,
\]
and $\psi>0$ on $\{\varphi\ge c\}$.  Hence
$\{\psi>0\}+\Gamma=\R^d$.  The support remains $\Phi^*$-separated, because
$\supp\psi\subset V_a$ and \eqref{eq:phi-gabor-separation} excludes positive
values at points differing by a nonzero shift of $\Phi^*$.

Define
\[
 D_\psi(x)=\sum_{\gamma\in\Gamma}\psi(x+\gamma)^2.
\]
This function is smooth, strictly positive, and $\Gamma$-periodic.  Put
\[
                         g(x)=\sqrt{\covol(\Phi)}\,
                         \frac{\psi(x)}{\sqrt{D_\psi(x)}}.
\]
Then $g\in C_c^\infty(\R^d)$, $g\ge0$, and
\eqref{eq:g-support-separation}--\eqref{eq:g-periodization} hold.

It remains to verify the Parseval property.  Let
$
                         h_0=\covol(\Gamma\times\Phi)^{-1/2}g.
$
Then
\[
 \sum_{\gamma\in\Gamma}|h_0(x+\gamma)|^2
 =\frac1{\covol(\Gamma)},
\]
and the $\Phi^*$-support separation is unchanged.  We show that
$\G(h_0,\Phi^*\times\Gamma^*)$ is orthonormal.  Distinct translations by
$\Phi^*$ have disjoint supports.  For a fixed translation and
$\gamma^*,\gamma^{*\prime}\in\Gamma^*$, periodization over a fundamental
domain $F_\Gamma$ gives
\[
 \begin{aligned}
 \ip{h_0}{M_{\gamma^{*\prime}-\gamma^*}h_0}
 &=\int_{F_\Gamma}\sum_{\gamma\in\Gamma}|h_0(x+\gamma)|^2
   e^{2\pi i\ip{\gamma^{*\prime}-\gamma^*}{x+\gamma}}\,dx\\
 &=\frac1{\covol(\Gamma)}
   \int_{F_\Gamma}
   e^{2\pi i\ip{\gamma^{*\prime}-\gamma^*}{x}}\,dx
 =\delta_{\gamma^*,\gamma^{*\prime}}.
 \end{aligned}
\]
Theorem~\ref{thm:gabor-duality} now gives that
$\G(g,\Gamma\times\Phi)=\G(g,D\Z^d\times\Z^d)$ is Parseval.
\end{proof}

\subsection{Necessity of the $\SG$ gap in the diagonal model (proof of Theorem~\ref{thm:intro-diagonal-zak-obstruction})}
\label{sec:rational-zak}

We now assume only that the Zak transform of the window is continuous.  The
next theorem proves directly that a one-window frame on the diagonal lattice
must satisfy the algebraic gap.

\begin{theorem}
\label{thm:diagonal-one-window}
Let $D$ be as in \eqref{eq:diagonal-data}, and let $g\in\CZ(\R^d)$.  If
\(
                         \G(g,D\Z^d\times\Z^d)
\)
is a frame, then
\(
                              N\ge R+d,
\)
that is, the condition~\eqref{eq:SG} is satisfied.
\end{theorem}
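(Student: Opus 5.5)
The plan is to convert the frame property into a pointwise full-rank statement for a continuous quasiperiodic matrix, and then invoke Theorem~\ref{thm:rectangular-common-zero-gap}.

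Since $g\in\CZ(\R^d)$, fix the continuous representative of $Zg$. It obeys \eqref{eq:zak-x-quasi}--\eqref{eq:zak-w-periodic} everywhere, by Proposition~\ref{prop:zak-standard}(vi). Each entry $Zg(x-Dt,\eta+\mathsf B^{-1}s)$ of $\mathcal Z_g$ in \eqref{eq:zak-H-definition} is then continuous on $\R^{2d}$. By quasiperiodicity it is bounded, since $|Zg|$ is $\Z^{2d}$-periodic and continuous. So $\mathcal Z_g$ is essentially bounded, and Proposition~\ref{prop:zz-matrix-criterion} applies. It gives $\mathcal Z_g\mathcal Z_g^*\ge RA_0I_R$ almost everywhere. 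Both sides are continuous, so this inequality holds everywhere. Hence the $R\times N$ matrix $\mathcal Z_g(x,\eta)$ has full row rank $R$ at every point of $\R^{2d}$, and its transpose $S:=\mathcal Z_g^{\mathsf T}$ ($N\times R$) has full column rank everywhere.

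The remaining step, and the main obstacle, is that Theorem~\ref{thm:rectangular-common-zero-gap} needs a Zak-quasiperiodic map with the standard multiplier $e^{2\pi i\ip{n}{v}}$. The face relations \eqref{eq:zak-fine-x}--\eqref{eq:zak-coarse-eta} show instead that $\mathcal Z_g$ has period lattice $\mathsf B\Z^d$ in $x$ with multiplier $e^{2\pi i\ip{\mathsf B n}{\eta}}$, and period $\Z^d$ in $\eta$. I would rescale by putting
\[
 \widetilde S(u,v)=\mathcal Z_g(\mathsf Bu,v)^{\mathsf T},\qquad u,v\in\R^d .
\]
Then \eqref{eq:zak-coarse-x} gives $\widetilde S(u+n,v)=e^{2\pi i\ip{\mathsf Bn}{v}}\widetilde S(u,v)$, and \eqref{eq:zak-coarse-eta} gives periodicity in $v$. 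This multiplier is $e^{2\pi i\ip{n}{\mathsf Bv}}$ rather than $e^{2\pi i\ip{n}{v}}$.

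Rescaling $v$ as well cannot repair this, because that would break $\Z^d$-periodicity in $v$. Instead I would use the finer relation \eqref{eq:zak-fine-x} together with a diagonal unitary conjugation. For a row $s$ and integer $n$, the entry satisfies $Zg(x+n-Dt,\eta+\mathsf B^{-1}s)=e^{2\pi i\ip{n}{\eta+\mathsf B^{-1}s}}Zg(x-Dt,\eta+\mathsf B^{-1}s)$. I would multiply row $s$ by the unimodular continuous factor $e^{-2\pi i\ip{x}{\mathsf B^{-1}s}}$. This preserves rank, and the row now transforms under $x\mapsto x+n$ by $e^{2\pi i\ip{n}{\eta}}$ exactly.

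Under $\eta\mapsto\eta+m$ the modified rows stay periodic, because $Zg$ is periodic in its second variable and the new factor does not involve $\eta$. So the modified matrix $\widehat{\mathcal Z}(x,\eta)$ is continuous and genuinely Zak-quasiperiodic on $\R^d\times\R^d$ with the integer lattice. It still has full row rank $R$ everywhere. Transposing gives an $N\times R$ map with full column rank everywhere. Theorem~\ref{thm:rectangular-common-zero-gap} then yields $N\ge R+d$, which is \eqref{eq:SG} by Lemma~\ref{lem:diagonal-arithmetic}.

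The points to double-check are two. First, the row multiplier must commute correctly with the quasiperiodicity of every entry in that row, across all columns $t$; it does, since the phase depends only on $n$, $\eta$ and $s$. Second, the upgrade from almost everywhere to everywhere must use continuity of the Gram matrix.
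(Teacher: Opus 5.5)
Your proposal is correct and matches the paper's proof: continuity upgrades the a.e.\ Zibulski--Zeevi lower bound to full row rank everywhere, and your row multiplier $e^{-2\pi i\langle x,\mathsf B^{-1}s\rangle}$ is exactly the paper's diagonal gauge $\Gamma(x)$. Your transposed matrix is therefore the paper's $S_g=\mathcal Z_g^{\mathsf T}\Gamma(x)$, to which Theorem~\ref{thm:rectangular-common-zero-gap} is applied. One detail to state explicitly: Proposition~\ref{prop:zz-matrix-criterion} gives the bound only on the base rectangle $\mathcal Q_{\mathsf B}$, and it is the unitary face relations \eqref{eq:zak-fine-x}--\eqref{eq:zak-fine-eta} that carry it to all of $\R^{2d}$, as the paper notes.
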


\begin{proof}
Use the notation and matrix model of
Subsection~\ref{sec:rational-zak-matrix-model}.  By
Proposition~\ref{prop:zz-matrix-criterion}, the Zibulski--Zeevi matrix
$\mathcal Z_g$ satisfies the frame bounds
\eqref{eq:zak-matrix-frame-bounds} almost everywhere.  Since $Zg$ is
continuous, $\mathcal Z_g$ is continuous on the compact closure of the base
rectangle.  Its boundary behavior is given by
\eqref{eq:zak-fine-x}--\eqref{eq:zak-coarse-eta}.

The smallest eigenvalue of $\mathcal Z_g\mathcal Z_g^*$ is continuous.  By
\eqref{eq:zak-matrix-frame-bounds}, it is bounded below by $RA_0$ almost
everywhere on the base rectangle.  If it were smaller at one point,
continuity would give an open set of positive measure on which the same
strict inequality held, a contradiction.  The boundary relations propagate
the conclusion throughout $\R^{2d}$.  Hence
\begin{equation}\label{eq:zak-everywhere-rank}
                      \rank \mathcal Z_g(x,\eta)=R,
 \qquad (x,\eta)\in\R^{2d}.
\end{equation}

We now introduce a diagonal gauge and derive the common-zero contradiction.
For $s\in\mathcal I_{\mathsf B}$ define the diagonal unitary matrix
\begin{equation}\label{eq:zak-Gamma-definition}
 \Gamma(x)_{s,s}
  =\exp\left(-2\pi i\sum_{j=1}^d\frac{s_j}{b_j}x_j\right).
\end{equation}
Then $\Gamma(x+e_i)=\Delta_i^{-1}\Gamma(x)$.  Define the
$N\times R$ matrix
\begin{equation}\label{eq:zak-S-definition}
                    S_g(x,\eta)=\mathcal Z_g(x,\eta)^{\mathsf T}\Gamma(x).
\end{equation}
It is continuous and has full column rank $R$ everywhere.  Using
\eqref{eq:zak-fine-x}, the identity $\Delta_i^{\mathsf T}=\Delta_i$, and
the boundary rule for $\Gamma$, we obtain
\begin{align}
 S_g(x+e_i,\eta)
  &=e^{2\pi i\eta_i}S_g(x,\eta),
                                                        \label{eq:zak-S-u}\\
 S_g(x,\eta+e_i)
  &=S_g(x,\eta).                         \label{eq:zak-S-v}
\end{align}
Equivalently,
\begin{equation}\label{eq:zak-S-full-quasi}
 S_g(x+n,\eta+m)=e^{2\pi i\ip{n}{\eta}}S_g(x,\eta),
 \qquad n,m\in\Z^d.
\end{equation}
Thus $S_g$ satisfies the hypotheses of
Theorem~\ref{thm:rectangular-common-zero-gap}, which gives $N\ge R+d$.
\end{proof}

This proves the diagonal continuous-Zak obstruction stated in
Theorem~\ref{thm:intro-diagonal-zak-obstruction}.

\section{Symplectic and metaplectic extension machinery}
\label{sec:lattice-theory}

We now prepare the passage from the diagonal model to arbitrary phase-space
lattices, using the symplectic form, adjoint lattice, symplectic rationality,
and the invariant $\nu$ introduced in Subsection~\ref{subsec:intro-gabor}.  Let
\[
 J=\begin{pmatrix}0&I_d\\-I_d&0\end{pmatrix},
\]
so that $\sigma(z,w)=z^TJw$.  The symplectic group is
\[
 \Sp(2d,\R)
 =\{S\in GL_{2d}(\R):S^TJS=J\}.
\]
Thus $S$ is symplectic exactly when
$\sigma(Sz,Sw)=\sigma(z,w)$ for all $z,w\in\R^{2d}$.  Two phase-space
lattices $\Lambda$ and $\Lambda'$ are \emph{symplectically equivalent} if
$\Lambda'=S\Lambda$ for some $S\in\Sp(2d,\R)$.  In concrete terms, the
phase-space variable $z=(x,\omega)$ is replaced by
$Sz=(x',\omega')$, and every lattice point $\lambda$ is replaced by
$S\lambda$.  Symplectic equivalence preserves covolume and satisfies
\[
                         (S\Lambda)^\circ=S\Lambda^\circ.
\]

Write a real $2d\times2d$ matrix in blocks as
\[
 S=\begin{pmatrix}A&B\\ C&D\end{pmatrix},
 \qquad A,B,C,D\in M_d(\R).
\]
The block criterion for symplectic matrices
\cite[Lemma~9.4.1(c)]{Grochenig2001} states that $S\in\Sp(2d,\R)$ if and
only if
\begin{equation}\label{eq:symplectic-block-criterion}
 A^TC=C^TA,
 \qquad B^TD=D^TB,
 \qquad A^TD-C^TB=I_d.
\end{equation}
Equivalently,
\begin{equation}\label{eq:symplectic-block-criterion-dual}
 AB^T=BA^T,
 \qquad CD^T=DC^T,
 \qquad AD^T-BC^T=I_d.
\end{equation}
For example, if $A$ is invertible, these identities imply that
$CA^{-1}$ and $A^{-1}B$ are symmetric.  These elementary block relations
will be used below to read support properties directly from a symplectic
matrix.

The group $\Sp(2d,\R)$ is generated by matrices of the following three
forms:
\begin{align*}
 S_A&=\begin{pmatrix}A&0\\0&A^{-T}\end{pmatrix},
       &&A\in GL_d(\R),\\
 V_B&=\begin{pmatrix}I_d&0\\B&I_d\end{pmatrix},
       &&B\in M_d(\R),\quad B=B^T,\\
 J&=\begin{pmatrix}0&I_d\\-I_d&0\end{pmatrix}.
\end{align*}
The matrix $S_A$ is called the \emph{symplectic dilation associated with
$A$}; it acts by
\[
                         (x,\omega)\longmapsto(Ax,A^{-T}\omega).
\]

The metaplectic group is a double cover of $\Sp(2d,\R)$ by unitary
operators on $L^2(\R^d)$.  In fact, for each $S\in\Sp(2d,\R)$ there exists a unitary  \emph{metaplectic lift} $\mu(S)$.  Up to the sign ambiguity arising from the double cover, we have $\mu(S_1S_2)=\mu(S_1)\mu(S_2)$. With the time--frequency convention used here, a
metaplectic lift satisfies
\[
 \mu(S)\pi(z)\mu(S)^{-1}=c(S,z)\pi(Sz),
 \qquad |c(S,z)|=1.
\]
Up to constants of modulus one, convenient lifts of the three generators
are
\[
 (\mu(S_A)f)(t)=|\det A|^{-1/2}f(A^{-1}t),
 \qquad
 (\mu(V_B)f)(t)=e^{\pi i t^TBt}f(t),
\]
and, with the Fourier convention used in this paper,
\[
 (\mu(J)f)(\omega)
 =\int_{\R^d}f(t)e^{-2\pi i\ip{t}{\omega}}\,dt=\widehat f(\omega).
\]
Consequently, $\mu(S)$ is unitary on $L^2(\R^d)$, maps
$\Sclass(\R^d)$ and $\Szero(\R^d)$ onto themselves, and preserves frame
bounds when the lattice and all windows are transformed together.  In
particular,
\[
 \G(g,\Lambda)\text{ is Parseval}
 \quad\Longleftrightarrow\quad
 \G(\mu(S)g,S\Lambda)\text{ is Parseval}.
\]
These standard facts may be found in \cite{Folland1989},
\cite[Section~9.4]{Grochenig2001}, and \cite{GjertsenLuef2024}.

We next isolate the arithmetic notion relevant to the normal form.  For a
full-rank phase-space lattice $\Lambda$, recall
\[
 \Lambda_{\rm int}=\Lambda\cap\Lambda^\circ.
\]
The lattice is \emph{symplectically rational} if
$[\Lambda:\Lambda_{\rm int}]<\infty$.  A real matrix is called
\emph{entrywise rational} if all of its entries lie in $\Q$.

\begin{unnumberedlemma}
Let $\Lambda=M\Z^{2d}$ and put
$\Theta_\Lambda=M^TJM$.  Then $\Lambda$ is symplectically rational if and
only if $\Theta_\Lambda$ is entrywise rational.  In particular, let
\[
 \Lambda=\Gamma\times\Phi,
 \qquad
 \Gamma=A_0\Z^d,
 \qquad
 \Phi=B_0\Z^d,
\]
with $A_0,B_0\in GL_d(\R)$.  The following conditions are equivalent:
\begin{enumerate}[label=\textup{(\roman*)}]
\item $\Gamma\times\Phi$ is symplectically rational;
\item the pairing matrix $A_0^TB_0$ is entrywise rational;
\item the Euclidean lattices $\Gamma$ and $\Phi^*$ are commensurable.
\end{enumerate}
\end{unnumberedlemma}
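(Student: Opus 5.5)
The plan is to compute $\Lambda^\circ$ and $\Lambda_{\rm int}$ explicitly in terms of $M$ and $\Theta_\Lambda$. Since $\sigma(z,w)=z^TJw$, a point $z=My$ with $y\in\R^{2d}$ lies in $\Lambda^\circ$ if and only if $y^TM^TJMx\in\Z$ for all $x\in\Z^{2d}$, i.e.\ $\Theta_\Lambda^Ty\in\Z^{2d}$. Since $\Theta_\Lambda$ is invertible and antisymmetric, this gives
\[
 \Lambda^\circ=M\Theta_\Lambda^{-1}\Z^{2d},
 \qquad
 \Lambda_{\rm int}=M\bigl(\Z^{2d}\cap\Theta_\Lambda^{-1}\Z^{2d}\bigr)
 =M\{y\in\Z^{2d}:\Theta_\Lambda y\in\Z^{2d}\}.
\]
Thus $[\Lambda:\Lambda_{\rm int}]=[\Z^{2d}:L_\Theta]$ with $L_\Theta=\{y\in\Z^{2d}:\Theta_\Lambda y\in\Z^{2d}\}$, and the main equivalence reduces to the claim that $L_\Theta$ has finite index in $\Z^{2d}$ if and only if $\Theta_\Lambda$ is rational.

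If $\Theta_\Lambda$ has rational entries with common denominator $q$, then $q\Z^{2d}\subset L_\Theta$, so the index is at most $q^{2d}$. Conversely, if the index $k$ is finite, then $k\Z^{2d}\subset L_\Theta$, so $k\Theta_\Lambda e_j\in\Z^{2d}$ for every $j$ and the entries lie in $k^{-1}\Z$. This is the only substantive step, and it is elementary. I also want to check that $\Theta_\Lambda$ does not depend on the choice of basis: changing $M$ to $MU$ with $U\in GL_{2d}(\Z)$ replaces $\Theta_\Lambda$ by $U^T\Theta_\Lambda U$, which preserves rationality.

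For the separable case I take $M=\operatorname{diag}(A_0,B_0)$. A block computation gives
\[
 \Theta_\Lambda=\begin{pmatrix}0&A_0^TB_0\\-B_0^TA_0&0\end{pmatrix},
\]
so (i)$\Leftrightarrow$(ii) follows immediately from the first part.

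For (ii)$\Leftrightarrow$(iii), recall that $\Phi^*=B_0^{-T}\Z^d$, so $\Gamma$ and $\Phi^*$ are commensurable if and only if $\Gamma\cap\Phi^*$ has finite index in both. Equivalently, the change-of-basis matrix $(B_0^{-T})^{-1}A_0=B_0^TA_0$ is rational. This uses the standard fact that two lattices $A\Z^d$ and $B\Z^d$ are commensurable if and only if $B^{-1}A\in M_d(\Q)$: rationality gives $qA\Z^d\subset B\Z^d$, hence finite index, and the converse is proved by the same denominator argument as above. Finally, $B_0^TA_0=(A_0^TB_0)^T$, so this matrix is rational exactly when $A_0^TB_0$ is.

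I expect no real obstacle. The only care needed is getting the transposes and signs right in the description of $\Lambda^\circ$, and these do not affect rationality.
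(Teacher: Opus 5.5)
Your proposal is correct and follows essentially the same route as the paper. The paper likewise proves the first equivalence with a denominator/exponent argument ($q\Lambda\subset\Lambda_{\rm int}$, checked on basis vectors), gets (i)$\Leftrightarrow$(ii) from the block form of the symplectic Gram matrix of $\operatorname{diag}(A_0,B_0)$, and gets (iii) from the criterion that $C\Z^d$ is commensurable with $\Z^d$ exactly when $C$ is rational. Your explicit description $\Lambda^\circ=M\Theta_\Lambda^{-1}\Z^{2d}$ and the basis-independence check are harmless extras.
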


\begin{proof}
Suppose first that $\Theta_\Lambda$ is entrywise rational.  Choose
$q\in\N$ such that $q\Theta_\Lambda$ is integral.  If
$\lambda=Mk$ and $\mu=M\ell$, with $k,\ell\in\Z^{2d}$, then
\[
 \sigma(q\lambda,\mu)=qk^T\Theta_\Lambda\ell\in\Z.
\]
Thus $q\Lambda\subset\Lambda_{\rm int}$, so
$[\Lambda:\Lambda_{\rm int}]<\infty$.

Conversely, suppose $\Lambda/\Lambda_{\rm int}$ is finite, and let $q$ be
its exponent.  Then $q\Lambda\subset\Lambda_{\rm int}$.  Applying this to
the basis vectors $Me_j$ gives
\[
 q(\Theta_\Lambda)_{jk}
 =\sigma(qMe_j,Me_k)\in\Z,
 \qquad 1\le j,k\le2d.
\]
Hence $\Theta_\Lambda$ is entrywise rational.

For the separable lattice $\Gamma\times\Phi$, the symplectic Gram matrix of
the basis $\operatorname{diag}(A_0,B_0)$ is
\[
 \begin{pmatrix}
 0&A_0^TB_0\\
 -B_0^TA_0&0
 \end{pmatrix}.
\]
This proves the equivalence of (i) and (ii).  Under the common linear change
of Euclidean coordinates $A_0^{-1}$, the pair $(\Gamma,\Phi^*)$ becomes
\[
 \bigl(\Z^d,(A_0^TB_0)^{-T}\Z^d\bigr).
\]
A lattice $C\Z^d$ is commensurable with $\Z^d$ exactly when
$C\in GL_d(\Q)$.  This proves the equivalence with (iii).
\end{proof}

We use the ordinary Smith normal form and its alternating, or skew,
analogue in the proof of the normal-form theorem.

\begin{unnumberedtheorem}[Smith--Newman~\protect\cite{Newman1972}]
Let $K\in M_d(\Z)$ have nonzero determinant.  There exist
$U,V\in GL_d(\Z)$ and positive integers
$s_1,\ldots,s_d$, with $s_j$ dividing $s_{j+1}$ for
$j=1,\ldots,d-1$, such that
\[
 U^T K V=\operatorname{diag}(s_1,\ldots,s_d).
\]
Let $\Theta\in M_{2d}(\Z)$ be nonsingular and alternating, that is $\Theta^T=-\Theta$.  There exist
$W\in GL_{2d}(\Z)$ and positive integers
$h_1,\ldots,h_d$, with $h_j$ dividing $h_{j+1}$ for
$j=1,\ldots,d-1$, such that
\[
 W^T\Theta W
 =\begin{pmatrix}0&H\\-H&0\end{pmatrix},
 \qquad H=\operatorname{diag}(h_1,\ldots,h_d).
\]
See \cite[Chapters~II and~IV]{Newman1972}; the alternating statement is the
skew Smith normal form.
\end{unnumberedtheorem}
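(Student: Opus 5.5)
The plan is to prove both normal forms by the classical extremal-entry argument over the Euclidean domain $\Z$, using only unimodular row and column operations, and to treat the alternating case by splitting off hyperbolic planes one at a time.

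\emph{Ordinary form.} For $K\in M_d(\Z)$ with $\det K\ne0$, I would induct on $d$. Among all matrices $U^TKV$ with $U,V\in GL_d(\Z)$, choose one whose nonzero entry of smallest absolute value, call it $s$, is as small as possible, and move that entry to position $(1,1)$ by permutations.
\begin{enumerate}[label=(\roman*)]
\item If some entry in the first row or first column is not divisible by $s$, division with remainder followed by one elementary operation produces a smaller nonzero remainder. This contradicts minimality, so I can clear the first row and column.
\item If some entry $k_{ij}$ of the remaining block is not divisible by $s$, I add row $i$ to row $1$. Then (i) applies again and gives a contradiction.
\end{enumerate}
Hence $s$ divides every entry of the complementary $(d-1)\times(d-1)$ block. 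That block is nonsingular, since $\det K\ne0$. Induction puts it in normal form $\operatorname{diag}(s_2,\ldots,s_d)$ with $s_2\mid\cdots\mid s_d$. The inductive transformations preserve divisibility of all entries by $s$, because the $s$-divisible matrices form a submodule stable under unimodular multiplication. So $s_1=s$ divides $s_2$. Finally, replacing $U$ by $-U$ on a coordinate where needed makes all $s_j$ positive.

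\emph{Alternating form.} For nonsingular alternating $\Theta\in M_{2d}(\Z)$, I would regard it as a bilinear form $\omega(x,y)=x^T\Theta y$ on $\Z^{2d}$ and induct on $d$.
\begin{enumerate}[label=(\roman*)]
\item Let $h_1>0$ be the gcd of all entries of $\Theta$. Since $\omega(\Z^{2d},\Z^{2d})$ is an ideal containing all entries, it equals $h_1\Z$. The key step is to realize $h_1$ by a single pair, $\omega(e,f)=h_1$.
\item To do this, I would take $e$ primitive and $f$ minimizing $|\omega(e,f)|>0$ over all such pairs. I then show this minimum divides every value $\omega(x,y)$, using Euclid-type moves $x\mapsto x-qe$ and $y\mapsto y-qf$ together with the alternating relations $\omega(e,e)=\omega(f,f)=0$. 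This is the same contradiction-to-minimality argument as in the ordinary case.
\item Given such $e,f$, every $x$ decomposes as
\[
x=\frac{\omega(x,f)}{h_1}\,e-\frac{\omega(x,e)}{h_1}\,f+x',
\]
where the coefficients are integers because $h_1$ divides all values of $\omega$. The remainder $x'$ lies in the $\omega$-orthogonal complement $E=\{x:\omega(x,e)=\omega(x,f)=0\}$, so $\Z^{2d}=\Z e\oplus\Z f\oplus E$. Completing $e,f$ by a basis of $E$ therefore gives a unimodular change of basis.
\item The restriction of $\omega$ to $E$ is a nonsingular alternating form whose values all lie in $h_1\Z$. Induction gives a basis of $E$ in block form with $h_2\mid\cdots\mid h_d$, and $h_1\mid h_2$ because $h_2$ is a value of $\omega|_E$, hence a multiple of $h_1$.
\item Ordering the basis as $e_1,\ldots,e_d,f_1,\ldots,f_d$ puts $W^T\Theta W$ in the stated block form $\begin{pmatrix}0&H\\-H&0\end{pmatrix}$.
\end{enumerate}

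The main obstacle is step (ii) of the alternating case: producing a single pair attaining the gcd while keeping the transformation unimodular and the form alternating. Simultaneous row and column operations must be applied congruently, as $W^T\Theta W$, so the two-sided freedom of the ordinary argument is lost. The alternating relations $\omega(e,e)=0$ and $\omega(e,f)=-\omega(f,e)$ are exactly what allow the Euclidean step to be carried out on a pair rather than on a single entry, and I expect this bookkeeping to be the only delicate point. The remaining steps are routine and are standard; see \cite[Chapters~II and~IV]{Newman1972}.
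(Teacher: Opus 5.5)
Your proof is correct. The paper does not prove this statement; it only quotes \cite[Chapters~II and~IV]{Newman1972}. What you wrote is essentially the classical argument behind that citation: the extremal-entry Euclidean reduction for the equivalence form, and, for congruence of alternating forms, splitting off one hyperbolic pair $(e,f)$ with $\omega(e,f)=h_1$ and inducting on its $\omega$-orthogonal complement. Your version buys self-containment. The paper does not need it, since the theorem is used only as a black box in Theorem~\ref{thm:diagonal-symplectic-normal-form}.

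One point of order in the alternating part. The claim in (i) that $\omega(\Z^{2d},\Z^{2d})$ is an ideal is not evident a priori, because values of a bilinear form are not obviously closed under addition. It is in fact equivalent to the existence of a single pair attaining $h_1$, so it should be deduced from (ii) rather than stated before it.

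Step (ii) also needs one more observation than you state. Let $m=|\omega(e,f)|$ be minimal. The Euclidean moves give $m\mid\omega(e,y)$ and $m\mid\omega(x,f)$ for all $x,y$. This makes the decomposition of (iii) integral with $m$ in place of $h_1$, and yields
\[
\omega(x,y)=\omega(x',y')+(\alpha\delta-\beta\gamma)\,\omega(e,f)
\]
with $x',y'\in E$. For the remaining term, note that $\omega(e+x',f)=\omega(e,f)$. So $(e+x',f)$ is again a minimizing pair, and the same Euclidean step gives $m\mid\omega(e+x',y')=\omega(x',y')$.

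Hence $m$ divides every value. Then $m=h_1$, because $m$ divides every entry while $h_1$ divides every value. Your restriction to primitive $e$ is harmless but unnecessary, since minimality forces it.
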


\begin{theorem}
\label{thm:diagonal-symplectic-normal-form}
Every symplectically rational full-rank phase-space lattice
$\Lambda\subset\R^{2d}$ is symplectically equivalent to
\begin{equation}\label{eq:normal-form-lattice}
 \Lambda_D=D\Z^d\times\Z^d,
 \qquad
 D=\operatorname{diag}\left(\frac{b_1}{a_1},\ldots,
                             \frac{b_d}{a_d}\right),
 \qquad (a_i,b_i)=1,
\end{equation}
where all $a_i,b_i$ are positive integers.  Thus there exists
$S\in\Sp(2d,\R)$ such that $\Lambda=S\Lambda_D$.

If
\[
 N=\prod_{i=1}^d a_i,
 \qquad
 R=\prod_{i=1}^d b_i,
\]
then
\begin{equation}\label{eq:normal-form-invariants}
 \nu(\Lambda)=\nu(\Lambda_D)=N,
 \qquad
 \nu(\Lambda^\circ)=\nu(\Lambda_D^\circ)=R,
 \qquad
 \covol(\Lambda)=\covol(\Lambda_D)=\frac{R}{N}.
\end{equation}
Consequently, the symplectically rational conditions $\SG$ and $\mSG_q$
become, respectively,
\[
                             N\ge R+d,
 \qquad
                             qN\ge R+d.
\]

If $\Lambda=\Gamma\times\Phi$ is separable, the normalizing symplectic
matrix may be chosen to be a symplectic dilation.  If $\Gamma+\Phi^*$ is a
lattice, then
\[
 \nu(\Gamma\times\Phi)=[\Gamma+\Phi^*:\Phi^*],
 \qquad
 \nu((\Gamma\times\Phi)^\circ)=[\Gamma+\Phi^*:\Gamma].
\]
\end{theorem}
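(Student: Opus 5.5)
The plan is to pass to the integer symplectic Gram matrix and apply the skew Smith normal form. Write $\Lambda=M\Z^{2d}$. By the unnumbered lemma, $\Theta_\Lambda=M^TJM$ is entrywise rational, so I choose $c>0$ rational with $c\Theta_\Lambda$ integral and nonsingular alternating. The skew Smith normal form gives $W\in GL_{2d}(\Z)$ with $W^Tc\Theta_\Lambda W=\begin{psmallmatrix}0&H\\-H&0\end{psmallmatrix}$. Put $H'=c^{-1}H=\operatorname{diag}(h_i/c)$ and write $h_i/c=b_i/a_i$ in lowest terms. Replacing $M$ by $MW$ does not change $\Lambda$, and the new basis has Gram matrix $\begin{psmallmatrix}0&D\\-D&0\end{psmallmatrix}$ with $D=\operatorname{diag}(b_i/a_i)$. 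The basis $M_D=\operatorname{diag}(D,I_d)$ of $\Lambda_D$ has exactly this Gram matrix, since $M_D^TJM_D=\begin{psmallmatrix}0&D\\-D&0\end{psmallmatrix}$. Hence $S=MWM_D^{-1}$ satisfies $S^TJS=J$, so $S\in\Sp(2d,\R)$, and $S\Lambda_D=\Lambda$.

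For the invariants I use that symplectic maps commute with taking adjoints, $(S\Lambda)^\circ=S\Lambda^\circ$. This transports $\Lambda_{\rm int}$ to $(\Lambda_D)_{\rm int}$ and preserves indices and covolume. Lemma~\ref{lem:diagonal-arithmetic} then gives $\nu(\Lambda)=N$, $\nu(\Lambda^\circ)=R$ and $\covol(\Lambda)=R/N$. Substituting these values into \eqref{eq:SG} and \eqref{eq:mSG} yields $N\ge R+d$ and $qN\ge R+d$. The covolume inequality is implied automatically, because $N\ge R+1$, respectively $qN\ge R+1$, forces $R/N<1$, respectively $R/N<q$.

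In the separable case $\Lambda=\Gamma\times\Phi$, take $\Gamma=A_0\Z^d$ and $\Phi=B_0\Z^d$. The Gram matrix is then block off-diagonal with block $K=A_0^TB_0$, which is rational. After scaling by $c$, the ordinary Smith normal form gives $U^TcKV=\operatorname{diag}(s_i)$. Changing the bases to $A_0U$ and $B_0V$ leaves $\Gamma$ and $\Phi$ unchanged and produces $(A_0U)^T(B_0V)=D$. Setting $A=A_0U$, we get $\Gamma=A\Z^d$. The identity $A^TB_0V=D$ gives $B_0V=A^{-T}D$, so $\Phi=A^{-T}D\Z^d$.

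Thus $S_A(\Z^d\times D\Z^d)=\Lambda$. This is $\Z^d\times D\Z^d$ rather than $D\Z^d\times\Z^d$, so I fix it with the diagonal dilation $S_{D}$, which maps $\Z^d\times D\Z^d$ to $D\Z^d\times\Z^d$. Composing, $S_{AD^{-1}}$ maps $\Lambda_D$ onto $\Lambda$, and it is a symplectic dilation.

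For the index formula I work in the coordinates where $\Gamma=D\Z^d$ and $\Phi=\Z^d$; these preserve the indices, since $S_{A}$ acts by $A$ on both $\Gamma$ and $\Phi^*$. Then $\Gamma+\Phi^*=D\Z^d+\Z^d=\operatorname{diag}(1/a_i)\Z^d$. This gives $[\Gamma+\Phi^*:\Phi^*]=\prod a_i=N$ and $[\Gamma+\Phi^*:\Gamma]=\prod b_i=R$ by coprimality.

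The main obstacle I expect is bookkeeping rather than substance. I need to check that the skew Smith form's rescaling by $c$ yields coprime $a_i,b_i$ consistently; reducing each fraction separately suffices, since divisibility of the $h_i$ is not needed. I also need to keep the orientation and sign conventions of $J$ consistent, so that the Gram matrix of $\operatorname{diag}(D,I)$ genuinely matches.
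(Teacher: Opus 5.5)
Your proposal is correct and matches the paper's proof step for step. In the general case you take the skew Smith normal form of the rescaled symplectic Gram matrix and set $S=MWM_D^{-1}$, which is the paper's $S=MUC_0^{-1}$. In the separable case you use the ordinary Smith normal form of $A_0^TB_0$; your single dilation $S_{AD^{-1}}$ with $A=A_0U$ is exactly the inverse of the paper's composite $S_{D_0}S_{U^{-1}}S_{A_0^{-1}}$. You transport the invariants via $(S\Lambda)^\circ=S\Lambda^\circ$ and Lemma~\ref{lem:diagonal-arithmetic}, as the paper does, and your computation $\Gamma+\Phi^*=\operatorname{diag}(1/a_i)\Z^d$ in diagonal coordinates spells out the index count the paper states in one line.
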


\begin{proof}
We begin with the separable case.  Write
\[
 \Gamma=A_0\Z^d,
 \qquad
 \Phi=B_0\Z^d,
 \qquad
 C_{\Gamma,\Phi}=A_0^TB_0.
\]
By the preceding lemma, $C_{\Gamma,\Phi}$ is entrywise rational.  Choose
$m\in\N$ so that $mC_{\Gamma,\Phi}$ is integral.  The first part of the preceding Smith--Newman theorem gives
$U,V\in GL_d(\Z)$ and positive integers
$s_1,\ldots,s_d$, with $s_j$ dividing $s_{j+1}$ for
$j=1,\ldots,d-1$, such that
\[
 U^T(mC_{\Gamma,\Phi})V
 =\operatorname{diag}(s_1,\ldots,s_d).
\]
Put
\[
 D_0=\operatorname{diag}(s_1/m,\ldots,s_d/m).
\]
The symplectic dilation associated with $A_0^{-1}$ sends
$\Gamma\times\Phi$ to
$\Z^d\times C_{\Gamma,\Phi}\Z^d$.  The symplectic dilation associated with
$U^{-1}$ then sends this lattice to
\[
 \Z^d\times U^TC_{\Gamma,\Phi}\Z^d
 =\Z^d\times D_0\Z^d,
\]
because $V\Z^d=\Z^d$.  Finally, the symplectic dilation associated with
$D_0$ sends this lattice to
\[
                         D_0\Z^d\times\Z^d.
\]
Writing each diagonal entry of $D_0$ in lowest terms as $b_i/a_i$ gives
\eqref{eq:normal-form-lattice}.  The product of the three symplectic
dilations is again a symplectic dilation.  Thus no chirp or Fourier
transform is needed in the separable reduction.

We now treat an arbitrary symplectically rational phase-space lattice.
Choose a basis matrix $M\in GL_{2d}(\R)$ with
$\Lambda=M\Z^{2d}$ and set $\Theta_\Lambda=M^TJM$.  By the preceding
lemma, $\Theta_\Lambda$ is entrywise rational.  Choose $q\in\N$ such that
$q\Theta_\Lambda$ is integral.  The second part of the preceding Smith--Newman theorem, applied to the invertible and alternating matrix 
$q\Theta_\Lambda$, gives
$U\in GL_{2d}(\Z)$ and positive integers
$h_1,\ldots,h_d$, with $h_j$ dividing $h_{j+1}$ for
$j=1,\ldots,d-1$, such that
\[
 U^T\Theta_\Lambda U
 =\begin{pmatrix}0&D_0\\-D_0&0\end{pmatrix},
 \qquad
 D_0=\operatorname{diag}\left(\frac{h_1}{q},\ldots,
                               \frac{h_d}{q}\right).
\]
Let
\[
 C_0=\begin{pmatrix}D_0&0\\0&I_d\end{pmatrix},
 \qquad
 S=MUC_0^{-1}.
\]
Since
\[
 C_0^TJC_0
 =\begin{pmatrix}0&D_0\\-D_0&0\end{pmatrix}
 =U^TM^TJMU,
\]
we obtain
\[
 S^TJS=C_0^{-T}U^TM^TJMU C_0^{-1}=J.
\]
Thus $S$ is symplectic and
\[
 \Lambda=M\Z^{2d}
 =MU\Z^{2d}
 =SC_0\Z^{2d}
 =S(D_0\Z^d\times\Z^d).
\]
Reducing the diagonal entries of $D_0$ gives the coprime pairs
$(a_i,b_i)$ in \eqref{eq:normal-form-lattice}.

For the diagonal lattice, the three formulas in
\eqref{eq:normal-form-invariants} were computed explicitly in
Lemma~\ref{lem:diagonal-arithmetic}.  If $\Lambda=S\Lambda_D$, then
$(S\Lambda_D)^\circ=S\Lambda_D^\circ$,
$S(\Lambda_D\cap\Lambda_D^\circ)
 =\Lambda\cap\Lambda^\circ$, and $\det S=1$.  Therefore the two indices and
the covolume are unchanged, proving all three equalities in
\eqref{eq:normal-form-invariants}.

In the separable case, the common linear changes above carry both $\Gamma$
and $\Phi^*$ by the same invertible matrix, so the indices in their sum are
unchanged.  On the diagonal representative the two indices are $N$ and $R$,
which gives
\[
 \nu(\Gamma\times\Phi)=[\Gamma+\Phi^*:\Phi^*],
 \qquad
 \nu((\Gamma\times\Phi)^\circ)=[\Gamma+\Phi^*:\Gamma].
\]
\end{proof}

\begin{proof}[Proof of Proposition~\ref{prop:TeP-SG-relation}]
By the separable part of
Theorem~\ref{thm:diagonal-symplectic-normal-form},
$\Gamma\times\Phi$ is symplectically rational exactly when
$\Gamma+\Phi^*$ is a lattice.  If $\Gamma+\Phi^*$ is not discrete, then
\[
 \covol(\Gamma\times\Phi)<1
 \quad\Longleftrightarrow\quad
 \covol(\Gamma)<\covol(\Phi^*),
\]
which is the non-discrete branch of $\TeP$.  If $\Gamma+\Phi^*$ is a lattice, the index
formulas in the theorem show that the symplectically rational branch of
$\SG$ is precisely
\[
                         [\Gamma+\Phi^*:\Phi^*]\ge[\Gamma+\Phi^*:\Gamma]+d,
\]
which is the commensurable branch of $\TeP$.
\end{proof}

\subsection{Effect on the support of the window}
\label{subsec:metaplectic-implementation}

The distinction between separable and general lattices is important for the
effect on the support of the window function $g$.  The block criterion gives
a precise support-preserving case.  Let
\[
 S=\begin{pmatrix}A&B\\ C&D\end{pmatrix}\in\Sp(2d,\R).
\]
If $B=0$, then~\eqref{eq:symplectic-block-criterion} gives
$D=A^{-T}$ and shows that
\[
 Q=CA^{-1}
\]
is symmetric.  Hence
\[
 S=
 \begin{pmatrix}I_d&0\\ Q&I_d\end{pmatrix}
 \begin{pmatrix}A&0\\0&A^{-T}\end{pmatrix},
\]
and, up to a constant of modulus one,
\begin{equation}\label{eq:metaplectic-lower-triangular-formula}
 (\mu(S)f)(t)
 =|\det A|^{-1/2}e^{\pi i t^TQt}f(A^{-1}t).
\end{equation}
Consequently,
\begin{equation}\label{eq:metaplectic-support-image}
 \supp\mu(S)f=A(\supp f).
\end{equation}
Thus $\mu(S)$ maps $C_c^\infty(\R^d)$ onto itself whenever $B=0$.
Conversely, suppose $B\ne0$ and let $r=\rank B\ge1$.  In the standard
Bruhat factorization of $S$, all factors except one preserve compact support:
they are invertible linear changes of variables or multiplication by chirps.
The remaining factor is an $r$-dimensional partial Fourier transform.  Write
the variables for that factor as $(x,y)\in\R^r\times\R^{d-r}$.  If both its
input $f_0$ and output were compactly supported, then Fubini's theorem would
give, for almost every fixed $y$, a compactly supported slice
$x\mapsto f_0(x,y)$ whose Fourier transform in $x$ is also compactly
supported.  That Fourier transform extends to an entire function of
$\zeta\in\C^r$ by the Paley--Wiener theorem.  It vanishes on an open
rectangle in the complement of its real support; applying the one-variable
identity theorem successively in $\zeta_1,\ldots,\zeta_r$ shows that it is
identically zero.  Hence $f_0(\,\cdot\,,y)=0$ for almost every $y$, and
therefore $f_0=0$.  Undoing the support-preserving factors gives the same
conclusion for the original input.  Thus a nonzero compactly supported
function cannot be carried to a compactly supported function when $B\ne0$,
and $B=0$ is the intrinsic block criterion for a metaplectic operator to
preserve compact support.

The Fourier transform of $\mu(S)f$ is, up to a constant of modulus one,
$\mu(JS)f$.  Since
\[
 JS=\begin{pmatrix}C&D\\-A&-B\end{pmatrix},
\]
Equation~\eqref{eq:metaplectic-lower-triangular-formula} applies to $JS$
exactly when $D=0$.  Thus $f\in C_c^\infty(\R^d)$ implies
$\widehat{\mu(S)f}\in C_c^\infty(\R^d)$ when $D=0$.

In the separable part of
Theorem~\ref{thm:diagonal-symplectic-normal-form}, the normalizing matrix is
a symplectic dilation, so its upper-right block is zero.  The reduction of a
separable lattice to diagonal form therefore does not destroy compact
support.  For an arbitrary symplectically rational phase-space lattice, the
normal-form theorem produces a general symplectic matrix, and its upper-right
block need not vanish.  This is why transfer to a general lattice naturally
guarantees a Schwartz or Feichtinger-algebra window, but not a compactly
supported one.

\section{The Sobolev Zak regime: finite uncertainty and strict density}
\label{sec:full-multivariate-balian-low}

We now prove Theorem~\ref{thm:full-multivariate-balian-low}.  The necessity
at critical density combines the weak arbitrary-lattice Balian--Low theorem
with regularity of the canonical dual.  For strict density, the
non-symplectically-rational case follows immediately from
Theorem~\ref{thm:main-gabor}.  In the rational case we return to the common
Zak matrix model of Subsection~\ref{sec:rational-zak-matrix-model}, but work
in the weaker Sobolev class: full rank is needed only almost everywhere, and
the threshold drops from $N\ge R+d$ to $N>R$.

We first prove \textup{(a)}$\Rightarrow$\textup{(b)}.

Suppose that $\G(g,\Lambda)$ is a frame with
$g\in\mathbb H^1(\R^d)$.  The density theorem gives
$\covol(\Lambda)\le1$.  We first exclude equality.  Let $S$ be the frame
operator and put $\gamma=S^{-1}g$.  At critical density the frame is a Riesz
basis, so $\gamma$ is its biorthogonal dual window.  By the canonical-dual
regularity theorem of Lee--Philipp--Voigtlaender,
$\gamma\in\mathbb H^1(\R^d)$ whenever $g\in\mathbb H^1(\R^d)$
\cite{LeePhilippVoigtlaender2023}.  This contradicts the weak
arbitrary-lattice Balian--Low theorem of
Gr\"ochenig--Han--Heil--Kutyniok, which excludes simultaneous membership
of a critical Riesz-basis window and its biorthogonal dual in
$\mathbb H^1(\R^d)$ \cite{GrochenigHanHeilKutyniok2002}.  Hence
$\covol(\Lambda)\ne1$, and therefore $\covol(\Lambda)<1$.

By Plancherel, $\partial_jg\in L^2$ if and only if
$\xi_j\widehat g\in L^2$.  Since a frame window is nonzero, membership in
$\mathbb H^1(\R^d)$ is equivalent to finiteness of the uncertainty product
in Theorem~\ref{thm:full-multivariate-balian-low}.

We next prove \textup{(b)}$\Rightarrow$\textup{(a)}.  We begin with
lattices that are not symplectically rational.

Assume that $\covol(\Lambda)<1$ and that $\Lambda$ is not symplectically
rational.  In this case the condition~\eqref{eq:SG} is precisely the strict
density inequality.  Theorem~\ref{thm:main-gabor} therefore gives a
Schwartz-class Gabor-frame window.  Since
$\Sclass(\R^d)\subset\mathbb H^1(\R^d)$, this proves the implication in the
non-symplectically-rational case.

We now move to constructing a window in $\mathbb{H}^1(\R^d)$ for lattices 
with rational diagonal coordinates. Note that this includes the case when 
the arithmetic condition in \eqref{eq:SG} fails, in contrast to the Schwartz 
or Feichtinger algebra case. The construction is based on building a matrix 
field on the unit cube that satisfies the Zak quasiperiodicity conditions 
and has $W^{1,2}$ regularity, after which the desired window can be found 
by taking the inverse Zak transform. 

\begin{lemma}[Finite-uncertainty window in the diagonal model]
\label{lem:finite-uncertainty-diagonal}
Let
\[
 \Lambda_D=D\Z^d\times\Z^d,
 \qquad
 D=\operatorname{diag}(b_1/a_1,\ldots,b_d/a_d),
 \qquad (a_i,b_i)=1,
\]
and put $N=\prod_i a_i$ and $R=\prod_i b_i$.  If $N>R$, then there exists
$g\in\mathbb H^1(\R^d)$ such that $\G(g,\Lambda_D)$ is a Parseval frame.
\end{lemma}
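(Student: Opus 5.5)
We construct the Zak matrix directly on the fine box $\mathcal Q$ of \eqref{eq:H1-fine-Zak-box}. The target is a matrix field $M:\mathcal Q\to M_{R,N}(\C)$ with the following properties: its entries lie in $W^{1,2}(\mathcal Q)\cap L^\infty$; it obeys the face rules \eqref{eq:H1-fine-u-face}--\eqref{eq:H1-fine-eta-face}; and
\[
 M(u,\eta)M(u,\eta)^*=R\,I_R\quad\text{almost everywhere.}
\]
Equivalently, $R^{-1/2}M$ takes values in the complex Stiefel manifold $V:=\{Y\in M_{R,N}(\C):YY^*=I_R\}$ away from a null set. Both face operations preserve $V$: one is a right multiplication by a unitary (a diagonal unitary times a column permutation), the other a left multiplication by a permutation. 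By the reassembly discussion after \eqref{eq:H1-fine-eta-face}, such an $M$ yields a scalar quasiperiodic $F\in W^{1,2}([0,1]^{2d})$. Proposition~\ref{prop:zak-standard}(viii) then gives $g=Z^{-1}F\in\mathbb H^1(\R^d)$. The matrix $\mathcal Z_g$ on $\mathcal Q_{\mathsf B}$ is obtained from the blocks $M$ by the same unitary column/row operations, so it is bounded and satisfies $\mathcal Z_g\mathcal Z_g^*=RI_R$ almost everywhere. Proposition~\ref{prop:zz-matrix-criterion} therefore shows that $\G(g,\Lambda_D)$ is Parseval.

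To build $M$, we imitate the equivariant skeleton-by-skeleton filling of Section~\ref{sec:soft-configurations}, with $V$ as the target space. First choose a triangulation of the closed box $\mathcal Q$ whose restrictions to paired faces correspond under the face identifications; the staircase triangulation of a box works, exactly as for the cube. The face rules then partition the simplices into finitely many classes. In each class we fill one representative and transport the filling to the other members by the corresponding unitary operation. Consistency at intersections of several faces comes from the stated corner compatibility, which plays the role of $\rho_a\rho_b=\rho_{a+b}$. On the $0$-skeleton we pick arbitrary points of $V$, which is nonempty because $N>R$. The key topological input is that $V\cong U(N)/U(N-R)$ is $2(N-R)$-connected, and $2(N-R)\ge2$ because $N>R$. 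Hence every simplex of dimension $k\le3$ can be filled continuously, indeed Lipschitz, relative to its boundary, without any singularity.

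For a simplex $\sigma$ of dimension $k\ge4$, where a continuous filling may be obstructed, we use the cone extension. Let $c$ be the barycenter of $\sigma$ and let $\pi_\sigma(x)$ denote the radial projection of $x\in\sigma\setminus\{c\}$ from $c$ onto $\partial\sigma$. Set $f(x)=f(\pi_\sigma(x))$ for $x\ne c$. This map is $V$-valued away from the single point $c$, bounded, and satisfies $|\nabla f(x)|\le C|x-c|^{-1}$. Since $\int_\sigma|x-c|^{-2}\,dx<\infty$ when $k\ge3$, we get $f\in W^{1,2}(\sigma)$. The fillings of the lower-dimensional simplices already make the boundary data Lipschitz, and a $W^{1,2}$ boundary trace is all the next step needs. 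Once all top-dimensional simplices are filled, the pieces are $W^{1,2}$ on each closed simplex and have matching traces on shared faces, so they glue to a $W^{1,2}$ field on $\mathcal Q$. The paired faces of $\mathcal Q$ carry traces related exactly by \eqref{eq:H1-fine-u-face}--\eqref{eq:H1-fine-eta-face}. The singular set is finite, hence null.

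The step I expect to need the most care is the equivariant bookkeeping. The face rules mix columns through $\tau_i$ and phase factors $e^{2\pi i q_i(t)(\eta_i+s_i/b_i)}$. These phases depend on $\eta$, so the transported filling of a representative simplex is its image under a smoothly varying unitary rather than a constant one. This preserves both $V$ and $W^{1,2}$, but one must check that every simplex lies in exactly one class and that the composed rules on edges and corners of $\mathcal Q$ agree. I would verify this by reducing to scalar Zak quasiperiodicity, which is where the corner compatibility in the text comes from. Two cheaper points are routine: the $W^{1,2}$-gluing across faces, which is the standard fact that piecewise $W^{1,2}$ functions with matching traces are $W^{1,2}$, and the observation that in low dimensions no singularity is needed at all.
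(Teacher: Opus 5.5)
Your proposal is correct in outline, but the low-dimensional filling step differs genuinely from the paper's. The overall architecture is shared: a staircase triangulation of $\mathcal Q$ compatible with the face identifications, orbit-by-orbit filling transported by the face rules, cone/radial extension on high-dimensional simplices (using $\int|x|^{-2}<\infty$ for $k\ge3$), gluing by matching traces, scalar reassembly, inverse Zak transform, and Proposition~\ref{prop:zz-matrix-criterion}.

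\emph{Your route.} You take values in the Stiefel set $\{Y:YY^*=I_R\}\cong U(N)/U(N-R)$ from the start. You use its $2(N-R)$-connectivity to fill simplices of dimension at most $3$, and switch to the singular cone only for $k\ge4$.

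\emph{The paper's route.} It stays in the open set of full-row-rank matrices. By block elimination it shows that the rank-deficient set is a countable union of Lipschitz images of dimension at most $2RN-4$. It then fills edges and triangles by explicit piecewise-linear paths and cones whose centers avoid a Lebesgue-null bad set. It radially extends already from $k=3$. Only at the end does it normalize by $\mathcal P(A)=\sqrt R\,(AA^*)^{-1/2}A$, which needs the uniform bounds $\alpha I_R\le MM^*\le\beta I_R$ and the identity $\mathcal P(UAV)=U\mathcal P(A)V$.

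\emph{What each buys.} Your version avoids the final normalization and the Sobolev chain rule for $(AA^*)^{-1/2}$, and it has a smaller singular set. The cost is importing the homotopy groups of Stiefel manifolds. You also leave unstated how a continuous filling rel boundary is upgraded to a Lipschitz one: smooth it, project back by the nearest-point retraction, and interpolate in a collar. The paper's fillings are piecewise linear, hence automatically Lipschitz, and its only tool is an elementary dimension count.

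Three points in your write-up need correcting.

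\textbf{The $u$-face rule is not a right multiplication.} The phase $e^{2\pi i q_i(t)(\eta_i+s_i/b_i)}$ depends on the row index through $s_i$ as well as on $t$. An entrywise phase pattern preserves $YY^*=I_R$ only if it factors as $\alpha_s\beta_t$. It does here: $a_iq_i(t)=1+b_i(\tau_i(t)_i-t_i)$ gives $q_i(t)\equiv a_i^{-1}\pmod{b_i}$, independently of $t$. Hence the $s$-dependent part is a left diagonal unitary. This congruence is the paper's Step~1. Together with the cocycle identity \eqref{eq:H1-face-cocycle}, it is exactly what your ``equivariant bookkeeping'' must establish. Without it, the claim that both face operations preserve the Stiefel set is unsupported.

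\textbf{The gradient bound fails for $k\ge5$.} There the boundary data on $\partial\sigma$ is no longer Lipschitz, since it is singular at the cone points of the $(k-1)$-faces. So the pointwise bound $|\nabla f|\le C|x-c|^{-1}$ is unavailable. Use instead the exact polar identity $\int_{B^k}|\nabla H|^2=\frac1{k-2}\int_{S^{k-1}}|\nabla_{S^{k-1}}h|^2$ for $h\in W^{1,2}$, with smooth approximation of $h$.

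\textbf{The singular set is not finite.} It is a finite union of polyhedral cones of codimension at least $4$. It is still null, so $MM^*=RI_R$ almost everywhere survives.
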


\begin{proof}
We construct the required Zak matrix on a fixed periodic simplicial complex.
The proof has five steps, which we now summarize.

First, we rewrite the two basic Zak translation identities as left and right
unitary operations on the finite matrix.  This gives the precise rule for
transporting matrix values from one translate of the fine Zak rectangle to
another and verifies that the transport is consistent around its corners.
At the same time we fix a periodic triangulation of the Zak parameter space.
Second, we analyze the set of matrices that fail to have full row rank and
show, by elementary block elimination, that it is covered by countably many
Lipschitz images of dimension at most $2RN-4$.  Third, using this codimension
estimate, we assign full-rank matrices to the vertices of the triangulation,
join them along its edges, and fill its triangular faces without meeting the
rank-deficient set.  This constructs the matrix field on the two-skeleton and
also gives a uniform positive lower bound for its smallest singular value.
Fourth, we extend successively across every simplex of dimension at least
three by keeping the boundary value constant along radial rays.  A direct
polar-coordinate calculation shows that these extensions have square-
integrable first derivatives.  Fifth, we orthonormalize the rows, glue the
matrix entries into one scalar Zak function, and take the inverse Zak
transform.

The simplicial complex used throughout is obtained by affinely rescaling
the closed fine Zak rectangle
\[
 \overline{\mathcal Q}=
 \prod_{i=1}^d[0,a_i^{-1}]\times
 \prod_{i=1}^d[0,b_i^{-1}]
\]
to the unit cube in $\R^{2d}$, using the standard staircase triangulation of
that cube, extending it periodically to $\R^{2d}$, and then scaling back.  
Denote the resulting $\Gamma$-periodic simplicial complex by $\mathcal T$, where
\[
 \Gamma=\mathsf A^{-1}\Z^d\times\mathsf B^{-1}\Z^d.
\]
The triangulations induced on opposite sides of $\overline{\mathcal Q}$
agree under the corresponding translation.  The $k$-skeleton
$\mathcal T^{(k)}$ is the union of all simplices of dimension at most $k$;
in particular, the vertices are the $0$-simplices, the edges are the
$1$-simplices, and the triangles are the $2$-simplices.  The quotient
$\mathcal T/\Gamma$ is finite.  We therefore make each choice on one
representative of a simplex orbit and define the values on all its translates
by the translation rules established in Step~1.

\medskip
\noindent\textbf{Step 1: the spatial and frequency translation rules.}
Use the notation of Subsection~\ref{sec:rational-zak-matrix-model}.  Thus
\[
 \mathsf A=\operatorname{diag}(a_1,\ldots,a_d),\qquad
 \mathsf B=\operatorname{diag}(b_1,\ldots,b_d),\qquad
 D=\mathsf B\mathsf A^{-1},
\]
$|\mathcal I_{\mathsf A}|=N$, $|\mathcal I_{\mathsf B}|=R$, and
\[
 \mathcal Q=
 \prod_{i=1}^d[0,a_i^{-1}]\times
 \prod_{i=1}^d[0,b_i^{-1}].
\]
The two identities
\eqref{eq:H1-fine-u-face}--\eqref{eq:H1-fine-eta-face} describe what happens
to the matrix when the spatial Zak variable is translated by
$a_i^{-1}e_i$ or the frequency Zak variable is translated by
$b_i^{-1}e_i$.  We now put these identities into a form that makes their
unitarity and compatibility transparent.

Fix $i$.  Choose $c_i\in\{0,\ldots,a_i-1\}$ such that
$b_ic_i\equiv1\pmod{a_i}$, and put
\[
 h_i=\frac{b_ic_i-1}{a_i}\in\Z.
\]
The definition of the column permutation $\tau_i$ says that, for a uniquely
determined integer $\rho_i(t)$,
\[
 \tau_i(t)_i=t_i-c_i+a_i\rho_i(t).
\]
Consequently,
\begin{equation}\label{eq:H1-q-congruence}
 q_i(t)
 =\frac{1-b_it_i+b_i\tau_i(t)_i}{a_i}
 =-h_i+b_i\rho_i(t),
 \qquad
 q_i(t)\equiv-h_i\pmod{b_i}.
\end{equation}
In particular, the residue of $q_i(t)$ modulo $b_i$ is independent of the
column index $t$.  Therefore
\[
 e^{2\pi iq_i(t)(\eta_i+s_i/b_i)}
 =e^{-2\pi ih_is_i/b_i}\,e^{2\pi iq_i(t)\eta_i}.
\]
It follows that translation of the spatial Zak variable by
$a_i^{-1}e_i$ acts on the matrix by
\begin{equation}\label{eq:H1-face-factorization}
                         M\longmapsto L_i M K_i(\eta),
\end{equation}
where $L_i$ is the diagonal row unitary
\[
 (L_i z)_s=e^{-2\pi ih_is_i/b_i}z_s,
\]
and $K_i(\eta)$ is the column permutation $t\mapsto\tau_i(t)$ followed by
the unimodular column phases $e^{2\pi iq_i(t)\eta_i}$.  Translation of the
frequency Zak variable by $b_i^{-1}e_i$ acts by the row permutation
\[
 (\mathcal F_iM)_{s,t}=M_{s+e_i\ ({\rm mod}\ b_i),t}.
\]
Let $\mathcal S_i(\eta)$ denote the spatial translation operation in
\eqref{eq:H1-face-factorization}.  Both $\mathcal S_i(\eta)$ and
$\mathcal F_i$ are left-right unitary operations.  They preserve rank, the
operator norm, and every singular value.

Moreover, note that translating in different orders gives the same
answer.  Translations belonging to different coordinates commute because
they change different coordinates of $s$ and $t$, and their scalar phases
depend on different variables.  For the two translations in the same
coordinate, the required cocycle identity is
\begin{equation}\label{eq:H1-face-cocycle}
 \mathcal S_i(\eta+b_i^{-1}e_i)\,\mathcal F_i
 =\mathcal F_i\,\mathcal S_i(\eta).
\end{equation}
Indeed, if $s_i<b_i-1$, both sides multiply the same entry by
$e^{2\pi iq_i(t)(\eta_i+(s_i+1)/b_i)}$.  If $s_i=b_i-1$, one side contains
$e^{2\pi iq_i(t)(\eta_i+1)}$ and the other contains
$e^{2\pi iq_i(t)\eta_i}$; these are equal because $q_i(t)\in\Z$.
Thus the translation rules are consistent on every intersection of
translated sides of the fine rectangle.  Hence they define an unambiguous
$\Gamma$-equivariant transport of matrix values on the periodic complex
$\mathcal T$.

\medskip
\noindent\textbf{Step 2: the rank-deficient set has codimension at least four.}
Let
\[
 V=\C^{R\times N}\cong\R^{2RN},
 \qquad
 \Sigma=\{A\in V:\rank A<R\}.
\]
We first note that $\Sigma$ is covered by low-dimensional
Lipschitz parametrizations.  Fix $j\in\{1,\ldots,R\}$ and consider a matrix
of rank exactly $R-j$.  Some $(R-j)\times(R-j)$ minor is invertible.  There
are only finitely many choices of its rows and columns.  After the
corresponding row and column permutations, write the matrix as
\[
 A=\begin{pmatrix}P&Q\\ S&T\end{pmatrix},
 \qquad
 P\in\C^{(R-j)\times(R-j)},\quad P\text{ invertible}.
\]
Block Gaussian elimination gives
\[
 \rank A=R-j
 \quad\Longleftrightarrow\quad
                         T=SP^{-1}Q.
\]
Thus $P,Q,S$ are free and $T$ is determined.  Their total complex dimension
is
\[
 (R-j)^2+(R-j)(N-R+j)+j(R-j)
   =RN-j(N-R+j).
\]
The corresponding real dimension is therefore
$2RN-2j(N-R+j)$.  The largest such dimension occurs for $j=1$; since
$N>R$,
\begin{equation}\label{eq:H1-singular-codimension}
 \dim_{\R}\Sigma
 \le 2RN-2(N-R+1)
 \le 2RN-4.
\end{equation}

Now, for each choice of the invertible minor and each integer $m\ge1$, restrict the free
blocks to
\[
 \|P\|,\|Q\|,\|S\|\le m,
 \qquad
 \|P^{-1}\|\le m.
\]
On this bounded set the map
\[
 (P,Q,S)\longmapsto
 \begin{pmatrix}P&Q\\ S&SP^{-1}Q\end{pmatrix}
\]
is Lipschitz.  To see this, use
\[
 P^{-1}-{P'}^{-1}=P^{-1}(P'-P){P'}^{-1},
\]
so inversion is Lipschitz when both inverse norms are bounded, and then use
the usual product estimates for $SP^{-1}Q$.  Every matrix of rank $R-j$
belongs to one of these sets: choose an invertible minor and then choose
$m$ larger than the norms of all free blocks and of the inverse minor.
There are finitely many minors, finitely many possible values of $j$, and
countably many $m$.  Hence $\Sigma$ is covered by countably many Lipschitz
images of bounded subsets of Euclidean spaces of dimension at most
$2RN-4$.

We shall use the following elementary null-set fact.  If $E\subset\R^q$ is
bounded, $f:E\to\R^m$ is Lipschitz, and $q<m$, then $f(E)$ has zero
$m$-dimensional Lebesgue measure.  To see this, enclose $E$ in a
fixed cube and subdivide that cube into cubes of side $\delta$.  At most
$C\delta^{-q}$ of them meet $E$.  If $f$ has Lipschitz constant $L$, the
image of each such small cube has diameter at most $L\sqrt q\,\delta$ and
is contained in an $m$-cube of side $2L\sqrt q\,\delta$.  Hence the
$m$-dimensional outer measure of $f(E)$ is at most
\[
 C\delta^{-q}(2L\sqrt q\,\delta)^m
 =C'\delta^{m-q}.
\]
Letting $\delta\downarrow0$ gives measure zero.  A countable union of such
images is again null.

\medskip
\noindent\textbf{Step 3: construction on the vertices, edges, and triangles.}
Recall that $\mathcal T^{(0)}$, $\mathcal T^{(1)}$, and
$\mathcal T^{(2)}$ denote respectively the vertices, the union of the
vertices and edges, and the union of all vertices, edges, and triangles of
the periodic triangulation.  We define the matrix field orbit by orbit.
Choose an arbitrary full-row-rank matrix at one representative of each
vertex orbit and transport it to every translated vertex by the translation
rules from Step~1.

We next fill one representative of each edge orbit.  Let
$A,B\in V\setminus\Sigma$ be its endpoint values.  We claim that there is a
matrix $C\in V$ such that both segments $[A,C]$ and $[C,B]$ avoid $\Sigma$.
For the first segment, a bad choice of $C$ satisfies
\[
 (1-t)A+tC=Y\in\Sigma
 \quad\text{for some }0<t\le1,
\]
and hence
\begin{equation}\label{eq:H1-bad-edge-center}
 C=\frac{Y-(1-t)A}{t}.
\end{equation}
Take one of the bounded Lipschitz parametrizations of $\Sigma$ from Step~2
and restrict $t$ to $[1/q,1]$.  The right-hand side of
\eqref{eq:H1-bad-edge-center} is then a Lipschitz image of a set of real
dimension at most
\[
 (2RN-4)+1=2RN-3<2RN.
\]
It is therefore a null set in $V$.  Taking the countable union over all
parametrizations and all $q\in\N$ still gives a null set.  Reversing the
parameter on the second segment gives the same conclusion for the bad
choices of $C$ for $[C,B]$.  We choose $C$ outside the union of the two null
sets.  The broken segment $A\to C\to B$ is then a piecewise-linear path of
full-row-rank matrices.  Transport it to every translated edge and repeat
this for the finitely many edge orbits.

Now let $\sigma$ be a representative triangle.  The already defined values
on its three edges form a piecewise-linear loop
\[
 L:S^1\longrightarrow V\setminus\Sigma.
\]
For $C\in V$, form the cone
\begin{equation}\label{eq:H1-two-cell-cone}
 H_C(r,\theta)=(1-r)C+rL(\theta),
 \qquad 0\le r\le1.
\end{equation}
If this cone meets $\Sigma$ at a point with $r<1$, then
\begin{equation}\label{eq:H1-bad-cone-center}
 C=\frac{Y-rL(\theta)}{1-r}
 \qquad\text{for some }Y\in\Sigma.
\end{equation}
On a bounded Lipschitz parametrization of $\Sigma$, on one of the finitely
many linear arcs of $L$, and for $0\le r\le1-1/q$, the right-hand side is a
Lipschitz image of a set of dimension at most
\[
 (2RN-4)+1+1=2RN-2<2RN.
\]
Thus the bad cone centers form a countable union of null sets.  Choose $C$
outside this union.  Then the cone avoids $\Sigma$ for $r<1$, and at $r=1$
it equals the already full-rank loop $L$.  Hence it is a Lipschitz
full-row-rank filling of the triangle.  Transport this filling to all
translated triangles and repeat for the finitely many triangle orbits.

Now let $\mathcal T_{\mathcal Q}^{(2)}$ be the finite union of
all closed vertices, edges, and triangles of $\mathcal T$ that meet the
closed fundamental rectangle $\overline{\mathcal Q}$.  The matrix field is
continuous on this finite compact complex: it is continuous on every closed
simplex, and the definitions agree on common faces.  It has full row rank at
every point.  Therefore the continuous function
$z\mapsto\sigma_{\min}(M(z))$ has a strictly positive minimum
$\mu>0$ on $\mathcal T_{\mathcal Q}^{(2)}$, while
$z\mapsto\|M(z)\|$ has a finite maximum $K$.  Since the eigenvalues of
$MM^*$ are the squares of the singular values of $M$, we obtain
\begin{equation}\label{eq:H1-two-skeleton-bounds}
 \mu^2 I_R\le MM^*\le K^2 I_R
 \quad\text{on }\mathcal T_{\mathcal Q}^{(2)}.
\end{equation}
Set $\alpha=\mu^2$ and $\beta=K^2$.  Every translated simplex has the same
bounds, because the translation operations from Step~1 are unitary and
therefore leave all singular values unchanged.

\medskip
\noindent\textbf{Step 4: radial extension through the higher skeleta.}
Suppose that, for some $k\ge3$, the field has been constructed on the
$(k-1)$-skeleton, belongs to $W^{1,2}$ on every face, has matching traces on
common faces, and satisfies the bounds
$\alpha I_R\le MM^*\le\beta I_R$ almost everywhere.  Let $\sigma$ be a
representative $k$-simplex.  Its boundary is a finite union of
$(k-1)$-simplices.  Because the traces agree on their common subfaces, the
piecewise boundary map is a single element of
$W^{1,2}(\partial\sigma;V)$; this follows, for example, by integrating by
parts on the faces, where the contributions from each common subface cancel
in pairs.

Choose a bi-Lipschitz map from the unit ball $B^k$ onto $\sigma$ that maps
$S^{k-1}$ onto $\partial\sigma$, and let
$h\in W^{1,2}(S^{k-1};V)$ denote the transported boundary map.  We first
assume that $h$ is smooth.  For $x=r\theta$, with $0<r<1$ and
$\theta\in S^{k-1}$, define
\begin{equation}\label{eq:H1-radial-extension}
 H(r\theta)=h(\theta).
\end{equation}
The derivative in the radial direction is zero.  If
$\tau\in T_\theta S^{k-1}$ is a unit tangent vector, then the corresponding
unit tangent direction at $r\theta$ is obtained by scaling the spherical
direction by $r$, and the chain rule gives
\[
 D_\tau H(r\theta)=r^{-1}D_\tau h(\theta).
\]
Consequently,
\[
 |\nabla H(r\theta)|^2
 =r^{-2}|\nabla_{S^{k-1}}h(\theta)|^2.
\]
Since the polar-coordinate volume element is
$r^{k-1}\,dr\,d\theta$, we obtain the exact energy calculation
\begin{equation}\label{eq:H1-radial-energy}
 \begin{aligned}
 \int_{B^k}|\nabla H(x)|^2\,dx
 &=\int_0^1\int_{S^{k-1}}
      r^{k-1}r^{-2}|\nabla_{S^{k-1}}h(\theta)|^2
      \,d\theta\,dr\\
 &=\left(\int_0^1r^{k-3}\,dr\right)
      \int_{S^{k-1}}|\nabla_{S^{k-1}}h|^2\,d\theta\\
 &=\frac1{k-2}
      \int_{S^{k-1}}|\nabla_{S^{k-1}}h|^2\,d\theta<\infty.
 \end{aligned}
\end{equation}
Also,
\[
 \int_{B^k}|H|^2
 =\frac1k\int_{S^{k-1}}|h|^2,
\]
so the radial extension is bounded from
$W^{1,2}(S^{k-1};V)$ to $W^{1,2}(B^k;V)$ when $k\ge3$.  The same conclusion
for general $h\in W^{1,2}$ follows by approximating $h$ in
$W^{1,2}(S^{k-1})$ by smooth maps and using the two displayed bounds.

For clarity, we also verify directly that the weak derivative has no extra
term supported at the center.  For a smooth compactly supported test
function $\phi$ and $0<\delta<1$, integrate by parts on
$B^k\setminus\overline{B_\delta}$.  Besides the integral containing the
classical derivative of $H$, the only new term is the inner boundary term
\[
 \int_{|x|=\delta}H(x)\phi(x)n_j\,dS(x).
\]
The constructed boundary maps, and hence their radial extensions, are
bounded.  Therefore the absolute value of this term is at most
\[
 \|H\|_\infty\|\phi\|_\infty |S^{k-1}|\delta^{k-1},
\]
which tends to zero as $\delta\downarrow0$.  Thus the weak derivative on the
whole ball is exactly the $L^2$ function computed above; there is no Dirac
mass or other distribution supported at the center.

For every $r>0$, $H(r\theta)$ is exactly a boundary value.  Hence the
matrix bounds $\alpha I_R\le HH^*\le\beta I_R$ hold almost everywhere in
the ball.  The value assigned at the center is immaterial.  Transport the
extension back to $\sigma$ and then to all its translates.  The translation
operations are unitary and have smooth phase factors with bounded
derivatives on the compact fundamental rectangle, so they preserve the
singular-value bounds and local $W^{1,2}$ regularity.  Proceeding through all
orbits of $k$-simplices and then through $k=3,\ldots,2d$ gives
\begin{equation}\label{eq:H1-matrix-bounds}
 M\in W^{1,2}(\mathcal Q;\C^{R\times N})\cap L^\infty(\mathcal Q),
 \qquad
 \alpha I_R\le MM^*\le\beta I_R
 \quad\text{a.e. on }\mathcal Q.
\end{equation}
The simplexwise functions form one global $W^{1,2}$ function because their
traces agree.  Equivalently, an integration-by-parts calculation over all
simplices has cancelling contributions on every interior face.

\medskip
\noindent\textbf{Step 5: row normalization, scalar reassembly, and the window.}
For a full-row-rank matrix $A$, define
\begin{equation}\label{eq:H1-row-normalization}
 \mathcal P(A)=\sqrt R\,(AA^*)^{-1/2}A.
\end{equation}
On the set $\alpha I_R\le AA^*\le\beta I_R$, the inverse square-root is a
smooth matrix function with bounded derivative.  The Sobolev chain rule
therefore gives
\[
 \widetilde M:=\mathcal P(M)
 \in W^{1,2}(\mathcal Q;\C^{R\times N})\cap L^\infty(\mathcal Q),
\]
and
\begin{equation}\label{eq:H1-normalized-identity}
                  \widetilde M\widetilde M^*=RI_R
                  \quad\text{a.e. on }\mathcal Q.
\end{equation}
This normalization commutes with every unitary
translation operation.  If $U$ and $V$ are unitary matrices of the
appropriate sizes, then
\[
 (UAV)(UAV)^*=UAA^*U^*.
\]
For every positive definite matrix $B$, unitary invariance of the functional
calculus gives
\[
 (UBU^*)^{-1/2}=UB^{-1/2}U^*.
\]
Applying this with $B=AA^*$ yields
\[
 \begin{aligned}
 \mathcal P(UAV)
 &=\sqrt R\,(UAA^*U^*)^{-1/2}UAV\\
 &=\sqrt R\,U(AA^*)^{-1/2}U^*UAV\\
 &=U\mathcal P(A)V.
 \end{aligned}
\]
Thus $\widetilde M$ obeys exactly the same spatial and frequency translation
rules as $M$.

We conclude by making explicit the reassembly of Zak image.  Put
\[
 \mathcal Q_x=\prod_i[0,a_i^{-1}),
 \qquad
 \mathcal Q_\eta=\prod_i[0,b_i^{-1}),
\]
and, for $s\in\mathcal I_{\mathsf B}$ and
$t\in\mathcal I_{\mathsf A}$, consider
\[
 \mathcal R_{s,t}
   =(-Dt+\mathcal Q_x)\times
     (\mathsf B^{-1}s+\mathcal Q_\eta).
\]
On $\mathcal R_{s,t}$ define
\begin{equation}\label{eq:H1-local-scalar-reassembly}
 F(x,\omega)
 =\widetilde M_{s,t}
    (x+Dt,\omega-\mathsf B^{-1}s).
\end{equation}
For each coordinate $i$, multiplication by $b_i$ permutes the residue
classes modulo $a_i$.  Therefore the intervals
\[
 -\frac{b_it_i}{a_i}+[0,a_i^{-1}),
 \qquad 0\le t_i<a_i,
\]
are, modulo integer translation, exactly the $a_i$ consecutive subintervals
of a unit interval.  Likewise,
$s_i/b_i+[0,b_i^{-1})$, $0\le s_i<b_i$, partition a unit interval.  Hence
the interiors of the $RN$ rectangles $\mathcal R_{s,t}$ are disjoint modulo
$\Z^{2d}$ and their integer translates cover $\R^{2d}$.

We check that the local definitions have matching traces.  Across a common
boundary between adjacent frequency subrectangles, the row index increases
by one.  If it wraps from $b_i-1$ to $0$, the frequency argument changes by
the integer vector $e_i$, under which a scalar Zak function is periodic.
This is exactly the frequency translation identity from Step~1.  Across a
common boundary between adjacent spatial subrectangles, the neighboring
column is $\tau_i(t)$, and
\begin{equation}\label{eq:H1-reassembly-identity}
 u+a_i^{-1}e_i-Dt
   =u-D\tau_i(t)+q_i(t)e_i.
\end{equation}
Thus the two scalar arguments differ by the integer spatial translation
$q_i(t)e_i$.  The phase in the spatial translation identity from Step~1 is
exactly the corresponding scalar Zak phase
$e^{2\pi iq_i(t)\omega_i}$.  Hence all traces match with the required Zak
quasiperiodicity.  Since the local pieces belong to $W^{1,2}$ and their
traces agree, they glue to a scalar function
\[
 F\in W^{1,2}([0,1]^{2d})\cap L^\infty([0,1]^{2d})
\]
whose extension satisfies
\[
 F(x+n,\omega+m)=e^{2\pi i\langle n,\omega\rangle}F(x,\omega),
 \qquad n,m\in\Z^d.
\]
The fine matrix associated with $F$ is $\widetilde M$.  Iterating the
translation identities extends
\eqref{eq:H1-normalized-identity} from $\mathcal Q_x$ to the full spatial
range $[0,1)^d$; every intervening operation is unitary.  Consequently,
\[
 \mathcal Z_g(x,\eta)\mathcal Z_g(x,\eta)^*=RI_R
 \quad\text{for almost every }(x,\eta)\in\mathcal Q_{\mathsf B},
\]
where $g=Z^{-1}F$.

Finally, Proposition~\ref{prop:zak-standard}(viii) and the identities
\eqref{eq:zak-H1-identities} imply
$g\in\mathbb H^1(\R^d)$.  Proposition~\ref{prop:zz-matrix-criterion} and
the last matrix identity show that $\G(g,\Lambda_D)$ is a Parseval frame.
This proves the lemma.
\end{proof}

\section{Transfer from the diagonal model to arbitrary lattices}
\label{sec:gabor-necessity}

We now transfer the diagonal conclusions to arbitrary phase-space lattices.
The finite-uncertainty theorem uses the same metaplectic reduction, so we
record that transfer first and then turn to the continuous and smooth regimes.

\subsection{Metaplectic transfer in the rational case: proof of Theorem~\ref{thm:full-multivariate-balian-low}, \textup{(b)}$\Rightarrow$\textup{(a)}}

Let $\Lambda$ now be symplectically rational with
$\covol(\Lambda)<1$.  By
Theorem~\ref{thm:diagonal-symplectic-normal-form}, there are
$S\in\Sp(2d,\R)$ and a diagonal rational lattice $\Lambda_D$ with
$\Lambda=S\Lambda_D$ and the same covolume.  Thus $N>R$ for the diagonal
data, and Lemma~\ref{lem:finite-uncertainty-diagonal} gives a Parseval-frame
window $g_D\in\mathbb H^1(\R^d)$ on $\Lambda_D$.

The space $\mathbb H^1(\R^d)$ is invariant under every metaplectic operator.
Indeed, let $X_jf=x_jf$ and $P_jf=(2\pi i)^{-1}\partial_jf$.  Differentiating
the metaplectic covariance relation shows that each of
$X_j\mu(S)f$ and $P_j\mu(S)f$ is a finite linear combination of the
functions $\mu(S)X_kf$ and $\mu(S)P_kf$.  Hence
$f\in\mathbb H^1$ implies $\mu(S)f\in\mathbb H^1$.  Metaplectic covariance
also preserves the Parseval property.  Therefore
\[
                         g=\mu(S)g_D
\]
belongs to $\mathbb H^1(\R^d)$ and generates a Parseval frame on $\Lambda$.
This completes the proof of
Theorem~\ref{thm:full-multivariate-balian-low}.

\subsection{Extension of the necessity result (proof of Theorem~\ref{thm:main-gabor}, \textup{(b)}$\Rightarrow$\textup{(c)})}

Assume first that $\Lambda$ is symplectically rational and that
$\G(g,\Lambda)$ is a frame for some $g\in\Szero(\R^d)$.  By
Theorem~\ref{thm:diagonal-symplectic-normal-form}, there are
$S\in\Sp(2d,\R)$ and a diagonal $D$ as in
\eqref{eq:normal-form-lattice} such that
\begin{equation}\label{eq:rational-diagonal-reduction}
                  \Lambda=S(D\Z^d\times\Z^d).
\end{equation}
The reduced window $g_0=\mu(S)^{-1}g$ belongs to $\Szero(\R^d)$ by
metaplectic invariance, hence belongs to $\CZ(\R^d)$, and
\[
                 \G(g_0,D\Z^d\times\Z^d)
\]
is a frame.  Theorem~\ref{thm:diagonal-one-window} gives
\begin{equation}\label{eq:rational-NR-gap}
                              N\ge R+d.
\end{equation}
By \eqref{eq:normal-form-invariants} and symplectic invariance, this is
\begin{equation}\label{eq:rational-SG-necessary}
                   \nu(\Lambda)\ge\nu(\Lambda^\circ)+d.
\end{equation}
Thus the symplectically rational inequality in~\eqref{eq:SG} is necessary.

\begin{theorem}
\label{thm:rational-zak-obstruction}
Let $\Lambda\subset\R^{2d}$ be symplectically rational and choose a
diagonalization
\[
                         \Lambda=S(D\Z^d\times\Z^d).
\]
If $\G(g,\Lambda)$ is a frame and
$Z(\mu(S)^{-1}g)$ is continuous, then
\[
                   \nu(\Lambda)\ge\nu(\Lambda^\circ)+d.
\]
For the diagonal representative this is
\[
                   \prod_{i=1}^d a_i
                   \ge \prod_{i=1}^d b_i+d.
\]
\end{theorem}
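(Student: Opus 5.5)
The plan is to reduce the statement to the diagonal obstruction, Theorem~\ref{thm:diagonal-one-window}, by metaplectic transport. This is exactly the argument just given for $\Szero$ windows. The only change is that the hypothesis now concerns continuity of the Zak transform of the reduced window $g_0=\mu(S)^{-1}g$ directly, rather than membership of $g$ in a metaplectically invariant space.

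First I transfer the frame property. By the metaplectic covariance $\mu(S)\pi(z)\mu(S)^{-1}=c(S,z)\pi(Sz)$ with $|c(S,z)|=1$, the system $\G(g,\Lambda)=\G(g,S\Lambda_D)$ is, up to unimodular factors, the image under the unitary $\mu(S)$ of $\G(g_0,\Lambda_D)$. Unimodular factors do not change $|\ip{f}{\pi(\lambda)g}|$, and the map $f\mapsto\mu(S)^{-1}f$ is unitary. Hence $\G(g_0,D\Z^d\times\Z^d)$ is a frame with the same bounds. The sign ambiguity of the double cover is harmless here, since it only multiplies $g_0$ by $\pm1$.

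Next, the hypothesis that $Z(\mu(S)^{-1}g)=Zg_0$ has a continuous representative says exactly that $g_0\in\CZ(\R^d)$. Theorem~\ref{thm:diagonal-one-window} then applies to $g_0$ on $\Lambda_D=D\Z^d\times\Z^d$ and yields $N\ge R+d$, that is, $\prod_i a_i\ge\prod_i b_i+d$. This is the second displayed inequality.

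Finally, I translate this back to $\Lambda$. Theorem~\ref{thm:diagonal-symplectic-normal-form} (equivalently Lemma~\ref{lem:diagonal-arithmetic} together with symplectic invariance) gives $\nu(\Lambda)=\nu(\Lambda_D)=N$ and $\nu(\Lambda^\circ)=\nu(\Lambda_D^\circ)=R$. The invariance holds because $(S\Lambda_D)^\circ=S\Lambda_D^\circ$, $S$ maps $\Lambda_D\cap\Lambda_D^\circ$ onto $\Lambda\cap\Lambda^\circ$, and indices are preserved under the linear bijection $S$. This gives $\nu(\Lambda)\ge\nu(\Lambda^\circ)+d$.

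The only point needing care is that the invariants $N,R$ are attached to the chosen diagonalization, which is not unique. This is not a real obstacle: the equalities $N=\nu(\Lambda)$ and $R=\nu(\Lambda^\circ)$ hold for any diagonalization, so the conclusion does not depend on the choice. The substantive content sits entirely in Theorem~\ref{thm:diagonal-one-window} and Theorem~\ref{thm:rectangular-common-zero-gap}.
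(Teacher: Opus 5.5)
Your proposal is correct and follows the paper's proof: transfer the frame to the diagonal representative by metaplectic covariance, observe that $g_0=\mu(S)^{-1}g\in\CZ(\R^d)$, apply Theorem~\ref{thm:diagonal-one-window} to obtain $N\ge R+d$, and convert this to $\nu(\Lambda)\ge\nu(\Lambda^\circ)+d$ using the invariance in Theorem~\ref{thm:diagonal-symplectic-normal-form}. You also spell out the covariance, unimodular-constant, and index-invariance details that the paper leaves implicit.
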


\begin{proof}
The transformed window belongs to $\CZ(\R^d)$ and generates a frame on the
diagonal representative.  Apply Theorem~\ref{thm:diagonal-one-window} and
then Theorem~\ref{thm:diagonal-symplectic-normal-form}.
\end{proof}

\begin{corollary}
\label{cor:rational-S0-obstruction}
Let $\Lambda\subset\R^{2d}$ be symplectically rational.  If
$g\in\Szero(\R^d)$ and $\G(g,\Lambda)$ is a frame, then
\[
 \covol(\Lambda)<1,
 \qquad
 \nu(\Lambda)\ge\nu(\Lambda^\circ)+d.
\]
\end{corollary}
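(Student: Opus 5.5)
The corollary has two assertions, and I would handle them separately. In each case the work reduces to results already established.

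\emph{Gap inequality.} The inequality $\nu(\Lambda)\ge\nu(\Lambda^\circ)+d$ is exactly the argument given just before Theorem~\ref{thm:rational-zak-obstruction}, so I would only repeat it in outline:
\begin{enumerate}[label=(\roman*)]
\item Choose the diagonalization $\Lambda=S(D\Z^d\times\Z^d)$ from Theorem~\ref{thm:diagonal-symplectic-normal-form}.
\item Set $g_0=\mu(S)^{-1}g$. Metaplectic invariance of $\Szero(\R^d)$ gives $g_0\in\Szero(\R^d)$.
\item The inclusion $\Szero(\R^d)\subset\CZ(\R^d)$ from Proposition~\ref{prop:zak-standard}(v) shows that $Z(\mu(S)^{-1}g)$ is continuous.
\item Metaplectic covariance preserves the frame property, so $\G(g_0,D\Z^d\times\Z^d)$ is a frame.
\item Theorem~\ref{thm:rational-zak-obstruction}, i.e.\ Theorem~\ref{thm:diagonal-one-window} combined with the invariants \eqref{eq:normal-form-invariants}, then yields $N\ge R+d$, which is the asserted inequality.
\end{enumerate}

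\emph{Strict density.} I would deduce $\covol(\Lambda)<1$ from the gap, avoiding any separate appeal to the Balian--Low theorem. Equation \eqref{eq:normal-form-invariants} gives $\covol(\Lambda)=\nu(\Lambda^\circ)/\nu(\Lambda)=R/N$. Since $d\ge1$, the gap gives $N\ge R+d>R$, hence $\covol(\Lambda)<1$.

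As an independent cross-check, one could instead combine two standard facts. The density theorem gives $\covol(\Lambda)\le1$, and the Balian--Low exclusion of $\Szero$ windows at critical density, recalled in Subsection~\ref{subsec:frames-duality-density}, rules out equality.

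\emph{Main point to verify.} There is no real obstacle here. The only thing I would check carefully is that the frame property survives conjugation by $\mu(S)$, with only unimodular phases $c(S,z)$ appearing. This holds because $|\ip{f}{c\,\pi(Sz)\mu(S)g}|$ is unchanged by such phases and $\mu(S)$ is unitary.
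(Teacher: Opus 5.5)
Your proposal is correct and follows the paper's proof. You diagonalize, pull the window back by $\mu(S)^{-1}$ using metaplectic invariance of $\Szero$ and the inclusion $\Szero\subset\CZ$, and then apply Theorem~\ref{thm:rational-zak-obstruction}; your derivation of $\covol(\Lambda)<1$ from $\covol(\Lambda)=R/N$ and $N\ge R+d>R$ is the same observation the paper records after \eqref{eq:covol-nu-ratio-intro}, so no separate Balian--Low appeal is needed.
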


\begin{proof}
Choose a diagonalization
\[
             \Lambda=S(D\Z^d\times\Z^d).
\]
Metaplectic invariance of the Feichtinger algebra gives
$\mu(S)^{-1}g\in\Szero(\R^d)$.  By Proposition~\ref{prop:zak-standard}(v), its Zak transform is continuous.
Theorem~\ref{thm:rational-zak-obstruction} gives the $\SG$ inequality.
\end{proof}

\begin{remark}
Theorem~\ref{thm:rational-zak-obstruction} concerns the Zak transform in the
coordinates adapted to a chosen diagonal representative.  Its conclusion is
that every frame window has discontinuous \emph{adapted} Zak transform when
the symplectically rational $\SG$ inequality fails.  This does not imply that
the standard Zak transform $Zg$ is discontinuous: continuity of
$Z(\mu(S)^{-1}g)$ and continuity of $Zg$ are not equivalent for a general
metaplectic change of coordinates.  The diagonal statement in
Theorem~\ref{thm:intro-diagonal-zak-obstruction} therefore uses the larger
class $\CZ$.  The coordinate-free necessity implication
\textup{(b)}$\Rightarrow$\textup{(c)} in
Theorem~\ref{thm:main-gabor} uses the smaller but metaplectically invariant
Wiener amalgam $W(\mathcal F L^1,L^1)=\Szero$.
\end{remark}

Suppose next that $\Lambda$ is not symplectically rational and that
$\G(g,\Lambda)$ is a frame with $g\in\Szero(\R^d)$.  The density theorem
gives $\covol(\Lambda)\le1$.  At equality the frame is a Riesz basis, and
the critical-density amalgam Balian--Low theorem excludes windows in the
Feichtinger algebra; see
\cite{BenedettoHeilWalnut1995,NitzanOlsen2013}.  Hence
\begin{equation}\label{eq:nonrational-necessity}
                         \covol(\Lambda)<1.
\end{equation}
This proves the necessity of the remaining inequality in~\eqref{eq:SG} for
$S_0$ windows, and hence in particular for Schwartz windows.

\subsection{Existence under the condition~\eqref{eq:SG} (proof of Theorem~\ref{thm:main-gabor}, \textup{(c)}$\Rightarrow$\textup{(a)}, and Theorem~\ref{thm:strengthened-one-window}\textup{(a)--(b)})}
\label{sec:general-existence}

Assume first that $\Lambda$ is symplectically rational and that the inequalities in~\eqref{eq:SG} hold.
Write
\[
                         \Lambda=S(D\Z^d\times\Z^d)
\]
as in Theorem~\ref{thm:diagonal-symplectic-normal-form}.  Then
$N\ge R+d$, so Theorem~\ref{thm:diagonal-positive} provides a
$g_0\in C_c^\infty(\R^d)$ whose Gabor system on the diagonal lattice is
Parseval.  Metaplectic covariance shows that
\[
                         g=\mu(S)g_0\in\Sclass(\R^d)
\]
generates a Parseval frame on $\Lambda$.

For a separable lattice $\Lambda=\Gamma\times\Phi$, one obtains more.
Proposition~\ref{prop:TeP-SG-relation} identifies the inequalities in~\eqref{eq:SG} with those in~\eqref{eq:TeP} for the
pair $(\Gamma,\Phi^*)$.  The construction of Part~A and the mollification
and energy normalization carried out in
Section~\ref{sec:gabor-sufficiency} use only this pair.  Therefore the same
argument gives the following statement, whether or not the separable lattice
is symplectically rational.

\begin{theorem}
\label{thm:smooth-gabor-positive}
Let $\Gamma,\Phi\subset\R^d$ be full-rank lattices.  Suppose
\[
 \covol(\Gamma)<\covol(\Phi^*)=\covol(\Phi)^{-1}
\]
and, if $\Gamma+\Phi^*$ is discrete, suppose also that
\[
 [\Gamma+\Phi^*:\Phi^*]\ge[\Gamma+\Phi^*:\Gamma]+d.
\]
Then there exists a nonnegative $g\in C_c^\infty(\R^d)$ such that
$\G(g,\Gamma\times\Phi)$ is a Parseval frame and
\begin{align}
 \supp g\cap(\supp g+\xi^*)&=\varnothing,
 &&\xi^*\in\Phi^*\setminus\{0\},
 \label{eq:general-separable-support}\\
 \sum_{\gamma\in\Gamma}|g(x+\gamma)|^2&=\covol(\Phi),
 &&x\in\R^d.
 \label{eq:general-separable-periodization}
\end{align}
Equivalently, $\covol(\Phi)^{-1}|g|^2$ is a smooth
$\Gamma$-partition of unity.  In particular,
\[
 g(x)g(x-\xi^*)=0,
 \qquad x\in\R^d,\quad \xi^*\in\Phi^*\setminus\{0\}.
\]
Alternatively, one may choose $g\in\Sclass(\R^d)$ so that
$\widehat g\in C_c^\infty(\R^d)$ is nonnegative and satisfies
\begin{align}
 \supp\widehat g\cap(\supp\widehat g+\gamma^*)&=\varnothing,
 &&\gamma^*\in\Gamma^*\setminus\{0\},
 \label{eq:general-fourier-support}\\
 \sum_{\phi\in\Phi}|\widehat g(\omega+\phi)|^2&=\covol(\Gamma),
 &&\omega\in\R^d.
 \label{eq:general-fourier-periodization}
\end{align}
Equivalently, $\covol(\Gamma)^{-1}|\widehat g|^2$ is a smooth
$\Phi$-partition of unity.
In the time-side statement, one may alternatively choose a nonnegative
$g_{\rm pa}\in C_c(\R^d)$ such that
$\covol(\Phi)^{-1}g_{\rm pa}^2$ is a continuous piecewise-affine
$\Gamma$-partition of unity.  In the Fourier-side statement one may
alternatively arrange that $\widehat g\in C_c(\R^d)$ and
$\covol(\Gamma)^{-1}|\widehat g|^2$ is a continuous piecewise-affine
$\Phi$-partition of unity.
\end{theorem}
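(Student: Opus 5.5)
The plan is to rerun the proof of Theorem~\ref{thm:diagonal-positive} with the pair $(\Gamma,\Phi^*)$ in place of $(D\Z^d,\Z^d)$, and then get the Fourier-side statement by applying the Fourier transform. The two displayed hypotheses are exactly condition~\eqref{eq:TeP} for $(T,P)=(\Gamma,\Phi^*)$, in both the discrete and non-discrete branches. So Corollary~\ref{cor:global-partitions} gives a nonnegative compactly supported piecewise-affine $\Gamma$-partition of unity $\varphi$ and some $\eta>0$ such that $(\supp\varphi+B_\eta)+\Phi^*$ is a packing. No rationality is needed: Part~A handles the discrete, fully dense and partially dense cases separately.

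For the time-side window I would repeat the argument of Theorem~\ref{thm:diagonal-positive} verbatim.
\begin{enumerate}[label=(\roman*)]
\item Set $g_{\rm pa}=(\covol(\Phi)\varphi)^{1/2}$, which gives the piecewise-affine alternative.
\item For the smooth window, pick thresholds $0<a<b<c<1/D_0$, mollify $(\varphi-b)_+$ at a small scale so that the support stays inside $\{\varphi>a\}$ and the result is positive on $\{\varphi\ge c\}$, and normalize by $\sqrt{D_\psi}$ with $D_\psi=\sum_{\gamma}\psi(\cdot+\gamma)^2$.
\end{enumerate}
This gives \eqref{eq:general-separable-support}--\eqref{eq:general-separable-periodization}. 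For the Parseval property, note that $(\Gamma\times\Phi)^\circ=\Phi^*\times\Gamma^*$. Distinct $\Phi^*$-translates of $h_0=\covol(\Gamma\times\Phi)^{-1/2}g$ have disjoint supports, and the periodization $\sum_\gamma|h_0(x+\gamma)|^2=1/\covol(\Gamma)$ makes the $\Gamma^*$-modulations orthonormal. Theorem~\ref{thm:gabor-duality} then gives the Parseval frame. The identity $g(x)g(x-\xi^*)=0$ is just the support separation restated.

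For the Fourier-side version, apply the time-side result to the swapped pair $\Gamma'=\Phi$, $\Phi'=\Gamma$. The hypotheses transfer as follows.
\begin{enumerate}[label=(\roman*)]
\item The covolume condition becomes $\covol(\Phi)<\covol(\Gamma^*)$, which is equivalent to $\covol(\Gamma)\covol(\Phi)<1$ and hence to the original one.
\item In the discrete case, dualize: $\Phi+\Gamma^*=(\Phi^*\cap\Gamma)^*$. For commensurable lattices, index formulas under duality give $[\Phi+\Gamma^*:\Gamma^*]=[\Gamma:\Gamma\cap\Phi^*]=[\Gamma+\Phi^*:\Phi^*]$, and similarly for the other index. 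So the gap condition is preserved. Equivalently, $\nu$ is preserved because $J$ is symplectic.
\end{enumerate}
This yields $h\in C_c^\infty$ such that $\G(h,\Phi\times\Gamma)$ is a Parseval frame with the corresponding support and periodization properties. Set $g=\mathcal F^{-1}h$, i.e.\ $\widehat g=h$. The Fourier transform is the metaplectic operator $\mu(J)$, so it sends $\G(g,\Gamma\times\Phi)$ to a Gabor system on $J(\Gamma\times\Phi)$. Up to signs, which lattices absorb, this lattice is $\Phi\times\Gamma$, and unimodular phases do not affect the Parseval property. Hence $\G(g,\Gamma\times\Phi)$ is Parseval, and $g\in\Sclass$ because $\widehat g\in C_c^\infty$. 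The piecewise-affine Fourier alternative follows the same way from $g_{\rm pa}$.

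The main point needing care is the index bookkeeping under duality in the swapped discrete case, together with the sign and phase conventions in $\mu(J)$. Everything else is a transcription of the diagonal proof.
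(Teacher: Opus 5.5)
Your proposal is correct and follows the paper's proof essentially step by step. The shared steps are:
\begin{enumerate}
\item Corollary~\ref{cor:global-partitions} applied to the pair $(\Gamma,\Phi^*)$.
\item The square-root and mollify-and-normalize construction from Theorem~\ref{thm:diagonal-positive}.
\item The orthonormality check on the adjoint lattice $\Phi^*\times\Gamma^*$, followed by Theorem~\ref{thm:gabor-duality}.
\item The Fourier-side statement, obtained by applying the time-side result to $J(\Gamma\times\Phi)=\Phi\times(-\Gamma)$ and setting $\widehat g=h$.
\end{enumerate}
The only difference is how you check the hypotheses for the swapped pair. You use explicit index duality, $[\Phi+\Gamma^*:\Gamma^*]=[\Gamma+\Phi^*:\Phi^*]$ and $[\Phi+\Gamma^*:\Phi]=[\Gamma+\Phi^*:\Gamma]$, whereas the paper just cites invariance under the symplectic matrix $J$. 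Both are valid. When you write it out, note that the same duality argument shows $\Phi+\Gamma^*$ is discrete exactly when $\Gamma+\Phi^*$ is, so the two branches of the condition match.
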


\begin{proof}
Proposition~\ref{prop:TeP-SG-relation} shows that the pair
$(\Gamma,\Phi^*)$ satisfies the two inequalities in~\eqref{eq:TeP}.
Corollary~\ref{cor:global-partitions} gives a nonnegative compactly
supported piecewise-affine $\Gamma$-partition of unity $\varphi$ with an
actual $\Phi^*$-support buffer.  Hence
\[
 g_{\rm pa}=\sqrt{\covol(\Phi)\,\varphi}
\]
is compactly supported and
$\covol(\Phi)^{-1}g_{\rm pa}^2=\varphi$ is the piecewise-affine
$\Gamma$-partition of unity.  It satisfies
\eqref{eq:general-separable-support}--\eqref{eq:general-separable-periodization}.
The adjoint-lattice argument below proves that its Gabor system is Parseval.
Mollification and energy normalization as in
Section~\ref{sec:gabor-sufficiency} instead give a nonnegative
$g\in C_c^\infty(\R^d)$ satisfying the same identities.

Let
\[
 h_0=\covol(\Gamma\times\Phi)^{-1/2}g.
\]
Then
\[
 \sum_{\gamma\in\Gamma}|h_0(x+\gamma)|^2
 =\frac{1}{\covol(\Gamma)},
 \qquad x\in\R^d.
\]
The adjoint lattice is
$(\Gamma\times\Phi)^\circ=\Phi^*\times\Gamma^*$.
Distinct translations of $h_0$ by elements of $\Phi^*$ have disjoint
supports.  For a fixed translation and
$\gamma^*,\gamma^{*\prime}\in\Gamma^*$, put
$\eta=\gamma^{*\prime}-\gamma^*$.  Periodization over a fundamental
domain $F_\Gamma$ gives
\[
 \begin{aligned}
 \ip{h_0}{M_\eta h_0}
 &=\int_{F_\Gamma}
   \sum_{\gamma\in\Gamma}|h_0(x+\gamma)|^2
   e^{2\pi i\ip{\eta}{x+\gamma}}\,dx\\
 &=\frac1{\covol(\Gamma)}
   \int_{F_\Gamma}e^{2\pi i\ip{\eta}{x}}\,dx
 =\delta_{\gamma^*,\gamma^{*\prime}}.
 \end{aligned}
\]
Thus
$\G(h_0,\Phi^*\times\Gamma^*)$ is an orthonormal system.  The normalized
duality principle, Theorem~\ref{thm:gabor-duality}, now implies that
$\G(g,\Gamma\times\Phi)$ is a Parseval frame.

For the Fourier-side alternative, apply the result just proved to the
separable lattice
\[
 J(\Gamma\times\Phi)=\Phi\times(-\Gamma).
\]
The hypotheses are unchanged because $J$ is symplectic.  We obtain a
nonnegative $h\in C_c^\infty(\R^d)$ such that
$\G(h,\Phi\times(-\Gamma))$ is Parseval and
\begin{align*}
 \supp h\cap(\supp h+\gamma^*)&=\varnothing,
 &&\gamma^*\in\Gamma^*\setminus\{0\},\\
 \sum_{\phi\in\Phi}|h(\omega+\phi)|^2&=\covol(\Gamma),
 &&\omega\in\R^d.
\end{align*}
Set $g=\mu(-J)h$.  Then $\widehat g=\mu(J)g=h$, and metaplectic
covariance transfers the Parseval frame from
$\Phi\times(-\Gamma)$ back to $\Gamma\times\Phi$.  This proves
\eqref{eq:general-fourier-support}--\eqref{eq:general-fourier-periodization}.
If the unsmoothed precursor is used on the rotated lattice instead, the same
argument gives the stated alternative with $|\widehat g|^2$ piecewise
affine.
\end{proof}

\begin{proposition}\label{prop:symplectic-lattice-support-refinement}
Let $a>0$, let
\[
 S=\begin{pmatrix}A&B\\C&D\end{pmatrix}\in\Sp(2d,\R),
 \qquad \Lambda=aS\Z^{2d},
\]
and assume the two inequalities in~\eqref{eq:SG} hold for $\Lambda$.  If
$A=0$ or $B=0$, then $\Lambda$ admits a Parseval frame window
$g\in C_c^\infty(\R^d)$; alternatively, it admits a compactly supported
continuous Parseval window with $|g|^2$ piecewise affine.  If $C=0$ or
$D=0$, then $\Lambda$ admits a Parseval frame window $g$ such that
$\widehat g\in C_c^\infty(\R^d)$; alternatively, one may arrange that
$\widehat g$ is compactly supported and continuous and
$|\widehat g|^2$ is piecewise affine.
\end{proposition}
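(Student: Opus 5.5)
The plan is to reduce all four block conditions to the separable case, Theorem~\ref{thm:smooth-gabor-positive}, by one explicit symplectic change of coordinates whose metaplectic lift preserves compact support (or compact Fourier support). The reduction goes through the support discussion in Subsection~\ref{subsec:metaplectic-implementation}.

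\textbf{Case $B=0$.} By \eqref{eq:symplectic-block-criterion} we have $D=A^{-T}$, and $S=V_QS_A$ with $Q=CA^{-1}$ symmetric. Then $\Lambda=V_Q(aS_A\Z^{2d})=V_Q(aA\Z^d\times aA^{-T}\Z^d)$, and the inner lattice $\Lambda_0=\Gamma\times\Phi$ with $\Gamma=aA\Z^d$ and $\Phi=aA^{-T}\Z^d$ is separable. Since $V_Q$ is symplectic, $\Lambda_0$ has the same covolume, the same adjoint relation and the same $\nu$-invariants as $\Lambda$, so it satisfies \eqref{eq:SG}. By Proposition~\ref{prop:TeP-SG-relation} the pair $(\Gamma,\Phi^*)$ therefore satisfies \eqref{eq:TeP}. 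Theorem~\ref{thm:smooth-gabor-positive} then gives a nonnegative $g_0\in C_c^\infty$, or the piecewise-affine alternative $g_{\rm pa}$, generating a Parseval frame on $\Lambda_0$. Set $g=\mu(V_Q)g_0=e^{\pi i t^TQt}g_0$. Metaplectic covariance makes $\G(g,\Lambda)$ Parseval. Multiplication by a chirp preserves both $C_c^\infty$ and the modulus, so $|g|^2=|g_{\rm pa}|^2$ stays piecewise affine in the alternative.

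\textbf{Case $A=0$.} Here I would premultiply by $J$ acting on the right, that is, use $S'=SJ^{-1}$. Since $J\Z^{2d}=\Z^{2d}$, we have $\Lambda=aS\Z^{2d}=aS'\Z^{2d}$. The upper-right block of $S'=S J^{-1}=S\begin{psmallmatrix}0&-I\\ I&0\end{psmallmatrix}$ equals $-A=0$, so this case reduces to the first. The only thing to check is the block bookkeeping, i.e.\ that right multiplication by $J^{\pm1}$ permutes the blocks to $(B,-A;D,-C)$ up to sign.

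\textbf{Cases $C=0$ and $D=0$.} I would apply the two cases above to $J\Lambda=aJS\Z^{2d}$. Here $JS=\begin{psmallmatrix}C&D\\-A&-B\end{psmallmatrix}$, whose upper blocks are $C$ and $D$. The lattice $J\Lambda$ still satisfies \eqref{eq:SG}, because $J$ is symplectic. This yields $h\in C_c^\infty$ (or a piecewise-affine analogue) generating a Parseval frame on $J\Lambda$. Set $g=\mu(J)^{-1}h$; covariance transfers the Parseval property back to $\Lambda$, and $\widehat g=\mu(J)g=h$ up to a unimodular constant. This is exactly the Fourier-side device already used at the end of the proof of Theorem~\ref{thm:smooth-gabor-positive}.

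\textbf{Main obstacle.} I expect the only delicate point to be the bookkeeping: confirming that the intermediate lattice is really separable and inherits \eqref{eq:SG}. In the non-rational case this needs only covolume invariance. In the rational case it needs $(V\Lambda)^\circ=V\Lambda^\circ$, so that $\nu$ is invariant under the symplectic map $V$. Both facts are recorded in Section~\ref{sec:lattice-theory}.
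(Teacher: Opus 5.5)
Your proposal is correct and follows essentially the paper's route. You reduce to the separable Theorem~\ref{thm:smooth-gabor-positive} through a symplectic matrix with vanishing upper-right block, using $SJ^{-1}$ when $A=0$ and passing through $J$ for the Fourier-side cases $C=0$ and $D=0$. The differences are only bookkeeping. You absorb the dilation $S_A$ into the separable lattice $aA\Z^d\times aA^{-T}\Z^d$ and apply only the chirp $\mu(V_Q)$, whereas the paper starts from $a\Z^d\times a\Z^d$ and applies the full $\mu(S)$ via \eqref{eq:metaplectic-lower-triangular-formula}. Also, $SJ^{-1}$ is a right multiplication, not a premultiplication, though your formula for it is correct.
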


\begin{proof}
The two inequalities in~\eqref{eq:SG} are symplectically invariant, so the
separable lattice $a\Z^d\times a\Z^d$ satisfies the hypotheses of
Theorem~\ref{thm:smooth-gabor-positive}.  Choose
$h\in C_c^\infty(\R^d)$ so that $\G(h,a\Z^{2d})$ is Parseval.  Independently,
choose a compactly supported continuous Parseval window $h_{\rm pa}$ such
that $|h_{\rm pa}|^2$ is piecewise affine.

If $B=0$, put $g=\mu(S)h$.  Metaplectic covariance gives a Parseval frame
on $\Lambda$, and
Equation~\eqref{eq:metaplectic-lower-triangular-formula} gives
$g\in C_c^\infty(\R^d)$.  Applying the same operator to $h_{\rm pa}$ gives a
compactly supported Parseval window $g_{\rm pa}$ with
\[
 |g_{\rm pa}(t)|^2
 =|\det A|^{-1}|h_{\rm pa}(A^{-1}t)|^2,
\]
which is piecewise affine.  If $A=0$, use instead
\[
 \Lambda=aSJ^{-1}\Z^{2d}.
\]
The upper-right block of $SJ^{-1}$ is $-A=0$, so
$g=\mu(SJ^{-1})h$ is compactly supported and generates a Parseval frame on
$\Lambda$; applying $\mu(SJ^{-1})$ to $h_{\rm pa}$ gives the corresponding
modulus-square piecewise-affine alternative.

If $D=0$, put $g=\mu(S)h$.  Since the upper-right block of $JS$ is
$D=0$,
\[
 \widehat g=\mu(J)\mu(S)h=c\,\mu(JS)h,
 \qquad |c|=1,
\]
belongs to $C_c^\infty(\R^d)$.  Applying the same expression to
$h_{\rm pa}$ shows that $|\widehat g|^2$ is piecewise affine.  Finally, if
$C=0$, use the representation $\Lambda=aSJ^{-1}\Z^{2d}$ and put
$g=\mu(SJ^{-1})h$.  The upper-right block of $JSJ^{-1}$ is $-C=0$, and the
same argument gives $\widehat g\in C_c^\infty(\R^d)$, with the analogous
modulus-square piecewise-affine alternative obtained from $h_{\rm pa}$.
\end{proof}

Finally, assume that $\Lambda$ is not symplectically rational and
$\covol(\Lambda)<1$.  In the notation of Enstad--Thiel--Vilalta the
correction term is zero, so their Theorem~C
\cite{EnstadThielVilalta2025} gives a one-window Schwartz Gabor frame on
$\Lambda$.  If $S_g$ is its frame operator, the canonical Parseval window is
$S_g^{-1/2}g$.  Spectral invariance of the smooth twisted group algebra keeps
this window in the Schwartz class; see
\cite{Luef2009Projections,JakobsenLuef2020}.

The necessity and existence arguments prove
Theorem~\ref{thm:main-gabor}: the implication
\textup{(c)}$\Rightarrow$\textup{(a)} is the existence result just
established, \textup{(a)}$\Rightarrow$\textup{(b)} follows from
$\Sclass\subset\Szero$, and
\textup{(b)}$\Rightarrow$\textup{(c)} follows from
Corollary~\ref{cor:rational-S0-obstruction} together with the strict-density
argument for lattices that are not symplectically rational.  They also prove
Theorem~\ref{thm:intro-diagonal-zak-obstruction},
Theorem~\ref{thm:strengthened-one-window}\textup{(a)} through
Theorem~\ref{thm:smooth-gabor-positive}, and
Theorem~\ref{thm:strengthened-one-window}\textup{(b)} through
Proposition~\ref{prop:symplectic-lattice-support-refinement}.

\section{The multiple-window extension}
\label{subsec:multiwindow-adjustment}

The paper remains focused on one-window frames.  We record the multiple-window
extension after completing the one-window theorem because its necessity proof
uses the same diagonal matrix argument with a larger number of columns.

Multiple windows only increase the number of Zibulski--Zeevi columns.  The following is the continuous-Zak necessity statement in the diagonal coordinates.

\begin{proposition}
\label{prop:continuous-zak-multiwindow}
Retain the diagonal data in \eqref{eq:zak-D-data}, and let
$\mathbf g=(g_1,\ldots,g_q)$ with $g_j\in\CZ(\R^d)$.  If
\[
       \G(\mathbf g,D\Z^d\times\Z^d)
\]
is a frame, then
\begin{equation}\label{eq:multiwindow-diagonal-gap}
                              qN\ge R+d.
\end{equation}
\end{proposition}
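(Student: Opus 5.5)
The plan is to repeat the proof of Theorem~\ref{thm:diagonal-one-window} with a wider matrix. For each window $g_j$, form the Zibulski--Zeevi matrix $\mathcal Z_{g_j}$ of size $R\times N$ from \eqref{eq:zak-H-definition}. Then concatenate them horizontally into
\[
 \mathcal Z_{\mathbf g}(x,\eta)=\bigl[\mathcal Z_{g_1}(x,\eta)\ \cdots\ \mathcal Z_{g_q}(x,\eta)\bigr]\in M_{R,qN}(\C).
\]
Summing the coefficient identity \eqref{eq:zak-coefficient-identity} over $j$ gives
\[
 \sum_{j=1}^q\sum_{n,k}|\ip{f}{M_nT_{Dk}g_j}|^2=\frac1R\int_{\mathcal Q_{\mathsf B}}\|\mathcal Z_{\mathbf g}^*\mathcal Zf\|^2 .
\]
Each $\mathcal Z_{g_j}$ is continuous on the compact closure, hence bounded, so the density extension is legitimate. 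The Lebesgue-point argument of Proposition~\ref{prop:zz-matrix-criterion} then yields
\[
 RA_0I_R\le\mathcal Z_{\mathbf g}\mathcal Z_{\mathbf g}^*\le RB_0I_R \quad\text{a.e.}
\]

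Next comes the continuity step. The smallest eigenvalue of $\mathcal Z_{\mathbf g}\mathcal Z_{\mathbf g}^*$ is continuous, so the almost-everywhere lower bound holds everywhere on the closed base rectangle. The face relations \eqref{eq:zak-fine-x}--\eqref{eq:zak-coarse-eta} hold block by block with the same row operators $\Delta_i$, $C_i$ and the same scalar phases. They therefore hold for the concatenated matrix and propagate $\rank\mathcal Z_{\mathbf g}=R$ to all of $\R^{2d}$.

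Then apply the gauge \eqref{eq:zak-Gamma-definition} and set $S_{\mathbf g}=\mathcal Z_{\mathbf g}^{\mathsf T}\Gamma(x)$. This is a continuous $qN\times R$ matrix of full column rank everywhere. The computation leading to \eqref{eq:zak-S-full-quasi} only involves left multiplication of $\mathcal Z$ by row operators, so it applies verbatim and shows that $S_{\mathbf g}$ is Zak-quasiperiodic. Theorem~\ref{thm:rectangular-common-zero-gap}, applied with $qN$ in place of $N$, then gives $qN\ge R+d$.

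No step looks genuinely hard. The one point to check is that the transposed block relations are compatible across the blocks. Since every block sees the identical left operator $\Delta_i$ (respectively $C_i$), transposition turns these into a common right action on $S_{\mathbf g}$. The gauge $\Gamma$ absorbs $\Delta_i$ exactly as in the one-window case. The $\eta$-relation \eqref{eq:zak-fine-eta} is only needed through its coarse form \eqref{eq:zak-coarse-eta}, where it is trivial.
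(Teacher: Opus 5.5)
Your proposal is correct and follows the paper's proof essentially step by step. Both arguments form the horizontal concatenation $\mathcal Z_{\mathbf g}\in M_{R,qN}(\C)$ and sum the coefficient identity to obtain the matrix frame bounds. Both then use continuity of the smallest eigenvalue to force full row rank everywhere, apply the diagonal gauge $\Gamma(x)$ to get a Zak-quasiperiodic $qN\times R$ matrix of full column rank, and conclude with Theorem~\ref{thm:rectangular-common-zero-gap} with $qN$ in place of $N$.
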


\begin{proof}
For each $j$, let $\mathcal Z_{g_j}(x,\eta)\in M_{R,N}(\C)$ be the Zibulski--Zeevi matrix from
\eqref{eq:zak-H-definition}, and form the horizontal concatenation
\begin{equation}\label{eq:multiwindow-H-definition}
 \mathcal Z_{\mathbf g}(x,\eta)
   =\begin{bmatrix}
       \mathcal Z_{g_1}(x,\eta)&\cdots&\mathcal Z_{g_q}(x,\eta)
     \end{bmatrix}
   \in M_{R,qN}(\C).
\end{equation}
Summing \eqref{eq:zak-coefficient-identity} over the windows gives
\begin{align}
 &\sum_{j=1}^q\sum_{n,k\in\Z^d}
   \bigl|\ip{f}{M_nT_{Dk}g_j}\bigr|^2 \notag\\
 &\qquad=\frac1R\int_{\mathcal Q_{\mathsf B}}
       \bigl\|\mathcal Z_{\mathbf g}(x,\eta)^*\mathcal Zf(x,\eta)\bigr\|_{\C^{qN}}^2
       \,dx\,d\eta.
 \label{eq:multiwindow-coefficient-identity}
\end{align}
Consequently, a multiwindow frame with bounds $A_0,B_0$ is equivalent to
\begin{equation}\label{eq:multiwindow-matrix-bounds}
 R A_0 I_R
 \ \le\ \mathcal Z_{\mathbf g}(x,\eta)\mathcal Z_{\mathbf g}(x,\eta)^*
 \ \le\ R B_0 I_R
\end{equation}
for almost every point of the base rectangle.  Since all $Zg_j$ are continuous, the smallest eigenvalue in
\eqref{eq:multiwindow-matrix-bounds} is continuous.  The open-set argument in the proof of
Theorem~\ref{thm:diagonal-one-window} yields
\begin{equation}\label{eq:multiwindow-full-row-rank}
             \rank \mathcal Z_{\mathbf g}(x,\eta)=R
             \qquad\text{for every }(x,\eta).
\end{equation}

Every block satisfies the same boundary relations.  With the diagonal gauge
$\Gamma(x)$ from \eqref{eq:zak-Gamma-definition}, set
\begin{equation}\label{eq:multiwindow-S-definition}
 S_{\mathbf g}(x,\eta)
   =\mathcal Z_{\mathbf g}(x,\eta)^{\mathsf T}\Gamma(x)
   \in M_{qN,R}(\C).
\end{equation}
Then
\begin{equation}\label{eq:multiwindow-standard-quasi}
 S_{\mathbf g}(x+n,\eta+m)
   =e^{2\pi i\ip{n}{\eta}}S_{\mathbf g}(x,\eta),
 \qquad n,m\in\Z^d,
\end{equation}
and \eqref{eq:multiwindow-full-row-rank} says that this matrix has full
column rank everywhere.  Theorem~\ref{thm:rectangular-common-zero-gap}, with
$N$ replaced by $qN$, gives $qN\ge R+d$.
\end{proof}

\begin{proof}[Proof of Theorem~\ref{thm:main-multiwindow}]
Since $\Sclass(\R^d)\subset\Szero(\R^d)$, the implication
\textup{(a)}$\Rightarrow$\textup{(b)} is immediate.  We prove
\textup{(b)}$\Rightarrow$\textup{(c)} and
\textup{(c)}$\Rightarrow$\textup{(a)}.

We first treat symplectically rational lattices.  Suppose that
$\G(\mathbf g,\Lambda)$ is a $q$-window frame with
$g_1,\ldots,g_q\in\Szero(\R^d)$.  By
Theorem~\ref{thm:diagonal-symplectic-normal-form} and metaplectic covariance,
we may work on $D\Z^d\times\Z^d$.  Metaplectic invariance gives reduced
windows in $\Szero(\R^d)$, whose Zak transforms are continuous by Proposition~\ref{prop:zak-standard}(v), and
\[
                 N=\nu(\Lambda),
 \qquad
                 R=\nu(\Lambda^\circ).
\]
Proposition~\ref{prop:continuous-zak-multiwindow} gives
\[
             qN\ge R+d,
\]
which is equivalent to
\[
             q\,\nu(\Lambda)\ge\nu(\Lambda^\circ)+d.
\]
This proves necessity in the symplectically rational case for
Feichtinger-algebra windows, including the stronger continuous-Zak assertion in diagonal coordinates.

For sufficiency in the symplectically rational case, put
$n=\nu(\Lambda)$ and $m=\nu(\Lambda^\circ)$.  Then
\[
                         \covol(\Lambda)=\frac mn.
\]
Theorem~D of Enstad--Thiel--Vilalta
\cite{EnstadThielVilalta2025} supplies a Schwartz multiwindow frame with
\begin{align}
 k
 &=\left\lfloor
       \covol(\Lambda)+\frac{d-1}{\nu(\Lambda)}
    \right\rfloor+1 \notag\\
 &=\left\lfloor\frac{m+d-1}{n}\right\rfloor+1
  =\left\lceil\frac{m+d}{n}\right\rceil
 \label{eq:multiwindow-k-min}
\end{align}
windows.  The symplectically rational condition $\mSG_q$ is exactly $q\ge k$.

Now let $\Lambda$ be a lattice that is not symplectically rational.
Necessity begins with the standard
multiwindow density theorem,
\[
                         \covol(\Lambda)\le q.
\]
Equality cannot occur for Feichtinger-algebra windows by the multiwindow
amalgam Balian--Low theorem; see Enstad
\cite[Theorems~5.4--5.6]{Enstad2022Density} for multiwindow density,
Gerhold--Lamando--Luef
\cite[Theorem~3.38 and Corollary~3.39]{GerholdLamandoLuef2026} for the
multiwindow deformation theorem and the critical-density obstruction, and
Zibulski--Zeevi~\cite{ZibulskiZeevi1997} for the classical rational
multiwindow framework.
Hence
\[
                         \covol(\Lambda)<q,
\]
which is the corresponding branch of $\mSG_q$.

Conversely, when $\Lambda$ is not symplectically rational, Theorem~D of
Enstad--Thiel--Vilalta gives a Schwartz frame with
\[
                         k=\lfloor\covol(\Lambda)\rfloor+1
\]
windows.  The inequality $\covol(\Lambda)<q$ is equivalent to $q\ge k$.
Thus in both the symplectically rational case and the case of lattices
that are not symplectically rational, we obtain a Schwartz frame
with at most $q$ windows.  If the construction produces $k<q$ windows, set
$\ell=q-k+1$ and replace one nonzero window $h$ by $\ell$ copies of
$h/\sqrt{\ell}$.  The total number of windows is then
$(k-1)+\ell=q$, and the sum of the copied windows' frame-operator
contributions equals that of $h$.  Finally apply the inverse square root of the common frame operator to
every window.  Spectral invariance keeps all canonical Parseval windows in
the Schwartz class.  This proves sufficiency and produces exactly $q$
windows.

The necessity arguments prove \textup{(b)}$\Rightarrow$\textup{(c)}, and
the existence construction proves \textup{(c)}$\Rightarrow$\textup{(a)}.
The common frame-operator normalization above gives the tight and Parseval
versions.  The formulas for the minimum number of windows are the least
integers satisfying the two branches of $\mSG_q$.
\end{proof}

The proof shows explicitly how the single-window argument is adjusted:
replace the $N$ Zibulski--Zeevi columns by $qN$ columns and make no other change in the
symplectically rational obstruction.  All geometric constructions and all detailed Zak
computations in the body of the paper remain single-window arguments.

\section*{Acknowledgements}
Part~A is the culmination of a long-running project on the sufficiency of the
sharp condition $n\ge m+d$.  Before the present argument was found, the
authors had obtained partial sufficiency results and a complete proof of the
necessity of this condition.  A principal motivation for the project was the
construction of compactly supported smooth Gabor windows for separable
time--frequency lattices.

At an earlier stage, repeated discussions with ChatGPT~5.5~Pro did not
resolve either the smallest outstanding example
\[
 T=7\mathbb Z\times\mathbb Z,
 \qquad
 P=3\mathbb Z\times 3\mathbb Z,
\]
or the general sufficiency problem.  A later discussion with
ChatGPT~5.6~Pro led to the key idea of replacing hard configurations by soft
configurations and studying the resulting extension problem.  This
soft-configuration viewpoint was the breakthrough that led to the proof of
the sufficiency of the $(\mathrm{T}\epsilon\mathrm{P})$ condition in
Theorem~\ref{thm:main-geometric}.

After that point, the authors used ChatGPT as an exploratory and drafting
tool in the further development, checking, reorganization, and presentation
of the remaining arguments, including parts of the Gabor-frame analysis.
The authors determined the mathematical objectives and proof strategy,
directed the system toward the intended statements, checked the
computations and references, and revised the resulting text.  The authors
take full responsibility for every statement and proof in the paper; no
artificial-intelligence system is an author.

A. Caragea and G. Pfander are supported by the German Research Foundation
(DFG) Grant CA 3683/1-1.

\appendix
\section{Topology of finite soft-configuration spaces}
\label{app:continuous-soft-filling}

This appendix proves Theorem~\ref{thm:continuous-soft-filling-main}.  We
reduce the colored case to the singleton-color case by adding colors one
column at a time; a gluing lemma identifies the homotopy type after each
addition.  We then pass from equal address multiplicities to arbitrary finite
nonempty address sets.

\subsection{The uniform colored complex}

Let $A,B,C$ be finite nonempty sets, and write
\[
 a=|A|,\qquad b=|B|,\qquad c=|C|.
\]
Let $\mathcal F(A,B,C)$ be the space of all nonnegative arrays
$(x_{i,j,k})_{i\in A,j\in B,k\in C}$ satisfying
\begin{align}
 \sum_{j\in B}\sum_{k\in C}x_{i,j,k}&=1
 &&(i\in A),\label{eq:appendix-colored-row-normalization}\\
 \#\{(i,k)\in A\times C:x_{i,j,k}>0\}&\le1
 &&(j\in B).\label{eq:appendix-colored-column-exclusion}
\end{align}
Thus every row has total mass one, while every column contains at most one
positive row-color pair.  We also write
$\mathcal F_{a,b,c}$ instead of $\mathcal{F}(A, B, C)$ when we only need the cardinalities of the sets $A, B, C$.


\begin{theorem}[Colored wedge theorem]\label{thm:colored-wedge}
If $b<a$, then $\mathcal F_{a,b,c}$ is empty.  If $b=a$, then it is a
discrete space finitely many points.  If $b>a$, then
\[
 \mathcal F_{a,b,c}\simeq
 \bigvee^{N_{a,b,c}}S^{b-a},
\]
where $N_{a,b,c}<\infty$. In particular, $\mathcal F_{a,b,c}$ is $(b-a-1)$-connected when $b>a$.
\end{theorem}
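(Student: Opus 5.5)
The plan is to induct on the number of colors, starting from the singleton-color case $c=1$ and adding one color at a time, as the appendix introduction announces. For $c=1$, $\mathcal F_{a,b,1}$ is the space of $a$ probability vectors on $B$ with pairwise disjoint supports. This is the singleton-color (chessboard/deleted-join type) complex treated by Abrams--Gay--Hower; we take from them that it is empty for $b<a$, a finite discrete set of injections when $b=a$, and homotopy equivalent to a finite wedge of $(b-a)$-spheres when $b>a$. The trivial cases are direct. If $b<a$, row normalization forces each row to occupy some column, and column exclusion makes these columns distinct, which is impossible. If $b=a$, every row has exactly one positive entry in a private column, so each row is a point mass; hence $\mathcal F_{a,a,c}$ is the finite discrete set of injections $A\to B\times C$ whose first coordinates are distinct, with $a!\,c^a$ points.

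For the inductive step we increase colors column by column rather than globally. Fix a column $j_0\in B$ and consider the intermediate space in which column $j_0$ carries $c+1$ colors while all other columns carry $c$. More generally, we work with per-column color sets $C_j$ and increase one $|C_{j}|$ at a time. Let $k^*$ be the new color in column $j_0$. We write the space $X'$ as the union of two subcomplexes. The first, $X$, is where color $k^*$ is unused in column $j_0$; this is the old space. The second, $Y$, is where column $j_0$ uses no color other than $k^*$; this is isomorphic to the old space with $k^*$ playing the role of one fixed old color $k_0$. Their intersection $X\cap Y$ is the space where column $j_0$ is unused, which is $\mathcal F$ with $b-1$ columns.

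Because the column is cleared in $X\cap Y$, the inclusion $X\cap Y\hookrightarrow X$ is, under the identification swapping $k^*$ and $k_0$, the same inclusion as $X\cap Y\hookrightarrow Y$. A gluing lemma for homotopy pushouts then gives $X'\simeq X\cup_{X\cap Y}Y$. By induction, $X\simeq\bigvee S^{b-a}$ and $X\cap Y\simeq\bigvee S^{b-1-a}$ (or a discrete set when $b-1=a$, or empty when $b-1<a$). The inclusion $X\cap Y\to X$ lands in a wedge of spheres of one higher dimension, so it is null-homotopic; for $b-1=a$ this is handled by noting that $X$ is connected once $b-a\ge1$. Gluing two copies of a space along a null-homotopic map from a wedge of $r$ spheres of dimension $b-a-1$ yields $X\vee Y\vee\Sigma(X\cap Y)$, which is a wedge of $(b-a)$-spheres. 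This is the new count
\[
N'=2N+\operatorname{rank}\widetilde H_{b-a-1}(X\cap Y),
\]
which is finite. The empty and discrete base cases are handled by the same formula. The final connectivity claim follows because a finite wedge of $S^{b-a}$ is $(b-a-1)$-connected.

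The main obstacle is the gluing step. We must verify two things. First, that $(X',X)$ and $(X',Y)$ are genuine subcomplex decompositions, which is easy since both are unions of cells defined by vanishing coordinates, so the union is a homotopy pushout. Second, and more delicately, that the two inclusions of $X\cap Y$ are homotopic compatible maps, so that the pushout is really the double of $X$ along $X\cap Y$. I would first try to prove null-homotopy of $X\cap Y\hookrightarrow X$ directly, by a coning argument that pushes mass of some row into column $j_0$. This uses the fact that $b-a\ge1$ guarantees a free column. If that fails, I would fall back on the dimension/connectivity argument above, which needs only that $X$ is $(b-a-1)$-connected, as given by the inductive hypothesis.
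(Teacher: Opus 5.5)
Your architecture matches the paper's. You start from the Abrams--Gay--Hower singleton-color wedge and add colors column by column. You cover the new space by subcomplexes that restrict the colors allowed in the active column, all meeting in the subcomplex where that column is unused. Null-homotopy of that intersection's inclusions comes from dimension at most $b-a-1$ against $(b-a-1)$-connectivity, and the homotopy-pushout gluing lemma finishes.

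There is, however, one false step. Your second piece $Y$ (column $j_0$ may use only $k^*$) is \emph{not} isomorphic to the old space $X$ once column $j_0$ already carries two or more colors. It is the space in which column $j_0$ has a single color, so it corresponds to a subspace of the old space, not a copy of it. Consequently, for $c\ge2$, three things you rely on fail:
\begin{itemize}
\item the ``two copies of a space'' picture;
\item the $k^*\leftrightarrow k_0$ identification of the two inclusions;
\item the count $N'=2N+\operatorname{rank}\widetilde H_{b-a-1}(X\cap Y)$.
\end{itemize}
More importantly, in your global $c\to c+1$ scheme the color profile of $Y$ is in general not a stage your induction has already reached. That profile is one color at $j_0$, $c+1$ colors at earlier columns and $c$ at later ones. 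So the hypothesis as you state it does not tell you that $Y$ is a wedge of $(b-a)$-spheres.

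The repair is easy, and you half-announce it with the per-column sets $C_j$. One option is to state the theorem for arbitrary per-column color sets and induct on $b$ and then on $\sum_j|C_j|$. The other is to fill one column completely, one color at a time, before moving to the next. Then $Y$ is exactly the stage before column $j_0$ was started, and $X\cap Y$ is a partially colored space on $b-1$ columns. The gluing lemma only needs both pieces to be path-connected wedges of $(b-a)$-spheres and both inclusions null-homotopic, which your fallback dimension/connectivity argument supplies. The count then becomes $N_X+N_Y+\operatorname{rank}\widetilde H_{b-a-1}(X\cap Y)$, still finite.

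The paper avoids the issue by adding all colors of column $m$ at once. It covers $\mathcal F_m(a,b)$ by the $|C|$ subcomplexes $X_k$ in which column $m$ uses only color $k$. Each of these genuinely is a copy of the previous stage $\mathcal F_{m-1}(a,b)$. All their pairwise intersections equal the same $Y\cong\mathcal F_{m-1}(a,b-1)$, and the $r$-fold gluing lemma applies. Your two-piece, one-color-at-a-time cover works once the hypothesis is strengthened, but it gains nothing over this and needs the extra bookkeeping.
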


For a configuration $x$ and a
row $i$, put
\[
 S_i(x)=\{j\in B:x_{i,j,k}>0\text{ for some }k\in C\}.
\]
The sets $S_i(x)$ are nonempty and pairwise disjoint. For every
$j\in S_i(x)$ there is a unique color $\kappa_i(j)\in C$ with
$x_{i,j,\kappa_i(j)}>0$.  Conversely, pairwise disjoint nonempty subsets
$S_i\subset B$ and color maps $\kappa_i:S_i\to C$ determine an open cell
\[
 e(S,\kappa)\cong
 \prod_{i\in A}\mathring\Delta(S_i).
\]
Its dimension is
\[
 \sum_{i\in A}(|S_i|-1)
 =\left|\bigcup_{i\in A}S_i\right|-a\le b-a.
\]
Hence
$\mathcal F_{a,b,c}$ is a finite regular CW complex of dimension $b-a$ when
$b\ge a$.  This also makes the first two assertions of the theorem clear:
if $b<a$, there are too few columns for the $a$ nonempty rows, while for
$b=a$ every column is used exactly once, giving finitely many hard
configurations.

\subsection{Partially colored spaces}

Order $B=\{1,\ldots,b\}$ and choose a distinguished color $k_0\in C$.
For $0\le m\le b$, let $\mathcal F_m(a,b)$ be the subspace obtained by
imposing, in addition to
\eqref{eq:appendix-colored-row-normalization}--\eqref{eq:appendix-colored-column-exclusion}, the restriction
\begin{equation}\label{eq:partial-colors}
 x_{i,j,k}=0\qquad\text{whenever }j>m\text{ and }k\ne k_0.
\end{equation}
Thus all colors are allowed in the first $m$ columns, whereas only $k_0$ is
allowed in the remaining columns.  In particular,
\[
 \mathcal F_0(a,b)
 \text{ is the singleton-color space},
 \qquad
 \mathcal F_b(a,b)=\mathcal F(A,B,C).
\]
Each $\mathcal F_m(a,b)$ is a CW subcomplex of dimension at most $b-a$.

Abrams, Gay, and Hower proved that the singleton-color space is a wedge of finitely many spheres~\cite{AbramsGayHower2013}:
\begin{equation}\label{eq:singleton-color-wedge}
 \mathcal F_0(a,b)\simeq\bigvee S^{b-a}
 \qquad(b>a).
\end{equation}
We use this result as a black box and add the colors one column at a time.
We first induct on $b$; for fixed $b$, we
then induct on $m$.  The case $b=a$ is the discrete case already discussed.
Assume now that $b>a$, that the assertion is known for all partially colored
spaces with $b-1$ columns, and that $m\ge1$.  The inner induction hypothesis
says that $\mathcal F_{m-1}(a,b)$ is a wedge of $(b-a)$-spheres.

\subsection{Adding the colors at $j=m$}

For each $k\in C$, let $X_k\subset\mathcal F_m(a,b)$ be the subspace
\begin{equation}\label{eq:Xk-colored-cover}
 X_k=\{x:x_{i,m,\ell}=0
       \text{ for every }i\in A\text{ and every }\ell\ne k\}.
\end{equation}
Thus, if column $m$ is used in a configuration belonging to $X_k$, its color
must be $k$.  If column $m$ is unused, the configuration lies in every
$X_k$.  Condition~\eqref{eq:appendix-colored-column-exclusion} therefore
gives the cover
\begin{equation}\label{eq:colored-cover}
 \mathcal F_m(a,b)=\bigcup_{k\in C}X_k.
\end{equation}

Each $X_k$ is homeomorphic to $\mathcal F_{m-1}(a,b)$: all coordinates with
$j\ne m$ are left unchanged, and at $j=m$ one merely renames the unique
permitted color from $k_0$ to $k$.  Consequently, with
$d=b-a$,
\begin{equation}\label{eq:Xk-wedge}
 X_k\simeq\bigvee S^d.
\end{equation}
If $k\ne\ell$, membership in both $X_k$ and $X_\ell$ forces column $m$ to
be unused.  Thus all pairwise intersections are the same CW subcomplex
\begin{equation}\label{eq:common-Y}
 Y=X_k\cap X_\ell
  =\{x:x_{i,m,r}=0\text{ for all }i\in A,
                         r\in C\}.
\end{equation}
Deleting the unused column $m$ gives a homeomorphism
\begin{equation}\label{eq:Y-homeomorphism}
 Y\cong\mathcal F_{m-1}(a,b-1).
\end{equation}
By the outer induction hypothesis, $Y$ is a wedge of $(d-1)$-spheres (with
the usual discrete interpretation when $d=1$), and its natural CW structure
has dimension at most $d-1$.  On the other hand, every $X_k$ is
$(d-1)$-connected.  Therefore every inclusion $Y\hookrightarrow X_k$ is
null-homotopic.  This follows, for example, by extending a null-homotopy over
the cells of $Y$ one dimension at a time.

\subsection{The gluing lemma}

The preceding cover has a particularly simple homotopy type because all of
its pairwise intersections coincide.

\begin{lemma}[Gluing lemma]\label{lem:colored-gluing}
Let $X_1,\ldots,X_r$ be path-connected CW complexes with the same pairwise
intersection $Y$, and suppose that $Y$ is a CW subcomplex of every $X_q$.
If every inclusion $Y\hookrightarrow X_q$ is null-homotopic, then
\begin{equation}\label{eq:gluing-lemma}
 \bigcup_{q=1}^{r}X_q
 \simeq
 \left(\bigvee_{q=1}^{r}X_q\right)
 \vee
 \left(\bigvee^{r-1}\Sigma Y\right),
\end{equation}
where $\Sigma Y$ is the reduced suspension after a basepoint of $Y$ has been
chosen.
\end{lemma}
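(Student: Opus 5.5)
The plan is to reduce to the classical case of a union of two spaces along a common subspace and then induct on $r$. For two pieces, $X_1\cup X_2$ is the homotopy pushout of $X_1\hookleftarrow Y\hookrightarrow X_2$, because $Y$ is a CW subcomplex of each $X_q$ and so both inclusions are cofibrations. I first replace this strict pushout by the double mapping cylinder
\[
 M=X_1\cup_{Y\times\{0\}}(Y\times[0,1])\cup_{Y\times\{1\}}X_2 .
\]
Homotopy invariance of homotopy pushouts says that changing either inclusion by a homotopy does not change the homotopy type of $M$. Since both inclusions are null-homotopic, I replace them by constant maps to basepoints $x_1\in X_1$ and $x_2\in X_2$. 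The pushout along constant maps is $X_1\vee X_2$ with the unreduced suspension of $Y$ attached at its two cone points, that is, $X_1\cup CY\cup X_2$ with the cone points glued to $x_1$ and $x_2$. Path-connectedness of $X_1$ and $X_2$ lets me slide the two attaching points to a common basepoint, up to homotopy. This gives $X_1\vee X_2\vee SY$, where $SY$ is the unreduced suspension with its two poles identified. That space is $\Sigma Y\vee S^1$ when $Y$ is nonempty and well pointed, which is not quite the claimed answer.

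I need to recheck this step, and I expect it to be the main subtlety. The two poles are attached at two points joined by a path in the connected union, so the extra $S^1$ seems to appear. On closer inspection it does not: $SY$ is attached along its two poles, which is the same as the pushout of $X_1\leftarrow Y\to X_2$ with constant maps. For nonempty $Y$ this pushout is $X_1\vee\Sigma Y\vee X_2$ up to homotopy, since choosing a basepoint $y_0\in Y$ collapses the contractible arc $\{y_0\}\times[0,1]$. I will state this carefully, using that $(Y,y_0)$ is a CW pair so the collapse is a homotopy equivalence. This settles $r=2$ with exactly one copy of $\Sigma Y$.

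For general $r$ I induct, setting $W_{r-1}=X_1\cup\cdots\cup X_{r-1}$. Then $W_{r-1}\cap X_r=Y$, since every pairwise intersection equals $Y$ and $Y\subset X_q$ for all $q$. The inclusion $Y\hookrightarrow W_{r-1}$ factors through $X_1$, so it is null-homotopic, and $W_{r-1}$ is path-connected. Applying the two-piece case to $W_{r-1}$ and $X_r$ adds one wedge summand $X_r$ and one more copy of $\Sigma Y$. The inductive hypothesis then gives \eqref{eq:gluing-lemma} with $r-1$ suspensions. One caveat needs a remark: if $Y$ is empty, $\Sigma Y$ should be read as $S^0$ and the pushout becomes a disjoint union. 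That case is excluded in the application because $Y\cong\mathcal F_{m-1}(a,b-1)$ is nonempty for $b-1\ge a$. Other than that, the only points to check are the cofibration and CW-pair hypotheses needed for homotopy invariance of pushouts.
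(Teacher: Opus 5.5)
Your proposal is correct and follows the paper's proof: you pass to the double mapping cylinder, replace the null-homotopic inclusions by constant maps, and collapse the arc $\{y_0\}\times[0,1]$ to obtain $X_1\vee X_2\vee\Sigma Y$. You then induct, using that $Y\hookrightarrow X_1\cup\cdots\cup X_{q-1}$ factors through $X_1$. The intermediate ``slide to a common basepoint'' detour should be deleted, not just retracted: in the mapping cylinder the two poles are attached to the disjoint pieces $X_1$ and $X_2$, so no extra $S^1$ ever arises.
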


\begin{proof}
We first take $r=2$.  Since $Y$ is a CW subcomplex, both inclusions
$Y\hookrightarrow X_q$ are cofibrations.  The ordinary pushout
$X_1\cup_YX_2$ therefore has the homotopy type of the corresponding homotopy
pushout, represented by the double mapping cylinder
\[
 M=X_1\sqcup(Y\times[0,1])\sqcup X_2,
 \qquad
 (y,0)\sim y\in X_1,
 \quad
 (y,1)\sim y\in X_2.
\]
Both attaching maps are null-homotopic.  By homotopy invariance of the
homotopy pushout, we may replace them by constant maps.  The two ends of the
cylinder are then attached at one point of $X_1$ and one point of $X_2$.
Choose $y_0\in Y$.  Collapsing the contractible path
$\{y_0\}\times[0,1]$ identifies those two attachment points and leaves the
reduced suspension
\[
 \Sigma Y=
 \frac{Y\times[0,1]}
 {Y\times\{0\}\cup Y\times\{1\}
  \cup\{y_0\}\times[0,1]}.
\]
Hence
\[
 X_1\cup_YX_2\simeq X_1\vee X_2\vee\Sigma Y.
\]

For more than two spaces, put $U_q=X_1\cup\cdots\cup X_q$.  Since every
pairwise intersection is $Y$, one has $U_{q-1}\cap X_q=Y$.  The inclusion
$Y\hookrightarrow U_{q-1}$ is null-homotopic because it factors through the
null-homotopic inclusion $Y\hookrightarrow X_1$.  Repeated application of
the two-space case adds one new copy of $\Sigma Y$ whenever a new $X_q$ is
attached, proving~\eqref{eq:gluing-lemma}.  This is the standard homotopy
pushout gluing argument; see, for example,
\cite[Section~2.1]{MayPonto2012} and \cite[Chapter~6]{Arkowitz2011}.
\end{proof}

\subsection{Completion of the induction}

Apply Lemma~\ref{lem:colored-gluing} to the family $(X_k)_{k\in C}$ in
\eqref{eq:colored-cover}.  Each $X_k$ is a wedge of $d$-spheres, while
$Y$ is a wedge of $(d-1)$-spheres.  Reduced suspension raises every sphere
dimension by one, so
\[
 \Sigma Y\simeq\bigvee S^d.
\]
Formula~\eqref{eq:gluing-lemma} therefore expresses
$\mathcal F_m(a,b)$ as a wedge of $d$-spheres.  This completes the inner
induction on $m$ and then the outer induction on $b$.  Taking $m=b$ proves
that $\mathcal F(A,B,C)$ is a wedge of $(b-a)$-spheres when $b>a$.
Thus the color index changes the number of spheres but not their dimension.

This completes the proof of Theorem~\ref{thm:colored-wedge}.



\subsection{Padding, retraction, and continuous extension}

We now pass from the uniform color multiplicity to arbitrary finite nonempty
address sets.  This step is needed because the cardinalities of the sets
$A_{i,j}$ in Theorem~\ref{thm:continuous-soft-filling-main} need not agree.

\begin{lemma}[Padding retraction]\label{lem:padding-retraction}
Let every $A_{i,j}$ be finite and nonempty, and choose an integer
$c\ge\max_{i,j}|A_{i,j}|$.  Enlarge each $A_{i,j}$ to a disjoint set
$\widetilde A_{i,j}$ of cardinality $c$.  Then
$\mathcal C_{m,n}(A)$ is a retract of
$\mathcal C_{m,n}(\widetilde A)\cong\mathcal F_{m,n,c}$.
\end{lemma}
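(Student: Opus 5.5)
The plan is to define a retraction $r:\mathcal C_{m,n}(\widetilde A)\to\mathcal C_{m,n}(A)$ that keeps the mass of each row inside the same column and only moves it from padded addresses to genuine ones. First, the identification $\mathcal C_{m,n}(\widetilde A)\cong\mathcal F_{m,n,c}$. For each pair $(i,j)$ fix a bijection $\widetilde A_{i,j}\to C$ with a common color set $C$ of size $c$. A soft configuration with addresses in $\widetilde A$ is then an array $x_{i,j,k}$ with row sums one. Column exclusion in $\mathcal C_{m,n}(\widetilde A)$ says that each column carries at most one positive labeled address over all rows, which is exactly \eqref{eq:appendix-colored-column-exclusion}. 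The bijections therefore give a linear homeomorphism that respects the cell structures.

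For the retraction, choose for each $(i,j)$ a surjective map $\pi_{i,j}:\widetilde A_{i,j}\to A_{i,j}$ that is the identity on $A_{i,j}$. Surjectivity needs $A_{i,j}\ne\varnothing$, and the identity requirement is possible because $A_{i,j}\subset\widetilde A_{i,j}$. Define $r(\lambda)_i(a)=\sum_{b\in\pi_{i,j}^{-1}(a)}\lambda_i(b)$ for $a\in A_{i,j}$. This pushforward is linear on the ambient coordinate space, so it is continuous. Row sums are preserved. On configurations already supported in $A$, the map $r$ is the identity, because each fiber meets $A_{i,j}$ only in the single point $a$ itself.

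The main step is to check that $r$ lands in $\mathcal C_{m,n}(A)$, that is, that column exclusion survives. Suppose $r(\lambda)$ were positive at two distinct labeled addresses $a\in A_{i,j}$ and $a'\in A_{i',j}$ in the same column $j$. Then $\lambda$ would be positive at some $b\in\pi_{i,j}^{-1}(a)$ and at some $b'\in\pi_{i',j}^{-1}(a')$. These satisfy $b\ne b'$: if $i\ne i'$ they carry different row labels, and if $i=i'$ they lie in different fibers of $\pi_{i,j}$. Both lie in column $j$, so this contradicts column exclusion for $\lambda$.

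Hence $r$ is a retraction. It is also cellwise affine, sending each cell $C_S$ affinely into the cell of the image support pattern. That is convenient for the later simplexwise applications but not needed for the lemma. I do not expect a real obstacle; the only care required is the labeled-disjointness bookkeeping in the column-exclusion check.
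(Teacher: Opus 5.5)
Your proof is correct and takes the same route as the paper. The paper's retraction is the special case of your pushforward in which every padded address in $\widetilde A_{i,j}\setminus A_{i,j}$ is sent to one fixed address $a^0_{i,j}\in A_{i,j}$. Your fiber argument spells out the column-exclusion check that the paper states in a single line.
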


\begin{proof}
Choose one distinguished address $a^0_{i,j}\in A_{i,j}$.  Inclusion of the
original coordinates defines
$\iota:\mathcal C_{m,n}(A)\to\mathcal C_{m,n}(\widetilde A)$.  Define a
linear coordinate-merging map $r$ by retaining all original coordinates,
except that the coordinate at $a^0_{i,j}$ is replaced by
\[
 \lambda_i(a^0_{i,j})+
 \sum_{b\in\widetilde A_{i,j}\setminus A_{i,j}}\lambda_i(b),
\]
and all added coordinates are then set equal to zero.  A soft configuration
has at most one positive coordinate in a fixed column, so this merger
preserves both row normalization and column exclusion.  Hence $r$ maps the
enlarged complex continuously to the original one, and
$r\circ\iota=\operatorname{id}$.
\end{proof}

\begin{proof}[Proof of Theorem~\ref{thm:continuous-soft-filling-main}]
Choose $c$, $\widetilde A$, $\iota$, and $r$ as in the padding lemma.  Given
a continuous map
$f:S^{k-1}\to\mathcal C_{m,n}(A)$ with $1\le k\le n-m$, compose it with
$\iota$.  By Theorem~\ref{thm:colored-wedge}, the enlarged complex is
$(n-m-1)$-connected, so $\iota\circ f$ extends to a continuous map
$G:B^k\to\mathcal C_{m,n}(\widetilde A)$.  Then $r\circ G$ extends $f$ and
has values in the original complex.  When $n=m$, a hard configuration is
obtained by choosing one address in the cells of any permutation matching.
This proves the theorem.
\end{proof}





\section{A first-principles Pfaffian proof of the common-zero theorem}
\label{app:pfaffian-common-zero}

This appendix proves the common-zero implication in
Theorem~\ref{thm:ddlt-common-zero} without vector bundles, Chern classes,
differential forms, Sard's theorem, regular-value theory, tangent spaces,
orientations, or zero-set manifolds.  The argument is entirely coordinate
based.  Its ingredients are a covariant smoothing operator, ordinary
partial derivatives, a real skew-symmetric matrix, the Pfaffian written as
an explicit finite sum, finite-dimensional linear algebra, and repeated
applications of the fundamental theorem of calculus on a cube.

A continuous function
\[
 s:\R^d\times\R^d\longrightarrow\C
\]
is called \emph{Zak-quasiperiodic} if
\begin{equation}\label{pf-eq:scalar-qp}
 s(u+n,v+m)=e^{2\pi i n\cdot v}s(u,v),
 \qquad u,v\in\R^d,\quad n,m\in\Z^d.
\end{equation}
A vector-valued function
$S=(s_1,\ldots,s_N):\R^{2d}\to\C^N$ is Zak-quasiperiodic when
\begin{equation}\label{pf-eq:vector-qp}
 S(u+n,v+m)=e^{2\pi i n\cdot v}S(u,v).
\end{equation}

\begin{theorem}\label{pf-thm:common-zero}
Let $1\le N\le d$, and let
$s_1,\ldots,s_N:\R^d\times\R^d\to\C$ be continuous
Zak-quasiperiodic functions.  Then the functions have a common zero.
Equivalently, the map
\[
 S=(s_1,\ldots,s_N):\R^{2d}\longrightarrow\C^N
\]
cannot be everywhere nonzero.
\end{theorem}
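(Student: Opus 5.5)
We argue by contradiction and compute one integral in two ways. A curvature-type quantity built from a normalized version of $S$ vanishes identically by linear algebra, while its integral over the unit cube equals a nonzero constant forced by the Zak phase $e^{2\pi i n\cdot v}$.

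\emph{Reductions.} First reduce to $N=d$: if $S$ has $N<d$ components and no zero, appending $d-N$ identically zero components gives a nowhere-vanishing continuous Zak-quasiperiodic map into $\C^d$. Next we smooth. Ordinary convolution in $v$ destroys \eqref{pf-eq:vector-qp}, because the phase depends on $v$. We therefore use a covariant (twisted) convolution
\[
 S_\epsilon(u,v)=\iint\rho_\epsilon(a,b)\,e^{-2\pi i a\cdot(v-b)}\,S(u-a,v-b)\,da\,db,
\]
with the phase chosen so that $(u,v)\mapsto e^{-2\pi i a\cdot(v-b)}S(u-a,v-b)$ is again Zak-quasiperiodic for each fixed $(a,b)$. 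Then $S_\epsilon$ is smooth and quasiperiodic. By uniform continuity on a fundamental cube it converges uniformly to $S$, so it is still nowhere zero for small $\epsilon$. Finally set $\psi=S_\epsilon/|S_\epsilon|$. This is smooth, takes values in the unit sphere of $\C^d$, and satisfies $\psi(u+n,v+m)=e^{2\pi i n\cdot v}\psi(u,v)$, since $|S_\epsilon|$ is $\Z^{2d}$-periodic.

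\emph{Pointwise vanishing.} Write $x=(u,v)\in\R^{2d}$. Define the real skew-symmetric $2d\times2d$ matrix field
\[
 \Omega_{jk}=2\,\mathrm{Im}\ip{\partial_j\psi}{\partial_k\psi}_{\C^d}.
\]
Identify $\C^d\cong\R^{2d}$ and let $M$ be the real Jacobian of $\psi$. Then $\Omega=M^{\mathsf T}J_0M$ for the standard complex-structure matrix $J_0$. Differentiating $|\psi|^2=1$ gives $\mathrm{Re}\ip{\psi}{\partial_j\psi}=0$, so every column of $M$ lies in the real orthogonal complement of $\psi$, and $\rank M\le 2d-1$. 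Hence $\det\Omega=0$, so $\Pf(\Omega)=0$ everywhere. Here $\Pf$ is written as the explicit finite sum over perfect matchings, and $\Pf^2=\det$ is finite linear algebra.

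\emph{Integral identity.} Introduce the real $1$-form coefficients $\alpha_j=\mathrm{Im}\ip{\psi}{\partial_j\psi}$. Then $\Omega_{jk}=\partial_j\alpha_k-\partial_k\alpha_j$ up to a fixed factor. Expanding the Pfaffian as a sum over matchings expresses $\Pf(\Omega)$ as a sum of partial derivatives $\partial_j\bigl(\alpha_k\cdot(\text{Pfaffian minor})\bigr)$, the coordinate analogue of $\omega^d=d(\alpha\wedge\omega^{d-1})$; this is the standard fact that Pfaffians of curls are divergences. Integrating over $[0,1]^{2d}$ with the fundamental theorem of calculus reduces everything to boundary terms. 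From $\psi(u+e_j,v)=e^{2\pi i v_j}\psi(u,v)$ we get that $\alpha$ jumps by $2\pi\,dv_j$ across opposite $u_j$-faces. Moreover $\Omega$ is $\Z^{2d}$-periodic, because the phase has modulus one and $\Omega$ is gauge invariant. Hence only these jump terms survive.

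Inducting on $d$, peeling off one pair $(u_j,v_j)$ at a time, the boundary term in the $u_j$-direction should reduce to $2\pi$ times the corresponding lower-dimensional integral. This should give
\[
\int_{[0,1]^{2d}}\Pf(\Omega)=c_d(2\pi)^d
\]
with a combinatorial constant $c_d\ne0$ (essentially $d!$ times a normalization). This contradicts the pointwise vanishing.

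\emph{Main obstacle.} I expect the main difficulty to be this last step: organizing the Pfaffian expansion so that the divergence form and the face-by-face boundary bookkeeping are explicit, and checking that the constant is nonzero with consistent signs. I would first do $d=1$, where the claim is $\int\Omega_{12}=\pm4\pi$, obtained by comparing $\alpha$ on the faces $u=0$ and $u=1$. Then I would set up the induction by fixing the first $2d-2$ coordinates, or integrating them out, so that the inductive step mirrors the $d=1$ computation.
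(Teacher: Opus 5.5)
Your outline is the paper's own argument in Appendix~\ref{app:pfaffian-common-zero}. The steps match: covariant smoothing, normalization, the real coefficients $\alpha_r$ (the paper uses $a_r=\alpha_r/2\pi$), pointwise vanishing of $\Pf(d\alpha)$ by a rank bound, and a nonzero value of $\int_{[0,1]^{2d}}\Pf(d\alpha)$. That value comes from a divergence identity (Proposition~\ref{pf-prop:divergence}, proved there from the Bianchi identity) and an induction over coordinate pairs (Lemma~\ref{pf-lem:boundary-pfaffian}). Your two deviations are harmless. Padding with zero components works as well as the paper's restriction to a slice. The real-Jacobian bound $\rank\Omega\le 2d-1$ already forces $\Pf\Omega=0$, without the projection onto $\psi^\perp$ that the paper uses to get $2d-2$.

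The one step that fails as written is the smoothing formula. Quasiperiodicity of $S$ gives
\[
 S_\epsilon(u+n,v+m)=e^{2\pi i n\cdot v}\iint\rho_\epsilon(a,b)\,e^{-2\pi i(a\cdot m+b\cdot n)}\,e^{-2\pi i a\cdot(v-b)}\,S(u-a,v-b)\,da\,db .
\]
The factor $e^{-2\pi i(a\cdot m+b\cdot n)}$ inside the integral is not $1$, so $S_\epsilon$ is not Zak-quasiperiodic. The twist has to pair the $v$-shift $b$ with $u$. Use $e^{2\pi i u\cdot b}S(u-a,v-b)$, as in \eqref{pf-eq:smoothing}. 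Under $u\mapsto u+n$ it produces $e^{2\pi i n\cdot b}$, which cancels the $e^{-2\pi i n\cdot b}$ coming from $S$, and it is inert under $v\mapsto v+m$.

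Two smaller corrections for the last step:
\begin{enumerate}
\item With your normalizations $\Omega=d\alpha$ exactly, and $\alpha_{v_j}$ jumps by $\pm2\pi$ across the $u_j$-faces (the sign depends on the convention for $\langle\cdot,\cdot\rangle$). So for $d=1$ the integral is $\pm2\pi$, not $\pm4\pi$. In general, with $\Pf$ the matching sum, it is $(\pm2\pi)^d$; no factor $d!$ appears, since in the symmetric divergence form each pair of $u_j$-faces contributes $\frac1d(\pm2\pi)^d$.
\item State the inductive claim for a real one-form with prescribed jumps, as the paper does, or for maps into unit spheres of arbitrary dimension. The restriction of $\psi$ to a slice still takes values in the unit sphere of $\C^d$, not of $\C^{d-1}$.
\end{enumerate}
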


The proof has four steps.
\begin{enumerate}[label=(\arabic*)]
\item A continuous nowhere-zero quasiperiodic map can be smoothed without
      changing its quasiperiodicity or creating a zero.
\item After normalizing a smooth map to have length one, we form scalar
      coefficients $a_r$ from its first derivatives and a skew-symmetric
      matrix $F=(\partial_r a_s-\partial_s a_r)$.
\item A pointwise rank calculation forces $\Pf F=0$ everywhere.
\item The boundary increments imposed by Zak quasiperiodicity force
      $\int_{[0,1]^{2N}}\Pf F=1$.
\end{enumerate}
The last two conclusions contradict one another.

It suffices first to prove the result when $d=N$.  Indeed, if $d>N$,
we fix the final $d-N$ coordinate pairs and apply the $d=N$ case to the
remaining variables.  We therefore work from Section~\ref{pf-sec:normalize}
onward with $N$ coordinate pairs.

\subsection{Covariant smoothing of continuous quasiperiodic maps}

We record the smoothing step because the Pfaffian calculation uses ordinary
partial derivatives.

Choose a nonnegative function
$\rho\in C_c^\infty(\R^{2N})$ with
\[
 \int_{\R^{2N}}\rho(x,y)\,dx\,dy=1,
\]
and put
\[
 \rho_\varepsilon(x,y)=\varepsilon^{-2N}
 \rho(x/\varepsilon,y/\varepsilon).
\]
For a continuous Zak-quasiperiodic map
$S:\R^{2N}\to\C^N$, define
\begin{equation}\label{pf-eq:smoothing}
 (\mathcal M_\varepsilon S)(u,v)
 =\int_{\R^{2N}}
 \rho_\varepsilon(x,y)\,
 e^{2\pi i u\cdot y}
 S(u-x,v-y)\,dx\,dy.
\end{equation}

\begin{lemma}\label{pf-lem:smoothing}
The function $\mathcal M_\varepsilon S$ is smooth and satisfies the same
Zak boundary rule as $S$.  Moreover,
\[
 \mathcal M_\varepsilon S\longrightarrow S
\]
uniformly on $[0,1]^{2N}$ as $\varepsilon\to0$.  Consequently, if $S$ is
nowhere zero, then $\mathcal M_\varepsilon S$ is nowhere zero for all
sufficiently small $\varepsilon>0$.
\end{lemma}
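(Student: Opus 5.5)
We verify the three assertions of Lemma~\ref{pf-lem:smoothing} directly from the formula \eqref{pf-eq:smoothing}: quasiperiodicity by a change of variables, smoothness by moving derivatives onto the mollifier, and uniform convergence by uniform continuity of $S$ on a compact neighbourhood of $[0,1]^{2N}$.

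\emph{Quasiperiodicity.} Fix $n,m\in\Z^N$. In $(\mathcal M_\varepsilon S)(u+n,v+m)$ the integrand contains $e^{2\pi i(u+n)\cdot y}S(u+n-x,v+m-y)$. By \eqref{pf-eq:vector-qp} this equals
\[
 e^{2\pi i(u+n)\cdot y}\,e^{2\pi i n\cdot(v-y)}\,S(u-x,v-y).
\]
The factor $e^{2\pi i n\cdot y}$ cancels against $e^{-2\pi i n\cdot y}$, leaving $e^{2\pi i n\cdot v}$ times the original integrand. This gives the same Zak rule. The phase $e^{2\pi iu\cdot y}$ was inserted precisely to absorb the $y$-dependence of the quasiperiodic factor.

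\emph{Smoothness.} Substitute $x'=u-x$ and $y'=v-y$ to obtain
\[
 \int\rho_\varepsilon(u-x',v-y')\,e^{2\pi iu\cdot(v-y')}\,S(x',y')\,dx'\,dy'.
\]
Now $(u,v)$ enters only through the smooth factor $\rho_\varepsilon(u-x',v-y')\,e^{2\pi i u\cdot(v-y')}$. For $(u,v)$ in a bounded set the integration is over a fixed compact set on which $S$ is bounded. Hence differentiation under the integral is justified to all orders, and $\mathcal M_\varepsilon S\in C^\infty$.

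\emph{Uniform convergence.} Since $\int\rho_\varepsilon=1$, we can write
\[
 (\mathcal M_\varepsilon S-S)(u,v)=\int\rho_\varepsilon(x,y)\bigl[e^{2\pi iu\cdot y}S(u-x,v-y)-S(u,v)\bigr]\,dx\,dy.
\]
Take $(u,v)\in[0,1]^{2N}$ and $|(x,y)|\le C\varepsilon$, with $\varepsilon\le 1$. Then $|e^{2\pi iu\cdot y}-1|\le 2\pi\sqrt N\,C\varepsilon$, and $|S(u-x,v-y)-S(u,v)|\le\omega(C\varepsilon)$, where $\omega$ is the modulus of continuity of $S$ on the compact set $[-C,1+C]^{2N}$. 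Together with $\sup|S|$ on that set, this bounds the bracket uniformly by a quantity tending to $0$.

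\emph{Nonvanishing.} If $S$ is nowhere zero, then $\delta=\min_{[0,1]^{2N}}|S|>0$. Choose $\varepsilon$ so that the uniform error is below $\delta/2$; then $\mathcal M_\varepsilon S\ne0$ on $[0,1]^{2N}$. The quasiperiodic rule changes only a unimodular factor and every point is an integer translate of a point of the cube, so $\mathcal M_\varepsilon S\ne0$ on all of $\R^{2N}$.

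The main point needing care is the cancellation in the quasiperiodicity check; the remaining steps are routine.
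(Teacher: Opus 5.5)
Your proposal is correct and follows the paper's proof in every step. You prove quasiperiodicity by the same phase cancellation, smoothness via the substitution $p=u-x$, $q=v-y$ that moves all $(u,v)$-dependence onto the smooth kernel, and uniform convergence via uniform continuity on an enlarged cube. Nonvanishing then follows from the minimum of $|S|$ on $[0,1]^{2N}$ and the unimodular Zak rule; your explicit bound $|e^{2\pi i u\cdot y}-1|\le 2\pi\sqrt N\,C\varepsilon$ just spells out the approximate-identity estimate the paper leaves implicit.
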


\begin{proof}
Let $n,m\in\Z^N$.  Using quasiperiodicity of $S$,
\begin{align*}
 &(\mathcal M_\varepsilon S)(u+n,v+m)\\
 &\quad=\int \rho_\varepsilon(x,y)
 e^{2\pi i(u+n)\cdot y}
 S(u+n-x,v+m-y)\,dx\,dy\\
 &\quad=\int \rho_\varepsilon(x,y)
 e^{2\pi i u\cdot y}e^{2\pi i n\cdot y}
 e^{2\pi i n\cdot(v-y)}S(u-x,v-y)\,dx\,dy\\
 &\quad=e^{2\pi i n\cdot v}
 (\mathcal M_\varepsilon S)(u,v).
\end{align*}
Thus the boundary rule is preserved exactly.

To see smoothness without differentiating the original continuous function,
make the change of variables
\[
 p=u-x,\qquad q=v-y.
\]
Then
\begin{equation}\label{pf-eq:smoothing-kernel}
 (\mathcal M_\varepsilon S)(u,v)
 =\int_{\R^{2N}}
 \rho_\varepsilon(u-p,v-q)
 e^{2\pi i u\cdot(v-q)}S(p,q)\,dp\,dq.
\end{equation}
For $(u,v)$ in a compact set, the factor
$\rho_\varepsilon(u-p,v-q)$ restricts $(p,q)$ to another compact set.
Every derivative in $u$ and $v$ falls on the smooth kernel
\[
 \rho_\varepsilon(u-p,v-q)e^{2\pi i u\cdot(v-q)},
\]
so differentiation under the integral is legitimate to every order.

For uniform convergence, rewrite \eqref{pf-eq:smoothing} as
\begin{align*}
 (\mathcal M_\varepsilon S)(u,v)-S(u,v)
  =\int \rho_\varepsilon(x,y)
  \bigl(e^{2\pi i u\cdot y}S(u-x,v-y)-S(u,v)\bigr)\,dx\,dy.
\end{align*}
On a slightly enlarged compact fundamental cube, $S$ is uniformly
continuous and bounded.  The expression in parentheses tends uniformly to
zero as $(x,y)\to(0,0)$.  The usual approximate-identity estimate therefore
gives uniform convergence on $[0,1]^{2N}$.

Finally, $\norm{S(u,v)}_2$ is $\Z^{2N}$-periodic.  If $S$ has no zero, then
\[
 \delta=\min_{[0,1]^{2N}}\norm{S(u,v)}_2>0.
\]
For sufficiently small $\varepsilon$ the uniform approximation error is
less than $\delta/2$, and then
$\norm{\mathcal M_\varepsilon S}_2\ge\delta/2$ everywhere.
\end{proof}

\subsection{Normalization and the derivative matrix}\label{pf-sec:normalize}

Assume for contradiction that a smooth Zak-quasiperiodic map
\[
 S:\R^N\times\R^N\longrightarrow\C^N
\]
is nowhere zero.  Normalize it by
\begin{equation}\label{pf-eq:normalize}
 U(x,y)=\frac{S(x,y)}{\norm{S(x,y)}_2}.
\end{equation}
Then
\begin{equation}\label{pf-eq:unit}
 U(x,y)^*U(x,y)=1
\end{equation}
and
\begin{equation}\label{pf-eq:U-boundary}
 U(x+n,y+m)=e^{2\pi i n\cdot y}U(x,y).
\end{equation}

We order the real coordinates as
\begin{equation}\label{pf-eq:coordinate-order}
 t_1=x_1,\quad t_2=y_1,\quad t_3=x_2,\quad t_4=y_2,
 \quad\ldots,\quad t_{2N-1}=x_N,\quad t_{2N}=y_N,
\end{equation}
and write $\partial_r=\partial/\partial t_r$.

For $r=1,\ldots,2N$, define
\begin{equation}\label{pf-eq:a-r}
 a_r(x,y)=\frac{1}{2\pi i}\,U(x,y)^*\partial_rU(x,y).
\end{equation}
These coefficients are real-valued.  Indeed, differentiating
\eqref{pf-eq:unit} gives
\[
 (\partial_rU)^*U+U^*\partial_rU=0,
\]
so
\[
 \overline{U^*\partial_rU}
 = (\partial_rU)^*U
 =-U^*\partial_rU.
\]
Thus $U^*\partial_rU$ is purely imaginary.

For clarity, write
\[
 a_{x_k}=\frac{1}{2\pi i}U^*\partial_{x_k}U,
 \qquad
 a_{y_k}=\frac{1}{2\pi i}U^*\partial_{y_k}U.
\]

\subsubsection*{Boundary relations for the coefficients}
From
\[
 U(x+e_j,y)=e^{2\pi i y_j}U(x,y)
\]
we obtain
\begin{equation}\label{pf-eq:ax-boundary}
 a_{x_k}(x+e_j,y)=a_{x_k}(x,y).
\end{equation}
For the $y_k$ derivative,
\begin{align*}
 \partial_{y_k}U(x+e_j,y)
 &=\partial_{y_k}\bigl(e^{2\pi i y_j}U(x,y)\bigr)\\
 &=e^{2\pi i y_j}
 \bigl(2\pi i\delta_{jk}U(x,y)+\partial_{y_k}U(x,y)\bigr).
\end{align*}
Multiplying on the left by $U(x+e_j,y)^*=e^{-2\pi i y_j}U(x,y)^*$ gives
\begin{equation}\label{pf-eq:ay-boundary}
 a_{y_k}(x+e_j,y)=a_{y_k}(x,y)+\delta_{jk}.
\end{equation}
Since $U$ is periodic in every $y_j$ variable,
\begin{equation}\label{pf-eq:y-periodicity-a}
 a_r(x,y+e_j)=a_r(x,y).
\end{equation}

Define the real skew-symmetric matrix
\begin{equation}\label{pf-eq:F-def}
 F=(F_{rs})_{r,s=1}^{2N},
 \qquad
 F_{rs}=\partial_ra_s-\partial_sa_r.
\end{equation}
The additive constants in \eqref{pf-eq:ay-boundary} disappear after
differentiation, so every entry $F_{rs}$ is $\Z^{2N}$-periodic.

\subsection{The Pfaffian and pointwise degeneracy}

For a real skew-symmetric $2N\times2N$ matrix $B=(B_{rs})$, define
\begin{equation}\label{pf-eq:pf-def}
 \Pf B
 =\frac{1}{2^NN!}
 \sum_{i_1,\ldots,i_{2N}=1}^{2N}
 \varepsilon_{i_1\ldots i_{2N}}
 B_{i_1i_2}B_{i_3i_4}\cdots B_{i_{2N-1}i_{2N}},
\end{equation}
where $\varepsilon_{i_1\ldots i_{2N}}$ is the alternating symbol.
We use the elementary algebraic identity
\begin{equation}\label{pf-eq:det-pf}
 \det B=(\Pf B)^2.
\end{equation}
It follows directly by expanding both sides as alternating polynomials in the
entries of $B$; it is also proved by induction using the expansion of the
Pfaffian along its first row.

\begin{proposition}\label{pf-prop:pointwise-zero}
For the matrix $F$ in \eqref{pf-eq:F-def},
\[
 \Pf F(x,y)=0
\]
at every point $(x,y)$.
\end{proposition}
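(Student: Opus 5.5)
We prove that $\Pf F(x,y)=0$ at every point by showing that $F(x,y)$ has a nontrivial kernel, hence $\det F=0$, and then applying \eqref{pf-eq:det-pf}.

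\textbf{Step 1: rewrite $F$ through the derivatives of $U$.} Differentiate the definition \eqref{pf-eq:a-r}. The second-derivative terms $U^*\partial_r\partial_sU$ cancel in the antisymmetrization, leaving
\[
 F_{rs}=\frac{1}{2\pi i}\bigl((\partial_rU)^*\partial_sU-(\partial_sU)^*\partial_rU\bigr)
 =\frac{1}{\pi}\operatorname{Im}\bigl((\partial_rU)^*\partial_sU\bigr).
\]

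\textbf{Step 2: project out the direction of $U$.} Put $P=I-UU^*$, the orthogonal projection onto $U^\perp$, and set $W_r=P\,\partial_rU=\partial_rU-2\pi i\,a_rU$. A direct expansion shows that the terms involving $U$ contribute $(2\pi i)^2\overline{a_r}a_s-(2\pi i)^2\overline{a_s}a_r$ up to the normalization. Since the $a_r$ are real, this expression is real, so it has no imaginary part and does not affect $F_{rs}$. Hence
\[
 F_{rs}=\frac1\pi\operatorname{Im}\langle W_r,W_s\rangle ,
\]
where every $W_r$ lies in the complex space $U^\perp$, of complex dimension $N-1$.

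\textbf{Step 3: bound the rank.} Identify $U^\perp$ with $\R^{2N-2}$. Then $\operatorname{Im}\langle\cdot,\cdot\rangle$ is a real bilinear form on this space, and $F=W^{\mathsf T}\Omega W$ with $W$ the real $(2N-2)\times 2N$ matrix whose columns are the $W_r$. Therefore
\[
 \rank F\le 2N-2<2N,
\]
so $\det F=0$ and $\Pf F=0$ by \eqref{pf-eq:det-pf}. When $N=1$ the space $U^\perp$ is zero, so $F=0$ directly.

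The only point needing care is Step 2: we must confirm that the cross terms between $U$ and $W_r$ vanish. They do, because $U^*W_r=0$. The real-valuedness of $a_r$, established just before \eqref{pf-eq:ay-boundary}, is what kills the remaining $a_ra_s$ term.
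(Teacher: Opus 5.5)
Your proof is correct and follows the paper's argument essentially step for step: the same antisymmetrized formula $F_{rs}=\frac{1}{2\pi i}\bigl((\partial_rU)^*\partial_sU-(\partial_sU)^*\partial_rU\bigr)$, the same projection $I-UU^*$ onto $U^\perp$ with the scalar terms dropping out because the $a_r$ are real, and the same factorization $F=W^{\mathsf T}\Omega W$ giving $\rank F\le 2N-2$. One small slip: the scalar contribution to $(\partial_rU)^*\partial_sU$ is $|2\pi i|^2a_ra_s=4\pi^2a_ra_s$, not $(2\pi i)^2\overline{a_r}a_s$, but it is real either way, so the conclusion is unaffected.
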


\begin{proof}
Differentiate \eqref{pf-eq:a-r}.  The product rule gives
\[
 \partial_r(U^*\partial_sU)
 = (\partial_rU)^*\partial_sU+U^*\partial_r\partial_sU,
\]
and the same formula with $r$ and $s$ interchanged.  Since $U$ is smooth,
$\partial_r\partial_sU=\partial_s\partial_rU$, and the second-derivative
terms cancel.  Hence
\begin{align}
 F_{rs}
 &=\frac{1}{2\pi i}
 \left[
 \partial_r(U^*\partial_sU)-\partial_s(U^*\partial_rU)
 \right]\notag\\
 &=\frac{1}{2\pi i}
 \left[
 (\partial_rU)^*\partial_sU-(\partial_sU)^*\partial_rU
 \right].
 \label{pf-eq:F-derivative}
\end{align}

Fix a point $(x,y)$ for the remainder of the argument and abbreviate
$U=U(x,y)$.  Put
\begin{equation}\label{pf-eq:Q-def}
 Q=\Id_N-UU^*.
\end{equation}
We verify explicitly that $Q$ is the orthogonal projection onto the complex
orthogonal complement of $U$.  Since $U^*U=1$,
\[
 Q^*=\Id_N-(UU^*)^*=Q
\]
and
\[
 Q^2=(\Id_N-UU^*)^2
 =\Id_N-2UU^*+U(U^*U)U^*
 =\Id_N-UU^*=Q.
\]
Moreover,
\[
 QU=U-U(U^*U)=0,
 \qquad
 U^*Q=U^*-(U^*U)U^*=0.
\]
Thus $\operatorname{ran}Q\subseteq U^\perp$.  Conversely, if $z\in U^\perp$,
then $U^*z=0$ and therefore $Qz=z$.  It follows that
\[
 \operatorname{ran}Q=U^\perp
 =\{z\in\C^N:U^*z=0\}.
\]
Because $U\ne0$, this space has complex dimension $N-1$ and real dimension
$2N-2$.

For each $r=1,\ldots,2N$, set
\begin{equation}\label{pf-eq:w-r}
 w_r=Q\partial_rU,
 \qquad
 \alpha_r=U^*\partial_rU.
\end{equation}
The vector $w_r$ belongs to $U^\perp$, because $U^*Q=0$.  The identity
$\Id_N=UU^*+Q$ gives the orthogonal decomposition
\[
 \partial_rU
 =(UU^*+Q)\partial_rU
 =U(U^*\partial_rU)+Q\partial_rU
 =U\alpha_r+w_r.
\]
Differentiating $U^*U=1$ in the $t_r$ direction gives
\[
 (\partial_rU)^*U+U^*\partial_rU=0,
\]
so $\overline{\alpha_r}=-\alpha_r$.  Thus there is a real number
$\beta_r$ with $\alpha_r=i\beta_r$.  In particular,
\[
 \overline{\alpha_r}\alpha_s
 =(-i\beta_r)(i\beta_s)=\beta_r\beta_s
 =\overline{\alpha_s}\alpha_r.
\]
Also $U^*w_s=0$ and $w_r^*U=0$.  Consequently,
\begin{align*}
 (\partial_rU)^*\partial_sU
 &=(\overline{\alpha_r}U^*+w_r^*)
   (U\alpha_s+w_s)
   =\overline{\alpha_r}\alpha_s+w_r^*w_s,\\
 (\partial_sU)^*\partial_rU
 &=\overline{\alpha_s}\alpha_r+w_s^*w_r.
\end{align*}
The scalar terms are equal and cancel in \eqref{pf-eq:F-derivative}.  We
therefore obtain
\begin{equation}\label{pf-eq:F-w}
 F_{rs}
 =\frac{1}{2\pi i}\bigl(w_r^*w_s-w_s^*w_r\bigr)
 =\frac{1}{\pi}\operatorname{Im}(w_r^*w_s).
\end{equation}
The second equality follows from
$w_s^*w_r=\overline{w_r^*w_s}$.

We now turn \eqref{pf-eq:F-w} into an explicit real matrix factorization.
Choose a complex orthonormal basis
$e_1,\ldots,e_{N-1}$ of $U^\perp$.  Write
\[
 w_r=\sum_{\ell=1}^{N-1}(p_{\ell r}+iq_{\ell r})e_\ell,
 \qquad p_{\ell r},q_{\ell r}\in\R.
\]
Let $W$ be the real $(2N-2)\times2N$ matrix whose $r$th column is
\[
 W_{\cdot r}
 =(p_{1r},q_{1r},p_{2r},q_{2r},\ldots,
   p_{N-1,r},q_{N-1,r})^T,
\]
and let
\[
 \Omega=\frac1\pi\operatorname{diag}
 \left(
 \begin{pmatrix}0&1\\-1&0\end{pmatrix},\ldots,
 \begin{pmatrix}0&1\\-1&0\end{pmatrix}
 \right)
 \in M_{2N-2}(\R).
\]
A direct calculation gives
\[
 (W_{\cdot r})^T\Omega W_{\cdot s}
 =\frac1\pi\sum_{\ell=1}^{N-1}
   (p_{\ell r}q_{\ell s}-q_{\ell r}p_{\ell s})
 =\frac1\pi\operatorname{Im}(w_r^*w_s)
 =F_{rs}.
\]
Thus, entry by entry,
\begin{equation}\label{pf-eq:F-factor}
 F=W^T\Omega W.
\end{equation}
When $N=1$, the space $U^\perp$ is zero-dimensional, $W$ has no rows, and
this formula simply says $F=0$.

Finally,
\[
 \rank F=\rank(W^T\Omega W)
 \le \rank W\le2N-2<2N.
\]
Hence the $2N\times2N$ matrix $F$ is singular, and so $\det F=0$.  The
identity \eqref{pf-eq:det-pf} now yields $(\Pf F)^2=0$, and therefore
$\Pf F=0$ at the chosen point.  Since the point was arbitrary, the
conclusion holds everywhere.
\end{proof}

\subsection{The boundary-Pfaffian identity and the common-zero contradiction}
\label{app:boundary-pfaffian}

We now isolate the boundary identity and complete the contradiction argument.

\subsubsection{An abstract boundary-Pfaffian identity}

The next lemma is independent of the map $U$.  It uses only smooth real
functions with the boundary increments found in
\eqref{pf-eq:ax-boundary}--\eqref{pf-eq:y-periodicity-a}.  We keep the
coordinate order
\[
 x_1,y_1,x_2,y_2,\ldots,x_N,y_N.
\]
Thus, inside an alternating symbol, the notation $x_j$ means the index
$2j-1$ and the notation $y_j$ means the index $2j$.

\begin{lemma}\label{pf-lem:boundary-pfaffian}
For $j=1,\ldots,N$, let $a_{x_j},a_{y_j}$ be smooth real functions on
$\R^{2N}$ satisfying, for every $j,k=1,\ldots,N$,
\begin{align}
 a_{x_k}(x+e_j,y)&=a_{x_k}(x,y),
 \label{pf-eq:abstract-bc-1}\\
 a_{y_k}(x+e_j,y)&=a_{y_k}(x,y)+\delta_{jk},
 \label{pf-eq:abstract-bc-2}\\
 a_r(x,y+e_j)&=a_r(x,y).
 \label{pf-eq:abstract-bc-3}
\end{align}
Order the coordinates as in \eqref{pf-eq:coordinate-order}, and define
$F_{rs}=\partial_ra_s-\partial_sa_r$.  Then
\begin{equation}\label{pf-eq:pf-integral-one}
 \int_{[0,1]^{2N}}\Pf F\,dt_1\cdots dt_{2N}=1.
\end{equation}
\end{lemma}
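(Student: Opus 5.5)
The plan is to write $\Pf F$ as an exact divergence, integrate over the cube with the fundamental theorem of calculus, and induct on $N$ using the boundary jumps \eqref{pf-eq:abstract-bc-1}--\eqref{pf-eq:abstract-bc-3}.

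\textbf{Step 1: rewriting $\Pf F$.} Since $\varepsilon$ is alternating and $F_{i_1i_2}=\partial_{i_1}a_{i_2}-\partial_{i_2}a_{i_1}$, each factor contributes $2\partial_{i_1}a_{i_2}$ under the alternating sum. This gives
\[
 \Pf F=\frac1{N!}\sum\varepsilon_{i_1\ldots i_{2N}}
 \partial_{i_1}a_{i_2}\,\partial_{i_3}a_{i_4}\cdots\partial_{i_{2N-1}}a_{i_{2N}} .
\]

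\textbf{Step 2: divergence form.} Define
\[
 G_{i_1}=\frac1{N!}\sum\varepsilon_{i_1\ldots i_{2N}}\,a_{i_2}\,\partial_{i_3}a_{i_4}\cdots\partial_{i_{2N-1}}a_{i_{2N}} .
\]
Then $\Pf F=\sum_{r}\partial_rG_r$. When $\partial_{i_1}$ falls on a later factor it produces a second derivative $\partial_{i_1}\partial_{i_3}a_{i_4}$. That term is symmetric in $i_1,i_3$, so the alternating symbol kills it.

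\textbf{Step 3: reduction to face terms.} Integrating $\partial_rG_r$ in $t_r$ over $[0,1]$ gives $G_r|_{t_r=1}-G_r|_{t_r=0}$.

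All derivatives $\partial_sa_k$ are $\Z^{2N}$-periodic, because the additive constants in \eqref{pf-eq:abstract-bc-2} disappear under differentiation. The functions $a_k$ are also periodic, except that $a_{y_k}$ jumps by $\delta_{jk}$ under $x_j\mapsto x_j+1$.

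Hence the face terms in the $y$-directions vanish. In the direction $r=x_j$, only the summands with $i_2=y_j$ survive, and each contributes its jump $1$. This leaves
\[
 \int_{[0,1]^{2N}}\Pf F=\frac1{N!}\sum_{j=1}^N\int_{\{x_j=0\}}\ \sum\varepsilon_{x_j\,y_j\,i_3\ldots i_{2N}}\,\partial_{i_3}a_{i_4}\cdots ,
\]
where the inner indices range over the coordinates other than $x_j,y_j$.

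\textbf{Step 4: recognising a lower-dimensional Pfaffian.} Moving the pair $(x_j,y_j)$ to the front is an even permutation, so the sign is unchanged. By the Step 1 formula in one dimension lower, the inner sum equals $(N-1)!\,\Pf F^{(j)}$. Here $F^{(j)}$ is the $(2N-2)\times(2N-2)$ matrix formed from the functions $a_{x_k},a_{y_k}$ with $k\ne j$, restricted to the slice with $x_j=0$ and $y_j$ fixed, differentiated only in the remaining variables.

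For each fixed $y_j$, these restricted functions satisfy the hypotheses of the lemma with $N-1$ pairs. By induction, $\int\Pf F^{(j)}=1$ for each fixed $y_j$. Integrating over $y_j\in[0,1]$ keeps the value $1$.

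\textbf{Step 5: conclusion and base case.} Each $j$ therefore contributes $(N-1)!/N!=1/N$, and summing over $j$ gives $1$. The base case $N=1$ is $\int(\partial_xa_y-\partial_ya_x)=1$, which follows directly from the same boundary computation.

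\textbf{Main obstacle.} The step needing most care is the bookkeeping of signs and combinatorial factors in Steps 1 and 4. It must be checked that
\begin{enumerate}
\item only the ordering $i_1=x_j$, $i_2=y_j$ produces a jump;
\item the permutation bringing the pair $(x_j,y_j)$ to the front is even;
\item the normalisations $1/(2^NN!)$ and $1/(2^{N-1}(N-1)!)$ combine to exactly $1/N$.
\end{enumerate}
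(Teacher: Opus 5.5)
Your proposal is correct and follows the paper's proof essentially step by step. Your $G_r$ coincides with the paper's $V_r$, since each $F$-factor contributes $2\,\partial a$ under the alternating sum, and the face analysis, the even permutation moving $(x_j,y_j)$ to the front, the $1/N$ bookkeeping and the slice-wise induction are the same; the only difference is that you verify $\sum_r\partial_rG_r=\Pf F$ directly from the symmetry of mixed partials in the $\partial a$ form, whereas the paper keeps the $F$-factors and invokes the Bianchi identity.
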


Before proving the lemma, note that every entry of $F$ is periodic in all
$2N$ variables.  Indeed, across an $x_j$-face each coefficient $a_s$
changes by a constant, namely either $0$ or $1$.  Differentiating the
boundary relation therefore gives
\[
 \partial_ra_s(x+e_j,y)=\partial_ra_s(x,y).
\]
Across a $y_j$-face the coefficients themselves are periodic.  Hence
$F_{rs}$ has identical values on every pair of opposite faces of the unit
cube.

The proof of the lemma is a repeated boundary calculation.  We first derive
an explicit divergence identity.

\paragraph{The divergence identity.}
For $r=1,\ldots,2N$, define
\begin{align}
 V_r
 &=\frac{1}{2^{N-1}N!}
 \sum_{i_2,\ldots,i_{2N}=1}^{2N}
 \varepsilon_{r i_2\ldots i_{2N}}\,
 a_{i_2}
 F_{i_3i_4}F_{i_5i_6}\cdots F_{i_{2N-1}i_{2N}}.
 \label{pf-eq:V-r}
\end{align}
Here and below a term is automatically zero when two indices coincide,
because the alternating symbol then vanishes.

\begin{proposition}\label{pf-prop:divergence}
The functions in \eqref{pf-eq:V-r} satisfy
\begin{equation}\label{pf-eq:divergence}
 \sum_{r=1}^{2N}\partial_rV_r=\Pf F.
\end{equation}
\end{proposition}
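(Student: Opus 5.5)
This is a direct computation: $\partial_r$ is applied to $V_r$ with the product rule, and only two kinds of terms survive. When the derivative falls on $a_{i_2}$, the result is
\[
\frac{1}{2^{N-1}N!}\sum\varepsilon_{r i_2\ldots i_{2N}}\,(\partial_r a_{i_2})\,F_{i_3i_4}\cdots F_{i_{2N-1}i_{2N}} .
\]
Antisymmetrizing in the pair $(r,i_2)$ turns $\partial_r a_{i_2}$ into $\tfrac12(\partial_r a_{i_2}-\partial_{i_2}a_r)=\tfrac12 F_{r i_2}$. After relabeling $r=i_1$, this becomes
\[
\frac{1}{2^N N!}\sum\varepsilon_{i_1\ldots i_{2N}}F_{i_1i_2}\cdots F_{i_{2N-1}i_{2N}}=\Pf F,
\]
by \eqref{pf-eq:pf-def}.

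The remaining terms are those in which $\partial_r$ falls on one of the factors $F_{i_{2p-1}i_{2p}}$. Because the alternating symbol is symmetric under permuting the pairs, each such term equals a multiple of
\[
\sum \varepsilon_{r i_2 i_3 i_4\ldots}\,a_{i_2}\,(\partial_r F_{i_3i_4})\,F_{i_5i_6}\cdots .
\]
Antisymmetrizing over the triple $(r,i_3,i_4)$ replaces $\partial_rF_{i_3i_4}$ by its cyclic sum $\partial_rF_{i_3i_4}+\partial_{i_3}F_{i_4r}+\partial_{i_4}F_{ri_3}$. This cyclic sum vanishes identically, since $F=da$ is closed: writing out $F_{rs}=\partial_ra_s-\partial_sa_r$, the six second-derivative terms cancel in pairs by equality of mixed partials, which is legitimate because the $a_r$ are smooth. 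Hence all these terms drop out and \eqref{pf-eq:divergence} follows.

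The one step that needs care is the bookkeeping of constants. The $N-1$ factors of $F$ contribute symmetric terms that must be shown to cancel, and the factor $\tfrac12$ gained from antisymmetrizing in $(r,i_2)$ must convert the normalization $2^{N-1}N!$ in \eqref{pf-eq:V-r} into exactly $2^NN!$. I would therefore check $N=1$ by hand first: there $V_1=a_2$ and $V_2=-a_1$, so the divergence is $\partial_1a_2-\partial_2a_1=F_{12}=\Pf F$. After that check I would carry out the general index count.
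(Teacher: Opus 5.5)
Your proof is correct and follows essentially the same route as the paper: the product rule splits the divergence into a term where the derivative hits $a_{i_2}$, which antisymmetrization in $(r,i_2)$ turns into $\Pf F$ with the right constant, and terms where it hits an $F$-factor, which vanish after averaging over cyclic relabelings of $(r,i_{2p-1},i_{2p})$ and using $\partial_rF_{ab}+\partial_aF_{br}+\partial_bF_{ra}=0$. Reducing all the $F$-factor terms to the $p=2$ case by permuting index pairs (an even permutation) is a valid shortcut; the paper instead applies the three-cycle argument to each $B_p$ directly.
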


\begin{proof}
Replace the outer index $r$ by $i_1$ and apply the product rule.  There are
two kinds of terms: the derivative may fall on $a_{i_2}$, or it may fall
on one of the $N-1$ factors
$F_{i_3i_4},\ldots,F_{i_{2N-1}i_{2N}}$.  Thus
\[
 \sum_{r=1}^{2N}\partial_rV_r=A+\sum_{p=2}^N B_p,
\]
where
\begin{align}
 A
 &=\frac{1}{2^{N-1}N!}
 \sum_{i_1,\ldots,i_{2N}}
 \varepsilon_{i_1i_2\ldots i_{2N}}
 (\partial_{i_1}a_{i_2})
 F_{i_3i_4}\cdots F_{i_{2N-1}i_{2N}},
 \label{pf-eq:derivative-a}
\end{align}
and, for $p=2,\ldots,N$,
\begin{align*}
 B_p
 &=\frac{1}{2^{N-1}N!}
 \sum_{i_1,\ldots,i_{2N}}
 \varepsilon_{i_1i_2\ldots i_{2N}}a_{i_2}
 (\partial_{i_1}F_{i_{2p-1}i_{2p}})\\
 &\hspace{28mm}\times
 \prod_{\substack{q=2\\q\ne p}}^N
 F_{i_{2q-1}i_{2q}}.
\end{align*}

We first compute $A$.  Let
\[
 \Sigma=
 \sum_{i_1,\ldots,i_{2N}}
 \varepsilon_{i_1i_2\ldots i_{2N}}
 (\partial_{i_1}a_{i_2})
 F_{i_3i_4}\cdots F_{i_{2N-1}i_{2N}}.
\]
Interchange the two dummy indices $i_1$ and $i_2$.  The alternating symbol
changes sign, while the remaining factors are unchanged.  Hence the same
sum also equals
\[
 -\sum_{i_1,\ldots,i_{2N}}
 \varepsilon_{i_1i_2\ldots i_{2N}}
 (\partial_{i_2}a_{i_1})
 F_{i_3i_4}\cdots F_{i_{2N-1}i_{2N}}.
\]
Adding these two representations of $\Sigma$ gives
\begin{align*}
 2\Sigma
 &=\sum_{i_1,\ldots,i_{2N}}
 \varepsilon_{i_1i_2\ldots i_{2N}}
 (\partial_{i_1}a_{i_2}-\partial_{i_2}a_{i_1})
 F_{i_3i_4}\cdots F_{i_{2N-1}i_{2N}}\\
 &=\sum_{i_1,\ldots,i_{2N}}
 \varepsilon_{i_1i_2\ldots i_{2N}}
 F_{i_1i_2}F_{i_3i_4}\cdots F_{i_{2N-1}i_{2N}}.
\end{align*}
Therefore the coefficient in \eqref{pf-eq:derivative-a} gives
\[
 A=\frac{1}{2^NN!}
 \sum_{i_1,\ldots,i_{2N}}
 \varepsilon_{i_1\ldots i_{2N}}
 F_{i_1i_2}\cdots F_{i_{2N-1}i_{2N}}
 =\Pf F.
\]

It remains to prove $B_p=0$ for each $p$.  The elementary identity used for
this cancellation is
\begin{equation}\label{pf-eq:bianchi}
 \partial_rF_{ab}+\partial_aF_{br}+\partial_bF_{ra}=0.
\end{equation}
To verify it from first principles, substitute
$F_{ab}=\partial_aa_b-\partial_ba_a$:
\begin{align*}
 &\partial_rF_{ab}+\partial_aF_{br}+\partial_bF_{ra}\\
 &\quad=
 \partial_r\partial_aa_b-\partial_r\partial_ba_a
 +\partial_a\partial_ba_r-\partial_a\partial_ra_b
 +\partial_b\partial_ra_a-\partial_b\partial_aa_r=0.
\end{align*}
The first and fourth terms cancel, the second and fifth terms cancel, and
the third and sixth terms cancel, because mixed partial derivatives
commute.

Fix $p$ and abbreviate
$a=i_{2p-1}$ and $b=i_{2p}$.  In the sum defining $B_p$, cyclically rename
the three dummy indices
\[
 (i_1,a,b)\longmapsto(a,b,i_1).
\]
A three-cycle is an even permutation, so the alternating symbol is
unchanged.  The value of the sum is therefore unchanged if
$\partial_{i_1}F_{ab}$ is replaced by $\partial_aF_{bi_1}$, and it is also
unchanged if it is replaced by $\partial_bF_{i_1a}$.  Averaging these three
equal representations yields
\begin{align*}
 B_p
 &=\frac{1}{3\cdot2^{N-1}N!}
 \sum_{i_1,\ldots,i_{2N}}
 \varepsilon_{i_1i_2\ldots i_{2N}}a_{i_2}\\
 &\quad\times
 \bigl(
 \partial_{i_1}F_{ab}+\partial_aF_{bi_1}
 +\partial_bF_{i_1a}
 \bigr)
 \prod_{\substack{q=2\\q\ne p}}^N
 F_{i_{2q-1}i_{2q}}=0
\end{align*}
by \eqref{pf-eq:bianchi}.  Thus every $B_p$ vanishes, while $A=\Pf F$,
which proves \eqref{pf-eq:divergence}.
\end{proof}

\paragraph{Proof of the boundary identity.}

We now verify the boundary identity by expanding the divergence formula.

\begin{proof}[Proof of Lemma~\ref{pf-lem:boundary-pfaffian}]
We argue by induction on $N$.  Throughout, an integral over a face means
integration with respect to all coordinates that remain free on that face.

For $N=1$, the coordinate order is $(x_1,y_1)$ and
\[
 \Pf F=F_{x_1y_1}
 =\partial_{x_1}a_{y_1}-\partial_{y_1}a_{x_1}.
\]
Applying the one-variable fundamental theorem of calculus first in $x_1$
and then in $y_1$ gives
\begin{align*}
 \int_0^1\int_0^1\Pf F\,dx_1\,dy_1
 &=\int_0^1
 \bigl(a_{y_1}(1,y_1)-a_{y_1}(0,y_1)\bigr)\,dy_1\\
 &\quad-
 \int_0^1
 \bigl(a_{x_1}(x_1,1)-a_{x_1}(x_1,0)\bigr)\,dx_1.
\end{align*}
The first bracket equals $1$ by
\eqref{pf-eq:abstract-bc-2}, and the second bracket equals $0$ by
\eqref{pf-eq:abstract-bc-3}.  Hence the integral equals $1$.

Assume now that the assertion has been proved for $N-1$ coordinate pairs.
Integrate \eqref{pf-eq:divergence} over the unit cube.  For a coordinate
$t_r$, write $dt_{\widehat r}$ for integration in all variables except
$t_r$.  The fundamental theorem of calculus gives
\begin{equation}\label{pf-eq:boundary-sum}
 \int_{[0,1]^{2N}}\Pf F
 =\sum_{r=1}^{2N}
 \int_{[0,1]^{2N-1}}
 \left(V_r\big|_{t_r=1}-V_r\big|_{t_r=0}\right)
 \,dt_{\widehat r}.
\end{equation}

We first examine a pair of $y_j$-faces.  By
\eqref{pf-eq:abstract-bc-3}, every coefficient $a_s$ has the same value at
$y_j=0$ and $y_j=1$.  As observed before the divergence calculation, every
entry of $F$ is also periodic.  Since $V_{y_j}$ is a polynomial expression
in the $a_s$ and $F_{rs}$, it follows pointwise that
\[
 V_{y_j}\big|_{y_j=1}=V_{y_j}\big|_{y_j=0}.
\]
Thus every pair of $y_j$-faces contributes zero to
\eqref{pf-eq:boundary-sum}.

Now fix $j$ and consider the two $x_j$-faces.  In the notation of
\eqref{pf-eq:V-r}, take $r=x_j$.  All $F$-factors agree on the two faces.
For the remaining factor, the boundary relations say
\[
 a_{i_2}(x+e_j,y)-a_{i_2}(x,y)
 =\begin{cases}
 1,&i_2=y_j,\\
 0,&i_2\ne y_j.
 \end{cases}
\]
Therefore only the terms with $i_2=y_j$ survive in the difference, and
\begin{align}
 &V_{x_j}\big|_{x_j=1}-V_{x_j}\big|_{x_j=0}\notag\\
 &\quad=\frac{1}{2^{N-1}N!}
 \sum_{i_3,\ldots,i_{2N}}
 \varepsilon_{x_jy_ji_3\ldots i_{2N}}
 F_{i_3i_4}\cdots F_{i_{2N-1}i_{2N}}.
 \label{pf-eq:x-face-jump}
\end{align}
A nonzero term in this sum must use each of the remaining $2N-2$ indices
exactly once.

Let
\[
 I_j=\{x_1,y_1,\ldots,x_N,y_N\}\setminus\{x_j,y_j\},
\]
and let $F^{(j)}$ be the submatrix of $F$ with rows and columns indexed by
$I_j$, evaluated on the face $x_j=0$.  The same submatrix is obtained on
$x_j=1$ because $F$ is periodic.  To compare alternating symbols, move the
ordered pair $(x_j,y_j)$ from its positions $(2j-1,2j)$ to the first two
positions.  Moving $x_j$ requires $2j-2$ transpositions, and after that
moving $y_j$ requires another $2j-2$ transpositions.  The total number
$4j-4$ is even, so no sign is introduced.  Hence the alternating symbol in
\eqref{pf-eq:x-face-jump} is exactly the alternating symbol associated with
the inherited order on $I_j$.

The Pfaffian of the $(2N-2)\times(2N-2)$ matrix $F^{(j)}$ is
\[
 \Pf F^{(j)}
 =\frac{1}{2^{N-1}(N-1)!}
 \sum_{i_3,\ldots,i_{2N}\in I_j}
 \varepsilon^{(j)}_{i_3\ldots i_{2N}}
 F_{i_3i_4}\cdots F_{i_{2N-1}i_{2N}},
\]
where $\varepsilon^{(j)}$ is the alternating symbol for the inherited
coordinate order.  Since $N!=N(N-1)!$, comparison with
\eqref{pf-eq:x-face-jump} gives
\begin{equation}\label{pf-eq:x-face-pf-minor}
 V_{x_j}\big|_{x_j=1}-V_{x_j}\big|_{x_j=0}
 =\frac1N\Pf F^{(j)}.
\end{equation}

It remains to integrate this minor.  Fix $y_j\in[0,1]$ and set $x_j=0$.
For every $k\ne j$, restrict the functions $a_{x_k}$ and $a_{y_k}$ to this
slice and regard them as functions of the remaining $2N-2$ variables.
For shifts in a remaining $x_\ell$ variable, $\ell\ne j$, they satisfy
\[
 a_{x_k}(x+e_\ell,y)=a_{x_k}(x,y),
 \qquad
 a_{y_k}(x+e_\ell,y)=a_{y_k}(x,y)+\delta_{\ell k},
\]
and they are periodic in every remaining $y_\ell$ variable.  Thus these
restricted functions satisfy exactly the hypotheses of the lemma with
$N-1$ coordinate pairs.  Their skew derivative matrix, taken only with
respect to the remaining variables, is precisely $F^{(j)}$.  The induction
hypothesis therefore gives
\begin{equation}\label{pf-eq:minor-integral}
 \int_{[0,1]^{2N-2}}\Pf F^{(j)}=1
\end{equation}
for every fixed value of $y_j$.

Using \eqref{pf-eq:x-face-pf-minor}, the total contribution of the two
$x_j$-faces to \eqref{pf-eq:boundary-sum} is therefore
\[
 \frac1N\int_0^1
 \left(\int_{[0,1]^{2N-2}}\Pf F^{(j)}\right)dy_j
 =\frac1N\int_0^1 1\,dy_j
 =\frac1N.
\]
There are $N$ choices of $j$.  All $y_j$-face contributions vanish, and
each pair of $x_j$-faces contributes $1/N$.  Hence
\[
 \int_{[0,1]^{2N}}\Pf F
 =\sum_{j=1}^N\frac1N=1,
\]
which completes the induction.

For example, when $N=2$ and the coordinates are
$(x_1,y_1,x_2,y_2)$,
\[
 \Pf F
 =F_{x_1y_1}F_{x_2y_2}
  -F_{x_1x_2}F_{y_1y_2}
  +F_{x_1y_2}F_{y_1x_2}.
\]
The argument above says that the $y_1$- and $y_2$-faces cancel, while the
$x_1$-faces and the $x_2$-faces each contribute $1/2$.
\end{proof}

\subsubsection{The contradiction and the continuous theorem}

The remaining step rules out a smooth nowhere-zero quasiperiodic map and then returns
to the original continuous function by approximation.

\begin{proposition}\label{pf-prop:smooth-no-nonzero}
There is no smooth nowhere-zero Zak-quasiperiodic map
\[
 S:\R^N\times\R^N\longrightarrow\C^N.
\]
\end{proposition}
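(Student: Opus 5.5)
Assume, for contradiction, that $S:\R^N\times\R^N\to\C^N$ is smooth, Zak-quasiperiodic and nowhere zero. The plan is to combine the two preceding results on the single skew matrix $F$ built from $S$: pointwise, $\Pf F$ must vanish, while its integral over the unit cube must equal one.

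First I normalize as in \eqref{pf-eq:normalize}. Since $S$ is smooth and nowhere zero, $\norm{S}_2$ is smooth and strictly positive. Hence $U=S/\norm{S}_2$ is smooth, satisfies \eqref{pf-eq:unit}, and obeys the quasiperiodicity \eqref{pf-eq:U-boundary}; this uses only that $\norm{S}_2$ is $\Z^{2N}$-periodic, because the factor $e^{2\pi i n\cdot y}$ has modulus one. With the ordering \eqref{pf-eq:coordinate-order} I then define the real coefficients $a_r$ by \eqref{pf-eq:a-r} and the matrix $F_{rs}=\partial_ra_s-\partial_sa_r$ by \eqref{pf-eq:F-def}. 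The computations preceding Lemma~\ref{pf-lem:boundary-pfaffian} already give the boundary relations \eqref{pf-eq:ax-boundary}, \eqref{pf-eq:ay-boundary} and \eqref{pf-eq:y-periodicity-a}, and these are exactly the hypotheses \eqref{pf-eq:abstract-bc-1}--\eqref{pf-eq:abstract-bc-3}. The $a_r$ are smooth because $U$ is smooth, so Lemma~\ref{pf-lem:boundary-pfaffian} applies and yields $\int_{[0,1]^{2N}}\Pf F=1$.

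On the other hand, Proposition~\ref{pf-prop:pointwise-zero} gives $\Pf F\equiv0$ pointwise, since the rank of $F$ is at most $2N-2$. So the integral is zero, which contradicts the value one.

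I do not expect any step to be a real obstacle. The only point needing care is to confirm that the same $F$ appears in both results: the proposition uses the formula \eqref{pf-eq:F-derivative}, which depends on equality of mixed partials, and this holds because $U$ is smooth. Smoothness is also what allows the lemma to be applied. The case $N=1$ is covered by both results as stated, with $F=0$ but integral one, so it needs no separate treatment. Finally, to obtain Theorem~\ref{pf-thm:common-zero} for continuous maps, I would combine this proposition with Lemma~\ref{pf-lem:smoothing} and the reduction from $d>N$ to $d=N$ given earlier.
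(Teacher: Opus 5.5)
Your proposal is correct and is essentially the paper's own proof. You normalize to $U$, form the coefficients $a_r$ and the matrix $F$, and then contradict the pointwise vanishing of $\Pf F$ from Proposition~\ref{pf-prop:pointwise-zero} with the value $\int_{[0,1]^{2N}}\Pf F=1$ from Lemma~\ref{pf-lem:boundary-pfaffian}.
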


\begin{proof}
Assume that such an $S$ exists.  Normalize it to $U$ as in
\eqref{pf-eq:normalize}, and define the coefficients $a_r$ and matrix $F$ as in
\eqref{pf-eq:a-r} and \eqref{pf-eq:F-def}.  Proposition~\ref{pf-prop:pointwise-zero}
gives
\[
 \Pf F(x,y)=0
\]
for every $(x,y)$.  Therefore
\[
 \int_{[0,1]^{2N}}\Pf F=0.
\]
On the other hand, the boundary relations
\eqref{pf-eq:ax-boundary}--\eqref{pf-eq:y-periodicity-a} satisfy the hypotheses of
Lemma~\ref{pf-lem:boundary-pfaffian}, which gives
\[
 \int_{[0,1]^{2N}}\Pf F=1.
\]
This is impossible.
\end{proof}

\begin{proof}[Proof of Theorem~\ref{pf-thm:common-zero}]
First suppose $d=N$ and assume that the continuous map
$S=(s_1,\ldots,s_N)$ is nowhere zero.  Lemma~\ref{pf-lem:smoothing} produces,
for sufficiently small $\varepsilon>0$, a smooth nowhere-zero
Zak-quasiperiodic map $\mathcal M_\varepsilon S$.  This contradicts
Proposition~\ref{pf-prop:smooth-no-nonzero}.

Now let $d>N$.  Fix the final $d-N$ coordinate pairs, for example at zero:
\[
 u_{N+1}=\cdots=u_d=0,
 \qquad
 v_{N+1}=\cdots=v_d=0.
\]
The restricted functions retain the Zak boundary rule in the first $N$
coordinate pairs.  The already proved case $d=N$ gives a common zero of the
restricted functions, and therefore a common zero of the original
functions.
\end{proof}

\subsubsection{The first two dimensions}

The formulas become especially transparent for small $N$.

For $N=1$,
\[
 F=
 \begin{pmatrix}
 0&F_{x_1y_1}\\
 -F_{x_1y_1}&0
 \end{pmatrix},
 \qquad
 \Pf F=F_{x_1y_1}.
\]
The boundary calculation is simply
\[
 \int_{[0,1]^2}
 (\partial_{x_1}a_{y_1}-\partial_{y_1}a_{x_1})=1,
\]
which is the ordinary winding-number increment.

For $N=2$, in the coordinate order
$(x_1,y_1,x_2,y_2)$,
\begin{align*}
 \Pf F
 &=F_{x_1y_1}F_{x_2y_2}
   -F_{x_1x_2}F_{y_1y_2}
   +F_{x_1y_2}F_{y_1x_2}.
\end{align*}
The pointwise rank estimate gives $\rank F\le2$, so this expression is zero
at every point.  The boundary calculation nevertheless gives
\[
 \int_{[0,1]^4}\Pf F=1.
\]
Thus the higher-dimensional contradiction is an iterated version of the
one-dimensional winding computation.

\subsubsection{Concluding comment}

The proof is entirely coordinate based.  Nonvanishing allows us to
normalize the tuple.  The normalized tuple produces a skew derivative
matrix $F$.  Its columns are generated by vectors in a real space of
dimension $2N-2$, so its top Pfaffian vanishes pointwise.  The Zak boundary
phase, however, contributes one unit of winding in each coordinate pair,
and the explicit divergence calculation packages those increments into
\[
 \int_{[0,1]^{2N}}\Pf F=1.
\]
The contradiction proves the common-zero theorem from ordinary calculus
and finite-dimensional linear algebra.

\section{A compactly supported Parseval frame outside the block-zero cases}
\label{app:compact-parseval-example}

This appendix proves the example stated after
Theorem~\ref{thm:strengthened-one-window}.  The construction is one-dimensional
and completely explicit.

\begin{proposition}\label{prop:compact-parseval-example}
Let
\[
 a=\frac{71}{100},
 \qquad
 \Gamma_0=a\begin{pmatrix}1&1\\-1&1\end{pmatrix}\Z^2.
\]
There exists a nonzero $g_0\in C_c^\infty(\R)$ such that
$\G(g_0,\Gamma_0)$ is an orthonormal system.  Consequently, if
\[
 \Lambda_0=\Gamma_0^\circ
 =\frac1{2a}\begin{pmatrix}1&-1\\1&1\end{pmatrix}\Z^2,
\]
then
\[
 g=\sqrt{\covol(\Lambda_0)}\,g_0
\]
belongs to $C_c^\infty(\R)$ and $\G(g,\Lambda_0)$ is a Parseval frame.
Moreover,
\[
 \Lambda_0=cS\Z^2,
 \qquad
 c=\frac1{\sqrt2a},
 \qquad
 S=\frac1{\sqrt2}\begin{pmatrix}1&-1\\1&1\end{pmatrix}\in\Sp(2,\R),
\]
and every entry of $S$ is nonzero.
\end{proposition}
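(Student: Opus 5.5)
The plan is to build $g_0$ directly from a smooth partition of unity on $\R$, in the spirit of the separable construction in Theorem~\ref{thm:diagonal-positive}, and then invoke the duality principle, Theorem~\ref{thm:gabor-duality}.

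\textbf{Step 1: describe $\Gamma_0$ concretely.} Writing a lattice point as $a(m+n,\,n-m)$ with $m,n\in\Z$ gives
\[
 \Gamma_0=\{(ap,aq):p,q\in\Z,\ p\equiv q \pmod 2\},
 \qquad \covol(\Gamma_0)=2a^2=\frac{5041}{5000}>1 .
\]
The key numerical fact is $2a^2>1$, which is equivalent to $\frac1{2a}<a$.

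\textbf{Step 2: choose the window.} Fix $\delta>0$ with $\frac1{2a}+2\delta<a$; for instance any $\delta<\tfrac12\bigl(a-\tfrac1{2a}\bigr)\approx 0.0029$ works. Using a flat cutoff $\chi$ as in Subsection~\ref{sec:rectangular-common-zero}, build a smooth $\psi\ge0$ supported in $[-\delta,\frac1{2a}+\delta]$. It rises on $[-\delta,\delta]$ as $\sin(\frac\pi2\chi(\cdot))$, equals $1$ in the middle, and falls on $[\frac1{2a}-\delta,\frac1{2a}+\delta]$ as the matching cosine. This makes
\[
 \sum_{k\in\Z}\psi^2\Bigl(x+\frac{k}{2a}\Bigr)=1 .
\]
Set $g_0=\sqrt{2a}\,\psi\in C_c^\infty(\R)$. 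Using $\psi$ rather than $\sqrt{\varphi}$ for a general partition $\varphi$ avoids any issue with smoothness of square roots.

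\textbf{Step 3: check orthonormality.} We need $\ip{g_0}{\pi(ap,aq)g_0}=\delta_{(p,q),0}$ for $(ap,aq)\in\Gamma_0$.

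If $p\neq0$, then $|ap|\ge a$, while $\operatorname{diam}\supp g_0=\frac1{2a}+2\delta<a$. So $g_0$ and $T_{ap}g_0$ have disjoint supports, and the inner product vanishes.

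If $p=0$, then $q=2k$ is even and
\[
 \ip{g_0}{M_{2ak}g_0}=2a\int_{\R}\psi^2(x)\,e^{-2\pi i(2ak)x}\,dx .
\]
The exponential is $\frac1{2a}$-periodic, so periodizing $\psi^2$ over $\frac1{2a}\Z$ reduces this to
\[
 2a\int_0^{1/(2a)}e^{-2\pi i\,2akx}\,dx=\delta_{k,0}.
\]
This is the only real content of the appendix. The main point to watch is exactly the compatibility $\frac1{2a}<a$, which is where the value $a=71/100$ enters, together with the support margin $\delta$.

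\textbf{Step 4: pass to the adjoint lattice and check $S$.} Compute $\Lambda_0=\Gamma_0^\circ$ from the definition of $\sigma$. A basis matrix of $\Gamma_0^\circ$ is $(M^TJ)^{-1}$ up to sign, where $M$ is a basis matrix of $\Gamma_0$; this yields $\frac1{2a}\begin{psmallmatrix}1&-1\\1&1\end{psmallmatrix}\Z^2$. Then $\covol(\Lambda_0)=\frac1{2a^2}=\frac{5000}{5041}$ and $\Lambda_0^\circ=\Gamma_0$.

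Apply Theorem~\ref{thm:gabor-duality} to $\Lambda_0$ and the window $g=\sqrt{\covol(\Lambda_0)}\,g_0$. Then $\covol(\Lambda_0)^{-1/2}g=g_0$ generates an orthonormal system on $\Lambda_0^\circ=\Gamma_0$, so $\G(g,\Lambda_0)$ is Parseval.

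Finally, $\Lambda_0=cS\Z^2$ with $c=\frac1{\sqrt2a}$ follows by factoring out $\frac1{\sqrt2}$. The matrix $S$ is a rotation with determinant $1$, hence lies in $\Sp(2,\R)$, and all of its entries equal $\pm\frac1{\sqrt2}\ne0$.
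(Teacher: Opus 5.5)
Your proposal is correct and follows essentially the same route as the paper: the same flat sine/cosine window $g_0=L^{-1/2}(\cdots)$ with $L=1/(2a)$, support diameter $L+2\delta<a$ to handle $p\neq0$, the periodization/Fourier-coefficient identity to handle $p=0$, then $\Lambda_0=JM_0^{-T}\Z^2$ and the normalized duality principle. The one step you leave implicit, which the paper states, is that $\ip{\pi(\lambda)g_0}{\pi(\mu)g_0}$ is a unimodular multiple of $\ip{g_0}{\pi(\mu-\lambda)g_0}$. Because $\Gamma_0$ is a group, checking inner products against the origin therefore suffices for orthonormality.
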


\begin{proof}
Put
\[
 L=\frac1{2a}=\frac{50}{71}.
\]
The numerical inequality needed below is
\[
 L<a,
 \qquad
 a-L=\frac{41}{7100}>0.
\]
Choose
\[
 0<\delta<\frac{a-L}{2};
\]
for example, $\delta=(a-L)/4$.  Then
\begin{equation}\label{eq:compact-example-diameter}
 L+2\delta<a.
\end{equation}

We first construct a smooth function whose squared modulus has constant
$L$-periodization.  Define
\[
 \rho(s)=
 \begin{cases}
  0,&s\le0,\\
  e^{-1/s},&s>0,
 \end{cases}
 \qquad
 \eta(s)=\frac{\rho(s)}{\rho(s)+\rho(1-s)}.
\]
Then $\eta\in C^\infty(\R)$, $\eta(s)=0$ for $s\le0$, and
$\eta(s)=1$ for $s\ge1$.  Set
\[
 \vartheta(t)=\frac\pi2\,
 \eta\!\left(\frac{t+\delta}{2\delta}\right).
\]
Thus $\vartheta(t)=0$ for $t\le-\delta$,
$\vartheta(t)=\pi/2$ for $t\ge\delta$, and all derivatives of
$\vartheta$ vanish at the two transition endpoints.  Define
\begin{equation}\label{eq:compact-example-window}
 g_0(t)=\frac1{\sqrt L}
 \begin{cases}
  \sin\vartheta(t),&-\delta\le t\le\delta,\\
  1,&\delta\le t\le L-\delta,\\
  \cos\vartheta(t-L),&L-\delta\le t\le L+\delta,\\
  0,&\text{otherwise}.
 \end{cases}
\end{equation}
The flatness of the transition function shows that the four pieces and all
their derivatives agree at the endpoints.  Hence
$g_0\in C_c^\infty(\R)$, and
\[
 \supp g_0\subset[-\delta,L+\delta].
\]
By \eqref{eq:compact-example-diameter}, the diameter of this support is
strictly smaller than $a$.

We claim that
\begin{equation}\label{eq:compact-example-periodization}
 \sum_{j\in\Z}|g_0(x+jL)|^2=\frac1L,
 \qquad x\in\R.
\end{equation}
It is enough to consider $0\le x<L$.  If
$\delta\le x\le L-\delta$, only the term $j=0$ is nonzero, and its
squared modulus is $1/L$.  If $0\le x<\delta$, the only nonzero terms are
$j=0$ and $j=1$, and
\[
 |g_0(x)|^2+|g_0(x+L)|^2
 =\frac1L\bigl(\sin^2\vartheta(x)+\cos^2\vartheta(x)\bigr)
 =\frac1L.
\]
If $L-\delta<x<L$, the only nonzero terms are $j=0$ and $j=-1$, and the
same identity, with argument $x-L$, applies.  This proves
\eqref{eq:compact-example-periodization}.

Taking the Fourier coefficients of the $L$-periodic function in
\eqref{eq:compact-example-periodization} gives, for every $k\in\Z$,
\begin{align}
 \int_{\R}|g_0(t)|^2e^{-2\pi i kt/L}\,dt
 &=\int_0^L\sum_{j\in\Z}|g_0(x+jL)|^2e^{-2\pi i kx/L}\,dx \notag\\
 &=\frac1L\int_0^L e^{-2\pi i kx/L}\,dx
 =\delta_{k,0}.
 \label{eq:compact-example-fourier-coefficients}
\end{align}
In particular, $\norm{g_0}_2=1$.

We now verify orthogonality on $\Gamma_0$.  For $m,n\in\Z$,
\[
 a\begin{pmatrix}1&1\\-1&1\end{pmatrix}
 \begin{pmatrix}m\\n\end{pmatrix}
 =\bigl(a(m+n),a(-m+n)\bigr).
\]
Writing $p=m+n$ and $q=-m+n$, we obtain
\begin{equation}\label{eq:compact-example-lattice-parity}
 \Gamma_0
 =\{(ap,aq):p,q\in\Z,\ p\equiv q\pmod2\}.
\end{equation}
Let $(ap,aq)\in\Gamma_0\setminus\{0\}$.  If $p\ne0$, then
$|ap|\ge a$, while the support diameter of $g_0$ is smaller than $a$.
Therefore
\[
 \supp g_0\cap(\supp g_0+ap)=\varnothing,
\]
and hence
\[
 \ip{g_0}{M_{aq}T_{ap}g_0}=0.
\]
If $p=0$, the parity condition in
\eqref{eq:compact-example-lattice-parity} gives $q=2k$ for some
$k\in\Z\setminus\{0\}$.  Since $2a=1/L$, we have $aq=k/L$, and
\eqref{eq:compact-example-fourier-coefficients} yields
\[
 \ip{g_0}{M_{aq}g_0}
 =\int_{\R}|g_0(t)|^2e^{2\pi i kt/L}\,dt=0.
\]
Thus $g_0$ is orthogonal to every nontrivial time--frequency translate of
itself by a point of $\Gamma_0$.  Since $\norm{g_0}_2=1$ and the inner
product of two lattice atoms differs, up to a constant of modulus one, from
an inner product with their lattice difference, $\G(g_0,\Gamma_0)$ is an
orthonormal system.

Let
\[
 M_0=a\begin{pmatrix}1&1\\-1&1\end{pmatrix}.
\]
For the convention used in this paper, the adjoint lattice of
$M_0\Z^2$ is $JM_0^{-T}\Z^2$.  A direct calculation gives
\[
 JM_0^{-T}\Z^2
 =\frac1{2a}\begin{pmatrix}-1&1\\-1&-1\end{pmatrix}\Z^2
 =\frac1{2a}\begin{pmatrix}1&-1\\1&1\end{pmatrix}\Z^2,
\]
where the final equality is only a change of the integer lattice basis.
This proves the displayed formula for $\Lambda_0$.

Write
\[
 c=\frac1{\sqrt2a}=\frac{100}{71\sqrt2},
 \qquad
 S=\frac1{\sqrt2}\begin{pmatrix}1&-1\\1&1\end{pmatrix}.
\]
Then $\det S=1$, so $S\in\Sp(2,\R)$, and $\Lambda_0=cS\Z^2$.
All four entries, which are the four scalar blocks in dimension one, are
nonzero.  Furthermore,
\[
 \covol(\Lambda_0)=c^2=\frac1{2a^2}=\frac{5000}{5041}.
\]
Its symplectic Gram matrix is
\[
 (cS)^TJ(cS)=c^2J=\frac{5000}{5041}J.
\]
Since $5000$ and $5041$ are coprime, an integer coordinate vector
$k\in\mathbb Z^2$ represents an element of the integral symplectic subgroup
exactly when both coordinates of $k$ are divisible by $5041$.  Hence
\[
 [\Lambda_0:(\Lambda_0)_{\rm int}]=5041^2,
 \qquad
 \nu(\Lambda_0)=5041.
\]
The reciprocal symplectic Gram matrix for the adjoint lattice has denominator
$5000$, and therefore
\[
 \nu(\Lambda_0^\circ)=5000.
\]
In particular, $\Lambda_0$ satisfies \eqref{eq:SG} in dimension one.

Finally, put
\[
 g=\sqrt{\covol(\Lambda_0)}\,g_0=cg_0.
\]
Then
$\covol(\Lambda_0)^{-1/2}g=g_0$, and
$\Lambda_0^\circ=\Gamma_0$.  The normalized Gabor duality principle,
Theorem~\ref{thm:gabor-duality}, therefore shows that
$\G(g,\Lambda_0)$ is a Parseval frame.  Since scalar multiplication does
not change support or smoothness, $g\in C_c^\infty(\R)$.
\end{proof}

\end{document}